\documentclass[microtype]{gtpart}     
\gtart  

\usepackage{amscd,enumerate,overpic,mathptmx}
\graphicspath{{figs/}}

%
%

\title{Ropelength criticality}

\author[J Cantarella]{Jason Cantarella}
\givenname{Jason}
\surname{Cantarella}
\address{Department of Mathematics\\ University of Georgia\\ Athens, GA 30602}
\email{jason@math.uga.edu}
\urladdr{www.jasoncantarella.com}

\author[J\,H\,G Fu]{Joseph H\,G Fu}
\givenname{Joseph H\,G}
\surname{Fu}
\address{Department of Mathematics\\ University of Georgia\\ Athens, GA 30602}
\email{fu@math.uga.edu}
\urladdr{www.math.uga.edu/~fu/}

\author[R\,B Kusner]{Robert B Kusner}
\givenname{Robert B}
\surname{Kusner}
\address{Department of Mathematics\\ University of Massachusetts\\ Amherst, MA 01003}
\email{kusner@math.umass.edu}
\urladdr{www.gang.umass.edu/~kusner/}

\author[J\,M Sullivan]{John M Sullivan}
\givenname{John M}
\surname{Sullivan}
\address{Institut f{\"u}r Mathematik\\ Technische Universit{\"a}t Berlin\\ 10623 Berlin}
\email{sullivan@math.tu-berlin.de}
\urladdr{www.isama.org/jms/}


\keyword{ropelength} \keyword{ideal knot} \keyword{tight knot}
\keyword{constrained minimization} \keyword{Kuhn--Tucker theorem}
\keyword{simple clasp} \keyword{Clarke gradient}
\subject{primary}{msc2010}{57M25}
\subject{primary}{msc2010}{49J52}
\subject{primary}{msc2010}{53A04}


\arxivreference{i1102.3234}
\arxivpassword{secret}

%
%
\volumenumber{}
\issuenumber{}
\publicationyear{}
\papernumber{}
\startpage{}
\endpage{}
\doi{}
\MR{}
\Zbl{}
\received{}
\revised{}
\accepted{}
\published{}
\publishedonline{}
\proposed{}
\seconded{}
\corresponding{}
\editor{}
\version{}

%
%

\newtheorem{proposition}{Proposition}[section]
\newtheorem{definition}[proposition]{Definition}
\newtheorem{lemma}[proposition]{Lemma}
\newtheorem{theorem}[proposition]{Theorem}
\newtheorem{corollary}[proposition]{Corollary}
\newtheorem{conjecture}[proposition]{Conjecture}

\theoremstyle{definition}
\newtheorem*{remark}{Remark}

\newcommand{\demph}[1]{\textbf{#1}}

\newcommand{\nicefrac}[2]{\mbox{\kern.02em\raise.43ex\hbox{\footnotesize$#1$}
  \kern-.41em$/$\kern-.18em\lower.43ex\hbox{\footnotesize$#2$}\kern.01em}}
\renewcommand{\half}{\nicefrac12}

\newcommand{\sfrac}[2]{#1/#2}
\newcommand{\ip}[2]{\bigl< #1 , #2 \bigr>}

\newcommand\eps{\epsilon}
\newcommand\veps{\pm}
\makeop{dist}
\makeop{reach}
\makeop{Strut}
\newcommand{\Tan}[2]{{T_{#1}{#2}}}
\newcommand{\Nor}[2]{N_{#1}{#2}}
\newcommand{\psit}[2]{\psi(#1,#2)}  
\newcommand{\Ct}[2]{C(#1,#2)}  
\newcommand{\rt}[2]{r(#1,#2)}  
\newcommand{\psits}[2]{\psi^*(#1,#2)}  
\newcommand{\Cts}[2]{C^*(#1,#2)}  
\newcommand{\rts}[2]{r^*(#1,#2)}  
\newcommand{\pd}[2]{\operatorname{pd}(#1,#2)} 
\newcommand{\pds}[2]{\operatorname{pd}^*(#1,#2)} 
\newcommand{\pdL}[3]{\operatorname{pd}^{#3}(#1,#2)} 

\makeop{Kink}
\makeop{Crit}
\newcommand\length{\operatorname{len}}
\makeop{Thi}
\newcommand\Ts{\Thi_\sigma}
\newcommand\Ti{\Thi_\infty}
\newcommand\loc{\mathrm{loc}}
\makeop{Lip}
\makeop{clos}
\newcommand{\cross}{\times}

\newcommand{\Circ}{\operatorname{Circ}}
\newcommand{\id}{\operatorname{Id}}

\renewcommand{\S}{\mathbb{S}}
\renewcommand{\d}{\partial}
\newcommand{\ds}{\,ds}
\newcommand{\dx}{\,dx}
\renewcommand{\limsup}{\varlimsup}
\renewcommand{\liminf}{\varliminf}
\newcommand{\setm}{\smallsetminus}
\newcommand{\ident}{\equiv}
\newcommand{\after}{\circ}
\newcommand{\nua}{\,\overline{\!\nu}}

\newcommand{\ddz}[1]{\left.\frac{\d}{\d#1}\right|_{#1=0}}

\newcommand{\gampr}{{\gamma\kern.09em}'}
\newcommand{\gamprpr}{{\gamma\kern.09em}''}
\newcommand{\Dxf}{D_{\kern-.08em x}\,f}
\newcommand{\DDxf}{D^2_{\kern-.12em x}f}
\newcommand{\hs}{\,}

\newcommand{\BV}{\textsc{bv}}
\newcommand{\SFM}{\Omega}
\newcommand{\KFM}{\Phi}
\makeop{Var}
\makeop{mass}
\newcommand{\bdy}{\partial}

\newcommand{\snorm}[1]{|#1|}
\newcommand{\bsnorm}[1]{\bigl| #1 \bigr|}

\newcommand{\CxL}[1]{\operatorname{Osc}#1 L}
\newcommand{\CL}{\operatorname{Osc}L}
\newcommand{\CLb}{\overline{\CL}}
\newcommand{\CxLb}[1]{\overline{\CxL{#1}}}

\newcommand{\asin}{\arcsin} 

\newcommand{\bump}{f} 

\newcommand{\bsp}{\rule[-1ex]{0pt}{0pt}}


\begin{document}

\begin{abstract}
The \emph{ropelength problem} asks for the minimum-length configuration
of a knotted diameter-one tube embedded in Euclidean three-space.
The core curve of such a tube
is called a tight knot, and its length is
a knot invariant measuring complexity.
In terms of the core curve, the thickness constraint has two parts:
an upper bound on curvature and a self-contact condition. 

We give a set of necessary and sufficient conditions for
criticality with respect to this constraint, based
on a version of the Kuhn--Tucker
theorem that we established in previous work. The key technical
difficulty is to compute the derivative of thickness under a
smooth perturbation.  This is accomplished by writing thickness
as the minimum of a $C^1$--compact family of smooth functions
in order to apply a theorem of Clarke.
We give a number of applications, including a classification of
the ``supercoiled helices'' formed by critical curves with no
self-contacts (constrained by curvature alone) and an explicit but
surprisingly complicated description of the ``clasp'' junctions
formed when one rope is pulled tight over another.
\end{abstract}

\begin{asciiabstract}
The ropelength problem asks for the minimum-length configuration
of a knotted diameter-one tube embedded in Euclidean three-space.
The core curve of such a tube
is called a tight knot, and its length is
a knot invariant measuring complexity.
In terms of the core curve, the thickness constraint has two parts:
an upper bound on curvature and a self-contact condition. 

We give a set of necessary and sufficient conditions for
criticality with respect to this constraint, based
on a version of the Kuhn-Tucker
theorem that we established in previous work. The key technical
difficulty is to compute the derivative of thickness under a
smooth perturbation.  This is accomplished by writing thickness
as the minimum of a C^1-compact family of smooth functions
in order to apply a theorem of Clarke.
We give a number of applications, including a classification of
the "supercoiled helices" formed by critical curves with no
self-contacts (constrained by curvature alone) and an explicit but
surprisingly complicated description of the "clasp" junctions
formed when one rope is pulled tight over another.
\end{asciiabstract}
\maketitle

\bigskip
\begin{flushright}\small
\emph{Unlike the classical machine that is composed of well-defined parts\\
that interact according to well-understood rules (gears and cogs),\\
the sliding interaction of two ropes under tension is extraordinary and interactive,\\
with tension, topology, and the system providing the form which finally results.\\}
\smallskip
---Louis H Kauffman, \emph{Knots and Physics}, 1992
\end{flushright}

\section{Introduction}
Our goal in this paper is to investigate what shape a knot or link attains
when it is tied in rope of a given diameter (or thickness) and then
pulled tight.  Ignoring elastic deformations
within the rope,  we formulate this as the
\emph{ropelength problem}: to minimize the length of a knot or link~$L$
in Euclidean space subject to the condition that it remains one unit thick.
Although there are many equivalent formulations~\cite{MR2003h:58014,gm} 
of this thickness constraint, perhaps the most elegant simply
requires that the \emph{reach} of~$L$ be at least~$\half$.
Here, following Federer, the reach of~$L$ is the supremal $r\ge0$
such that every point in space within distance~$r$ of~$L$ has
a unique nearest point on~$L$.  Any curve of positive reach
is $C^{1,1}$, that is, its unit tangent vector is a Lipschitz
function of arclength.

In an earlier paper~\cite{CFKSW1}, we studied a simplified
version, the Gehring link problem, in which
the thickness constraint is replaced by the weaker requirement
that the \emph{link-thickness} -- the minimal distance between
different components of the link -- is at least~$1$.
Thinking of the components again as strands of rope of diameter~$1$,
this means that different strands cannot overlap, but each
strand can pass through itself.
Our balance criterion~\cite{CFKSW1} for the
Gehring problem made precise the intuition that, in a critical configuration
for a link~$L$, the tension forces seeking to minimize
length must be balanced by contact forces.
More precisely, we defined a \emph{strut} to be a pair of points on different
components at distance exactly~$1$.  The balance criterion
says that~$L$ is critical if and only if there is a nonnegative measure
on the set of struts, thought of as a system of compression forces,
which balances the curvature vector field of~$L$.

The strut measure should be thought of as giving Lagrange multipliers
for the distance constraints; our proof was basically an
infinite-dimensional Lagrange multipliers argument characterizing
critical points of length constrained by the nonsmooth
thickness functional.  The general procedure for such a problem is to write
the nonsmooth constraint as the minimum of a compact family of
differentiable constraints.  In the case of link-thickness, this is immediate:
we just take the infinite family of pairwise distances between points
on different components of the curve.
Our proof was then based on two technical tools:
First, Clarke's theorem~\cite{clarke} on the derivatives of
``min-functions'' (our Theorem~\ref{thm:Clarke}) lets us
compute the directional derivative of the link-thickness with respect
to a smooth deformation of $L$. Second, we proved a new version of
the Kuhn--Tucker theorem on extrema of functionals subject to convex
constraints, similar in spirit to a version by Luenberger~\cite{luen},
but giving necessary and sufficient conditions for a strong form
of criticality. This provided the required version of the Lagrange
multipliers theorem.

In the present paper we adopt the same general approach to develop
a criticality theory for the (technically much more difficult)
ropelength problem.  Again the main point is to express the thickness
as the minimum of a compact family of smooth functions.
For this, we recall some equivalent reformulations
\cite[Lemmas 1,~2]{MR2003h:58014} of thickness for a space curve.
First it is the infimal diameter of circles through three points
on the curve, and this is always realized in a limit as at least
two points approach each other.  (This idea originates with~\cite{gm}
and leads to interesting work on approximating ropelength by smooth
integral Menger curvature energies -- see for instance~\cite{SSvdM}.)
Second, the thickness is always either the minimum self-critical distance
or twice the infimal radius of curvature,
as illustrated in Figure~\ref{fig:thick}.
\begin{figure}[ht]
\begin{center}
\begin{overpic}[width=4in]{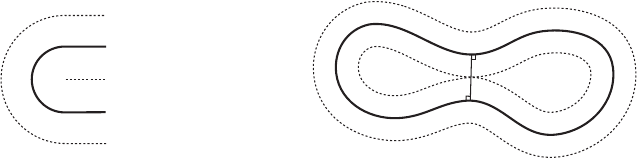}
\end{overpic}
\end{center}
\caption{The diameter of an embedded tube around a curve is controlled
by twice the radius of curvature (left)
and by local minima of the self-distance function on the tube (right).}
\label{fig:thick}
\end{figure}

Guided by this last picture, we write thickness as the minimum of
two compact subfamilies of smooth functions, controlling self-distance
and curvature respectively.

The first subfamily is indexed by \emph{all} pairs of points of the link~$L$,
but of course cannot simply be the distance, since this vanishes along
the diagonal.  Guided by the trigonometric factors that appear in the
three-point diameter when two of the three points approach each other,
we define a penalized distance between two points (depending also on
the tangent direction at one of them) which equals distance for
critical pairs and achieves its minimum only at such pairs (while
blowing up along the diagonal).
This yields a $C^1$--compact family of functions indexed by $L\times L$.

The second subfamily controls the curvature of~$L$, but its construction
is complicated by the fact that $L$ need not be $C^2$.
Nevertheless, since any thick curve is~$C^{1,1}$
(meaning the tangent vector is Lipschitz continuous),
$L$ is twice differentiable -- and thus admits an osculating circle
-- almost everywhere.
It is now tempting to simply use the limit inferior to define a
lower semicontinuous radius of curvature function along the curve.
We can view this as a family indexed by the compact set~$L$, but
Clarke's theorem requires that the derivatives under any variation
vectorfield also be lower semicontinuous, which is not the case
here.  (Knowing the derivative of curvature requires knowing the
osculating plane, information which is lost in the lim inf.)
Fixing this requires a genuinely new idea.
We consider the closure $\CLb$ of the set of osculating circles
in the space of all pointed circles in~$\R^3$;
the functions in our second subfamily
simply measure the diameter of each circle.

Proceeding in this way, we formulate and prove our first main result --
the General Balance Criterion of Theorem~\ref{thm:gbc} -- which
gives a necessary and sufficient condition for a link to
be (strongly) critical for length under the thickness constraint.
As in the Gehring case, the condition requires the existence of a certain
measure balancing the curvature of~$L$, this time the sum of the strut
measure and a \emph{kink measure} on the space $\CLb$ of circles.
In particular, in the case when there are no kinks, we recover
the criticality criterion of Schuricht and von der Mosel~\cite{MR2033143},
who discussed tight knots where the curvature constraint is nowhere active.

Our analysis also applies to the case where, in addition to the thickness
constraint, the radius of curvature of the curve is constrained to be at least
$\sigma$, a parameter giving the \emph{stiffness} of the link.
(Here we take $\sigma \ge\half$, with $\sigma=\half$ corresponding to the
ordinary ropelength problem.)

The General Balance Criterion can be applied directly to curves
without kinks; for example we classify curves with struts in one-to-one
contact as double helices.  The kink measure, on the other hand, is a bit
arcane and can be difficult to work with: in general, $L$ is no smoother
than $C^{1,1}$, so the space $\CLb$ may be an unruly subspace of the normal
bundle over $L$.  For a $C^2$ link, of course, the kink measure
reduces to a measure along~$L$, but unfortunately, the only known example of a
tight link which is $C^2$ is the round circle, the ropelength-minimizing
unknot.  On the other hand, all known explicit examples
of tight links~\cite{MR2003h:58014,CFKSW1} are piecewise $C^2$, indeed
even piecewise analytic.

With a view towards the fact that other
tight links (say, the tight trefoil knot) may not even be piecewise $C^2$,
in Section~\ref{sec:regulated}, we impose the even milder smoothness
assumption of \emph{regulated kinks}.
We conjecture that all critical links have regulated kinks,
but an answer to this question seems far beyond our current understanding.
For links with regulated kinks, we derive successively nicer forms
of our Balance Criterion, concluding with Theorem~\ref{thm:final},
our second main result.  It says the kink measure can be
described by a scalar \emph{kink tension function} -- or equivalently, by
a \emph{virtual tangent} vector -- along the curve.
As an example, we use this theorem to classify all strut-free arcs in
critical curves.

At the end of the paper, we apply our Balance Criterion to describe
the ropelength-critical symmetric clasps.
A curious feature of these clasps
-- whose analysis is based on the discussion in \cite[Sect.~9]{CFKSW1}
and whose form was independently derived by Starostin~\cite{Starostin:2003ut} --
is the presence of a gap between the tips of the two
components. In other words, there is a small cavity between two tight
ropes of circular cross-section linked in this way.

\sh{Acknowledgments}
We gratefully acknowledge helpful conversations with many colleagues,
including Elizabeth Denne, Oscar Gonzalez and Heiko von der Mosel.
Special thanks go to Nancy Wrinkle for various contributions to this project.
Some of the figures were prepared with POV-ray, Inkscape and Mathematica.
This work was partially supported by the NSF through grants
DMS-02-04826 (to Cantarella and Fu) and DMS-10-07580 (to Fu).
We thank Thomas El Khatib and the referees for detailed and helpful comments.

\section{Curves, reach, curvature and thickness}

We must begin this paper with the lengthy and somewhat intricate
reformulation of thickness outlined in the introduction.
Proposition~\ref{prop:basics} achieves the
goal of writing thickness as the minimum of a compact
family of functions; Corollary~\ref{cor:ts-clarke} extends
this to a family of thicknesses modeling stiff ropes.
This allows us to use Clarke's theorem (Theorem~\ref{thm:Clarke})
to compute first variation of thickness in Section~\ref{subsec:dthi}.

We consider generalized links, which may include
arc components with constrained endpoints;
our links are always $C^1$ but not necessarily~$C^2$.

A \demph{$C^1$ curve}~$L$ will mean a compact $1$--dimensional
$C^1$ submanifold with boundary embedded in~$\R^3$.  (For us,
manifold will always mean manifold with boundary.)
The curve~$L$ is thus a finite union of components, each a circle or
an arc (compact interval).  Our results are independent of
orientation, but for convenience in
taking derivatives we fix an orientation
on each component.  The Euclidean metric on~$\R^3$
pulls back to give a Riemannian metric on~$L$; we denote the
positively oriented unit tangent vector at a point $x\in L$ by $T(x)$.
The orientation induces a sign $\pm1$ on each endpoint $p\in \d L$
such that $\pm T(p)$ is the outward tangent vector.

Each arc or circle component of length $\ell$
is of course isometric to $[0,\ell]$ or $\R/\ell\Z$, respectively.
Writing $M$ for the disjoint union of these intervals or circles,
the isometry $\gamma\co M\to L\subset\R^3$ is simply an
arclength parametrization of~$L$,
and we use it to implicitly identify $M$ with~$L$.

All standard smoothness classes of functions on~$L$ are
obtained via this identification.
In particular, given a (vector-valued) function~$f$ on~$L$,
we write $f'(x)$ for the arclength derivative of~$f$ at any $x\in L$;
for example $\gampr(x)=T(x)$.

When we talk about the degree of smoothness of a Lipschitz curve~$L$ we mean
the smoothness of the arclength parametrization;
it is a standard and straightforward fact that
no (immersive) reparametrization can be smoother.
For any $C^1$ curve~$L$, we let
$E_L\subset L$ denote the set of points at which $L$ (meaning its
arclength parametrization) is twice differentiable.
(At an endpoint $x\in\d L$ we of course require
only a one-sided second derivative.)
No reparametrization has a second derivative at any point of~$L\setm E_L$.
For $x\in E_L$, we write $\kappa(x):=T'(x)=\gamprpr(x)$
for the curvature vector.

Suppose we have a $C^2$--smooth vector-valued function $f\co \R^3\to V$ on space.
Its restriction to~$L$ is~$C^1$ (with respect to arclength);
indeed we have $f'(x)=\Dxf\bigl(T(x)\bigr)$.
For $x\in E_L$, the second arclength derivative along $L$ also exists
and is given in terms of the spatial derivatives of~$f$ by
$$f''(x) = \DDxf\bigl(T(x),T(x)\bigr) + \Dxf\bigl(\kappa(x)\bigr).$$

We say a sequence $L_1,L_2,\ldots$ of $C^1$ curves
\demph{converges in the $C^1$ topology} to a $C^1$ curve $L$
if there are $C^1$ immersions $\gamma_i\co L\to\R^3$ with images
$\gamma_i(L)=L_i$ such that the maps $\gamma_i$ converge in~$C^1$ to the
inclusion map $\gamma$.  Of course each $\gamma_i$ has a
reparametrization $\gamma_i\after\phi_i$ with locally constant speed
(that is, constant speed on each component).  Since these also converge
to~$\gamma$, we usually assume each $\gamma_i$ has locally constant speed.

\subsection{Reach}
To handle our generalized links, we need to reconsider the equivalence of
the various formulations of reach or thickness mentioned in the introduction,
that are by now standard for closed curves.
Federer's definition~\cite{Federer} of reach can be rephrased as follows:

\begin{definition}
Given a link (or indeed any closed set) $L\subset\R^3$,
its \demph{medial axis} is the set of points $p\in\R^3$
for which the nearest point $x\in L$ is not unique.
The \demph{reach} of~$L$, $\reach(L)$,
is the distance from~$L$ to its medial axis.
\end{definition}

Of course, a closed subset $L\subset\R^3$ has infinite reach
if and only if it is convex.  For curves, this means $\reach(L)=\infty$
if and only if $L$ is a connected straight arc.  We will often
implicitly exclude this trivial case, for instance when discussing derivatives
of reach.

To analyze the reach of a curve in more detail, we need to
consider its tangent and normal cones.
Let $L$ be a $C^1$ curve in $\R^3$.
At any interior point $x\in L$,
the \demph{tangent cone} $\Tan xL$ is the line through~$x$ tangent to~$L$.
At an endpoint $x\in\d L$ of an arc component, $\Tan{x}L$
is the (inward) tangent ray.  
The \demph{normal cone} $\Nor{x}L$ is
$$\Nor{x}L := \bigl\{ p\in\R^3 :
  \ip{p-x}{q-x} \le 0 \text{ for all } q\in \Tan{x}L\bigr\}.$$
At an interior point this is the normal plane, while at
an endpoint $x\in\d L$ it is a closed halfspace.
(These cones are the translates by the base point $x$ of the corresponding
cones given by Federer~\cite{Federer} for general closed subsets of~$\R^n$.)

The following alternate characterization of reach is then an immediate
corollary of \cite[Theorem~4.8]{Federer}.
\begin{lemma}\label{lem:reach-segs} If $L$ is a $C^1$ curve in~$\R^3$ then 
the reach of~$L$ equals the infimal $r>0$ such that there exist
$x\ne y\in L$ and $p\in\Nor{x}L$ with $\snorm{p-x}=r=\snorm{p-y}$.
\qed\end{lemma}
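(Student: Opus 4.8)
The plan is to reduce the two--point condition in the statement to Federer's quotient formula for reach. For an ordered pair of distinct points $x,y\in L$, let $r(x,y)$ denote the infimum of $\snorm{p-x}$ over those $p\in\Nor xL$ satisfying $\snorm{p-x}=\snorm{p-y}$ (with $r(x,y)=+\infty$ if no such $p$ exists). The admissible set $\{p\in\Nor xL:\snorm{p-x}=\snorm{p-y}\}$ is the intersection of the normal cone with the perpendicular bisector of $x$ and $y$, hence an unbounded convex set on which $\snorm{p-x}$ takes every value in $[r(x,y),\infty)$. Therefore the collection of radii appearing in the statement is exactly $\bigcup_{x\ne y}[r(x,y),\infty)$, whose infimum is $\inf_{x\ne y}r(x,y)$. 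So it suffices to evaluate $r(x,y)$ explicitly and to match the resulting infimum with $\reach(L)$.

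The main computation is the evaluation of $r(x,y)$. Writing $p=x+w$, membership $p\in\Nor xL$ says that $w$ lies in the origin--based normal cone, while the equidistance $\snorm{p-x}=\snorm{p-y}$ becomes the single affine constraint $2\ip{w}{y-x}=\snorm{y-x}^2$. Minimizing $\snorm w$ subject to these conditions is an elementary exercise, and in every case it yields
$$r(x,y)=\frac{\snorm{y-x}^2}{2\,\dist(y,\Tan xL)}.$$
At an interior point $\Nor xL$ is the full normal plane and the minimizer is the obvious positive multiple of the component of $y-x$ orthogonal to the tangent line. At an endpoint $\Nor xL$ is only a halfspace and $\Tan xL$ a ray, and here one must verify the formula directly, distinguishing whether $y-x$ makes an acute or an obtuse angle with the tangent ray. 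I expect this halfspace bookkeeping, together with the degenerate case $y\in\Tan xL$ (where $r(x,y)=+\infty$ and the pair contributes nothing), to be the only delicate points.

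Finally I would invoke \cite[Theorem~4.8]{Federer}, which expresses the reach as precisely the infimum of this quotient, $\reach(L)=\inf_{x\ne y}\snorm{y-x}^2\big/\bigl(2\,\dist(y,\Tan xL)\bigr)$, once one accounts for the fact that the cones here are the translates to the base point $x$ of Federer's, as noted after the definition of $\Nor xL$. Combining this with the previous two paragraphs gives $\reach(L)=\inf_{x\ne y}r(x,y)$, which is exactly the asserted infimum. It is worth emphasizing that only the one--sided condition $p\in\Nor xL$ is needed, not $p\in\Nor yL$ as well: this is precisely why the relevant object is Federer's \emph{asymmetric} infimum over ordered pairs, with the tangent cone taken at the first point alone.
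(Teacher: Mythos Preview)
Your proposal is correct and follows the same route as the paper: both deduce the lemma directly from \cite[Theorem~4.8]{Federer}. The paper simply records this as an ``immediate corollary'' without further comment, whereas you have spelled out the bridge, namely that the infimum in the statement equals $\inf_{x\ne y}\snorm{y-x}^2\big/\bigl(2\,\dist(y,\Tan xL)\bigr)$, which is exactly Federer's quotient characterization of reach.
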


If $p\notin\Nor x L$, then there are points near~$x$ in~$L$
which are closer to~$p$.  Thus if $(x,y)$ is a
local minimum for $\snorm{x-y}$ on $L\cross L$ (away from the diagonal),
then $(x,y)$ is a critical pair in the following sense:

\begin{definition}
A pair of distinct points $x,y\in L$ is a
\demph{critical pair} if $x\in\Nor yL$ and $y\in\Nor xL$.
We denote the set of all critical pairs by $\Crit(L)$. 
\end{definition}

We would now like to reformulate the lemma above in terms of
the radii of circles tangent to the curve at one point and
passing through another point.

\begin{definition}
For distinct points $x,y\in L$, let $\Ct xy$ denote the circle (or line) through
$y$ tangent to~$L$ at~$x$.
By plane geometry, its radius is
$$\frac{\snorm{x-y}}{2\cos\psit xy} =: \rt xy,$$
where $\psit xy\in\big[0,\nicefrac\pi2\big]$
denotes the angle between the normal plane to~$L$ at~$x$ and the segment~$xy$.
(The notation we define here suppresses the dependence of~$C$, $r$ and~$\psi$
on~$L$, in particular on $\Tan xL$.)
\end{definition}

To properly handle endpoints of generalized links,
we also need variants of these functions.
So consider now circles in the plane of~$\Tan{x}L$ and~$y$, passing
through $x$ and~$y$.
Let $\Cts xy$ denote the smallest such circle whose center lies in~$\Nor xL$.
Then $\Cts xy=\Ct xy$ except when $x\in\d L$ and
$y\in\Nor xL$, in which case $\Cts xy$ is a circle with diameter~$xy$.
The radius of $\Cts xy$ is
$$\frac{\snorm{x-y}}{2\cos\psits xy} =: \rts xy \le \rt xy,$$
where $\psits xy\in\big[0,\nicefrac\pi2\big]$
denotes the angle at~$x$ between~$\Nor xL$ and the segment~$xy$.
Thus $\psi^*=0$ for $y\in\Nor xL$
and $\psi^*=\nicefrac\pi2$ for $y\in\Tan xL$.
Furthermore $\psi^*(x,y) = \psi(x,y)$ if $x$ is an interior point of~$L$.

Lemma~\ref{lem:reach-segs} can now be rephrased as follows:
\begin{corollary}\label{cor:rch=infr}
If $L$ is a $C^1$ curve in~$\R^3$ then
$$\reach(L) = \inf_{x\ne y\in L} \rts xy
= \min\Bigl(\inf_{x\ne y\in L} \rt xy, \,
            \inf_{\substack{x\ne y\in L\\ x\in\d L}} \rts xy\Bigr).$$
\end{corollary}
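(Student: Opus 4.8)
The plan is to reduce the corollary to Lemma~\ref{lem:reach-segs} by showing that, for each fixed ordered pair $x\ne y$, the number $\rts xy$ is exactly the infimal radius of a sphere centered at some $p\in\Nor xL$ and passing through both $x$ and $y$. Granting this, the first equality is immediate: Lemma~\ref{lem:reach-segs} writes $\reach(L)$ as the infimum of all $r>0$ for which there exist $x\ne y$ and $p\in\Nor xL$ with $\snorm{p-x}=r=\snorm{p-y}$, and for a fixed pair the set of such $r$ is precisely $[\rts xy,\infty)$; taking the union over all pairs and then the infimum yields $\inf_{x\ne y}\rts xy$.

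So the heart of the matter is the per-pair minimization. Fix $x\ne y$ and let $H$ be the perpendicular bisector plane of the segment $xy$, so that $\{p:\snorm{p-x}=\snorm{p-y}\}=H$ and the radius in question is $\snorm{p-x}$ for $p\in\Nor xL\cap H$. I would first argue that the minimizer may be taken to lie in the plane $P$ spanned by $\Tan xL$ and $y$. Indeed, let $p'$ be the orthogonal projection of $p$ onto $P$; since $x,y\in P$, the identity $\snorm{p-z}^2=\snorm{p'-z}^2+\dist(p,P)^2$ for $z\in\{x,y\}$ shows that $p'$ is again equidistant from $x$ and $y$ (so $p'\in H$) and that $\snorm{p'-x}\le\snorm{p-x}$. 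Moreover $p'\in\Nor xL$: at an interior point $\Nor xL$ is the plane through $x$ perpendicular to $T(x)$, and projecting onto $P$ only removes the component of $p-x$ normal to $P$, which is orthogonal to $T(x)\in P$; at an endpoint $\Nor xL$ is a halfspace bounded by a plane whose normal direction spans $\Tan xL\subset P$, and since $p-p'$ is orthogonal to $P$ it is orthogonal to that direction, so $p'$ satisfies the same defining inequality as $p$. Hence the three-dimensional minimization collapses to the two-dimensional one inside $P$, whose value is by definition the radius of the smallest circle through $x$ and $y$ with center in $\Nor xL\cap P$ --- that is, the radius $\rts xy$ of $\Cts xy$.

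It then remains to record the second equality, which is pure bookkeeping using the two supplied facts: $\rts xy=\rt xy$ whenever $x$ is interior, and $\rts xy\le\rt xy$ in general. Splitting the index set according to whether $x$ is interior or lies in $\Deriv L$ gives $\inf_{x\ne y}\rts xy=\min\bigl(\inf_{x\in L\setm\Deriv L}\rt xy,\ \inf_{x\in\Deriv L}\rts xy\bigr)$. On the other hand $\inf_{x\ne y}\rt xy=\min\bigl(\inf_{x\in L\setm\Deriv L}\rt xy,\ \inf_{x\in\Deriv L}\rt xy\bigr)$, and since $\rts xy\le\rt xy$ forces $\inf_{x\in\Deriv L}\rts xy\le\inf_{x\in\Deriv L}\rt xy$, the boundary $\rt$-term is absorbed into the minimum with the boundary $\rts$-term. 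Substituting, the right-hand side of the corollary equals $\min\bigl(\inf_{x\in L\setm\Deriv L}\rt xy,\ \inf_{x\in\Deriv L}\rts xy\bigr)=\inf_{x\ne y}\rts xy$, as required.

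The main obstacle is the projection step in the second paragraph: one must check that orthogonal projection onto $P$ keeps $p$ inside the normal cone in both the interior case (where $\Nor xL$ is a plane) and the endpoint case (where it degenerates to a halfspace). Once that is established, the identification of the minimal sphere radius with $\rts xy$ is immediate, and the two equalities of the corollary follow formally.
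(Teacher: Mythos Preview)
Your proof is correct and follows essentially the same approach as the paper's: both arguments identify $\rts xy$ as the minimal radius $\snorm{p-x}$ over points $p\in\Nor xL$ equidistant from $x$ and $y$, then invoke Lemma~\ref{lem:reach-segs} and the relation $\rts xy\le\rt xy$ (with equality off $\Deriv L$) for the second equality. You are more careful than the paper in justifying the reduction to the plane~$P$ via orthogonal projection---the paper simply asserts that any such $p$ ``is the center of a circle through~$x$ and~$y$; hence $\snorm{p-x}\ge\rts xy$''---so your version fills in a step the paper leaves implicit.
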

\begin{proof}
Any point~$p\in\Nor xL$ as in Lemma~\ref{lem:reach-segs}
is the center of a circle through~$x$ and~$y$;
hence $\snorm{p-x}\ge \rts xy$.  Conversely, the center
of any $\Cts xy$ is such a point~$p$. This gives the first equality. The second
follows from the fact that $\rts xy\le\rt xy$
with equality unless $x\in\d L$.
\end{proof}

(For closed curves, this was also the first statement
in \cite[Lemma~1]{MR2003h:58014}.  The proof of the later parts of
that lemma should have been more careful about the
treatment of points where $L$ is not twice differentiable.)

For any $C^1$ link~$L$, the angles $\psi$ and $\psi^*$ extend
continuously to the diagonal, since
$\lim_{y\to x} \psit xy=\nicefrac\pi2 = \lim_{y\to x} \psits xy$.
But without additional smoothness of~$L$,
the functions $r$ and $r^*$ do not extend.
For smooth curves, of course, it is a standard fact that as $y\to x$,
the circles tangent at~$x$ through~$y$ approach the osculating circle at~$x$.
For completeness, we verify that the existence of a second derivative at~$x$
is sufficient for this:

\begin{lemma}\label{lem:R=limr}
Suppose $L$ is a $C^1$ curve with curvature vector $\kappa$
at a point $x\in E_L$.  Then
$$\lim_{y\to x} \rt xy = \lim_{y\to x} \rts xy = \nicefrac1{\snorm\kappa}.$$
\end{lemma}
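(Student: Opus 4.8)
The plan is to expand the arclength parametrization $\gamma$ to second order at~$x$ and read off the leading-order behavior of $\snorm{x-y}$ and of the angle. Since $x\in E_L$, writing $t$ for the signed arclength from~$x$ to~$y$, second differentiability at~$x$ gives
$$y-x \;=\; tT + \tfrac{t^2}{2}\kappa + o(t^2), \qquad T:=T(x),\ \kappa:=\kappa(x),$$
with $\ip T\kappa=0$ since $\snorm T\equiv1$. Hence $\snorm{x-y}=\snorm t\bigl(1+O(t^2)\bigr)$, while the component of $y-x$ orthogonal to~$T$ is $\tfrac{t^2}{2}\kappa+o(t^2)$, of norm $\tfrac{t^2}{2}\snorm\kappa+o(t^2)$.

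Next I would identify the geometric content of~$\psi$. As $\rt xy$ is by definition the radius of the tangent circle $\Ct xy$ through~$y$, the tangent--chord angle theorem gives $\rt xy=\snorm{x-y}/(2\sin\theta)$, where $\theta$ is the angle between~$T$ and the chord~$xy$; comparison with $\rt xy=\snorm{x-y}/(2\cos\psit xy)$ shows $\cos\psit xy=\sin\theta$, i.e.\ $\cos\psit xy$ equals the ratio of the orthogonal component of $y-x$ to $\snorm{y-x}$. Substituting the two expansions yields $\cos\psit xy=\tfrac{\snorm t}{2}\snorm\kappa+o(\snorm t)$, so
$$\rt xy=\frac{\snorm t\,\bigl(1+O(t^2)\bigr)}{\snorm t\,\bigl(\snorm\kappa+o(1)\bigr)}\;\longrightarrow\;\frac1{\snorm\kappa}$$
as $\snorm t\to0$ (with the convention $\nicefrac1{\snorm\kappa}=\infty$ when $\kappa=0$).

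It remains to treat $r^*$. At an interior point $\psits xy=\psit xy$, so the limit is unchanged. At an endpoint $x\in\Deriv L$ the point~$y$ can approach only along the curve, so the chord~$xy$ is asymptotically tangent to the inward ray $\Tan xL$; thus $y\notin\Nor xL$ for $y$ near~$x$, whence $\rts xy=\rt xy$ there and the limit is again $\nicefrac1{\snorm\kappa}$.

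The step needing the most care is the error bookkeeping in the second paragraph: one must verify that the orthogonal part of the $o(t^2)$ remainder contributes only $o(\snorm t)$ to $\cos\psit xy$ after division by $\snorm{y-x}\sim\snorm t$, and that these estimates hold uniformly as $t\to0^+$ and as $t\to0^-$, so that the (two-sided, at interior points) limit genuinely exists. The endpoint computation for~$r^*$ is the only piece not obtained by specializing the interior argument, and it reduces cleanly to the normal-cone observation above.
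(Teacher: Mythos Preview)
Your proof is correct and follows essentially the same route as the paper's: both Taylor-expand the arclength parametrization to second order, identify $\cos\psit xy$ with the magnitude of the component of $y-x$ orthogonal to~$T$ divided by $\snorm{y-x}$ (the paper writes this via the cross product $\snorm{T\times y}=\snorm y\cos\psit xy$, which is the same thing), and read off the limit. Your claim $\snorm{x-y}=\snorm t\bigl(1+O(t^2)\bigr)$ is slightly too strong---the cross term with the $o(t^2)$ remainder gives only $\snorm{x-y}^2=t^2+o(t^3)$, hence $\snorm{x-y}=\snorm t\bigl(1+o(t)\bigr)$---but this has no effect on the argument.
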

\begin{proof}
First note that for $y$ sufficiently near $x$,
we have $y\notin N_xL$ so $\psi^*(x,y)=\psi(x,y)$ and thus $r^*(x,y)=r(x,y)$.
Assume $x=0\in\R^3$ and
let $\gamma$ be an arclength parametrization around~$x$ so
$$\gamma(0)=0,\qquad \gampr(0)=T=T(x),\qquad \gamprpr(0)=\kappa.$$
Taylor's theorem implies that
$$\gamma(s)=sT+\frac{s^2}{2}\kappa + o\bigl(s^2\bigr).$$
For $y=\gamma(s)$, we can compute~$\psi$ from the equation
$\snorm{T\cross y}=\snorm{y}\cos\psit xy$.  We get
\begin{equation*}
\rt xy=\frac{\snorm{\gamma(s)}^2}{2\snorm{T\cross \gamma(s)}}
        =\frac{s^2 + o\big(s^3\big)}{\snorm\kappa s^2 + o\big(s^2\big)}
        =\nicefrac1{\snorm\kappa}+o(1). \rlap{\hspace{17.5mm}\mbox{\qedhere}}
\end{equation*}
\end{proof}

\begin{lemma}\label{lem:dpsi}
Suppose a $C^1$ curve $L$ is twice differentiable at $x\in E_L$,
and suppose $y\in L\setm\Nor xL$.
Fix the orientation at~$x$ such that $\langle T(x), y-x\rangle >0$.
If $\rt xy<\infty$,
then the partial derivative $\nicefrac{\d r}{\d x}$ exists, with
$$\frac{\d r}{\d x}(x,y)\le \bigl(r(x,y)\bsnorm{\kappa(x)}-1\bigr)\tan\psi(x,y).$$
\end{lemma}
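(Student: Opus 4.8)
The plan is to realize $\rt xy$ as a smooth function of the $1$-jet of the arclength parametrization at~$x$, so that the single hypothesis $x\in E_L$ suffices to differentiate it once. As in Lemma~\ref{lem:R=limr} we have
$$\rt xy=\frac{\snorm{y-x}^2}{2\bsnorm{T(x)\cross(y-x)}},$$
which exhibits $\rt xy=R\bigl(\gamma(s),\gamma'(s)\bigr)$, where $\gamma$ is the arclength parametrization with $\gamma(s)=x$ and where $R(p,t):=\snorm{y-p}^2/\bigl(2\snorm{t\cross(y-p)}\bigr)$ is a function of a position~$p$ and a tangent vector~$t$. The map $R$ is $C^\infty$ near $\bigl(x,T(x)\bigr)$: its only singularities lie where $t\cross(y-p)=0$, that is, where $y-p$ is parallel to~$t$, and these are excluded by the hypothesis $\rt xy<\infty$.

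First I would settle existence. Since $x\in E_L$, the parametrization $\gamma$ is twice differentiable at~$s$, so $s\mapsto\bigl(\gamma(s),\gamma'(s)\bigr)$ is differentiable there with derivative $\bigl(T(x),\kappa(x)\bigr)$. Composing with the smooth map~$R$, the chain rule gives that $s\mapsto\rt xy$ is differentiable, with
$$\frac{\Deriv r}{\Deriv x}=D_pR\bigl(T(x)\bigr)+D_tR\bigl(\kappa(x)\bigr).$$
This is the conceptual heart of the statement: $r$ depends on $\gamma$ only through its position and its tangent, so a single derivative of the tangent -- precisely the curvature, which exists at points of~$E_L$ -- is all that is needed, even though $L$ need not be $C^2$.

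Next I would compute in an adapted orthonormal frame. Placing $x$ at the origin with $T(x)=e_1$ and $\kappa(x)=\bsnorm{\kappa(x)}\,e_2$ (recall $\kappa\perp T$), I write $y-x$ in polar form governed by $\psi=\psit xy$, its angle from the normal plane~$\Nor xL$, together with an azimuthal angle $\phi$ recording, within that plane, the deviation of $y-x$ from the curvature direction~$e_2$. Logarithmic differentiation of $R$ along~$\gamma$ -- using $\tfrac{d}{ds}(y-\gamma)=-T$ and $\tfrac{d}{ds}\gamma'=\kappa$ -- together with the substitutions $\snorm{y-x}=2r\cos\psi$, $\bsnorm{T(x)\cross(y-x)}=\snorm{y-x}\cos\psi$, and $\ip{T(x)}{y-x}=\snorm{y-x}\sin\psi$, then collapses to the clean identity
$$\frac{\Deriv r}{\Deriv x}=\bigl(\rt xy\,\bsnorm{\kappa(x)}\cos\phi-1\bigr)\tan\psit xy.$$

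The stated bound is then immediate. By definition $\psi\in\bigl[0,\nicefrac\pi2\bigr]$, and $\rt xy<\infty$ forces $\psi<\nicefrac\pi2$, so $\tan\psit xy\ge0$ is finite; the orientation hypothesis $\ip{T(x)}{y-x}>0$ is exactly what makes the tangential term enter with a minus sign (had we chosen the opposite orientation the inequality would reverse). Since $\cos\phi\le1$ we have $\rt xy\,\bsnorm{\kappa(x)}\cos\phi-1\le\rt xy\,\bsnorm{\kappa(x)}-1$, and multiplying by $\tan\psit xy\ge0$ gives the claim. The one genuine obstacle is the bookkeeping in the third step -- verifying that the tangential contribution is exactly $-\tan\psi$ and the curvature contribution exactly $\rt xy\,\bsnorm{\kappa(x)}\cos\phi\,\tan\psi$. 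I would guard against sign errors by checking the flat case $\kappa=0$, where the identity reduces to $\Deriv r/\Deriv x=-\tan\psi$, matching the elementary fact that the circles through~$y$ tangent to a straight line shrink as $x$ advances toward the foot of the perpendicular from~$y$.
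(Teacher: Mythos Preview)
Your argument is correct, and it takes a somewhat different route from the paper's. The paper works through the angle~$\psi$: it observes geometrically that the unit vector $(x-y)/\snorm{x-y}$ rotates at speed $\nicefrac{1}{2r}$ while the normal plane rotates at speed~$\snorm\kappa$, bounds $\Deriv\psi/\Deriv x$ between $-\nicefrac{1}{2r}\pm\snorm\kappa$, and then differentiates $r=\snorm{x-y}/(2\cos\psi)$ to obtain $\Deriv r/\Deriv x=-\half\tan\psi+r\tan\psi\,\Deriv\psi/\Deriv x$, from which the inequality follows. You instead differentiate the closed formula $r=\snorm{y-x}^2/\bigl(2\snorm{T\cross(y-x)}\bigr)$ directly via the chain rule, obtaining the exact identity $\Deriv r/\Deriv x=(r\snorm\kappa\cos\phi-1)\tan\psi$, with $\phi$ the azimuthal angle of $y-x$ relative to the curvature direction within the normal plane; the inequality is then just $\cos\phi\le1$. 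Your approach is a bit more computational but yields a sharper equality (the paper's bound on $\Deriv\psi/\Deriv x$ hides the same $\cos\phi$), and your framing of $r$ as a smooth function of the $1$-jet makes the existence of $\Deriv r/\Deriv x$ at a merely twice-differentiable point of~$L$ cleanly transparent.
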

\begin{proof}
From plane geometry, the rotation speed of the vector $x-y$ is
$$
\left| \frac \partial{\partial x}\left(\frac{x-y}{|x-y|}\right)
\right|=\frac1{2r(x,y)}.$$
The normal plane $\Nor xL$ of course turns at rate~$\bsnorm{\kappa(x)}$.
Comparing these rates gives 
$$-\frac{1}{2r(x,y)}-\bsnorm{\kappa(x)}\le\frac{\d\psit xy}{\d x}
\le -\frac{1}{2r(x,y)}+\bsnorm{\kappa(x)}.$$
On the other hand differentiating the definition of~$r$ gives
$$
\frac{\d \rt xy}{\d x} = -\frac12 \tan\psi + r \tan\psi \frac{\d\psi}{\d x}.$$
The desired inequality follows at once.
\end{proof}

\subsection{Penalized distance}\label{sect:pd}

Recall that in order to apply Clarke's theorem (Theorem~\ref{thm:Clarke})
to compute the derivative of $\reach(L)$ under a smooth deformation of~$L$,
we must express the reach as the minimum of a compact family of functions.
For a closed $C^2$ curve~$L$, we could simply extend~$r$ continuously
to the diagonal $x=y$ by Lemma~\ref{lem:R=limr}, getting a compact
family parametrized by $L\cross L$.
Unfortunately, the examples of~\cite{MR2003h:58014}
show that even ropelength minimizers may fail to be $C^2$.
(For the same reason, the three-point curvature defined off
the diagonal in $L\cross L\cross L$ has no nice extension to
the diagonal, and thus cannot be used in Clarke's theorem.)

On the other hand by \cite[Lemma~4]{MR2003h:58014},
the reach condition implies that $L$ is $C^{1,1}$,
meaning that $T$ is a Lipschitz function of arclength.
Recall that by Rademacher's Theorem (cf.~\cite[Section~5.4]{Royden}),
a Lipschitz function is differentiable almost everywhere,
so $E_L$ has full measure if $L$ is $C^{1,1}$.
This turns out to be enough to make Clarke's theorem work using the
more technical approach that we now describe.

The expression of thickness in terms of minimum self-distance
and mininum radius of curvature is mirrored in the following
dichotomy:
First, if the infimal $r$ is achieved, then it is achieved
for a critical pair $(x,y)$, where $r=\nicefrac{\snorm{x-y}}2$.
To avoid the problem that the infimal $r$ might also be achieved
at noncritical pairs, we next define a penalized distance function that
achieves its minimum only on critical pairs.
Second, if the infimal $r$ is not achieved, then it is approached
in the limit as $y\to x$.
Intuitively, this should happen at a point of maximum curvature,
but in fact $L$ might not even be twice differentiable at the limit point.
To handle this limiting behavior near the diagonal,
in Section~\ref{sect:osc} we will look at the set of osculating circles
(at points where $L$ is twice differentiable)
and compactify it within the space of all pointed circles in space.

\begin{definition}
Given a link~$L$,
the \demph{penalized distance} between two distinct points $x,y\in L$ is
\begin{equation*}
\pd xy := \snorm{x - y}\,\sec^2\psit xy = 2\rt xy \sec\psit xy.
\end{equation*}
For $y=x$, we set $\pd xx=\infty$.
When we want to emphasize the dependence on~$L$, we will write $\pdL xyL$.
Similarly the \demph{penalized endpoint distance} is
\begin{equation*}
\pds xy := \snorm{x - y}\,\sec^2\psits xy = 2\rts xy \sec\psits xy \le \pd xy.
\end{equation*}
For $y=x$, we set $\pds xx=\infty$.
Of course $\pds xy = \pd xy$ except when $x\in \d L$.
\end{definition}

\begin{lemma}
Given a link~$L$ of positive reach,
the penalized distance is a continuous
function from $L\cross L$ to $(0,\infty]$.
Similarly, the penalized endpoint distance is continuous
when restricted to $\d L\times L$.
\end{lemma}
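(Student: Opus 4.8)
The plan is to verify continuity of $\pd xy = \snorm{x-y}\sec^2\psit xy$ on all of $L\times L$, taking the value $+\infty$ precisely on the diagonal, and separately to check continuity of $\pds xy$ on $\Deriv L\times L$. The function is built from two factors: the Euclidean distance $\snorm{x-y}$, which is manifestly continuous on $L\times L$, and $\sec^2\psit xy$, where $\psit xy$ is the angle between the normal plane $\Nor xL$ and the segment $xy$. First I would dispose of the easy regime: on the open set of off-diagonal pairs $\{x\ne y\}$, the tangent line $\Tan xL$ varies continuously in $x$ (since $L$ is $C^1$, so $T(x)$ is continuous), and hence so does the angle $\psit xy$, which is defined by $\snorm{T(x)\cross(y-x)} = \snorm{y-x}\cos\psit xy$. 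Away from the diagonal $\psi$ stays in $[0,\nicefrac\pi2]$ and is jointly continuous, so $\pd xy$ is a continuous finite-valued function there.

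The substantive point is the behavior as $(x,y)$ approaches the diagonal, where I must show $\pd xy\to\infty$ so that the extension by $+\infty$ is continuous. Here the positive-reach hypothesis enters decisively. Using the identity $\pd xy = 2\rt xy\sec\psit xy$ together with Corollary~\ref{cor:rch=infr}, which gives $\rt xy\ge\rts xy\ge\reach(L) > 0$, I get the lower bound $\pd xy\ge 2\,\reach(L)\,\sec\psit xy \ge 2\,\reach(L)$. This already shows $\pd xy$ is bounded below by a positive constant, but to force blow-up near the diagonal I need $\sec\psit xy\to\infty$, i.e.\ $\psit xy\to\nicefrac\pi2$, as $y\to x$. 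This is exactly the limit recorded in the remark preceding Lemma~\ref{lem:R=limr}: $\lim_{y\to x}\psit xy = \nicefrac\pi2$ for any $C^1$ curve, since the chord $xy$ becomes tangent. Combining $\rt xy\ge\reach(L)>0$ with $\sec\psit xy\to\infty$ yields $\pd xy\to\infty$ uniformly as the diagonal is approached, which is precisely the continuity statement at diagonal points (where the value is $+\infty$).

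I would then record continuity of $\pds xy$ on $\Deriv L\times L$ by the same scheme, using that $\psits xy$ is defined via the angle between $\Nor xL$ and the segment $xy$, and that the normal cone $\Nor xL$ at an endpoint is a closed halfspace varying continuously with $x\in\Deriv L$. The inequality $\pds xy\le\pd xy$ and the bound $\rts xy\ge\reach(L)$ from Corollary~\ref{cor:rch=infr} handle the lower bound, while $\psits xy\to\nicefrac\pi2$ as $y\to x$ (again from the remark before Lemma~\ref{lem:R=limr}) forces the blow-up on the diagonal. Since $\Deriv L$ is a finite set of points, there are no subtleties in the domain beyond joint continuity in $(x,y)$.

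The main obstacle, and the only place genuine content is needed, is the diagonal blow-up: without positive reach the factor $\rt xy$ could degenerate and $\pd xy$ might not tend to infinity. The whole argument hinges on coupling the uniform lower bound $\rt xy\ge\reach(L)>0$ (from Corollary~\ref{cor:rch=infr}) against the geometric fact $\psit xy\to\nicefrac\pi2$. I expect the write-up to be short, since both ingredients are already established; the care required is merely to make the convergence $\psit xy\to\nicefrac\pi2$ uniform enough — or to argue by sequences $(x_n,y_n)\to(x,x)$ — that the limit genuinely holds jointly rather than just along radial approaches $y\to x$ with $x$ fixed.
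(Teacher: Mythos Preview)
Your proposal is correct and follows essentially the same route as the paper: continuity of~$\psi$ off the diagonal gives continuity of $\pd xy$ there, and near the diagonal the factorization $\pd xy = 2\rt xy\sec\psit xy$ combined with $\rt xy\ge\reach(L)>0$ and $\psit xy\to\nicefrac\pi2$ forces the blow-up. The paper packages your diagonal argument slightly more compactly by asserting at the outset that $\psi$ (extended by~$\nicefrac\pi2$ on the diagonal) is jointly continuous on all of $L\times L$, which absorbs the uniformity concern you flag at the end; your sequence argument $(x_n,y_n)\to(z,z)$ is exactly what justifies that assertion, so there is no real difference.
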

\begin{proof}
First, we note that the angle $\psit xy$ (extended to be $\nicefrac\pi2$
on the diagonal $x=y$) is continuous.
The formula for $\pd xy$ shows it shares this continuity
away from the diagonal. 
But we also have continuity on the diagonal,
since $r\ge\reach(L)>0$,
while $\psi$ approaches~$\nicefrac\pi2$ as $(x,y)\to (z,z)$.

On the other hand the penalized endpoint distance $\pds xy$
is merely lower semicontinuous, since
it equals $\pd xy$ away from endpoints $x\in\d L$ but can jump down there.
But the continuity claimed here is easy: for fixed $x\in\d L$, the angle
$\psits xy$ is continuous in~$y$, and the rest follows as above.
\end{proof}

\begin{lemma}\label{lem:pd>rch}
Suppose $0<\reach(L)<\infty$.
We have $\pds xy\ge2\reach(L)$ for all $x,y\in L$;
equality can hold only if $x,y$ is a critical pair.
\end{lemma}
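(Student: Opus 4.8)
The plan is to prove the inequality $\pds xy \ge 2\reach(L)$ by comparing the penalized endpoint distance to the radius functions already related to reach in Corollary~\ref{cor:rch=infr}. The starting observation is the identity from the definition, $\pds xy = 2\rts xy\sec\psits xy$. Since $\sec\psits xy \ge 1$ (as $\psits xy\in[0,\nicefrac\pi2]$), we immediately get $\pds xy \ge 2\rts xy$. Combining this with Corollary~\ref{cor:rch=infr}, which states $\reach(L)=\inf_{x\ne y}\rts xy$, yields $\pds xy \ge 2\rts xy \ge 2\reach(L)$ for all distinct $x,y$, while the case $y=x$ is trivial since $\pds xx=\infty$. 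So the inequality itself is nearly immediate.

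The substantive part is the equality discussion. First I would trace back when each of the two inequalities above can be tight. The factor $\sec\psits xy$ equals $1$ exactly when $\psits xy=0$, i.e.\ when $y\in\Nor xL$; and we similarly need $\rts xy=\reach(L)$, meaning the pair realizes the infimum defining the reach. The goal is to show that both conditions together force $(x,y)$ to be a critical pair, meaning $x\in\Nor yL$ and $y\in\Nor xL$. The condition $\psits xy=0$ already gives $y\in\Nor xL$, so half of criticality is automatic. The remaining task is to establish the symmetric condition $x\in\Nor yL$.

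The key step is therefore to show that at an infimum-realizing pair with $y\in\Nor xL$, we must also have $x\in\Nor yL$. I expect this to be the main obstacle, and I would handle it by a first-variation/contradiction argument: if $x\notin\Nor yL$, then by the very geometry recorded just before the definition of critical pair, there are points of $L$ near $x$ that lie strictly closer to $y$, so moving $y$'s nearest competitor slightly would produce a pair with strictly smaller radius $\rts{}{}$, contradicting that we sit at the infimum $\reach(L)$. Concretely, since $p$ (the center of $\Cts xy$) lies in $\Nor xL$ at distance $\reach(L)$ from both $x$ and $y$, the characterization in Lemma~\ref{lem:reach-segs} says this $p$ is a point on the medial axis at distance exactly $\reach(L)$; but if $x\notin\Nor yL$ one can perturb to find a nearer point, pushing the realized radius below $\reach(L)$, which is impossible. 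I would phrase this carefully using the normal-cone definition $\Nor xL=\{p:\ip{p-x}{q-x}\le 0\text{ for all }q\in\Tan xL\}$ to make the perturbation argument rigorous. Once both $y\in\Nor xL$ and $x\in\Nor yL$ are established, $(x,y)$ is by definition a critical pair, completing the equality claim.
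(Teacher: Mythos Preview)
Your inequality argument is correct and matches the paper exactly. For the equality case, your overall plan (a first-variation contradiction) is also the paper's approach, but your execution has a slip and an unnecessary detour.

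The slip: from $x\notin\Nor yL$ the conclusion is that there are points of~$L$ near~$y$ (not near~$x$) strictly closer to~$x$; equivalently, there is a tangent direction $T$ at~$y$ with $\ip{x-y}{T}>0$. You should vary~$y$, not~$x$.

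The detour: your attempt to push the contradiction through the center~$p$ of $\Cts xy$ and Lemma~\ref{lem:reach-segs} does not close. Moving $y$ to a nearby $y'$ gives $\snorm{p-y'}<\reach(L)$, but $p$ is no longer equidistant from $x$ and $y'$, and the natural replacement $p'=(x+y')/2$ need not lie in $\Nor xL$ (for interior~$x$ this is a plane, and $y'$ has typically left it). So you cannot invoke Lemma~\ref{lem:reach-segs} for the perturbed pair.

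The paper avoids this by differentiating $\pds xy$ directly in~$y$. Since $\psits xy=0$, the factor $\sec^2\psits xy$ is at its minimum and contributes nothing to first order, so the directional derivative of $\pds x\cdot$ at~$y$ in direction~$T$ equals that of $\snorm{x-\cdot}$, which is strictly negative. This contradicts the already-proved fact that $\pds xy=2\reach(L)$ is the global minimum of~$\operatorname{pd}^*$. Your $r^*$ version can be made to work by the same observation (the derivative of $\cos\psits x\cdot$ vanishes at $\psi^*=0$), but you need to state it; the medial-axis route should be dropped.
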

\begin{proof}
Clearly $\pds xy\ge 2\rts xy$,
with equality only when $\psits xy=0$, that is, when $y\in\Nor xL$.
Since $\rts xy\ge\reach(L)$ by Corollary~\ref{cor:rch=infr},
it only remains to show that $x\in\Nor yL$ in the case $\pds xy=2\reach(L)$.
If not, there is a tangent vector $T$ to~$L$ at~$y$ such that $ \ip{x-y}{T} >0$.
The directional derivative of $\snorm{x-y}$ in the direction $T$ is
negative; since $\psits xy=0$, the directional derivative of $\pds xy$ is the
same negative value, contradicting the fact that $\pds xy=\reach(L)$
is a minimum.
\end{proof}

\subsection{Osculating circles}\label{sect:osc}

Capturing the curvature portion of the thickness
information on a $C^{1,1}$ curve as a min-function will require a
genuinely new idea.  As mentioned in the introduction,
one might be tempted to use lim inf to replace the
radius of curvature defined on $E_L$ by a lower semicontinuous
function on~$L$.  But its time derivative under a variation of~$L$
would not be lower semicontinuous, so Clarke's theorem would not work.

Instead we recall that at each point in the dense set $E_L\subset L$
there is an osculating circle.  Taking the closure of the set of
these osculating circles inside the space of pointed circles in $\R^3$
gives the compact index set on which the radius function is $C^1$--continuous.
This construction is the most important technical idea in this paper,
and we note that a similar idea should be essential in extending
our results to surfaces or higher-dimensional submanifolds.

Thus we consider the space $\Circ$ of all oriented pointed
circles (including lines) in~$\R^3$.
We describe a circle through $p\in\R^3$
by its oriented unit tangent $T\in\S^2$ at~$p$
together with its curvature vector $\kappa\in T_T\S^2$ there.
This identifies $\Circ$ with $\R^3 \times TS^2\ni (p,T,\kappa)$.
Here of course $\kappa=0$ exactly when the circle degenerates to a line.
Let $R(p,T,\kappa):= \nicefrac 1{\snorm\kappa} \in (0,\infty]$
be the radius function on $\Circ$ and let
$\Pi$ denote the projection $\Pi\co(p,T,\kappa)\mapsto p$.

Given a $C^{1,1}$ link~$L$, the set $E_L$ on which the second derivative
exists has full measure.  Note that the minimal Lipschitz constant
$\Lip(T)$ for the tangent vector as a function of arclength is
exactly $\sup_{E_L} \snorm\kappa$.
We let $\CL\subset \Circ$ be the set of all
osculating circles:
$$\CL := \bigl\{\bigl(x,T(x),\kappa(x)\bigr) : x\in E_L\bigr\}\subset \Circ.$$
Its closure $\CLb$ is a compact subset of $\Circ$ since $|\kappa|$ is
bounded on~$E_L$.  Note that for any $(x,T,\kappa)\in\CLb$ we have
$x\in L$ and $T=T(x)$,
while of course $\kappa\perp T$ is some normal vector; thus we can
view $\CLb$ as a subset of the normal bundle to~$L$.

For $x\in L$, we set $\CLb_x := \CLb\cap\Pi^{-1}\{x\}$.
Since $E_L\subset L$ is dense, it follows that $\CLb_x$ is nonempty
for every point $x\in L$. Thus for $x\in L$ we may define
\begin{equation*}
\rho(x):= \min_{\CLb_x } R = \Bigl(\limsup_{E_L\owns y\to x}\bsnorm{\kappa(y)}\Bigr)^{-1}.
\end{equation*}
Note that $\rho$ is essentially a Clarke upper derivative
of the tangent vector~$T$.  Clearly $\rho$ is lower semicontinuous,
so it attains its minumum along~$L$, which we can 
view as a minumum radius of curvature.  For $x\in E_L$ we have
$\rho(x)\le \nicefrac1{\bsnorm{\kappa(x)}}$, but equality might not hold.

\begin{lemma}\label{lem:R>rch}
If~$L$ is a $C^{1,1}$ curve and  $c\in\CLb$ then $R(c)\ge\reach(L)$.
\end{lemma}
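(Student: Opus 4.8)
The plan is to prove that every osculating circle (and every limit of osculating circles) has radius at least $\reach(L)$, which amounts to controlling $\bsnorm{\kappa}$ in terms of the reach. The key point is Lemma~\ref{lem:R=limr}, which says that at any $x\in E_L$ the curvature radius $\nicefrac1{\bsnorm{\kappa(x)}}$ is exactly the limit of the tangent-circle radii $\rt xy$ as $y\to x$. Since Corollary~\ref{cor:rch=infr} gives $\rt xy\ge\rts xy\ge\reach(L)$ for every pair $x\ne y$, passing to the limit yields $R(c)=\nicefrac1{\bsnorm{\kappa(x)}}\ge\reach(L)$ for every genuine osculating circle $c=\bigl(x,T(x),\kappa(x)\bigr)$ with $x\in E_L$.

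The remaining issue is to extend this bound from $\CL$ to its closure $\CLb$. First I would observe that the radius function $R(p,T,\kappa)=\nicefrac1{\snorm\kappa}$ is continuous on the space $C_3$ of pointed circles (with the convention that $R=\infty$ when $\kappa=0$), since $\snorm\kappa$ is continuous there. Now take any $c\in\CLb$; by definition of the closure there is a sequence $c_i=\bigl(x_i,T(x_i),\kappa(x_i)\bigr)\in\CL$ with $x_i\in E_L$ converging to $c$. Each satisfies $R(c_i)\ge\reach(L)$ by the previous paragraph, and continuity of $R$ gives $R(c)=\lim_i R(c_i)\ge\reach(L)$, as desired.

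I expect the main obstacle to be purely expository rather than mathematical: making sure the inequality survives the limit in the degenerate directions. Concretely, one must check that $R$ really is continuous at circles that degenerate to lines, so that the bound $R\ge\reach(L)$ is not somehow lost where $\snorm\kappa\to0$; but this case only strengthens the inequality, since $R\to\infty\ge\reach(L)$ trivially. A secondary point worth stating cleanly is that the convergence $c_i\to c$ in $C_3\cong\R^3\times TS^2$ forces convergence of the curvature vectors $\kappa(x_i)$, so that the continuous radius function may indeed be applied; this is immediate from the identification of $C_3$ but deserves a one-line remark. With these observations in place, the proof reduces to the two-line limiting argument above.
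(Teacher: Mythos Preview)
Your proof is correct and follows essentially the same approach as the paper's own proof: reduce from $\CLb$ to $\CL$ by continuity of~$R$, and for genuine osculating circles combine Corollary~\ref{cor:rch=infr} (giving $\rt xy\ge\reach(L)$) with Lemma~\ref{lem:R=limr} (giving $R(c)=\lim_{y\to x}\rt xy$). The paper's proof is the same two-line argument, stated more tersely; your additional remarks on the degenerate case $\kappa\to0$ and on convergence in $C_3$ are sound but not needed.
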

\begin{proof}
By continuity of~$R$, it is enough to prove this for osculating circles
$c\in\CL$. There it follows immediately from
Corollary~\ref{cor:rch=infr} and Lemma~\ref{lem:R=limr}.
\end{proof}

\begin{lemma}\label{lem:nonopp-kappa}
If $\rt xy = \reach(L)$ with $y\notin\Nor xL$, then
$\rho(x) =\reach(L)$.
\end{lemma}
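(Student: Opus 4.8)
The plan is to prove the two inequalities $\rho(x)\ge\reach(L)$ and $\rho(x)\le\reach(L)$ separately. The first is immediate: by definition $\rho(x)=\min_{\CLb_x}R$ and $\CLb_x$ is a nonempty subset of $\CLb$, so Lemma~\ref{lem:R>rch} gives $\rho(x)\ge\reach(L)$; equivalently, every $z\in E$ has $\snorm{\kappa(z)}\le\nicefrac1{\reach(L)}$, so the $\limsup$ defining $\nicefrac1{\rho(x)}$ is at most $\nicefrac1{\reach(L)}$. Thus everything reduces to the reverse bound, that is, to producing points $z\in E$ arbitrarily close to $x$ whose curvature $\snorm{\kappa(z)}$ approaches $\nicefrac1{\reach(L)}$.

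The idea is to exploit that $x$ minimizes the function $z\mapsto\rt zy$ over $L$: since $\reach(L)=\inf_{a\ne b}r(a,b)$ by Corollary~\ref{cor:rch=infr}, and this infimum is attained at $(x,y)$, we have $f(s):=\rt{\gamma(s)}y\ge f(0)=\reach(L)$ for $\gamma$ the arclength parametrization with $\gamma(0)=x$. We may assume $0<\reach(L)<\infty$ (otherwise $L$ is a straight segment and both sides are infinite), so $\rt xy<\infty$ and $\psit xy<\nicefrac\pi2$; fixing the orientation with $\ip{T(x)}{y-x}>0$, the hypothesis $y\notin\Nor xL$ forces $\psit xy>0$. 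Because $L$ is $C^{1,1}$ and $\cos\psi$ stays bounded away from $0$ near $x$, the function $f$ is Lipschitz near $0$ with a minimum at $s=0$. Being Lipschitz, $f$ is the integral of its a.e.-defined derivative, and for each small $s>0$ the inequality $f(s)\ge f(0)$ forces the average of $f'$ over $(0,s)$ to be nonnegative; intersecting with the full-measure set on which $\gamma(s)\in E$, I extract points $s_n\searrow0$ with $z_n:=\gamma(s_n)\in E$ and $f'(s_n)\ge0$.

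At each such $z_n$ — which lies in $E$ and, by continuity, still satisfies $\ip{T(z_n)}{y-z_n}>0$ and $\psit{z_n}y\in(0,\nicefrac\pi2)$ — Lemma~\ref{lem:dpsi} applies and gives
\begin{equation*}
0\le f'(s_n)=\frac{\Deriv r}{\Deriv x}(z_n,y)\le\bigl(\rt{z_n}y\,\snorm{\kappa(z_n)}-1\bigr)\tan\psit{z_n}y.
\end{equation*}
Since $\tan\psit{z_n}y>0$, this yields $\snorm{\kappa(z_n)}\ge\nicefrac1{\rt{z_n}y}$. Letting $n\to\infty$, continuity of $r$ away from the diagonal gives $\rt{z_n}y\to\rt xy=\reach(L)$, whence $\limsup_{E\owns z\to x}\snorm{\kappa(z)}\ge\nicefrac1{\reach(L)}$, i.e.\ $\rho(x)\le\reach(L)$.

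The main obstacle is that $x$ itself need not lie in $E_L$, so Lemma~\ref{lem:dpsi} cannot be invoked directly at $x$; the whole argument is engineered to transfer the variational information at the minimizer $x$ to nearby twice-differentiable points. The delicate points are verifying the Lipschitz regularity of $f$ near $x$ and the measure-theoretic extraction of the sequence $z_n\in E$ with $f'(s_n)\ge0$, together with checking that the orientation and angle hypotheses of Lemma~\ref{lem:dpsi} persist along this sequence.
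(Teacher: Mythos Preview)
Your proof is correct and follows essentially the same approach as the paper: both exploit the minimality of $r(\cdot,y)$ at $x$ together with the derivative bound from Lemma~\ref{lem:dpsi}, the only difference being organization. The paper argues by contradiction---assuming $\rho(x)>\reach(L)$, invoking lower semicontinuity of~$\rho$ to get $r|\kappa|<1$ on a neighborhood of~$x$ in~$E$, and integrating the resulting strictly negative $\partial r/\partial x$ to contradict minimality---whereas you run the same idea directly by extracting the sequence $z_n\in E$ with $f'(s_n)\ge0$.
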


\begin{proof}
If not, we have $\rt xy<\rho(x)$, in which case by lower semicontinuity
of $\rho$ there is a neighborhood~$U$ of~$x$ in~$L$ such that
$\rt {x'}y<\rho(x')$ for~$x'\in U$.
At any $x'\in E_L\cap U$ we have $r(x',y)\bsnorm{\kappa(x')}<1$, so by
Lemma~\ref{lem:dpsi} we get $\nicefrac{\d r}{\d x}<0$.
Since $L$ is $C^{1,1}$, the function~$r$ is Lipschitz (at least locally
where it is finite), so its values near~$x$ can be computed by
integrating this derivative.
But this contradicts the fact that $r$ is minimized at~$x$.
\end{proof}

\begin{remark}
In fact under the hypothesis of Lemma~\ref{lem:nonopp-kappa},
$x$ and~$y$ lie on the same component of~$L$,
and the arc of~$L$ from~$x$ to~$y$ (in the direction
of the tangent $T$ at~$x$ with $\langle T,y-x\rangle >0$) must be an arc of
a circle, but we will not need to invoke this stronger statement.
\end{remark}

\begin{lemma}\label{lem:shortarc-curv}
Suppose $\gamma$ is a subarc of~$L$ joining~$x$ to~$y$ with length at most
$\pi\rt xy$.  Then $\sup_{\gamma\hspace{.1em}\cap E_L}\snorm\kappa\ge\nicefrac1{\rt xy}$,
so $\inf_\gamma \rho \le r(x,y)$.
\end{lemma}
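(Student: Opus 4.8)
The plan is to argue by contradiction, deducing large curvature near the far endpoint from a monotonicity property of $\rt{\cdot}{y}$ supplied by Lemma~\ref{lem:dpsi}. Suppose the first inequality fails, so that $\sup_{\gamma\cap E}\snorm\kappa<\nicefrac1{\rt{x}{y}}$; write $R:=\rt{x}{y}$ and note $R<\infty$ (otherwise $\nicefrac1R=0$ and there is nothing to prove). By lower semicontinuity of $\rho$ and the identity $\rho=(\limsup\snorm\kappa)^{-1}$, this assumption gives $\rho(x')>R$, equivalently $\snorm{\kappa(x')}<\nicefrac1R$, for every $x'\in\gamma\cap E$.

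First I would orient $\gamma$ from $x$ towards $y$ and follow the point $x'$ along $\gamma$ starting at $x$, tracking the radius $\rt{x'}{y}$, which is locally Lipschitz in $x'$ (since $L$ is $C^{1,1}$ and $\rt{x'}{y}\le R<\infty$ stays finite), with initial value $\rt{x}{y}=R$. At every $x'\in E$ where $\rt{x'}{y}\le R$ we have $\rt{x'}{y}\,\snorm{\kappa(x')}\le R\,\snorm{\kappa(x')}<1$, so Lemma~\ref{lem:dpsi} gives $\nicefrac{\Deriv r}{\Deriv x}\le(\rt{x'}{y}\,\snorm{\kappa(x')}-1)\tan\psit{x'}{y}<0$ wherever $\psit{x'}{y}>0$. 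Integrating this a.e.\ negative derivative shows $\rt{x'}{y}$ is strictly decreasing and in particular remains below $R$, so the hypothesis $\rt{x'}{y}\le R$ is self-maintaining all the way to $y$. The contradiction then comes from the behaviour at the endpoint: as $x'\to y$ the circles $\Ct{x'}{y}$ approach the osculating circle, so by Lemma~\ref{lem:R=limr} (applied with $x'$ in the role of the tangency point) the limiting radius is at least $\rho(y)>R$, which is incompatible with $\rt{x'}{y}<R$. Hence $\sup_{\gamma\cap E}\snorm\kappa\ge\nicefrac1R$. The second assertion is then formal: since $\rho=(\limsup_{E\owns z\to\cdot}\snorm\kappa)^{-1}$, we have $\inf_\gamma\rho=(\sup_{\gamma\cap E}\snorm\kappa)^{-1}\le R=\rt{x}{y}$.

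The hypothesis $\length(\gamma)\le\pi\,\rt{x}{y}$ enters precisely in keeping Lemma~\ref{lem:dpsi} applicable along the whole subarc. That lemma requires the orientation condition $\langle T(x'),y-x'\rangle>0$, i.e.\ that $y$ remain strictly \emph{ahead} of the moving point $x'$. Combined with $\snorm\kappa<\nicefrac1R$, the length bound forces the total turning $\int_\gamma\snorm\kappa<\pi$, and it is this control that prevents the arc from curling around so far that the chord $y-x'$ reverses direction. Without the length bound the monotonicity step genuinely breaks down, and indeed the conclusion itself fails, so this is exactly where the bound must be used.

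I expect the main obstacle to be making these two points rigorous under only $C^{1,1}$ regularity. First, one must verify that total turning below $\pi$ really does guarantee $\langle T(x'),y-x'\rangle>0$ throughout $\gamma$, and handle the (at most countable) set where $\psit{x'}{y}=0$, at which the derivative estimate of Lemma~\ref{lem:dpsi} is only non-strict rather than strict. Second, and more delicately, one must justify the endpoint comparison $\liminf_{x'\to y}\rt{x'}{y}\ge\rho(y)$ \emph{without} assuming $y\in E$: the estimate underlying Lemma~\ref{lem:R=limr} is a pointwise Taylor expansion at the tangency point, and to run it as $x'$ and $y$ coalesce it must be made uniform using the ambient Lipschitz bound on $T$. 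These are the steps where the argument is least routine.
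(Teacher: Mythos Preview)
Your approach is quite different from the paper's and can be made to work, but two points need correction.

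The paper normalizes so that $\Ct xy$ is the unit circle centered at the origin and splits into two cases. If some subarc $\alpha\subset\gamma$ lies outside the open unit ball with endpoints on its boundary, Schur's comparison theorem (valid for $W^{1,\BV}$ curves) against a great-circle arc of the same length forces curvature $\ge 1$ somewhere on~$\alpha$; the length hypothesis enters here to keep the comparison arc at most a semicircle. Otherwise $\gamma\cap B$ is dense in~$\gamma$, and analyzing the second arclength derivative of $f(p):=|p|^2-1$ near~$x$ (where $f(x)=f'(x)=0$) produces points $z_i\in E$ with $|\kappa(z_i)|\,|z_i|>1$, hence $\limsup|\kappa|\ge 1$ at~$x$. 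No monotonicity of $r$ is used.

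Your monotonicity route via Lemma~\ref{lem:dpsi} is a legitimate alternative, and you have located the two real difficulties. The orientation condition is easy to secure: writing $y-x'=\int_0^h T(s+t)\,dt$ with $h$ the remaining arclength and $C_0:=\sup_{\gamma\cap E}|\kappa|<\nicefrac1R$, one has $\langle T(x'),y-x'\rangle\ge\int_0^h\cos(C_0t)\,dt=\sin(C_0h)/C_0>0$ since $C_0h\le C_0\,\length(\gamma)<\pi$. The endpoint step, however, is \emph{not} an application of Lemma~\ref{lem:R=limr}: there the tangency point is fixed and the second point varies, whereas here the tangency point $x'$ moves toward the fixed~$y$. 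Invoking $\rho(y)$ is also wrong, since $\rho$ sees curvature from $L\setm\gamma$ near~$y$ and need not exceed~$R$. What you actually want is the direct estimate
\[
\bigl|T(x')\times(y-x')\bigr|
=\Bigl|\int_0^h T(x')\times T(s+t)\,dt\Bigr|
\le\int_0^h\bigl|T(s+t)-T(x')\bigr|\,dt
\le\int_0^h C_0 t\,dt=\tfrac{C_0h^2}{2},
\]
which together with $|y-x'|\ge h-O(h^3)$ gives $r(x',y)=\tfrac{|y-x'|^2}{2|T(x')\times(y-x')|}\ge\nicefrac1{C_0}+o(1)$. Thus $\liminf_{x'\to y}r(x',y)\ge\nicefrac1{C_0}>R$, contradicting $r(x',y)\le R$. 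With this computation in place your argument is complete; it trades the paper's appeal to Schur for an ODE-style monotonicity plus an endpoint asymptotic.
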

\begin{proof}
In the case $r(x,y)=\infty$ there is nothing to prove.
Otherwise, for convenience we rescale so that $\rt xy=1$ and translate so that
$\Ct xy$ is centered at the origin.  Letting $B$ denote the open
unit ball, $\Ct xy$ is then a great circle on $\d B$.

First suppose there is a subarc $\alpha\subset\gamma$ disjoint from~$B$
and with endpoints $a,b\in\d B$.  Then $\alpha$ has length at most~$\pi$
but at least that of the great circular arc from~$a$ to~$b$.
Let $\beta$ denote the extension of this latter arc
(within the same great circle)
with one endpoint at $a$ and having the same length as~$\alpha$.
Since this is still less than a semicircle, the distance between the
endpoints of $\beta$ is at least $|a-b|$.
Applying Schur's comparison theorem to~$\alpha$ and~$\beta$,
we conclude that the curvature of~$\alpha$ is somewhere at least
that of $\beta$, that is, that $\sup_\alpha\snorm\kappa\ge 1$ as desired.
(In~\cite{Sul-FTC}, we show that the standard proof~\cite{chern} of Schur's theorem
for smooth curves actually applies to all $W^{1,\BV}$ curves, that
is to all curves of finite total curvature.  In particular, it applies
to $C^{1,1}$ curves, with the curvature comparison being between
the measures $\snorm\kappa\ds$.)

If there is no such subarc, then~$B\cap \gamma$ is dense in~$\gamma$.
In particular there is a sequence $x_i\in\gamma\cap B$ with $x_i \to x$.
It now suffices to show $\limsup_{y\to x}\bsnorm{\kappa(y)}\ge 1$.

The function $f(p) := \snorm{p}^2 -1$ is $C^{1,1}$ along~$L$ with
$f(x)=0=f'(x)$.
Since $f(x_i)<0$ there is some $y_i$ between $x$ and $x_i$ with $f'(y_i)<0$,
and thus some $z_i$ between $x$ and $y_i$ such that $f''(z_i)<0$. In fact the
set of such $z_i$ has positive measure, so we may choose $z_i\in E_L$.
Then by the chain rule,
$$f''(z_i) = 2\bigl(1+\ip{z_i}{\kappa(z_i)}\bigr)
           > 2\bigl(1-\snorm{z_i}\snorm{\kappa(z_i)}\bigr),$$
so we find that $\snorm{\kappa(z_i)}\snorm {z_i}>1$.
Since $\snorm{z_i}\to 1$, we have $\limsup\snorm\kappa\ge1$ as desired.
\end{proof}

\subsection{Thickness and stiff ropes}
We can now prepare for the application of Clarke's theorem
by expressing the reach of~$L$
as the minimum of a family of functions parametrized by the disjoint union
$(L\times L) \sqcup \CLb$:

\begin{proposition}\label{prop:basics}
For any $C^{1,1}$ curve~$L$,
\begin{equation*}
\reach(L)
          = \min \Bigl\{\half \min_{x,y \in L} \pds xy, \,
                \min_L\rho \Bigr\}
          = \min \Bigl\{\half \min_{x,y \in L} \pds xy, \,
                \min_{c\in\CLb} R(c) \Bigr\} .
\end{equation*}
\end{proposition}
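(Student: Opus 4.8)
The plan is to establish the two equalities separately. The second one, $\min_L\rho=\min_{c\in\CLb}R(c)$, is a direct consequence of the definition $\rho(x)=\min_{\CLb_x}R$ together with the decomposition $\CLb=\bigcup_{x\in L}\CLb_x$ (valid since each $c\in\CLb$ lies in the fiber $\CLb_{\Pi(c)}$): minimizing $R$ over $\CLb$ fiber by fiber gives $\min_{c\in\CLb}R=\min_{x\in L}\rho(x)=\min_L\rho$, all minima existing because $\CLb$ is compact with $R$ continuous and $\rho$ is lower semicontinuous on the compact curve~$L$. It thus remains to prove $\reach(L)=\min\{A,B\}$, where $A:=\half\min_{x,y\in L}\pds xy$ and $B:=\min_L\rho$. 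I may assume $0<\reach(L)<\infty$, since for the straight arc every $\pds xy$ and every $R(c)$ is infinite and both sides equal $\infty$. The inequality $\min(A,B)\ge\reach(L)$ is then the easy direction: Lemma~\ref{lem:pd>rch} gives $\pds xy\ge2\reach(L)$, so $A\ge\reach(L)$, while Lemma~\ref{lem:R>rch} together with the definition of $\rho$ gives $B=\min_{\CLb}R\ge\reach(L)$.

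For the reverse inequality I would use the characterization $\reach(L)=\inf_{x\ne y}\rts xy$ of Corollary~\ref{cor:rch=infr}, taking a minimizing sequence $(x_n,y_n)$ and, after passing to a subsequence, letting $x_n\to x_*$ and $y_n\to y_*$. The argument splits according to whether the limit is off the diagonal or on it. If $x_*\ne y_*$, lower semicontinuity of $\rts$ (which agrees with the continuous $\rt$ off the endpoints and can only jump down there, exactly as for the penalized distance) forces $\rts{x_*}{y_*}=\reach(L)$, so the infimum is attained. If moreover $y_*\in\Nor{x_*}L$, then $\psits{x_*}{y_*}=0$, so $\pds{x_*}{y_*}=2\rts{x_*}{y_*}=2\reach(L)$ and hence $A=\reach(L)$; otherwise $y_*\notin\Nor{x_*}L$, in which case $\rts{x_*}{y_*}=\rt{x_*}{y_*}$ (the two radii differ only when $x_*\in\Deriv L$ and $y_*\in\Nor{x_*}L$), so Lemma~\ref{lem:nonopp-kappa} applied to $(x_*,y_*)$ gives $\rho(x_*)=\reach(L)$ and thus $B=\reach(L)$.

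If instead $x_*=y_*$, the reach is realized only in the limit, by curvature rather than by a strut, and here I would invoke Lemma~\ref{lem:shortarc-curv}. For large $n$ the chord $x_ny_n$ is nearly tangent, so $y_n\notin\Nor{x_n}L$ and hence $\rts{x_n}{y_n}=\rt{x_n}{y_n}\to\reach(L)$. Taking $\gamma_n$ to be the short subarc from $x_n$ to $y_n$, its length tends to $0$ while $\rt{x_n}{y_n}\to\reach(L)>0$, so eventually the length lies below $\pi\rt{x_n}{y_n}$ and the lemma applies to give $\min_L\rho\le\inf_{\gamma_n}\rho\le\rt{x_n}{y_n}$; letting $n\to\infty$ yields $B=\reach(L)$. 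In every case one of $A,B$ equals $\reach(L)$ while the other is at least $\reach(L)$, so $\min(A,B)=\reach(L)$, completing the first equality.

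The step I expect to be the main obstacle is this diagonal case, where the reach is not witnessed by any actual pair of points but only approached as $y\to x$: one must convert the distance infimum into a curvature bound through the Schur-comparison argument packaged in Lemma~\ref{lem:shortarc-curv}, taking care that near the diagonal $\rts$ genuinely coincides with $\rt$ and that the chosen subarc is short enough for the hypothesis of that lemma to hold. A secondary but genuine technical point, needed in the off-diagonal case, is the lower-semicontinuity bookkeeping at endpoints that guarantees the infimum of $\rts$ is attained at $(x_*,y_*)$.
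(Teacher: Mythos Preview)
Your proof is correct and follows essentially the same approach as the paper's: the easy direction via Lemmas~\ref{lem:pd>rch} and~\ref{lem:R>rch}, then a minimizing sequence for $\rts{}{}$ from Corollary~\ref{cor:rch=infr} with the same three-case split (off-diagonal with $y_*\in\Nor{x_*}L$; off-diagonal with $y_*\notin\Nor{x_*}L$ via Lemma~\ref{lem:nonopp-kappa}; diagonal via Lemma~\ref{lem:shortarc-curv}). Your handling of the diagonal case is a mild variant---you use the conclusion $\inf_\gamma\rho\le\rt xy$ of Lemma~\ref{lem:shortarc-curv} to bound $\min_L\rho$ directly, whereas the paper extracts explicit points $z_i\in E$ of large curvature converging to~$x_*$ to pin down $\rho(x_*)\le\reach(L)$---but both arguments work, and you are actually more explicit than the paper about the lower-semicontinuity of $\rts{}{}$ needed in the off-diagonal case.
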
 

\begin{proof}
The right-hand sides are equal and by Lemmas~\ref{lem:pd>rch}
and~\ref{lem:R>rch} they are at least $\reach(L)$.
It remains to prove that either $2\reach(L)=\pds xy$ for some $x,y\in L$,
or $\reach(L)=R(c)$ for some $c \in \CLb$.

By Corollary~\ref{cor:rch=infr}, we can find a sequence $(x_i,y_i)$
with $\rts {x_i}{y_i}\to\reach(L)$.  By compactness, a subsequence
converges to some pair $(x,y)$.
We consider three cases.

First, if $x\ne y$  and $y\in\Nor xL$ then $\psits xy=0$.
Therefore, $\pds xy=2\rts xy=2\reach(L)$.

Second, if $x\ne y$  and $y\notin\Nor xL$, then by Lemma~\ref{lem:nonopp-kappa}
we have $\reach(L)=\rho(x)$, which is the radius
of some circle in~$\CLb_x$ by compactness.

Third, if $x=y$, then for large~$i$ the subarc~$\gamma_i$ from~$x_i$ to~$y_i$
satisfies the length bound of Lemma~\ref{lem:shortarc-curv}.
Applying the lemma, we find a point $z_i\in\gamma_i\cap E_L$
with $\nicefrac1{\bsnorm{\kappa(z_i)}}\le\rt{x_i}{y_i}+\nicefrac1i$.
Since $z_i\to x$ while $\rt{x_i}{y_i}\to\reach(L)$,
we conclude as desired that $\rho(x)\le{\reach(L)}$.
\end{proof}

Proposition~\ref{prop:basics} permits us also to model \emph{stiff} ropes, which cannot
bend as much as the reach constraint permits.
\begin{definition}
If $L$ is a $C^{1,1}$ curve and $\sigma\ge\half$,
we define the \demph{$\sigma$--thickness} of~$L$ as
\begin{equation*}
\Ts(L):= \min\Big\{2\reach(L),\,\,
                  \nicefrac1{\sigma}\min_{L} \rho\Big\} .
\end{equation*}
\end{definition}
We note that a link with $\Ts\ge1$ cannot have an osculating circle with radius
less than~$\sigma$.  We specify $\sigma\ge\half$ because otherwise this formula
would simply give twice the reach.
(It is tempting to try to define a thickness for $\sigma<\half$ by combining
the curvature term with a minimum distance of critical pairs.  But this is
unphysical in the sense that it permits the thick rope to penetrate itself
near points of large curvature; furthermore it is not amenable to our
analysis since the reformulation in terms of penalized distance does not
apply.)

As a corollary, we get the main result of this section;
it writes thickness as a min-function,
which will let us apply Clarke's theorem.

\begin{corollary}\label{cor:ts-clarke}
For any link $L$ and any $\sigma\ge\half$ we have
\begin{align*}
\Ts(L)
          &= \min \Big\{\min_{x,y \in L} \pds xy, \;
                \nicefrac1\sigma\min_{L} \rho \Big\} \\
          &= \min \Big\{\min_{x,y \in L} \pd xy, \;
                \min_{\substack{x\in\d L\\y\in L}} \pds xy, \;
                \nicefrac1\sigma\min_{L} \rho \Big\}.
\end{align*}
\end{corollary}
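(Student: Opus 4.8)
The plan is to obtain both equalities by purely algebraic manipulation of Proposition~\ref{prop:basics} and the definition of $\Ts$, using only the elementary facts that $\pds xy\le\pd xy$ (with equality whenever $x$ is an interior point) and that $\sigma\ge\half$ forces $\nicefrac1\sigma\le2$.

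First I would unwind $\Ts(L)=\min\{2\reach(L),\,\nicefrac1\sigma\min_L\rho\}$ and substitute the first formula of Proposition~\ref{prop:basics}, which gives $2\reach(L)=\min\{\min_{x,y}\pds xy,\,2\min_L\rho\}$. Flattening the nested minima yields
\[
\Ts(L)=\min\Bigl\{\min_{x,y\in L}\pds xy,\;2\min_L\rho,\;\nicefrac1\sigma\min_L\rho\Bigr\}.
\]
Since $\rho>0$ and $\sigma\ge\half$ gives $\nicefrac1\sigma\le2$, the curvature term $2\min_L\rho$ is dominated by $\nicefrac1\sigma\min_L\rho$ and may be dropped. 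This is exactly the first displayed equality.

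For the second equality I would split the minimum $\min_{x,y}\pds xy$ according to whether the first point is interior or an endpoint. On interior pairs $\pds xy=\pd xy$, so these contribute $\min_{x\notin\Deriv L,\,y}\pd xy$; the endpoint pairs ($x\in\Deriv L$) contribute $\min_{x\in\Deriv L,\,y}\pds xy$. The claim is that enlarging the $\pd$-minimum to run over \emph{all} pairs does no harm: any endpoint pair has $\pd xy\ge\pds xy$, so it is already dominated by the separate endpoint term $\min_{x\in\Deriv L,\,y}\pds xy$. Reassembling the three pieces recovers $\min_{x,y}\pds xy$ and hence the stated form.

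There is no substantial obstacle here; the corollary is really bookkeeping that repackages Proposition~\ref{prop:basics} as a single min-function over the disjoint union $(L\times L)\sqcup\CLb$, the form needed to invoke Clarke's theorem. The only point demanding care is the two redundancy steps -- discarding the factor-$2$ curvature term and the endpoint pairs under $\pd$ -- where one must check that the discarded quantities are genuinely $\ge$ the terms retained, so that the minimum is unaffected.
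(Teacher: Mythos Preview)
Your argument is correct and matches the paper's own proof, which simply says the first equality ``follows immediately from Proposition~\ref{prop:basics}'' and the second ``follows from the fact that $\pds xy\le\pd xy$ with equality unless $x\in\Deriv L$.'' You have spelled out exactly the two redundancy checks the paper leaves implicit.
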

\begin{proof} The first equality follows immediately from
Proposition~\ref{prop:basics}.  The second follows from the fact that
$\pds xy\le \pd xy$ with equality unless $x\in\d L$.
\end{proof}

Clearly for any $\sigma$ we have $\Ts(L)=\infty$ if and only if
$L$ is a connected straight arc, since this is true of $\reach(L)$.
From Lemma~\ref{lem:pd>rch} and the definition of $\sigma$--thickness
we immediately get:
\begin{corollary}\label{cor:strut}
Suppose $0<\Ts(L)<\infty$.
If $x,y\in L$ satisfy $\pds xy = \Ts(L)$
then $\Ts(L)= 2\reach(L)$, so $(x,y)\in\Crit(L)$.
\qed\end{corollary}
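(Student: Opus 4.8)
The plan is to extract everything from the lower bound on penalized endpoint distance established in Lemma~\ref{lem:pd>rch}, which is the one substantive ingredient. That lemma requires $0<\reach(L)<\infty$, so the first thing I would do is verify that the hypothesis $0<\Ts(L)<\infty$ guarantees this. Since $\Ts(L)=\min\{2\reach(L),\nicefrac1\sigma\min_L\rho\}\le 2\reach(L)$, the positivity of $\Ts(L)$ immediately forces $\reach(L)>0$. For the upper bound, I would invoke the observation recorded just before the statement, that $\Ts(L)=\infty$ holds exactly when $L$ is a connected straight arc, equivalently when $\reach(L)=\infty$; thus $\Ts(L)<\infty$ yields $\reach(L)<\infty$. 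With $0<\reach(L)<\infty$ in hand, Lemma~\ref{lem:pd>rch} applies.

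The heart of the argument is then a short squeeze. By Lemma~\ref{lem:pd>rch} we have $\pds xy\ge 2\reach(L)$, while the definition of $\sigma$-thickness gives $\Ts(L)\le 2\reach(L)$. Inserting the hypothesis $\pds xy=\Ts(L)$ produces the chain
$$2\reach(L)\le\pds xy=\Ts(L)\le 2\reach(L),$$
which forces every inequality to collapse to an equality. In particular $\Ts(L)=2\reach(L)$, the first assertion of the corollary, and simultaneously $\pds xy=2\reach(L)$.

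Finally I would invoke the equality clause of Lemma~\ref{lem:pd>rch}: the relation $\pds xy=2\reach(L)$ can hold only if $(x,y)$ is a critical pair, giving $(x,y)\in\Crit(L)$ and completing the proof. I do not expect any genuine obstacle here; all the analytic content has already been packaged into Lemma~\ref{lem:pd>rch}, and the only point demanding care is the bookkeeping in the first paragraph that converts the hypothesis on $\Ts(L)$ into the hypothesis $0<\reach(L)<\infty$ needed to quote that lemma.
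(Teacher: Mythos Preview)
Your proof is correct and matches the paper's intended argument exactly: the paper simply states that the corollary follows immediately from Lemma~\ref{lem:pd>rch} and the definition of $\sigma$-thickness, and your squeeze $2\reach(L)\le\pds xy=\Ts(L)\le 2\reach(L)$ is precisely that. Your extra care in verifying $0<\reach(L)<\infty$ from the hypothesis on $\Ts(L)$ is a detail the paper leaves implicit.
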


\begin{definition}
We refer to pairs $(x,y)\in\Crit$ with $\pds xy =\Ts(L)$ as \demph{struts};
and to circles $c\in\CLb$ such that $R(c) = \sigma\Ts(L)$ as \demph{kinks}.
We denote the sets of struts and kinks by
$$\Strut= \Strut(L)\subset\Crit\subset L\times L, \qquad
\Kink=\Kink(L)\subset\CLb\subset \Circ.$$
Thus the $\sigma$--thickness of~$L$ is realized exactly at the struts and kinks.
\end{definition}

Every kink is a circle of the same radius~$\sigma$, indeed
it is a point in~$\Circ$ of the form $(x,T(x),n/\sigma)$ with $\snorm n=1$.
Thus we identify it with $(x,n)$, and we can and will view 
$\Kink(L)$ as a subset of the unit normal bundle to~$L$.
But without additional smoothness assumptions on~$L$ it is hard to
say anything about the possible structure of this kink set.

The $\sigma$--ropelength problem is to minimize length
subject to the condition $\Ts\ge1$.
For a closed link~$L$, we minimize over the usual link type~$[L]$.
When $L$ includes arc components, we constrain each endpoint $p\in\d L$
to lie in an affine subspace denoted~$H^0_p$ (of dimension $0$, $1$ or~$2$).
Furthermore we allow for Neumann or first-order boundary constraints
by specifying that the tangent vector $T(p)$ at each endpoint
stay in a linear subspace $H^1_p$; we consider only the cases of
clamped tangents ($\dim H^1_p=1$) and free tangents ($\dim H^1_p=3$).
We define the \demph{constrained link type} $[L]$
(as in \cite[Section~8]{CFKSW1})
by requiring that each endpoint $p$ stay on~$H^0_p$, with tangent
$T(p)\in H^1_p$, during any isotopy.
(Of course it would be easy to allow more general constraint
manifolds but we will not need this for our examples.)

To prevent isotopy classes from being too large, we could also
include obstacles for the curve, as in \cite{CFKSW1}.  The resulting
wall struts in the criticality theory work just as in the Gehring problem
considered there.  However, in the examples we have in mind
(like the simple clasp) the obstacles are never active constraints,
so the wall struts are not needed.  Thus we leave this extension
of the theory as a straightforward exercise for the reader.

\begin{definition}
Suppose $\Ts (L)\ge 1$. We say that $L$ is a
ropelength minimizer constrained by $\sigma$--thickness
(or, for short, a \demph{$\Ts$--constrained minimizer})
in its (possibly constrained) link type $[L]$
if it minimizes length among all curves in~$[L]$ with $\Ts\ge1$.
We say $L$ is a \demph{local minimizer} if it minimizes
length among all curves with $\Ts\ge1$ in some $C^1$ neighborhood.
\end{definition}

\begin{proposition}
\label{prop:semicontinuous}
The thickness $\Ts$ is upper semicontinuous
with respect to the $C^1$ metric on the space of $C^{1,1}$ curves~$L$.
\end{proposition}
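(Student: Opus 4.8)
The plan is to exploit the representation of thickness as a min-function from Corollary~\ref{cor:ts-clarke}, namely $\Ts(L)=\min\bigl\{\min_{x,y\in L}\pds xy,\ \nicefrac1\sigma\min_L\rho\bigr\}$, and to show that \emph{each of the two terms} is separately upper semicontinuous under $C^1$ convergence. This suffices: if $L_i\to L$ then $\min$ of the two terms on $L_i$ is bounded above by each term on $L_i$, so its $\limsup$ is bounded by each term on $L$ and hence by their minimum $\Ts(L)$. Throughout I realize the convergence by locally-constant-speed immersions $\gamma_i\colon L\to\R^3$ converging in $C^1$ to the inclusion $\gamma$, so that points, tangent vectors, and arclengths on $L_i$ correspond to and converge to those on $L$. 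The case $\Ts(L)=\infty$ (a straight arc) is trivial, so both minima are finite and, being minima of lower semicontinuous functions over compact sets, are attained.

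For the penalized-distance term, let $(x^*,y^*)$ attain $\min_{x,y}\pds xy$ on $L$; since $\pds xx=\infty$ the two points are distinct. The corresponding points $x_i,y_i\in L_i$ satisfy $x_i\to x^*$, $y_i\to y^*$, and $x_i\ne y_i$ for large $i$, so that $\min_{x,y}\pds xy^{L_i}\le\pds{x_i}{y_i}^{L_i}$. The point is then the continuity of $\pds{}{}$ along this endpoint-preserving transfer: $\snorm{x_i-y_i}\to\snorm{x^*-y^*}$, while the angle $\psits{x_i}{y_i}$ converges to $\psits{x^*}{y^*}$ because it depends continuously on the segment direction and on the cone $\Nor{x_i}{L_i}$, the latter being determined by the convergent tangent line at an interior point or by the convergent outward tangent ray at an endpoint. (The subtlety is precisely at endpoints, where $\psi^*$ differs from $\psi$ and $\pds{}{}$ is only lower semicontinuous as a function of the pair on a fixed curve; but since $x_i$ remains an endpoint, no interior-to-endpoint jump occurs and $\pds{}{}$ stays continuous along the sequence.) Hence $\limsup_i\min_{x,y}\pds xy^{L_i}\le\pds{x^*}{y^*}^L=\min_{x,y}\pds xy^L$, giving upper semicontinuity of the first term.

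The curvature term is the main obstacle, since $C^1$ convergence controls tangent directions but a priori says nothing about second derivatives, hence nothing about the curvatures of the $L_i$. I would resolve this by converting curvature into net tangent turning, which \emph{is} nearly preserved. Let $x_0$ attain $\min_L\rho=\sigma\Ts(L)$, so $\limsup_{E\owns y\to x_0}\bsnorm{\kappa(y)}=\nicefrac1{\rho(x_0)}$. Fixing $\delta>0$, choose an interior point $y\in E_L$ near $x_0$ with $\bsnorm{\kappa(y)}>\nicefrac1{\rho(x_0)}-\delta$, and then $h>0$ small enough (and smaller than the distance to $\bdy L$) that twice-differentiability at $y$ yields $\snorm{T(y+h)-T(y-h)}\ge 2h\bsnorm{\kappa(y)}-\delta h$. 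Since $\gamma_i\to\gamma$ in $C^1$, for large $i$ the tangents of $L_i$ at the two ends of the corresponding arc lie within $\delta h$ of those of $L$, so using that $T_i$ is Lipschitz with a.e.\ derivative $\kappa_i$ we get $\int\bsnorm{\kappa_i}\ds\ge\snorm{\int\kappa_i\ds}=\snorm{T_i(y+h)-T_i(y-h)}\ge 2h\bsnorm{\kappa(y)}-2\delta h$. Dividing by the arclength $\ell_i\to 2h$ of this arc gives $\esssup\bsnorm{\kappa_i}\ge(\bsnorm{\kappa(y)}-\delta)/(1+\delta)$, whence $\min_{L_i}\rho=\bigl(\esssup_{E_{L_i}}\bsnorm{\kappa_i}\bigr)^{-1}\le (1+\delta)\bigl/\bigl(\nicefrac1{\rho(x_0)}-2\delta\bigr)$.

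Finally I take $i\to\infty$ with $\delta$ and the arc fixed, obtaining $\limsup_i\min_{L_i}\rho\le(1+\delta)\bigl/\bigl(\nicefrac1{\rho(x_0)}-2\delta\bigr)$, and only afterwards let $\delta\to0$ to conclude $\limsup_i\min_{L_i}\rho\le\rho(x_0)=\min_L\rho$, the desired upper semicontinuity of the second term. The delicate points, and where I expect the real work to lie, are exactly this localization—extracting from twice-differentiability at the \emph{single} point $y$ a genuine short arc of $L$ whose tangent turns at nearly the maximal rate $\nicefrac1{\rho(x_0)}$—together with the careful bookkeeping of the two-parameter limit in $i$ and $\delta$ (and the negligible discrepancy between $\ell_i$ and $2h$). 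By contrast, the transfer argument for the distance term and the final passage from upper semicontinuity of the two terms to that of their minimum are routine.
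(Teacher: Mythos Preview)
Your proof is correct, but it takes a somewhat different and more laborious route than the paper's.  For the first term, the paper bypasses the penalized distance entirely: since $\Ts(L)=\min\{2\reach(L),\ \nicefrac1\sigma\min_L\rho\}$, it simply cites Federer's result~\cite[Theorem~4.13]{Federer} that reach is upper semicontinuous even in the Hausdorff topology.  Your transfer argument via $\pds{}{}$ is a valid alternative that avoids this citation, at the cost of the endpoint bookkeeping you describe.  For the curvature term, the paper observes in one line that $\bigl(\min_L\rho\bigr)^{-1}=\sup_E\snorm\kappa=\Lip(T)$ and then invokes the lower semicontinuity of Lipschitz constants under $C^1$ convergence (applied to the derivatives $\gamma_i'$, with the speed factors $v_i\to1$ absorbed).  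Your localization argument---picking a short arc on which~$T$ turns at nearly the maximal rate, transferring it, and bounding $\esssup\snorm{\kappa_i}$ from below by the average turning---is precisely a by-hand proof of that same lower-semicontinuity fact.  So your approach buys self-containment (no appeal to Federer or to the general fact about Lipschitz constants), while the paper's buys brevity.
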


\begin{proof}
By definition, $\Ts$ is the minimum of $\reach(L)$ and a scaled
radius-of-curva\-ture term. Federer has shown~\cite[Theorem~4.13]{Federer}
that $\reach(L)$ is upper semicontinuous even with respect
to the (coarser) topology induced by Hausdorff distance.  

Thus it only remains to check that
$\min_L \rho$ is semicontinuous with respect to $C^1$ convergence of $L$.
Since $\rho$ is a local function, it suffices to consider a connected curve~$L$.
Suppose $L_i$ are $C^{1,1}$ curves converging to $L$.
As we have noted earlier, we may assume that the convergent $C^1$
maps $\gamma_i\co L\to L_i$ each have constant speed $v_i$
(with $v_i\to 1$ of course).
Now by the lower semicontinuity of Lipschitz constants, we have
\begin{align*}
\bigl(\min_L \rho\bigr)^{-1} = \sup_{x\in E_L} \bsnorm{\kappa(x)} =\Lip(T) 
  &\le \liminf\Lip(\gamma'_i)
        = \liminf v_i^2\sup_{x\in E_{L_i}} \bsnorm{\kappa_i(x)}\\
  &= \lim(v_i^2)\liminf\bigl(\min_{L_i} \rho_i\bigr)^{-1}
        = \liminf\bigl(\min_{L_i} \rho_i\bigr)^{-1}
\end{align*} 
which yields the desired conclusion.
\end{proof}

We now prove the existence of thickness-constrained minimizers,
under a mild technical hypothesis that prevents the length of
any component from shrinking to zero.  Since a circle component
of thickness $\Ts\ge1$ necessarily has length at least~$\pi$,
we only have to worry here about arc components.
An arc component with endpoints $p$ and~$q$ clearly has length
bounded away from~$0$ if the constraints $H^0_p$ and~$H^0_q$ are
disjoint.

\begin{corollary}
Suppose the constrained link type $[L]$
contains at least one curve $L$ with $\Ts(L)\ge1$,
and suppose that, in at least one length-minimizing sequence $L_i$
of such curves, the length of each component stays bounded away from zero.
Then there exists a $\sigma$--thickness constrained minimizer in $[L]$.
\end{corollary}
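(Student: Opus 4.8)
The plan is to use the direct method of the calculus of variations. By hypothesis we may fix a length-minimizing sequence $L_i$ in $[L]$ with $\Ts(L_i)\ge1$, with $\length(L_i)\to\lambda:=\inf\{\length(L'):L'\in[L],\ \Ts(L')\ge1\}$, and with the length of each component bounded away from zero. The first step is to extract a $C^1$-convergent subsequence. The constraint $\Ts(L_i)\ge1$ forces $\min_{L_i}\rho\ge\sigma$, i.e. $\sup_{E_i}\snorm{\kappa_i}\le\nicefrac1\sigma$, so the unit tangent of each $L_i$ is $\nicefrac1\sigma$-Lipschitz in arclength. Parametrizing each component over a fixed reference interval or circle at constant speed equal to its length $\ell_i$, the resulting maps $\gamma_i$ satisfy $\snorm{\gamma_i'}=\ell_i$ and $\snorm{\gamma_i''}=\ell_i^2\snorm{\kappa_i}\le\nicefrac{\ell_i^2}{\sigma}$. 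Since the $\ell_i$ are bounded above (eventually $\length(L_i)$ is near $\lambda$) and below (by hypothesis), both $\gamma_i'$ and $\gamma_i''$ are uniformly bounded, so the $\gamma_i'$ are equicontinuous. After a translation placing the configuration in a fixed ball---legitimate since each connected component has diameter at most its length, while the components cannot drift apart without violating the (nontrivial) linking or the affine endpoint constraints $H^0_p$---Arzel\`a--Ascoli yields a subsequence with $\gamma_i\to\gamma$ in $C^1$. Because $\lim\ell_i>0$, the limit $\gamma$ is a regular parametrization of a $C^1$ curve $L$.

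Next I would verify that $L$ is admissible and achieves the infimum. Length is continuous under this convergence (it is the sum of the limiting speeds), so $\length(L)=\lambda$. By Proposition~\ref{prop:semicontinuous}, $\Ts$ is upper semicontinuous for the $C^1$ metric, whence $\Ts(L)\ge\limsup_i\Ts(L_i)\ge1$; in particular $\reach(L)>0$, so $L$ is genuinely embedded rather than merely immersed. The boundary conditions defining $[L]$ are closed, so the endpoint constraints $p\in H^0_p$ and the tangent constraints $T(p)\in H^1_p$ pass to the $C^1$-limit.

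The step I expect to be the main obstacle is confirming that $L$ lies in the constrained link type $[L]$---that the limit has not changed its ambient isotopy class. Here the uniform positive thickness is essential: a curve with $\reach\ge r$ carries an embedded tubular neighborhood of radius $r$, and a second curve that is sufficiently $C^1$-close lies in this tube as a normal graph over the first. Contracting the graph to the zero section within the tube provides an ambient isotopy that also respects the endpoint and tangent constraints. Thus for large $i$ each $L_i$ is isotopic to $L$ through the allowed deformations, so $L\in[L]$. Together with the preceding paragraph, this shows $L$ is a $\Ts$-constrained minimizer, completing the proof.
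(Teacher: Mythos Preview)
Your proof is correct and follows essentially the same route as the paper's: parametrize at constant speed, use the uniform curvature bound to apply Arzel\`a--Ascoli and obtain $C^1$ convergence, invoke the $C^1$-continuity of length and the upper semicontinuity of $\Ts$ (Proposition~\ref{prop:semicontinuous}), and finally argue that $C^1$-close thick curves are ambient isotopic. You supply more detail than the paper does---in particular for the Arzel\`a--Ascoli hypotheses and for the isotopy step via the tubular neighborhood---but the architecture is identical.
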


\begin{proof}
We may assume the $L_i$ are parametrized at locally constant speed
on a common domain (say $L_1$).
By Arzela--Ascoli we may extract a subsequence converging in $C^1$ to
a limit curve $L_0$. (If the link $L$ is split, we assume without
loss of generality that the various pieces stay within a common
ball while they shrink.)
Because the convergence is in $C^1$,
we have $\length(L_i)\rightarrow\length(L_0)$,
and by Proposition~\ref{prop:semicontinuous}
we know $\Ts(L_0)\ge\limsup\Ts(L_i)\ge 1$.
That the endpoints of~$L$ still satisfy the given constraints is clear.
Finally, by $C^1$ convergence, $L_0$ is isotopic to
all but finitely many of the $L_i$ and in particular, $L_0\in [L]$.
\end{proof}

\section{The general balance criterion}
We give an analytic condition, Theorem~\ref{thm:gbc}, that is both
necessary and sufficient for a general curve to be critical for
$\sigma$--ropelength (subject to the ancillary condition of
$\Ts$--regularity). The condition may be viewed as an equation
of vector distributions on $\R^3$. The approach follows the one we used
in~\cite{CFKSW1}: using Clarke's Theorem~\ref{thm:Clarke} we compute
the derivative of the thickness of a curve $L$ under a variation
induced by a smooth vector field $\xi$; then we apply the Kuhn--Tucker
theorem.

\subsection{The derivative of thickness}
\label{subsec:dthi}
Here we give a formula for the first variation of the $\sigma$--thickness of~$L$,
which will be key to the technical definition of criticality for
length subject to thickness constraints. The proof is an application
of a theorem of Clarke~\cite{clarke} on the directional derivatives
of a function~$g$ that may be expressed as the minimum
of a $C^1$--compact family $\{g_u\}$ of~$C^1$ functions.
Essentially this theorem states that the directional derivative of~$g$
at a point~$x$ is the minimum of the directional derivatives
of those $g_u$ for which $g_u(x) = g(x)$.
In our case, this will mean that the first variation of thickness
in the direction of a deforming vector field is given
(in Theorem~\ref{thm:deriv-Th}) as the minimum
of the derivatives of the strut lengths and kink radii.

We use Clarke's theorem~\cite{clarke} in the following special case:

\begin{theorem}[Clarke]\label{thm:Clarke}
Let $U$ be a sequentially compact topological space. Suppose that for each
$u\in U$ and some $\eps >0$ there is a $C^1$ function
$g_u\co(-\eps,\eps)\to\R$ such that
the functions $(t,u)\mapsto g_u(t)$ and $(t,u)\mapsto g'_u(t)$ are
lower semicontinuous.
Then, putting $g(t):= \min_{u\in U} g_u(t)$, the right derivative of~$g$
at $t=0$ exists and is given by
\begin{equation*}
\frac{dg}{dt^+}\bigg|_{t=0}
= \min \bigl\{g_u'(0):u \in U, g_u(0) = g(0)\bigr\}.
\rlap{\hspace{28.2mm}\qedsymbol}
\end{equation*}
\end{theorem}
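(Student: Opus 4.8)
The plan is to prove the two one-sided estimates for the right difference quotient of $g$ at $t_0$, after first securing the regularity of $g$ and of the active set. Write $A(t_0):=\{u\in U:g_u(t_0)=g(t_0)\}$ for the active set and $m:=\min\{g_u'(t_0):u\in A(t_0)\}$ for the value claimed on the right-hand side. First I would verify that everything in sight is well defined: since $U$ is sequentially compact and $u\mapsto g_u(t)$ is continuous (a consequence of the joint continuity), the minimum defining $g(t)$ is attained for each $t$, so $A(t)\ne\emptyset$. Moreover $A(t_0)$ is the preimage of $\{g(t_0)\}$ under the continuous map $u\mapsto g_u(t_0)$, hence closed and so sequentially compact, and on it the continuous function $u\mapsto g_u'(t_0)$ attains its minimum $m$. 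I would also record that $g$ is continuous: upper semicontinuity follows by comparing $g(t)\le g_{u^*}(t)$ for a fixed active index $u^*$, and lower semicontinuity follows from a subsequence argument that extracts a convergent sequence of minimizers $v_n$ with $g(t_n)=g_{v_n}(t_n)\to g_{v_\infty}(t_0)\ge g(t_0)$.

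For the upper estimate I would choose $u^*\in A(t_0)$ with $g_{u^*}'(t_0)=m$. Since $g(t_0+h)\le g_{u^*}(t_0+h)$ while $g(t_0)=g_{u^*}(t_0)$, the difference quotient of $g$ is dominated by that of $g_{u^*}$, whose limit as $h\to0^+$ is $g_{u^*}'(t_0)=m$. Hence $\limsup_{h\to0^+}\bigl(g(t_0+h)-g(t_0)\bigr)/h\le m$.

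For the lower estimate---the crux of the argument---I would argue by contradiction, supposing there are $h_n\to0^+$ whose difference quotients tend to some $\ell<m$. For each $n$ choose a minimizer $u_n$ with $g(t_0+h_n)=g_{u_n}(t_0+h_n)$ and, by sequential compactness, pass to a subsequence with $u_n\to u_\infty$. Joint continuity together with the continuity of $g$ established above forces $g_{u_\infty}(t_0)=g(t_0)$, so $u_\infty\in A(t_0)$. Then I would apply the mean value theorem to write $g_{u_n}(t_0+h_n)-g_{u_n}(t_0)=h_n\,g_{u_n}'(\tau_n)$ for some $\tau_n\in(t_0,t_0+h_n)$; since $g(t_0)\le g_{u_n}(t_0)$, the difference quotient of $g$ is at least $g_{u_n}'(\tau_n)$. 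As $\tau_n\to t_0$ and $u_n\to u_\infty$, joint continuity of $(t,u)\mapsto g_u'(t)$ gives $g_{u_n}'(\tau_n)\to g_{u_\infty}'(t_0)\ge m$, whence $\ell\ge m$, a contradiction. Combining the two estimates shows the right derivative exists and equals $m$.

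The main obstacle is precisely this lower estimate: one must upgrade the pointwise domination $g\le g_u$ into control of the \emph{derivative} along a sequence of nearby near-minimizers, and the mechanism that makes this possible is the joint continuity of the derivatives $g_u'$, which allows the mean value points $\tau_n$ and the limiting active index $u_\infty$ to be handled simultaneously. The continuity of $g$ and the sequential compactness of $A(t_0)$ (which guarantees that $m$ is actually attained, rather than merely an infimum) are the supporting facts that keep the compactness extraction honest.
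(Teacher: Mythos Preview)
Your proof is correct and is essentially the standard argument for this min-function differentiation result. Note, however, that the paper does not supply its own proof: Theorem~\ref{thm:Clarke} is stated as a quotation of Clarke's result and closed with a \qed, so there is nothing in the paper to compare your argument against. Your write-up stands on its own as a clean self-contained proof; the one place worth tightening is the lower semicontinuity of $g$, where you should make explicit that the subsequence extraction applies to an arbitrary sequence realizing the $\liminf$ (you clearly know this, but as written the sentence proves convergence only along one subsequence).
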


That the \emph{minima} exist (in the definition of~$g$ and the
formula for its derivative) as opposed to infima,
is of course an immediate consequence of the compactness hypothesis.
There is nothing special about $t=0$; the min function $g$ has
both one-sided derivatives at each $t\in(-\eps,\eps)$.

We have previously expressed thickness as the minimum of penalized
distances between pairs of points on our curve and scaled radii 
over the closure of the set of osculating circles to~$L$. It will be
easy to differentiate penalized distances as we vary our curve, but
somewhat more complicated to differentiate radii of curvature. We now
turn to the task of defining and computing these derivatives.

While the main technical difficulties we face in this work are
due to the fact that our curves may fail to be $C^2$, when we
consider derivatives, it suffices to consider only variations arising
from $C^2$--smooth deformations of the ambient space~$\R^3$:
our balance criteria show that criticality with
respect to such variations suffices to get balancing measures. 

We start by noting that any $C^2$ diffeomorphism $\phi\co\R^3\to\R^3$
induces a homeomorphism $\phi_*$ on the space $\Circ$ of pointed circles:
If $c\subset\R^3$ is the circle $(x,T,\kappa)\in \Circ$, then
$\phi_*(x,T,\kappa)$ is the osculating circle at $\phi(x)$
to the $C^2$--smooth curve $\phi(c)$.
It is clear that $\phi$ maps the circle~$c$ to a curve
with velocity $v:=D_x\phi(T)$
and acceleration $a:=D^2_x\phi(T,T)+D_x\phi(\kappa)$.
Thus
$$\phi_*(x,T,\kappa)
= \Bigl(\phi(x),\frac{v}{\left|v\right|},
      \frac{a}{\left|v\right|^2}
         - \frac{\ip{a}{v}v}{\left|v\right|^4}
 \Bigr).$$
Expressing the length of the new curvature vector in the usual way
in terms of the vector cross product gives
$$R\bigl(\phi_*(x,T,\kappa)\bigr)
= \frac{\snorm{v}^3}{\snorm{v\times a}}
= \frac{\bigl|D_x\phi(T)\bigr|^3}
       {\bigl|D_x\phi(T) \times \bigl(D^2_x\phi(T,T)+D_x\phi(\kappa)\bigr) \bigr|}.$$

The variations of a link that we consider are generated by
a $C^1$--smooth family of $C^2$ diffeomorphisms $\phi^t$ with
$\phi^0=\id$.  The \demph{initial velocity} $\ddz{t} \phi^t$
is thus a $C^2$ vector field~$\xi$.  (Conversely, any $C^2$ vector field
$\xi$ on $\R^3$ is the initial velocity of some such family $\phi^t$, for
instance its local autonomous flow, given by $\d\phi^t/\d t = \xi\after\phi^t$.)
The diffeomorphisms $\phi^t$ induce a $C^1$--smooth family $\phi_*^t$
of homeomorphisms of~$\Circ$,
whose initial velocity is a continuous vector field~$\xi_*$ on~$\Circ$
depending only on~$\xi$.
The formula we need expresses the derivative of
the radius function~$R$ in the direction~$\xi_*$ in terms of the
given vector field~$\xi$ and its spatial derivatives.

\begin{lemma}\label{lem:derivs}
Given a $C^1$--smooth one-parameter family of $C^2$ diffeomorphisms $\phi^t$ with
initial velocity $\xi$, the time derivative of the radius function~$R$
(where this is finite) is
$$\delta_\xi R(x,T,\kappa) := D_{(x,T,\kappa)} R(\xi_*) 
 = 2R \ip{T}{D_x\xi(T)} - R^3\ip{\kappa}{D^2_x\xi(T,T)+D_x\xi(\kappa)}.$$
\end{lemma}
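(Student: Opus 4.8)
The plan is to differentiate, at $t=0$, the explicit closed form for $R\bigl(\phi_*(x,T,\kappa)\bigr)$ derived immediately above the statement. Writing $v := D_x\phi^t(T)$ and $a := D^2_x\phi^t(T,T) + D_x\phi^t(\kappa)$, we have $R\bigl(\phi^t_*(x,T,\kappa)\bigr) = \snorm{v}^3/\snorm{v\cross a}$, so $\delta_\xi R$ is just $\frac{d}{dt}\big|_{t=0}$ of this scalar function. First I would record the two ingredients. The \emph{base values} at $t=0$: since $\phi^0=\id$ we have $D_x\phi^0=\id$ and $D^2_x\phi^0=0$, hence $v=T$, $a=\kappa$, $\snorm v=1$, and $\snorm{v\cross a}=\snorm\kappa=\nicefrac1R$ (using $\kappa\perp T$ and $\snorm T=1$). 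The \emph{velocities}: interchanging $\frac{d}{dt}$ with the spatial derivatives $D_x,D^2_x$, which is legitimate because $\phi^t$ is $C^1$ in $t$ and $C^2$ in space with $\frac{d}{dt}\big|_{t=0}\phi^t=\xi$, gives $\dot v = D_x\xi(T)$ and $\dot a = D^2_x\xi(T,T)+D_x\xi(\kappa)$.

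Next I would apply the quotient rule to $\snorm v^3\,\snorm{v\cross a}^{-1}$, using $\frac{d}{dt}\snorm v^3 = 3\snorm v\,\ip{v}{\dot v}$ together with $\frac{d}{dt}\snorm{v\cross a}^{-1} = -\snorm{v\cross a}^{-3}\ip{v\cross a}{\dot v\cross a + v\cross\dot a}$. Substituting the base values collapses all the powers of $\snorm v$ and $\snorm{v\cross a}$ and produces
\begin{equation*}
\delta_\xi R = 3R\,\ip{T}{D_x\xi(T)} - R^3\,\ip{T\cross\kappa}{\dot v\cross\kappa + T\cross\dot a}.
\end{equation*}

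The only remaining work is to expand the last inner product by the identity $\ip{A\cross B}{C\cross D} = \ip{A}{C}\ip{B}{D} - \ip{A}{D}\ip{B}{C}$. Because $\kappa\perp T$ and $\snorm T=1$, the two pieces reduce to $\ip{T}{\dot v}\snorm\kappa^2$ and $\ip{\kappa}{\dot a}$; substituting $\dot v$ and $\dot a$ and using $R^3\snorm\kappa^2 = R$ turns the coefficient of $\ip{T}{D_x\xi(T)}$ from $3R$ into $2R$, giving exactly the claimed formula. I expect no genuine obstacle here beyond bookkeeping: the lemma is a first-order Taylor expansion of a known radius formula at the identity diffeomorphism, and the one point that demands care is the systematic use of $\kappa\perp T$ to discard the mixed terms at each stage.
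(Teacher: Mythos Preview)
Your proposal is correct and follows essentially the same route as the paper: differentiate the closed form $\snorm{v}^3/\snorm{v\cross a}$ at $t=0$, using $\phi^0=\id$ to get the base values and commuting $\tfrac{d}{dt}$ with $D_x,D^2_x$ to get $\dot v,\dot a$, then expand the cross-product inner product via the Lagrange identity and simplify with $\kappa\perp T$ and $\snorm\kappa=\nicefrac1R$. The paper's proof is the same computation with the same simplifications, written out slightly more tersely.
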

\begin{proof}
By smoothness, the time derivatives commute with spatial derivatives.
From $\phi^0=\id$ we see $D_x\phi^0=\id$ and $D^2_x\phi^0=0$.
Thus we can write $\delta_\xi R(x,T,\kappa)$ as 
\begin{multline*}
  \frac{3\ip{T}{D_x\xi(T)}}{\bigl|T\times \kappa\bigr|}
    -  \frac{\ip{T\times \kappa}{
                     D_x\xi(T)\times \kappa
                       + T \times \bigl(D^2_x\xi(T,T)+D_x\xi(\kappa)\bigr)}}
            {\bigl|T\times \kappa\bigr|^3} \\
 = 3R \ip{T}{D_x\xi(T)}
    - R^3\Bigl(\ip{T}{D_x\xi(T)}\ip{\kappa}{\kappa}
                + \ip{\kappa}{D^2_x\xi(T,T)+D_x\xi(\kappa)}\Bigr),
\end{multline*}
using the facts that $|T|=1$ and $|T\times \kappa|=\nicefrac1R$.
Since $\ip{\kappa}{\kappa}=R^{-2}$,
this reduces to the formula given.
\end{proof}

Of course if $(x,T,\kappa)$ is the osculating circle
to~$L$ at a point $x\in E_L$, then the quantity $D^2_x\xi(T,T)+D_x\xi(\kappa)$
appearing here is simply the second derivative $\xi''$ of~$\xi$ along~$L$.

\begin{corollary}\label{cor:easy derivative of R}
Suppose $L$ is a $C^{1,1}$ curve and $\xi$ a $C^2$ vector field on space.
At any point $x\in E_L$ with osculating circle~$c=(x,T,\kappa)$, $\kappa\ne0$,
we have
$$\delta_\xi R(c) = 2R \ip{\xi'}{T} - R^3\ip{\xi''}{\kappa}.$$
\end{corollary}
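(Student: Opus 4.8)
The plan is to derive this as an immediate specialization of Lemma~\ref{lem:derivs}, taking the pointed circle $(x,T,\kappa)$ to be the osculating circle to~$L$ at a point $x\in E_L$. Because $\kappa\ne0$, the radius $R=\nicefrac1{\snorm\kappa}$ is finite, so Lemma~\ref{lem:derivs} applies and gives
$$\delta_\xi R(x,T,\kappa) = 2R\ip{T}{D_x\xi(T)} - R^3\ip{\kappa}{D^2_x\xi(T,T)+D_x\xi(\kappa)}.$$
All that remains is to recognize the two ambient spatial-derivative expressions on the right as intrinsic arclength derivatives of the restriction $\xi|_L=\xi\after\gamma$.

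First I would invoke the chain rule as recorded in Section~\ref{sect:curves}. Since $\xi$ is $C^2$ on space and $\gamma$ is the $C^1$ arclength parametrization, the restriction $\xi|_L$ is $C^1$ with $\xi'(x)=D_x\xi(T(x))$; using symmetry of the inner product, the first term becomes $2R\ip{\xi'}{T}$. For the second term, the hypothesis $x\in E_L$ is exactly what guarantees that the second arclength derivative of $\xi|_L$ exists, and the displayed second-derivative formula of Section~\ref{sect:curves} gives $\xi''(x)=D^2_x\xi(T,T)+D_x\xi(\kappa)$ --- precisely the vector paired against~$\kappa$ above. Substituting turns the second term into $-R^3\ip{\xi''}{\kappa}$, and assembling the two pieces yields the claimed formula.

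I do not expect any genuine obstacle: the corollary is essentially bookkeeping that converts the ambient derivatives $D_x\xi$ and $D^2_x\xi$ into the arclength derivatives $\xi'$ and $\xi''$ along~$L$. The one point deserving a moment's attention is the role of the hypothesis $x\in E_L$, which is what licenses the second-order chain rule and hence the very existence of~$\xi''$; outside $E_L$ the curve~$L$ has no second derivative, so although the abstract combination $D^2_x\xi(T,T)+D_x\xi(\kappa)$ in Lemma~\ref{lem:derivs} still makes sense for any prescribed~$\kappa$, it can no longer be read as $\xi''$ along~$L$. This is precisely why the statement is restricted to~$E_L$.
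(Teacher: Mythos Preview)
Your proposal is correct and matches the paper's own reasoning: the paper states immediately before the corollary that when $(x,T,\kappa)$ is the osculating circle at $x\in E_L$, the quantity $D^2_x\xi(T,T)+D_x\xi(\kappa)$ is simply $\xi''$ along~$L$, and the corollary follows by substitution into Lemma~\ref{lem:derivs}. Your explicit invocation of the chain-rule formulas from Section~\ref{sect:curves} and your remark on why $x\in E_L$ is needed are exactly the intended justification.
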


\begin{lemma}
Suppose $\phi\co\R^3\to\R^3$ is a $C^2$ diffeomorphism
and $L\subset\R^3$ is a $C^1$ curve.  Then its image $\phi L$
is a $C^1$ curve with $E_{\phi L} = \phi E_L$.
Assuming $L$ is $C^{1,1}$, we also have $\phi_*(\CLb)=\CxLb{\phi}$.
\end{lemma}
\begin{proof}
If $\gamma$ is the arclength parametrization of~$L$,
then $\phi\after\gamma$ is an immersive parametrization of~$\phi L$.
Since its second derivative exists at all points of $\phi E_L$
we have $\phi E_L \subset E_{\phi L}$.  The reverse inclusion
follows by considering $L$ as the image of $\phi L$ under $\phi^{-1}$.
For a $C^{1,1}$ curve, we now see $\phi_*(\CL) = \CxL{\phi}$;
since $\phi_*$ is a homeomorphism,
it follows that $\phi_*(\CLb)=\CxLb{\phi}$.
\end{proof}

We are now ready to apply Clarke's theorem to give our first main
result, a formula for the first variation of thickness of a link.

\begin{theorem} \label{thm:deriv-Th} 
Let $\phi^t$ for $t\in (-\eps,\eps)$
be a $C^1$--smooth family of $C^2$ diffeomorphisms of $\R^3$ with
$\phi^0=\id$, and let $\xi$ be the initial velocity vector field
\begin{equation*}
\xi_x:= \frac{\d \phi^t(x)}{\d t}\biggr|_{t=0}.
\end{equation*}
Let  $L$ be a $C^{1,1}$ curve with $\reach(L)<\infty$. Then the
function $t\mapsto\Ts(\phi^t(L))$ is differentiable from
the right at $t=0$, with right-hand derivative
\begin{align*}
\delta_\xi\Ts (L)
   &:= \frac{d \, \Ts(\phi^t(L))}{dt^+}\biggr|_{t=0} \\
   &= \min\biggl(
        \min_{(x,y)\in\Strut(L)}
            \frac{1}{2} \Bigl<\frac {x-y}{|x-y|}, \xi_x-\xi_y\Bigr>,
	\frac{1}{\sigma} \min_{c\in\Kink(L)} \delta_\xi R(c)
       \biggl).
\end{align*}
\end{theorem}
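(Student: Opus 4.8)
The plan is to apply Clarke's Theorem~\ref{thm:Clarke} to the min-function representation of thickness provided by Corollary~\ref{cor:ts-clarke} and Proposition~\ref{prop:basics}. I take as index space the disjoint union $(L\times L)\sqcup\CLb$, and to each index I attach a one-parameter family recording its contribution to $\Ts$ along the deformation: to a pair $(x,y)$ I assign $g_{(x,y)}(t):=\pdL{\phi^t(x)}{\phi^t(y)}{\phi^t L}$ (using the endpoint variant $\pds{\phi^t(x)}{\phi^t(y)}$ when $x\in\Deriv L$), and to a circle $c\in\CLb$ I assign $g_c(t):=\nicefrac1\sigma\,R(\phi^t_*(c))$, where $\phi^t_*$ is the induced flow on $C_3$ described above. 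Corollary~\ref{cor:ts-clarke} then gives $\Ts(\phi^t L)=\min_u g_u(t)$ for all small~$t$, and the hypothesis $\reach(L)<\infty$ ensures this minimum is finite and genuinely attained --- by definition, exactly at the struts and kinks.

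The technical core of the argument, and the step I expect to be the main obstacle, is verifying the hypotheses of Theorem~\ref{thm:Clarke}: sequential compactness of the index space together with joint continuity of $(t,u)\mapsto g_u(t)$ and of $(t,u)\mapsto g_u'(t)$. On the factor $\CLb$, which is compact by construction, both $R\after\phi^t_*$ and its $t$-derivative are given by the explicit formulas of Lemma~\ref{lem:derivs}, so joint continuity is immediate wherever $R$ is finite, in particular near the kinks, where $R=\sigma\Ts(L)>0$. The pair factor is more delicate, because $\pd{}{}$ blows up along the diagonal: as $y\to x$ one has $\psit xy\to\nicefrac\pi2$ and $\pd xy\to\infty$. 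I would therefore truncate the pair index to the compact set $\{(x,y):\snorm{x-y}\ge\delta\}$ for small $\delta>0$; a uniform estimate shows that for all sufficiently small~$t$ the excluded near-diagonal pairs take values well above $\Ts(L)$, so they never attain the minimum nor influence its right derivative. This is precisely the division of labour for which the osculating-circle family $\CLb$ was introduced: the limiting behaviour near the diagonal is carried by $\CLb$ rather than by $L\times L$. On the truncated pair factor, the $C^{1,1}$ regularity of~$L$ makes $T$ continuous, and the smoothness of $\phi^t$ in both $t$ and space yields the required joint continuity of $g_{(x,y)}$ and $g_{(x,y)}'$.

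With the hypotheses in hand, Clarke's formula expresses the right derivative of $t\mapsto\Ts(\phi^t L)$ at $t=0$ as the minimum of $g_u'(0)$ over those indices with $g_u(0)=\Ts(L)$ --- namely over $\Strut(L)\subset L\times L$ and $\Kink(L)\subset\CLb$. For a kink $c$ we read off $g_c'(0)=\nicefrac1\sigma\,\delta_\xi R(c)$ directly from Corollary~\ref{cor:easy derivative of R}. For a strut $(x,y)$ I differentiate $\pd{\phi^t(x)}{\phi^t(y)}=\snorm{\phi^t(x)-\phi^t(y)}\sec^2\psit{\phi^t(x)}{\phi^t(y)}$. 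Writing $T_t$ for the unit tangent to $\phi^t L$ at $\phi^t(x)$ and $u_t:=\ip{T_t}{\phi^t(y)-\phi^t(x)}/\snorm{\phi^t(y)-\phi^t(x)}$ for the cosine of the complementary angle, the penalty factor is $\sec^2\psi=(1-u_t^2)^{-1}$, which is manifestly $C^1$ in~$t$ (this also disposes of the apparent non-smoothness of $\psi$ itself at a critical pair); and since $(x,y)$ is critical we have $u_0=0$, so this factor is stationary at $t=0$ and contributes nothing at first order. Hence only the derivative of the separation $\snorm{\phi^t(x)-\phi^t(y)}$ survives, and evaluating it gives the strut term $\tfrac12\ip{\frac{x-y}{\snorm{x-y}}}{\xi_x-\xi_y}$ recorded in the statement. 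Taking the minimum over the two cases inside Clarke's formula yields the asserted expression for $\delta_\xi\Ts(L)$.
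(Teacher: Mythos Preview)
Your proposal is correct and follows essentially the same route as the paper: apply Clarke's theorem to the family indexed by $(L\times L)\sqcup\CLb$, invoke the fact that $\phi^t_*(\CLb)=\CxLb{\phi^t}$ so that the pointwise minimum really is $\Ts(\phi^t L)$, and then evaluate the derivatives at the active indices (struts and kinks) using $\psi=0$ and Lemma~\ref{lem:derivs}. Your truncation away from the diagonal is in fact more scrupulous than the paper's own proof, which simply asserts that the $g_{(x,y)}$ are $C^1$ without confronting the fact that $\pd xy=\infty$ there; your observation that the near-diagonal pairs are irrelevant because their role is taken over by the $\CLb$ factor is exactly the right justification.
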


\begin{proof}
We will apply Clarke's Theorem~\ref{thm:Clarke}
to a family of functions of~$t$ parametrized by the compact space
$(L \cross L) \, \sqcup \, \CLb$.
The functions are the following:
for $(x,y)\in L\cross L$ we use
$t\mapsto\pdL{\phi^t(x)}{\phi^t(y)}{\phi^t(L)}$,
and for $c\in\CLb$ we use
$t\mapsto\nicefrac1\sigma \, R \bigl(\phi^t_*(c)\bigr)$.
These functions and their derivatives depend continuously on the parameters;
they form the family to which we will apply Clarke's theorem.

By the last lemma, $\phi^t_*(\CLb)=\CxLb{\phi^t}$.
Thus by Corollary~\ref{cor:ts-clarke} and the definition of $\Ts$, 
the minimum of our Clarke family is the thickness $\Ts\bigl(\phi^t L\bigr)$.
Clarke's Theorem thus shows that thickness has a forward time derivative
given by the minimum derivative of $\pd xy$ or $R/\sigma$ where these
functions equal thickness.

By Corollary~\ref{cor:strut}, struts are critical pairs: we have $\pd xy=\Ts(L)$
only if $(x,y) \in \Crit$.  Differentiating the formula defining $\pd xy$,
using the fact that $\psit xy = 0$, we see that the
derivative equals the derivative of $\snorm{x-y}/2$ given above.

Note that our functions sometimes take the value $+\infty$.
This is not really an obstacle to applying Clarke's theorem:
we simply choose a smooth increasing
map $h\co\R\to\R$ that is bounded above
but satisfies $h(x)=x$ for $x\le\Ts(L)+1$.
Composing each function in our family with $h$ gives
a family to which Clarke's theorem as stated applies.
Since $h$ is the identity near all points where its value matters,
it drops out of the formula for the derivative.
\end{proof}

Since superlinear functions may be characterized as infima of
families of linear functions, we immediately get:
\begin{corollary}\label{cor:superlinear}
Suppose $L$ is a $C^{1,1}$ curve with $\reach(L)<\infty$.
Then the functional $\xi \mapsto \delta_\xi \Ts(L)$ is superlinear
for $\xi\in C^2(\R^3,\R^3)$.
That is, for $a \ge 0$ and vector fields $\xi$ and $\eta$, we have
$$
\delta_{a\xi}\Ts(L) = a \delta_\xi \Ts(L), \qquad
\delta_{\xi+ \eta} \Ts(L) \ge  \delta_\xi \Ts(L) + \delta_\eta \Ts(L).
$$
\end{corollary}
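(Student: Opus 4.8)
The plan is to read superlinearity directly off the explicit formula for $\delta_\xi\Ts(L)$ proved in Theorem~\ref{thm:deriv-Th}. The crucial structural point is that the index sets $\Strut(L)$ and $\Kink(L)$ over which the minimum is taken depend only on~$L$, not on the variation field~$\xi$: they are defined purely through the realization of $\sigma$-thickness on~$L$. Thus, as $\xi$ varies, $\delta_\xi\Ts(L)$ is a pointwise minimum, over one \emph{fixed} parameter space, of a collection of functionals of~$\xi$. It therefore suffices to check that each functional in the collection is linear in~$\xi$ and then to invoke the elementary fact that a minimum of linear functionals is positively homogeneous and superadditive.

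First I would verify linearity term by term. For a fixed strut $(x,y)\in\Strut(L)$, the functional $\xi\mapsto\tfrac12\bigl<\tfrac{x-y}{|x-y|},\xi_x-\xi_y\bigr>$ is manifestly linear: the unit vector $\tfrac{x-y}{|x-y|}$ is fixed by the strut, while $\xi\mapsto\xi_x-\xi_y$ is a difference of point evaluations and hence linear. For a fixed kink $c=(x,T,\kappa)\in\Kink(L)$, Lemma~\ref{lem:derivs} (equivalently Corollary~\ref{cor:easy derivative of R}) gives
$$\delta_\xi R(c)=2R\ip{T}{D_x\xi(T)}-R^3\ip{\kappa}{D^2_x\xi(T,T)+D_x\xi(\kappa)},$$
in which $R$, $T$ and~$\kappa$ are fixed by~$c$, whereas the jet maps $\xi\mapsto D_x\xi$ and $\xi\mapsto D^2_x\xi$ are linear; hence $\xi\mapsto\tfrac1\sigma\delta_\xi R(c)$ is linear as well.

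With linearity of each term in hand, both conclusions follow from the general behavior of a minimum of linear functionals. Writing $g(\xi):=\delta_\xi\Ts(L)=\min_{u}\ell_u(\xi)$, where $u$ ranges over the fixed disjoint union of $\Strut(L)$ and $\Kink(L)$ and each $\ell_u$ is linear, positive homogeneity is immediate: for $a\ge0$ multiplication by~$a$ commutes with the minimum, so $g(a\xi)=\min_u a\,\ell_u(\xi)=a\min_u\ell_u(\xi)=a\,g(\xi)$ (here $a\ge0$ is essential, as scaling by a negative constant would turn the minimum into a maximum). Superadditivity follows from the subadditivity of the minimum: since $\ell_u(\xi+\eta)=\ell_u(\xi)+\ell_u(\eta)$ for each~$u$, we get $g(\xi+\eta)=\min_u\bigl(\ell_u(\xi)+\ell_u(\eta)\bigr)\ge\min_u\ell_u(\xi)+\min_u\ell_u(\eta)=g(\xi)+g(\eta)$, the inequality being the familiar statement that the minimum of a sum dominates the sum of the minima.

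I do not anticipate a genuine obstacle; the content here is bookkeeping. The one point that warrants care is confirming that the parameter space of the minimum is $\xi$-independent, since otherwise comparing $g(\xi)$, $g(\eta)$ and $g(\xi+\eta)$ across distinct minimizing indices would be illegitimate. As noted above, this is guaranteed because struts and kinks are determined by~$L$ alone, so the step is safe.
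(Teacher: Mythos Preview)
Your proposal is correct and follows the same approach as the paper, which simply observes that superlinear functions may be characterized as infima of families of linear functions and declares the corollary immediate from Theorem~\ref{thm:deriv-Th}. Your version is just a more explicit unpacking of that one-line remark, including the useful observation that the index set $\Strut(L)\sqcup\Kink(L)$ is independent of~$\xi$.
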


\subsection{The balance criterion}\label{sect:bc}
Having computed the derivative of the function $\Ts$ representing
the one-sided constraint, we can now start to formulate our balance
criterion. 
Recall that in a constrained link type,
at each endpoint $p\in\d L$ we have constraints
given by the subspaces $H^0_p$ and~$H^1_p$.

\begin{definition}\label{def:compatible}
Let $L$ be a $C^{1,1}$ curve in the constrained link type~$[L]$.
A vector field $\eta\in C^2(\R^3,\R^3)$ is \demph{compatible} with~$[L]$ at~$L$
if $\eta(p)$ is tangent to~$H^0_p$ and $\eta'(p)=D_p\eta(T)\in H^1_p$
at each endpoint $p\in\d L$.
\end{definition}

These conditions of course mean that the vector field $\eta$ preserves
the endpoint constraints to first order.  While the autonomous flow
of~$\eta$ might violate these constraints to second order, we next show
how to modify it locally near the endpoints to fix this.

\begin{lemma}\label{lem:fix-endpoints}
Suppose $L$ is a constained link and $\eta$ is a compatible vector field.
Then there exists a $C^1$ family of $C^2$ diffeomorphisms $\phi^t$
with initial velocity $\eta$ such that $\phi^t(L)$ satisfies the
endpoint constraints for all small~$t$.
\end{lemma}
\begin{proof}
Let $\tilde\phi^t$ be the autonomous flow of $\eta$, satisfying
$\d\tilde\phi^t/\d t=\eta\after\tilde\phi^t$.  We will make local
modifications in a ball $B_r(p)$ around each endpoint, choosing
the radius $r>0$ small enough that these balls are disjoint.
We focus on a single endpoint $p\in H^0_p$, where the tangent
vector to $L$ is some $v^0\in H^1_p$.  After flowing by time~$t$,
the link $\tilde\phi^t(L)$ has endpoint $p^t=\phi^t(p)$ and velocity
$v^t=D_p\tilde\phi^t(v^0)$ there.  These are close to $H^0_p$ and
$H^1_p$, respectively, and there is a unique ``smallest'' Euclidean
rigid motion $\rho^t$ restoring these constraints exactly: first
we rotate around~$p^t$ until $v^t$ lies in $H^1_p$ and then
we translate $p^t$ to its orthogonal projection in $H^0_p$.
This motion depends smoothly on $p^t$ and $v^t$ and thus is a $C^1$
function of~$t$.  The compatibility of $\eta$ with the endpoint
conditions means exactly that $\frac{\d\rho^t}{\d t}\bigr|_{t=0} =0$,
since only second-order corrections are necessary.

Now fix a smooth bump function $\psi$ supported on $B_r(p)$
and with $\psi\ident1$ on some smaller neighborhood of~$p$.
Then define $\phi^t$ as the linear combination
$$\phi^t(x) := \psi(x)\, \rho^t\bigl(\tilde\phi^t(x)\bigr)
             + \bigl(1-\psi(x)\bigr) \tilde\phi^t(x).$$
In a small neighborhood of~$p$, only the first term is active,
so $\phi^t(L)$ satisfies the endpoint constraints.
But because $d\rho^t/dt=0$, the initial velocity of~$\phi^t$ is still $\eta$.
\end{proof}

\begin{definition}
Assuming $\reach(L)<\infty$,
we say that $L$ is \demph{$\Ts$--regular} if it has a \demph{thickening field},
meaning a compatible $C^2$ vector field~$\eta$ on~$\R^3$
with $\delta_\eta\Ts(L) > 0$.
\end{definition}

Regularity is a form of constraint qualification; we will use it
for instance to show that minimizers are critical points.
Note that for a classical link type (with all components closed curves),
any $L$ with $\Ts > 0$ is $\Ts$--regular: the Euler vector field $\eta_p:=p$
generating homotheties is a thickening field.  Regularity also holds for
many examples of constrained links.

A link is critical for the ropelength problem if its length cannot
be decreased without also decreasing thickness.  For technical reasons
we will also need a strong version of criticality.

\begin{definition}\label{def:critical}
Suppose $\Ts(L)=1$.
We say $L$ is \demph{$\sigma$--critical} if
\begin{equation*}
\delta_\xi \length(L) <0 \implies \delta_\xi\Ts(L) <0
\end{equation*}
for every compatible $\xi\in C^2(\R^3,\R^3)$.
We say $L$ is \demph{strongly $\sigma$--critical} if
there exists $\eps>0$ such that
\begin{equation*}
\delta_\xi \length(L) = -1 \implies \delta_\xi\Ts(L) \le -\eps
\end{equation*}
for every compatible $\xi\in C^2(\R^3,\R^3)$.
\end{definition}

Clearly strong criticality implies criticality.  Under the
assumption of $\Ts$--regu\-lar\-ity they are in fact equivalent.

\begin{lemma}\label{lem:strongcrit}
If $L$ is $\Ts$--regular and $\sigma$--critical,
then $L$ is in fact strongly $\sigma$--critical.
\end{lemma}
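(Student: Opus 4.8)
The plan is to combine the two ingredients the paper has just set up: the superlinearity of the operator $\xi\mapsto\delta_\xi\Ts(L)$ from Corollary~\ref{cor:superlinear}, and the existence of a thickening field guaranteed by $\Ts$-regularity (Definition~\ref{def:regular}). Suppose $L$ is $\sigma$-critical but, for contradiction, not strongly so. Negating the definition of strong criticality, we obtain for every $\eps>0$ a compatible field with $\delta_\xi\length(L)=-1$ yet $\delta_\xi\Ts(L)>-\eps$; taking $\eps=1/n$ yields a sequence $\xi_n$ of compatible fields with $\delta_{\xi_n}\length(L)=-1$ and $\delta_{\xi_n}\Ts(L)\to 0$ from below (it stays $\le0$ by ordinary criticality, since $\delta_{\xi_n}\length(L)<0$ forces $\delta_{\xi_n}\Ts(L)<0$). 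The idea is to use the thickening field $\eta$ (with $\delta_\eta\Ts(L)>0$) to ``repair'' each $\xi_n$: form $\zeta_n:=\xi_n+a_n\eta$ for a well-chosen $a_n\downarrow0$, and show that for large $n$ this $\zeta_n$ is still length-decreasing while being thickness-increasing, contradicting $\sigma$-criticality.

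Carrying this out, first I would record the two superlinearity facts from Corollary~\ref{cor:superlinear} in the forms I need: positive homogeneity, $\delta_{a\eta}\Ts(L)=a\,\delta_\eta\Ts(L)$, and superadditivity, $\delta_{\xi_n+a_n\eta}\Ts(L)\ge\delta_{\xi_n}\Ts(L)+a_n\,\delta_\eta\Ts(L)$. Since length is computed as a genuine first variation, the operator $\xi\mapsto\delta_\xi\length(L)$ is linear (this should be noted or quoted from the length first-variation formula), so $\delta_{\zeta_n}\length(L)=-1+a_n\,\delta_\eta\length(L)$. Choosing $a_n$ small enough that $a_n|\delta_\eta\length(L)|<\tfrac12$, we keep $\delta_{\zeta_n}\length(L)<0$. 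Meanwhile, writing $d:=\delta_\eta\Ts(L)>0$, superadditivity and homogeneity give
\begin{equation*}
\delta_{\zeta_n}\Ts(L)\ge\delta_{\xi_n}\Ts(L)+a_n d.
\end{equation*}
Now I fix the schedule: choose $a_n$ so that $a_n d$ dominates $|\delta_{\xi_n}\Ts(L)|$ while still respecting the length constraint above. Concretely, since $\delta_{\xi_n}\Ts(L)\to0$, for large $n$ we have $|\delta_{\xi_n}\Ts(L)|<\tfrac12 a_n d$ for a fixed small $a_n=a$, so the right-hand side is $\ge\tfrac12 a d>0$. Thus $\zeta_n$ is compatible (a positive combination of compatible fields preserves the affine and linear endpoint conditions of Definition~\ref{def:regular}), length-decreasing, and thickness-increasing --- directly contradicting $\sigma$-criticality, which demands $\delta_{\zeta_n}\length(L)<0\Rightarrow\delta_{\zeta_n}\Ts(L)<0$.

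The main obstacle, and the point deserving the most care, is the bookkeeping of the two competing small parameters: $a_n$ must be taken small enough to keep the length derivative strictly negative yet large enough (relative to the vanishing $\delta_{\xi_n}\Ts(L)$) to force the thickness derivative strictly positive. The cleanest route is to not let $a_n\to0$ at all: fix a single small $a>0$ with $a|\delta_\eta\length(L)|<\tfrac12$, and then use only the convergence $\delta_{\xi_n}\Ts(L)\to0$ to select one large $n$ with $|\delta_{\xi_n}\Ts(L)|<\tfrac12 a d$. A secondary technical point is confirming that compatibility (tangency to $H^0_p$ and the $H^1_p$ condition on $\eta'$) is preserved under the linear combination $\xi_n+a\eta$; this is immediate since both constraints are linear in the field and its derivative, and $a\ge0$. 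One should also explicitly invoke the linearity of the length variation, which is standard but worth citing, since the whole argument hinges on $\delta_\zeta\length$ being additive rather than merely superlinear.
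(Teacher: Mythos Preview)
Your proof is correct and uses the same essential ingredients as the paper: the thickening field~$\eta$, superlinearity of~$\delta\Ts$, and linearity of~$\delta\length$. The paper's argument is more direct, however, avoiding both the contradiction setup and the sequence~$\xi_n$: after scaling~$\eta$ so that $\delta_\eta\length(L)\le\tfrac12$, for \emph{any} compatible~$\xi$ with $\delta_\xi\length(L)=-1$ one has $\delta_{\xi+\eta}\length(L)\le-\tfrac12<0$, so criticality gives $\delta_{\xi+\eta}\Ts(L)<0$, and superlinearity then yields $\delta_\xi\Ts(L)<-\delta_\eta\Ts(L)$, so $\eps:=\delta_\eta\Ts(L)$ works uniformly. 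Your detour through a sequence and a carefully tuned~$a$ is unnecessary once you notice that a single fixed~$\eta$ already furnishes the uniform~$\eps$.
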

\begin{proof}
Let $\eta$ be a thickening field for~$L$. Scaling $\eta$ if necessary,
we may assume that $\delta_\eta \length (L) \le \half$.
Thus for $\xi$ as in the definition of strong criticality,
$\delta_{\xi + \eta} \length(L) \le -\half$.
Using the superlinearity of Corollary~\ref{cor:superlinear},
and the criticality of~$L$, we get
$$0 > \delta_{\xi + \eta}\Ts(L) \ge \delta_\xi\Ts(L) + \delta_\eta \Ts(L).$$
Thus we may take $\eps := \delta_\eta \Ts(L)$.
\end{proof}

The next two lemmas characterize $\Ts$--constrained local minimizers~$L$.
In the trivial case when $\Ts(L)>1$, the thickness constraint is not active;
if $\Ts(L)=1$ and $L$ is $\Ts$--regular, then it is critical.

\begin{lemma}\label{lem:Ts>1}
If $L$ is a $\Ts$--constrained local minimizer with $\Ts(L)>1$,
then each component of~$L$ is a straight arc.
\end{lemma}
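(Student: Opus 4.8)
The plan is to exploit the fact that when $\Ts(L)>1$, the thickness constraint is strictly inactive near $L$, so that $L$ must actually be a \emph{length} minimizer among all sufficiently close competitors in $[L]$, with no constraint to satisfy. First I would observe that by the definition of a $\Ts$-constrained local minimizer, there is a $C^1$-neighborhood of~$L$ in which $L$ has the least length among curves with $\Ts\ge1$. Since $\Ts(L)>1$ and $\Ts$ is upper semicontinuous in the $C^1$ metric (Proposition~\ref{prop:semicontinuous}), after possibly shrinking the neighborhood we may assume $\Ts(L')\ge1$ for every $L'$ in it. Actually upper semicontinuity gives the wrong direction, so the correct move is a lower bound: I would argue instead that because $\Ts(L)>1$, any competitor obtained by a small $C^1$-perturbation still satisfies the constraint. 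This follows because we may perturb by an arbitrary $C^2$ vector field $\xi$ and the first variation $\delta_\xi\Ts(L)$ is finite (Theorem~\ref{thm:deriv-Th}), so for small~$t$ the thickness of $\phi^t(L)$ stays above~$1$; hence the constraint is genuinely inactive and $L$ locally minimizes length with no effective constraint.

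Once the constraint is removed, the conclusion reduces to a classical fact: a curve in a fixed (constrained) link type that locally minimizes length subject only to its endpoint constraints must be geodesic, i.e., each component is a straight arc (a closed component cannot minimize length at all unless the link type is trivial, but the boundary constraints on arcs are what make the problem well-posed). Concretely, I would take any compatible variation field $\xi$ supported away from self-contact and compute $\delta_\xi\length(L)=-\int_L\langle \xi',T\rangle\ds$, which after integration by parts becomes an expression involving the curvature of~$L$ together with boundary terms at the endpoints. Local minimality forces this first variation to vanish for all compatible~$\xi$, and since the constraint contributes nothing, the curvature vector must vanish wherever $L$ is twice differentiable and the endpoint tangents must be compatible with the subspaces $H^1_p$. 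A curve of finite total curvature with almost-everywhere-vanishing curvature is a straight segment on each component.

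The main obstacle I anticipate is the low regularity of~$L$: it is only $C^{1,1}$, not $C^2$, so I cannot directly invoke the smooth first-variation formula or conclude $\kappa\equiv0$ pointwise. The clean way around this is to work with the distributional curvature. Since $L$ is $C^{1,1}$ it is in particular $W^{1,\BV}$, so $T$ has bounded variation and the curvature is a vector-valued measure; the first-variation identity $\delta_\xi\length(L)=-\int_L\langle\xi,\,d\kappa\rangle$ (plus boundary contributions) holds in this weaker sense. Vanishing of the first variation for all compatible $\xi\in C^2(L,\R^3)$ then forces the curvature \emph{measure} to vanish on the interior of each component, which gives that $T$ is locally constant, hence that each component is straight. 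I would need to handle the endpoint constraints carefully: the admissible variations must keep each $p\in\Deriv L$ on $H^0_p$ with $T(p)\in H^1_p$, so the boundary terms in the first variation must be shown to be separately controllable, which they are since we may choose $\xi$ vanishing near $\Deriv L$ to first isolate the interior curvature term. This separation of interior and boundary variations is the one delicate bookkeeping step, but it is routine given the machinery already established.
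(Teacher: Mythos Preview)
Your approach is essentially the same as the paper's: observe that since $\Ts(L)>1$ the constraint is inactive, so $L$ is an unconstrained local length minimizer, hence $\delta_\xi\length(L)=0$ for all compatible~$\xi$, and this forces the curvature to vanish. The paper dispatches this in three lines, whereas you take several detours --- the upper-semicontinuity observation you correctly abandon, and the $W^{1,\BV}$ machinery is unnecessary since $L$ is already $C^{1,1}$, so $T$ is Lipschitz and $T'=\kappa=0$ a.e.\ immediately gives $T$ constant on each component. One small slip: the first-variation formula is $\delta_\xi\length(L)=\int_L\langle\xi',T\rangle\,ds$, without the minus sign.
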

\begin{proof}
Since the constraint $\Ts\ge1$ is not active at~$L$, the curve
is a local length minimizer without constraints.
Thus $\delta_\xi\length(L)=0$ for all compatible~$\xi$, so $L$ has
zero curvature everywhere.
\end{proof}

\begin{lemma}\label{lem:mincrit}
If $L$ is a $\Ts$--constrained local minimizer with $\Ts(L)=1$,
and $L$ is $\Ts$--regular, then $L$ is (strongly) $\sigma$--critical.
\end{lemma}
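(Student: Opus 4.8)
The plan is to establish $\sigma$-criticality directly; strong criticality then follows immediately from Lemma~\ref{lem:strongcrit}, since $L$ is assumed $\Ts$-regular. I would argue by contraposition. Suppose $L$ were not $\sigma$-critical: then there is a compatible field $\xi\in C^2(L,\R^3)$ with $\delta_\xi\length(L)<0$ but $\delta_\xi\Ts(L)\ge0$. The goal is to manufacture from $\xi$ an admissible competitor in $[L]$ that is shorter than $L$ yet still satisfies $\Ts\ge1$, contradicting local minimality. (Note $\reach(L)<\infty$ here, since $\Ts(L)=1$, so Theorem~\ref{thm:deriv-Th} applies.)

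The crux is to upgrade the merely nonnegative thickness derivative to a strictly positive one, and this is exactly what the thickening field buys us. Let $\eta$ be a thickening field, so $\delta_\eta\Ts(L)>0$, and set $\xi_s:=\xi+s\eta$ for small $s>0$. Because the first variation of length is linear in the field, $\delta_{\xi_s}\length(L)=\delta_\xi\length(L)+s\,\delta_\eta\length(L)$ stays negative once $s$ is small. On the other hand the superlinearity of Corollary~\ref{cor:superlinear} gives $\delta_{\xi_s}\Ts(L)\ge\delta_\xi\Ts(L)+s\,\delta_\eta\Ts(L)\ge s\,\delta_\eta\Ts(L)>0$. Thus for a suitable small $s>0$ the compatible field $\xi_s$ decreases length to first order while \emph{strictly} increasing thickness to first order.

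Now I would flow along $\xi_s$. Letting $\phi^t$ be its flow, Theorem~\ref{thm:deriv-Th} guarantees that $t\mapsto\Ts(\phi^t L)$ is differentiable from the right at $t=0$ with derivative $\delta_{\xi_s}\Ts(L)>0$, whence $\Ts(\phi^t L)>\Ts(L)=1$ for all sufficiently small $t>0$. Simultaneously the negative right derivative of length forces $\length(\phi^t L)<\length(L)$ for small $t>0$, and $\phi^t L\to L$ in the $C^1$ topology. Since the curves $\phi^t L$ eventually lie in any prescribed $C^1$-neighborhood of $L$, they contradict its local minimality. Hence no such $\xi$ exists, $L$ is $\sigma$-critical, and by Lemma~\ref{lem:strongcrit} strongly $\sigma$-critical.

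The step I expect to be the main obstacle is making the competitor genuinely admissible at the boundary: for arc components the deformed curve must keep each endpoint on its affine constraint $H^0_p$ with tangent in $H^1_p$, so that $\phi^t L\in[L]$. Here I would exploit the fact that compatibility of $\xi_s$ is precisely the linearization of these affine conditions. Since the derivatives $\delta_{\xi_s}\length(L)$ and $\delta_{\xi_s}\Ts(L)$ depend only on $\xi_s$ and its arclength derivative along $L$, I am free to replace the naive flow of $\xi_s$ by any isotopy of $\R^3$ that carries each $H^0_p$ into itself and respects the tangent constraints, provided it shares the same first-order data along $L$; such an isotopy exists because the $H^0_p$ are affine and $\xi_s$ is compatible. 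For classical (closed) link types there are no endpoints, and the plain flow of $\xi_s$ already suffices.
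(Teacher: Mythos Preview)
Your proof is correct and follows essentially the same argument as the paper: assume a compatible $\xi$ violates criticality, add a small multiple of the thickening field to make $\delta\Ts$ strictly positive while keeping $\delta\length$ negative (using superlinearity, Corollary~\ref{cor:superlinear}), and conclude that nearby admissible curves are shorter with $\Ts>1$. You supply considerably more detail than the paper, which dispatches the lemma in three sentences and simply asserts the existence of the nearby competitors; your final paragraph on respecting the endpoint constraints addresses a point the paper leaves implicit.
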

\begin{proof}
Suppose $\xi$ is a compatible vector field such that
$\delta_\xi \length (L) <0$, but $\delta_\xi \Ts \ge 0$.
Let $\eta$ be a thickening field, and choose $c >0$ small enough
that $\delta_{\xi + c\eta}\length <0$. By Corollary~\ref{cor:superlinear},
we see $\delta_{\xi + c\eta} \Ts >0$.  Using Lemma~\ref{lem:fix-endpoints},
we can flow to get nearby curves in the same constrained link type
with $\Ts>1$ but smaller length than~$L$, which is a contradiction.
\end{proof}

The rest of our results deal with strongly $\sigma$--critical
curves $L$ with $\Ts(L)=1$,
and thus apply to $\Ts$--regular local minimizers
(ignoring the trivial case of minimizers with $\Ts(L)>1$, classified above).
Our main theorem, the General Balance Criterion, says that
a link is strongly critical if and only if its curvature is balanced by
certain measures on the kinks and struts.

\begin{definition}\label{def:kink and strut}
Let $L$ be a $C^{1,1}$ link.
A \demph{kink measure} for $L$ is a nonnegative Radon measure on $\Kink(L)$.  
A \demph{strut measure} for $L$ is a nonnegative Radon measure on
$\Strut(L)\subset L\times L$ that is invariant under $(x,y) \mapsto (y,x)$.
Given a strut measure $\mu$ on $\Strut(L)$ we define the
associated \demph{strut force measure} $\SFM$ on~$L$ to be
the vector-valued measure obtained by projecting
the vector-valued Radon measure $2 (x-y) \mu(x,y)$ to~$L$ via $(x,y) \mapsto x$.
Thus
$$\int_{\Strut(L)}\ip{x-y}{\xi_x-\xi_y} \,d\mu(x,y)
= \int_L \ip{\xi}{d\SFM}.$$
\end{definition}

Physically one should think of a strut
measure as a system of compressions on the points of self-contact
of the embedded tube around~$L$, or alternatively on
certain compression-bearing elements of length~$1$ connecting
critical pairs of~$L$. The strut force measure then gives
the resultant force along $L$ itself.
The physical interpretation of the kink measure is more elusive in general.

\begin{definition}\label{def:balanced}
A $C^{1,1}$ link~$L$ with $\Ts(L)=1$ is \demph{$\sigma$--balanced}
if there exist a strut measure~$\mu$ (with strut force measure~$\SFM$)
and a kink measure~$\nu$ for~$L$ 
such that for any compatible vector field $\xi$ we have
\begin{equation*}
\delta_\xi \length(L)
   = \int_L \ip{\xi}{d\SFM}
   + \int_{\Kink(L)} \delta_\xi R(c) \, d\nu(c).
\end{equation*}
\end{definition}

We refer to this as the \demph{balance equation}.
Note that it may be viewed as an equation of distributions acting
on vector fields $\xi:\R^3 \to \R^3$.
The kink term has distributional order~$2$ by Lemma~\ref{lem:derivs},
while the other terms have order~$0$: in particular the variation of
length can be written as
$$\delta_\xi \length(L)
=\int_L \ip{\xi'}{T}\ds
=-\int_L \ip{\xi}{\kappa}\ds + \sum_{p \in \bdy L} \ip{\xi}{\veps T},$$
pairing $\xi$ with a vector-valued Radon measure
which is absolutely continuous on the interior
and has outward-pointing atoms at each endpoint.

The General Balance Criterion is
an application of the following version
of the Kuhn--Tucker theorem from linear programming,
which we proved in~\cite{CFKSW1} following ideas of~\cite{luen}.
As usual $C(Y)$ denotes the space of continuous functions on a space $Y$.

\begin{theorem} \label{thm:ourkt}
Let~$X$ be any vector space and~$Y$ be a compact topological space.
For any linear functional~$f$ on~$X$ and any linear map $A: X \to C(Y)$,
the following are equivalent:
\begin{enumerate}[(a)]
\item There exists $\eps>0$ such that for each $\xi\in X$
with $f(\xi)=-1$ there exists $y\in Y$ with $(A\xi)(y) \le -\eps$.
\item There exists a nonnegative Radon measure~$\mu$ on~$Y$
  such that $f(\xi)=\int_Y A(\xi) d\mu$ for all $\xi\in X$.
  \hfill\qedsymbol
\end{enumerate}
\end{theorem}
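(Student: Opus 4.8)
The plan is to prove the two implications separately: (b)$\Rightarrow$(a) is a short averaging argument, while (a)$\Rightarrow$(b) is the substance and will come from a geometric Hahn--Banach separation inside the Banach space $C(Y)$. For (b)$\Rightarrow$(a), assuming the measure $\mu$ exists, I would first dispose of the trivial case $f\equiv0$ (then no $\xi$ has $f(\xi)=-1$ and (a) holds vacuously). Otherwise $\mu\neq0$, so $M:=\mu(Y)$ lies in $(0,\infty)$ because $Y$ is compact. For any $\xi$ with $f(\xi)=-1$ the function $A\xi$ is continuous on the compact space $Y$, hence attains its minimum $m$; from $-1=\int_Y A\xi\,d\mu\ge mM$ I get $m\le-1/M$, so $\eps:=1/M$ works, a minimizing $y$ giving $(A\xi)(y)\le-\eps$.

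For (a)$\Rightarrow$(b), after again setting $\mu=0$ when $f\equiv0$, I assume $f\not\equiv0$ and fix $\xi_0$ with $f(\xi_0)=-1$. The idea is to read (a) as a disjointness statement about convex sets. Let
\[
W:=\{g\in C(Y):g(y)>-\eps\ \text{for all}\ y\in Y\},\qquad
S:=\{A\xi:\ f(\xi)=-1\}.
\]
Here $W$ is convex, contains $0$, and is open (by compactness of $Y$, a sup-norm ball around any $g_0\in W$ of radius $\min_Y g_0+\eps$ stays in $W$), while $S$ is a nonempty affine set; condition (a) says precisely that $S\cap W=\emptyset$, since every $A\xi$ with $f(\xi)=-1$ dips to a value $\le-\eps$. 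Geometric Hahn--Banach then supplies a nonzero continuous $\Lambda$ on $C(Y)$ and a constant $\alpha$ with (after possibly negating) $\Lambda\ge\alpha$ on $W$ and $\Lambda\le\alpha$ on $S$.

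The remaining work is to squeeze three facts out of this single $\Lambda$. First, since $-\eps\mathbf{1}+th\in W$ for every strictly positive $h$ and every $t>0$, letting $t\to\infty$ in $\Lambda(-\eps\mathbf{1}+th)\ge\alpha$ forces $\Lambda(h)\ge0$; thus $\Lambda$ is a positive functional, and letting $t\to0$ gives $\alpha\le-\eps\Lambda(\mathbf{1})$. Second, $S$ is invariant under adding any $A\zeta$ with $\zeta\in\ker f$, so the upper bound $\Lambda\le\alpha$ on $S$ forces $\Lambda(A\zeta)=0$; hence $\ker f\subseteq\ker(\Lambda\circ A)$ and therefore $\Lambda\circ A=c\,f$ for a scalar $c$. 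Third, evaluating at $\xi_0$ gives $-c=\Lambda(A\xi_0)\le\alpha\le-\eps\Lambda(\mathbf{1})\le0$, so $c\ge\eps\Lambda(\mathbf{1})\ge0$; and $c=0$ would force $\Lambda(\mathbf{1})=0$, whence positivity makes $\Lambda$ vanish identically, contradicting $\Lambda\neq0$. So $c>0$, and by the Riesz representation theorem the positive functional $\tfrac1c\Lambda$ is integration against a nonnegative Radon measure $\mu$; then $\int_Y A\xi\,d\mu=\tfrac1c\Lambda(A\xi)=f(\xi)$, which is (b).

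I expect the main obstacle to be not the separation itself but extracting these three structural consequences from one functional -- above all verifying that $\Lambda$ must be \emph{positive} and that the normalizing constant $c$ is strictly positive (the non-degeneracy that lets us divide). Both hinge on the same mechanism, namely that $W$ contains a whole translated ray $-\eps\mathbf{1}+th$ into the positive cone: this is what turns a mere separation into positivity and simultaneously rules out the degenerate $c=0$. Once these are in hand, the Riesz representation theorem finishes immediately.
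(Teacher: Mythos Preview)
Your argument is correct. Note that the paper does not actually prove this theorem: it is quoted from the authors' earlier paper~\cite{CFKSW1} (with the \qed\ marking it as a citation), so there is no in-paper proof to compare against line by line. That said, the paper remarks that the result was proved ``following ideas of~\cite{luen}'', i.e.\ Luenberger's convex-optimization framework, and your approach---geometric Hahn--Banach separation in $C(Y)$ followed by the Riesz representation theorem---is exactly the circle of ideas one expects there. The three structural steps you isolate (positivity of $\Lambda$, constancy of $\Lambda\circ A$ on $\ker f$, and nondegeneracy $c>0$) are the right ones and are handled cleanly.

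Two minor remarks. First, the separation theorem gives a \emph{strict} inequality on the open set~$W$; you write $\Lambda\ge\alpha$ rather than $\Lambda>\alpha$, but your limiting arguments only use the non-strict version, so nothing is lost. Second, the Riesz representation theorem is normally stated for compact \emph{Hausdorff} spaces, while the hypothesis here says only ``compact''. In the paper's applications $Y=\Strut\sqcup\Kink$ sits inside a Euclidean space, so this is harmless; if you want the abstract statement exactly as written you should either add the Hausdorff hypothesis or note that a positive functional on $C(Y)$ factors through the Hausdorff quotient.
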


\begin{theorem}[General Balance Criterion]\label{thm:gbc}
A link~$L$ with $\Ts(L)=1$ is strongly $\sigma$--critical (Definition~\ref{def:critical})
if and only if it is $\sigma$--balanced (Definition~\ref{def:balanced}).
\end{theorem}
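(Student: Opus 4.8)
The plan is to derive Theorem~\ref{thm:gbc} directly from our Kuhn--Tucker Theorem~\ref{thm:ourkt} by making the right choices of the vector space~$X$, the compact space~$Y$, the linear functional~$f$, and the linear map $A\colon X\to C(Y)$. The natural choice is to take $X$ to be the space of compatible $C^2$ vector fields $\xi$ along~$L$, and to set $f(\xi):=\delta_\xi\length(L)$, which is linear in~$\xi$. The subtle point is the construction of $Y$ and~$A$: the derivative formula of Theorem~\ref{thm:deriv-Th} expresses $\delta_\xi\Ts(L)$ as a minimum over the disjoint union $\Strut(L)\sqcup\Kink(L)$, so the natural candidate is $Y:=\Strut(L)\sqcup\Kink(L)$, which is compact, with
$$
(A\xi)(y):=\begin{cases}
\tfrac12\bigl<\tfrac{x-y}{|x-y|},\,\xi_x-\xi_y\bigr> & y=(x,y)\in\Strut(L),\\
\tfrac1\sigma\,\delta_\xi R(c) & y=c\in\Kink(L).
\end{cases}
$$
With these choices, Theorem~\ref{thm:deriv-Th} says precisely that $\delta_\xi\Ts(L)=\min_{y\in Y}(A\xi)(y)$.

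Granting this dictionary, the two conditions of Theorem~\ref{thm:ourkt} translate almost verbatim into the two sides of Theorem~\ref{thm:gbc}. Condition~(a) says there is $\eps>0$ such that $f(\xi)=-1$ forces $\min_{y}(A\xi)(y)\le-\eps$, i.e.\ $\delta_\xi\length(L)=-1$ implies $\delta_\xi\Ts(L)\le-\eps$; this is exactly strong $\sigma$-criticality as in Definition~\ref{def:critical}. Condition~(b) produces a single nonnegative Radon measure~$\mu$ on $Y=\Strut(L)\sqcup\Kink(L)$ with $f(\xi)=\int_Y A(\xi)\,d\mu$. I would then split $\mu$ as $\mu|_{\Strut(L)}+\nu$, where $\nu:=\mu|_{\Kink(L)}$ is a kink measure. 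Writing $\SFM$ for the strut force measure associated to $\mu|_{\Strut(L)}$ as in Definition~\ref{def:kink and strut}, and using the identity $\int_{\Strut(L)}\ip{x-y}{\xi_x-\xi_y}\,d\mu=\int_L\ip{\xi}{d\SFM}$ together with the definition of~$A$ on the kink factor, condition~(b) becomes exactly the balance equation of Definition~\ref{def:balanced}. Thus (a)$\iff$(b) gives strongly $\sigma$-critical $\iff$ $\sigma$-balanced.

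The main obstacle is the symmetrization of the strut part of~$\mu$. Kuhn--Tucker delivers a measure on $Y$, but the definition of a strut measure requires invariance under the interchange $(x,y)\mapsto(y,x)$, and $A$ as written treats the two endpoints asymmetrically. I would address this by noting that $(A\xi)$ on a strut is already symmetric in the pair $\{x,y\}$ once we pair with $\xi_x-\xi_y$ and the unit vector $\tfrac{x-y}{|x-y|}$ (the sign flips in both factors cancel), so we may replace any measure $\mu$ obtained from the theorem by its symmetrization $\tfrac12(\mu+\iota_*\mu)$ without changing the value of $\int_Y A(\xi)\,d\mu$; this yields an interchange-invariant strut measure, and the corresponding strut force measure is well defined. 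A secondary point to check is that the hypotheses of Theorem~\ref{thm:ourkt} are met: $Y$ must be compact (true, since $\Strut(L)$ is closed in the compact set $\Crit(L)\subset L\times L$ and $\Kink(L)$ is closed in the compact set $\CLb$), and $A$ must genuinely land in $C(Y)$, i.e.\ $(A\xi)(y)$ must be continuous in~$y$ for fixed~$\xi$, which follows from continuity of the integrand on $\Strut(L)$ and from Lemma~\ref{lem:derivs} together with continuity of the coordinates $(x,T,\kappa)$ on~$\Kink(L)$.

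Finally, I would remark that the equivalence is exactly the content we want because Lemma~\ref{lem:strongcrit} and Lemma~\ref{lem:mincrit} already tie strong $\sigma$-criticality to $\Ts$-constrained local minimality under $\Ts$-regularity; once Theorem~\ref{thm:gbc} is in hand, a balanced link is characterized purely in terms of the existence of the measures~$\mu$ and~$\nu$, with no further reference to the variational formulation. I expect the argument to be short, the only genuinely nontrivial steps being the verification that the Clarke-derivative formula of Theorem~\ref{thm:deriv-Th} matches $\min_{y}(A\xi)(y)$ on the nose and the interchange-symmetrization of the strut measure.
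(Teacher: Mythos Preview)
Your proposal is correct and follows essentially the same approach as the paper: apply Theorem~\ref{thm:ourkt} with $X$ the space of compatible vector fields, $f(\xi)=\delta_\xi\length(L)$, $Y=\Strut(L)\sqcup\Kink(L)$, and $A$ given by the strut and kink integrands from Theorem~\ref{thm:deriv-Th}, so that conditions~(a) and~(b) translate directly into strong $\sigma$-criticality and $\sigma$-balance. You are in fact more thorough than the paper's own proof in explicitly checking compactness of~$Y$, continuity of $A\xi$, and the interchange-symmetrization of the strut measure---points the paper leaves implicit.
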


\begin{proof}
We apply Theorem~\ref{thm:ourkt} with $X$ being
the space of compatible vector fields~$\xi$ and $f$ the
linear functional $f(\xi) := \delta_\xi \length (L)$.
The idea is to capture the derivative $\delta_\xi \Ts(L)$ as
the minimum value of a continuous function $A(\xi)$.
Thus following Theorem~\ref{thm:deriv-Th} we take
$Y:= \Strut \sqcup \Kink$ and define $A\co X \to C(Y)$ via
$$A(\xi):= \begin{cases}
    \frac 1 2\ip{x-y}{\xi_x - \xi_y},   & (x,y)\in \Strut,\\
    \sigma^{-1}\delta_\xi R(c),         & c\in \Kink.
\end{cases}$$
The conclusion of Theorem~\ref{thm:ourkt} is then exactly
that $L$ is strongly critical if and only if it is balanced.
\end{proof}

The special case of a critical knot with no kinks was analyzed
by Schuricht and von der Mosel~\cite{MR2033143}.  Of course
in this case our balance criterion reduces to theirs, involving
only the strut measure.  We next consider other links that
can be balanced by strut measure alone.

\begin{proposition}\label{prop:gehr-thick}
Suppose $L$ is a critical link for the Gehring problem of minimizing
length subject to maintaining distance~$1$ between components.
Then $L$ is also $\sigma$--critical
for any $\sigma$ for which $\Ts(L)\ge1$.
\end{proposition}
\begin{proof}
The main theorem of~\cite{CFKSW1} gives a strut measure on the set of
Gehring struts (connecting points at distance~$1$ on distinct components).
Under the assumption that $\Ts(L)\ge1$, these Gehring struts
are also struts in our sense.  Even if there are kinks or further
struts (bewteen points on a single component) the Gehring
strut measure alone balances the link, so by the General
Balance Criterion it is $\sigma$--critical.
\end{proof}

Consider for instance, the known ropelength-minimizing 
links from~\cite{MR2003h:58014}, where each component is a convex
planar curve built from straight segments and arcs of unit circles.
They have $\Ts=1$ for any $\sigma\in[\half,1]$ and thus are global minimizers
also for these more restrictive problems.
By Lemma~\ref{lem:mincrit} they are then strongly $\sigma$--critical.
The same strut measure that balances them for the Gehring problem~\cite{CFKSW1}
also shows they are $\sigma$--balanced, again for any $\sigma\le1$.
(For $\sigma=1$ the curved sections are kinks and balance can be
achieved in other ways as well.)

The Gehring $\tau$--clasp of \cite[Section 9]{CFKSW1} has
maximum curvature $1/\sqrt{1-\tau^2}$ at the tip.
Since neither component approaches itself closely,
for $\sigma \le\sqrt{1-\tau^2}$ we have $\Ts=1$.
For these values of~$\sigma$,
the strut measure used for the Gehring problem shows
the clasp is also $\sigma$--balanced.
Below in Section~\ref{sec:clasp} we explore what happens
for larger stiffnesses, when the clasps include kinks.

Similarly, we described in \cite[Section 10]{CFKSW1} a Gehring-critical
configuration $B_0$ of the Borromean rings.  It curvature is bounded
by $1.52802$ (and no component approaches itself closely), so the same
strut measure shows it is $\sigma$--balanced for any $\sigma<0.65444$.
We also described a nearby configuration $B_2$ (with length less
than $1\,\%$ more than that of $B_0$) where each component is made of arcs
of unit circles centered on the other components.  For $\sigma=1$
these arcs are kinks, and it is not hard to show
(using Lemma~\ref{lem:delta arc} below) that $B_2$ is $1$--balanced.
We have computed $\sigma$--balanced configurations also for intermediate
stiffnesses and plan to report on these separately.

\subsection{Kink-free arcs with special strut patterns}\label{sec:spec-strut}

The kink term in the General Balance Criterion is a bit arcane;
in Section~\ref{sec:regulated} we will give nicer versions under
certain minimal smoothness assumptions.  But of course the
kink term is irrelevant along kink-free arcs (or even kinked
arcs over which the kink measure vanishes), so we can apply
the General Balance Criterion directly.

\begin{lemma}\label{lem:bal-nokink}
Suppose $L$ is $\sigma$--balanced and $A$ is an open subarc
over which the kink measure vanishes.  Then along~$A$
the strut force measure is absolutely continuous,
given by $\SFM=-\kappa\ds$.
\end{lemma}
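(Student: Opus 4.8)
The plan is to start from the balance equation of Definition~\ref{def:balanced}, specializing to vector fields $\xi$ supported in the open subarc $A$. Since the kink measure $\nu$ vanishes on $A$, for any compatible $\xi$ with support in $A$ the kink term $\int_{\Kink(L)}\delta_\xi R(c)\,d\nu(c)$ drops out entirely, leaving
\begin{equation*}
\delta_\xi\length(L)=\int_L\ip{\xi}{d\SFM}.
\end{equation*}
Here one must be slightly careful: a strut $(x,y)\in\Strut(L)$ contributes to the strut force measure $\SFM$ at the point $x$, so I would restrict to struts with at least one endpoint in $A$, and use that $\SFM$ is defined by projection via $(x,y)\mapsto x$. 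The goal is to identify the restriction of $\SFM$ to $A$.

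Next I would expand the left-hand side using the formula for the first variation of length already given in the excerpt, namely
\begin{equation*}
\delta_\xi\length(L)=-\int_L\ip{\xi}{\kappa}\ds+\sum_{p\in\bdy L}\ip{\xi}{\veps T}.
\end{equation*}
Because $\xi$ is supported in the \emph{open} arc $A$, it vanishes near the endpoints of $L$, so the boundary sum disappears and the left-hand side becomes simply $-\int_A\ip{\xi}{\kappa}\ds$. Equating the two expressions gives
\begin{equation*}
\int_A\ip{\xi}{d\SFM}=-\int_A\ip{\xi}{\kappa}\ds
\end{equation*}
for every compatible $\xi$ supported in $A$.

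Finally, since this holds for all such test vector fields $\xi$, I would invoke the fundamental lemma of the calculus of variations (in its measure-theoretic form): two vector-valued Radon measures that pair identically against all continuous compactly supported $\xi$ on $A$ must coincide. Hence $\SFM\restriction A=-\kappa\ds$ as measures on $A$. In particular the right-hand side $-\kappa\ds$ is absolutely continuous with respect to arclength (as $L$ is $C^{1,1}$, so $\kappa\in L^\infty$), which forces the restricted strut force measure $\SFM\restriction A$ to be absolutely continuous as claimed, with density $-\kappa$.

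The main obstacle I anticipate is purely bookkeeping at the level of the measures rather than any deep analytic difficulty: one must make sure that the strut force measure, which a priori is the projection to $L$ of a measure on $\Strut(L)\subset L\times L$, genuinely has no hidden atomic or singular part over $A$ once it is forced to equal the absolutely continuous measure $-\kappa\ds$. This is handled automatically by the uniqueness in the fundamental lemma, but it is worth stating explicitly that the equality of the two measures is what \emph{proves} the absolute continuity of $\SFM$ over $A$ — it is a conclusion, not an assumption. No genuinely hard estimate is needed, since all the analytic work (expressing thickness as a min-function, computing $\delta_\xi\Ts$, and establishing the balance equation) has already been carried out upstream.
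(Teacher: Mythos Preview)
Your proposal is correct and follows essentially the same route as the paper's proof: test the balance equation against compatible $\xi$ supported in $A$, drop the kink term, and identify $\SFM$ with $-\kappa\,ds$ via the first-variation-of-length formula. The paper writes $\delta_\xi\length(L)=\int_A\ip{\xi'}{T}\,ds$ and then integrates by parts, whereas you quote the already-integrated form $-\int_A\ip{\xi}{\kappa}\,ds$; these are equivalent, and your explicit appeal to the fundamental lemma just spells out what the paper leaves implicit.
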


\begin{proof}
For any vector field $\xi$ vanishing on $L\setm A$ the kink term
in the balance equation vanishes, so we get
$$\int_A\ip{\xi}{d\SFM} = \delta_\xi\length(L) = \int_A\ip{\xi'}{T}\ds.$$
Integrating by parts gives the desired result.
\end{proof}

As a first application, we can easily analyze ``free'' sections
of a critical curve, with no struts or kinks.  (This result was
first discussed -- in the case of a $C^2$ knot -- by
Gonzalez and Maddocks~\cite{gm}.)

\begin{proposition}
If $L$ is $\sigma$--balanced and $A$ is a subarc
with zero strut force measure and zero kink measure,
then $A$ is a line segment.
\end{proposition}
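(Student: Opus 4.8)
The plan is to apply Lemma~\ref{lem:bal-nokink} directly. Since $A$ is a subarc over which the kink measure vanishes, that lemma tells us the strut force measure along $A$ is absolutely continuous and equals $-\kappa\ds$. But we are given that the strut force measure is \emph{zero} on $A$. Combining these, we get $\kappa\ds = 0$ as a measure along $A$, which forces the curvature vector $\kappa$ to vanish almost everywhere on $A$.

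From $\kappa = 0$ almost everywhere I would conclude that $T$ is constant along $A$. Indeed, since $L$ is $C^{1,1}$, the tangent $T$ is a Lipschitz function of arclength with $T' = \kappa$ almost everywhere; an absolutely continuous function whose derivative vanishes almost everywhere is constant. Hence $T$ is a fixed unit vector on $A$, and integrating $\gamma' = T$ shows that $\gamma$ parametrizes a straight line segment. This is the desired conclusion.

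The one point requiring slight care is the logical step from ``$\SFM = -\kappa\ds$'' and ``$\SFM = 0$'' to ``$\kappa = 0$ a.e.'' Lemma~\ref{lem:bal-nokink} produces the strut force measure as an absolutely continuous measure with density $-\kappa$ relative to arclength; setting this equal to the zero measure identifies the densities almost everywhere, which is where the $C^{1,1}$ regularity (so that $\kappa$ is defined a.e.\ by Rademacher's theorem, as noted in Section~\ref{sect:curves}) does its work. I expect no genuine obstacle here: the substantive content is already packaged in Lemma~\ref{lem:bal-nokink}, and the remaining argument is the elementary fact that a $C^{1,1}$ arc of zero curvature is straight. The proof is therefore quite short, amounting to invoking the lemma and then observing that vanishing curvature means a line segment.
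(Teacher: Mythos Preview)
Your proof is correct and follows exactly the same approach as the paper: invoke Lemma~\ref{lem:bal-nokink} to get $\SFM=-\kappa\,ds$ along $A$, combine with $\SFM=0$ to conclude $\kappa=0$, and hence $A$ is straight. The paper's version is simply the one-line statement ``By the lemma $\kappa\,ds=-\SFM=0$ along the subarc,'' leaving implicit the elementary step (which you spell out) that a $C^{1,1}$ arc with $\kappa=0$ a.e.\ is a line segment.
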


\begin{proof}
By the lemma $\kappa\ds=-\SFM=0$ along the subarc.
\end{proof}

We now consider the case of two subarcs in ``one-to-one contact''.

\begin{proposition}
Let $L$ be $\sigma$--balanced.  Suppose
$A$ and~$B$ are two subarcs with zero kink measure
and suppose they are in one-to-one contact,
meaning there is a homeomorphism $\phi\co A\to B$
such that there is a strut from~$a$ to $\phi(a)$ for each $a\in A$
but no other struts touching~$A\cup B$.
Then $A\cup B$ forms a piece of a standard symmetric double helix
of pitch at least~$1$ (or of a circle).
\end{proposition}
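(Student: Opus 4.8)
The plan is to exploit the two balance conditions that the one-to-one contact imposes. Along each of the two arcs $A$ and $B$, the kink measure vanishes, so by Lemma~\ref{lem:bal-nokink} the strut force measure is absolutely continuous with $\SFM=-\kappa\ds$. This immediately tells us that both arcs are $C^2$ (indeed the curvature is continuous, being the density of an absolutely continuous measure), and more importantly it forces the strut direction to point along the curvature vector. First I would parametrize $A$ by arclength as $\gamma(s)$ and write its strut partner as $\phi(\gamma(s))=\delta(s)$, a curve on $B$. Each strut is a critical pair realizing $\Ts(L)=1$, so the segment from $\gamma(s)$ to $\delta(s)$ has length~$1$ and lies in the normal plane to~$L$ at both endpoints (Corollary~\ref{cor:strut}).

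Next I would extract the differential equations governing the strut geometry. Since $\SFM=-\kappa\,ds$ on $A$, and $\SFM$ is built by projecting $2(x-y)\,d\mu(x,y)$ to $L$, the strut vector $\gamma(s)-\delta(s)$ must be parallel to~$\kappa_A(s)$, hence antiparallel to the unit normal of~$A$; by symmetry the same holds along~$B$. Writing the unit strut vector as $u(s):=\gamma(s)-\delta(s)$ (of length~$1$), the conditions $u\perp T_A$ and $u\perp T_B$ together with $u\parallel\kappa_A$ and $u\parallel\kappa_B$ give a rigid system. Differentiating $|u|^2=1$ shows $\langle u',u\rangle=0$, and differentiating the orthogonality relations relates the curvatures of the two arcs, the reparametrization speed $|\delta'(s)|$, and the rate at which the strut direction rotates. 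The key structural fact to pull out is that the curvatures are \emph{constant} and equal: the balance condition equates the strut density on each side, which pins down the turning.

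The main obstacle will be organizing the frame computation cleanly enough to recognize the standard helix. Concretely, I would set up an orthonormal frame adapted to the strut, say $\{T_A, u, T_A\times u\}$, and write the Frenet-type equations for both $\gamma$ and $\delta$ in this frame. The constraints $u\perp T_A$, $u\perp T_B=\pm T_\delta$, together with $\dot u$ orthogonal to $u$, force $T_A$ and $T_B$ to make a fixed angle across the strut and force the strut midpoints to trace a straight axis. Once the frame equations show constant curvature and constant torsion with a fixed relation between them, the arcs are helical; the two helices share an axis and are related by a screw symmetry, giving the symmetric double helix. The pitch-angle bound $\ge\nicefrac\pi4$ should emerge from the requirement $\Ts(L)=1$: if the pitch angle were smaller, the helices would curve too sharply, either violating $R\ge\sigma$ (producing a kink, contrary to hypothesis) or forcing the struts to fail the critical-pair normality condition. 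The degenerate case where the torsion vanishes yields a circle, which is why that alternative appears in the statement.

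The one point demanding care is the \emph{symmetry} claim: one-to-one contact gives a homeomorphism $\phi$ but \emph{a priori} no metric compatibility. I would recover the symmetry by noting that the strut measure is interchange-invariant (Definition~\ref{def:kink and strut}), so the densities $d\mu$ seen from $A$ and from $B$ agree; combined with $\SFM=-\kappa\,ds$ on each side this equalizes the two curvature functions and shows $\phi$ is in fact an isometry (up to the constant speed factor), which is what promotes ``two helical arcs sharing an axis'' to ``a standard \emph{symmetric} double helix.''
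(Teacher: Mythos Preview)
Your outline heads in the right direction but has two issues worth flagging.

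First, your reasoning for the pitch-angle bound is off. The constraint $\ge\nicefrac\pi4$ does not come from a curvature bound or from kink avoidance: a double helix of any pitch has constant curvature, and nothing in the balance equation on $A\cup B$ sees~$\sigma$. The paper's argument is that for pitch angle below $\nicefrac\pi4$, the squared distance from a fixed point on one strand to the other strand has a local \emph{maximum} (not minimum) at the point directly across; hence there are nearer critical pairs, the thickness drops below~$1$, and the hypothesis of one-to-one contact (``no other struts touching $A\cup B$'') is violated. So the bound is a compatibility condition with the strut hypothesis, not a curvature constraint.

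Second, your plan to grind through Frenet-frame ODEs and extract ``constant curvature, constant torsion'' may be workable but is heavier than needed, and you have not actually supplied the step that pins the curvature down. The paper's route is shorter: from $\SFM=-T'$ and the interchange symmetry of~$\mu$ one gets, for any subarc $aa'\subset A$,
\[
T(a)-T(a')=\SFM(aa')=-\SFM\bigl(\phi(aa')\bigr)=T\bigl(\phi(a')\bigr)-T\bigl(\phi(a)\bigr),
\]
so $W:=T(a)+T\bigl(\phi(a)\bigr)$ is a \emph{constant} vector. One then checks that $N(a):=\phi(a)-a$ is the Frenet principal normal on both sides, hence $N\perp W$, hence $\ip{W}{T}$ is constant and equal on $A$ and~$B$. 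This immediately shows $\phi$ is an isometry and that the strut midpoints move at constant speed in the direction~$W$ while $N$ rotates uniformly about~$W$; the helix drops out without solving any ODE. Your proposed route via strut densities and the Jacobian of~$\phi$ can in principle recover the isometry, but the conserved-vector argument does it in one line.

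A small side remark: absolute continuity of~$\SFM$ gives only an $L^1$ density, not a continuous one, so ``the curvature is continuous, being the density of an absolutely continuous measure'' is not a valid inference. The paper sidesteps this by taking $N(a)=\phi(a)-a$, which is manifestly continuous.
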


\begin{remark}
We could start with the weaker assumption of a (weakly) monotonic
family of struts, where a single point $a\in A$ might touch
a whole subarc $B'\subset B$ or vice versa.  In fact this cannot happen,
since $B'$ is a subarc of the unit normal circle to~$A$ at~$a$,
so the tangent vector has nonzero change along~$B'$; this would imply
an atom of strut force measure at~$a$ which is impossible since $\SFM$
is absolutely continuous on a kink-free arc.
\end{remark}

\begin{proof}
Change the orientation on~$B$ if necessary to assume that $\phi$
is orientation-preserving.
Since the kink measure vanishes on $A\cup B$, the lemma applies,
giving $\SFM=-T'$.  For any subarc ${aa'}\subset A$, by the
symmetry of~$\SFM$ we get
$$T(a)-T(a') = \SFM({aa'})
             = -\SFM\bigl(\phi({aa'})\bigr)
	     = T\bigl(\phi(a')\bigr) - T\bigl(\phi(a)\bigr).$$
This means that
$W:= T(a)+T\bigl(\phi(a)\bigr)$
is a constant vector along~$A$.

Now define the continuous vector field
$N(a) := \phi(a)-a$
along~$A$.  Since struts have unit length and $\phi(a)\in N_aL$, this is
a unit normal field.  Since $\SFM$ acts in the direction $-N$ of the
single strut, we deduce that $T'=\snorm\kappa N$ almost everywhere.
That is, $N$ is the Frenet principal normal.

Reversing the roles of~$A$ and~$B$, we see equally well that
$N(a) \perp T\bigl(\phi(a)\bigr)$.
(Indeed the principal normal at $\phi(a)\in B$ is $-N(a)$.)
It follows that $N(a)\perp W$, which in turn implies that
$\ip{W}{T(a)}$ is constant along~$A$.
But from the definition of~$W$, we have
$$\ip{W}{T(a)}=1+\ip{T(a)}{T(\phi(a))} = \ip{W}{T(\phi(a))},$$
so $\ip{W}{T}$ is the same constant along~$B$.

Consider first the degenerate case where $W=0$, meaning $T(\phi(a))=-T(a)$.
The arcs $A$ and $B$ stay in the plane of $T(a)$ and $N(a)$, and indeed
are centrally symmetric around the midpoint of any strut.  Since $a$
and $\phi(a)$ are always at unit distance, it follows that $A$ and~$B$
are antipodal arcs of a circle of diameter~$1$, a degenerate double helix
of pitch zero.

Clearly this case only arises when $\sigma=\half$.
Since points near $\phi(a)$ are at distance less than~$1$ from~$a$,
it follows that $A$ and~$B$ belong to the same component of~$L$.
Furthermore, by the remark after Lemma~\ref{lem:nonopp-kappa},
this component is the full circle of diameter~$1$.
Since this circle is kinked, balance could alternatively
be obtained through a kink measure instead of the strut measure.

For the general case $W\ne0$, think of $W$ as a vertical vector.
Since $N\perp W$, each strut connects points at equal height.
Since $\ip{W}{T}$ is the same constant along each curve,
the homeomorphism~$\phi$ is actually an isometry.
Consider now the midpoints $M(a):=(a+\phi(a))/2$ of the struts.
Since $\phi$ is an isometry, differentiating gives $M'=W/2$,
meaning these midpoints move at constant speed in direction~$W$.
Since $T$ makes a constant angle with~$W$,
the strut vectors $N(a)$ also rotate at constant speed in the
plane perpendicular to~$W$.  The arcs $A$ and~$B$, given as
$M\mp N/2$, thus form a symmetric double helix as claimed.

(In the degenerate case where $\snorm W = 2$, we have
$T(\phi(a))=T(a)\ident W/2$.  That is, both $A$ and $B$ are straight segments,
giving a degenerate double helix of infinite pitch.
The strut measure vanishes on the struts connecting $A$ and~$B$.)

Consider the squared distance function from a fixed point
$(\nicefrac{-1}2,0,0)\in A$ to the other strand
$B=\{(\cos\theta,\sin\theta,k\theta)/2\}$ of a helix of pitch~$k$.
Since its second derivative is $(k^2-\cos\theta)/2$, we see that it is convex
(with a single minimum at the claimed strut) for $k\ge1$.
For smaller pitch, the distance has a local maximum at $\theta=0$, so the
thickness of the double helix is less than~$1$ and the curves
are not in one-to-one contact.
\end{proof}
    
This agrees with the result of Maddocks and Keller~\cite{MR89g:73037}
which states (under different hypotheses) that two intertwined ropes
in equilibrium with one-to-one
contact should form a double helix where the radii of the helices
depend on the tension in the ropes. Schuricht and von der
Mosel~\cite{MR2033143} show in this situation that the curvature
vectors of~$A$ and~$B$ must point along the common strut, without
carrying the analysis through to prove that the curves form a double helix.

\section{Balance with regulated kinks}\label{sec:regulated}
The General Balance Criterion can be hard to apply without some
control on the kink set. In the balance equation, as we have already
noted, the second-order kink term is equated to strut and length terms
which are distributions of order zero in the variation vector field~$\xi$.
If we knew that kinked arcs were $C^2$, then there
would be at most one kink over each point of~$L$ and furthermore,
Corollary~\ref{cor:easy derivative of R} would give the kink term
in terms of the second arclength derivative of~$\xi$. In this case,
standard distributional calculus (cf.~\cite{Duistermaat:2010bs})
then says this second-order term can be integrated by parts. This
would give us a simpler form of the balance criterion as an equality
of measures in which the variational vector field does not appear.

Our goal is to carry out as much of this program as possible
for less smooth links,
like those in our examples.  Over a junction point along a
piecewise $C^2$ curve, for instance, there may be two kinks.
Our first theorem below says that we can essentially ignore such points:
the kink measure is nonatomic even after projection down to~$L$,
so even any countable subset of~$L$ can be ignored.

In the later parts of this section we discuss the balance criterion
under certain mild regularity assumptions about the kinked arcs of~$L$;
these suffice first to guarantee a single kink over all but
a countable subset of~$L$, then to transfer the balance equations
to distributions along~$L$, and thus to apply the calculus of distributions.
We end up with friendlier versions of the Balance Criterion, and can
bootstrap to greater smoothness of the critical link~$L$.

\subsection{The projection of the kink measure is nonatomic}
The kink measure~$\nu$ for a balanced link~$L$ is supported on
$\Kink(L)$, which we view as a subset of the unit normal bundle $N_1(L)$
via $(x,n)\longleftrightarrow (x,T(x),n/\sigma)$.  Thus we think of
$\nu$ as a measure on this circle bundle with support on $\Kink$.
We recall the projection $\Pi\co \Circ\to\R^3$, in particular
$\Pi\co N_1(L)\to L$.  If $\nu$ is a kink measure for~$L$, then
we write $\nua$ for the projection of $\sigma\nu$ to~$L$, which of course
is supported on $\Pi\Kink(L)$.  (The factor of $\sigma$ here simplifies
several formulas later.)

Using Lemma~\ref{lem:derivs} we can write the kink term in the
balance equation as
\begin{multline*}
\int_{\Kink}\delta_{\xi} R(x,n)\,d\nu(x,n) \\
=  2\int_L\ip{\xi'}{T}\,d\nua(x)
         - \sigma^2\int_{\Kink} \ip {D^2_x\xi (T,T)}{n}\,d\nu(x,n) \\
         - \sigma\int_{\Kink} \ip {D_x\xi(n)}{n} \,d\nu(x,n).
\end{multline*}
We note the linear and quadratic dependence on~$n$ in
the last two terms; these could also be written as integrals over~$L$,
now with respect to projected vector- and tensor-valued measures.
Thus it is really only the projections to~$L$ of the three
measures $\nu$, $n\nu$ and $(n\otimes n)\nu$ which enter
into the balance equation.  (What this essentially means
is that if we Fourier-decompose the measure $\nu$ on
each normal circle, then it is only the components of
order~$0$, $1$ and~$2$ which matter.)

Our first result shows that no single normal circle has positive
mass.  This will later allow us to ignore countably many
points along~$L$.

\begin{theorem}\label{thm:nonatomic}
If $L$ is $\sigma$--balanced, then the projection $\nua$ of the kink
measure $\nu$ to $L$ is nonatomic.
\end{theorem}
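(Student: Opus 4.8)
The plan is to argue by contradiction: suppose $\nua$ has an atom at some point $x_0\in L$, of mass $a_0:=\nua(\{x_0\}) = \sigma\,\nu(\Pi^{-1}\{x_0\})>0$. I would treat $x_0$ as an interior point (an endpoint is handled identically with a one-sided bump, and there are only finitely many endpoints). Normalize coordinates so that $x_0=0$ and $T_0:=T(x_0)=e_1$. Substituting the three-term expansion of the kink term coming from Lemma~\ref{lem:derivs}, the balance equation of Definition~\ref{def:balanced} reads $K(\xi)=\Theta(\xi)$ for every compatible $\xi$, where
\begin{multline*}
K(\xi) = 2\int_L\ip{\xi'}{T}\,d\nua \\
  - \sigma^2\int_{\Kink}\ip{D^2_x\xi(T,T)}{n}\,d\nu
  - \sigma\int_{\Kink}\ip{D_x\xi(n)}{n}\,d\nu,
\end{multline*}
and $\Theta(\xi):=\delta_\xi\length(L)-\int_L\ip{\xi}{d\SFM}$ is a distribution of order zero: indeed $\snorm{\Theta(\xi)}\le C\sup_L\snorm{\xi}$, with $C$ controlled by the total curvature of~$L$, the number of endpoints, and the total mass of~$\SFM$.

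Next I would build test fields that detect the atom only through the first (order-zero-in-$n$) term. Fix an even bump $\rho$ with $\rho(0)=1$, set $g_\eps(u):=u\,\rho(u/\eps)$, and let $\xi_\eps(y):=g_\eps(\ip{y}{e_1})\,\chi(y)\,e_1$, where $\chi$ is a fixed spatial cutoff equal to~$1$ near~$x_0$ and supported in a ball meeting~$L$ in a single arc through~$x_0$. This choice has three useful features: $\sup_L\snorm{\xi_\eps}\le\sup\snorm{g_\eps}=O(\eps)\to0$, so $\Theta(\xi_\eps)\to0$; the arclength derivative $g_\eps'$ is uniformly bounded with $g_\eps'(0)=1$; and $g_\eps''=O(1/\eps)$, supported where $\snorm{\ip{y}{e_1}}\le\eps$, i.e.\ on~$L$ along the subarc $\snorm{s}\lesssim\eps$ about~$x_0$, well inside the region where $\chi\equiv1$. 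A short check confirms that the $\nabla\chi$ terms never meet the support of~$g_\eps$ along~$L$, so along~$L$ the field is effectively $g_\eps(\ip{y}{e_1})e_1$ and the derivatives entering~$K$ are computed directly from~$g_\eps$.

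The decisive structural observation is that over the atom point every kink normal satisfies $n\perp T(x_0)=e_1$. Hence, evaluating the three terms of $K(\xi_\eps)$ at $x=x_0$, the last two integrands carry a factor $\ip{n}{e_1}=0$ and contribute nothing, while the first contributes exactly $2\,g_\eps'(0)\,\ip{e_1}{e_1}^2\,a_0=2a_0$, independent of~$\eps$. It then remains to show the contributions away from~$x_0$ vanish as $\eps\to0$. For the first and third terms this is immediate from $\sup\snorm{g_\eps'}<\infty$ together with $\nua(\{0<\snorm{s}\le\eps\})\to0$ and $\nu(\Pi^{-1}\{0<\snorm{s}\le\eps\})\to0$ (continuity from above of finite measures). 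The delicate term is the second, where $g_\eps''=O(1/\eps)$ threatens to blow up; here I would invoke that $L$ is $C^{1,1}$, so $T$ is Lipschitz and $\snorm{\ip{n}{e_1}}=\snorm{\ip{n}{e_1-T(x)}}\le\Lip(T)\snorm{s}\le\Lip(T)\,\eps$ on the subarc. This $O(\eps)$ gain exactly cancels the $O(1/\eps)$ from~$g_\eps''$, leaving a uniformly bounded integrand supported on a set of $\nu$-mass tending to~$0$, so the term vanishes. I expect this compensation to be the crux of the argument: it is precisely where the Lipschitz regularity of the tangent is indispensable.

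Putting the pieces together, $K(\xi_\eps)\to 2a_0$ while $\Theta(\xi_\eps)\to0$; since $K(\xi_\eps)=\Theta(\xi_\eps)$ for every~$\eps$, we conclude $a_0=0$, contradicting $a_0>0$. Hence $\nua$ has no atoms.
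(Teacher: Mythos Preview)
Your argument is correct and follows the same overall strategy as the paper: localize with a family of test fields $\xi_\eps$ whose $L^\infty$ norm is $O(\eps)$ so the order-zero terms die, then show the kink integrand is uniformly bounded (via the $C^{1,1}$ Lipschitz tangent) and detects the atom with a fixed nonzero value.

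The difference is in the choice of test field. The paper takes the \emph{normal} field $\xi^\eps(x)=\phi(x/\eps)\,x^\perp$, where $x^\perp$ is the component of~$x$ perpendicular to~$T_0$; this localizes to a small ball (no separate spatial cutoff is needed) and the atom is detected through the term $-\sigma\ip{n}{D_x\xi(n)}$, yielding $\delta_\xi R(0,n)=-\sigma$. You instead take a \emph{tangential} field $\xi_\eps=g_\eps(\ip{y}{e_1})\chi(y)\,e_1$, localized to a slab (hence the extra cutoff~$\chi$), and the atom is detected through the first-order term $2\ip{T}{D_x\xi(T)}$, yielding the value~$2\sigma$. Both routes hinge on the same $C^{1,1}$ compensation: in the paper it appears as $\snorm{T^\perp}=O(\eps)$ and $\snorm{x^\perp}=O(\eps^2)$, in yours as $\snorm{\ip{n}{e_1}}\le\snorm{T(x)-T_0}=O(\eps)$. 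The paper's field is slightly more economical (one bump instead of bump-times-cutoff), but your tangential variant is equally valid and arguably makes the mechanism of detection---isolating the single term with no factor of~$n$---more transparent.
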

 
\begin{proof}
Fix a point on $L$, which by translation we assume is at the origin.
We must show that $\nu\bigl(\Pi^{-1}\{0\}\bigr)=0$.
We will obtain this equation as the limit of the balance
equation applied to a family of variation fields $\xi^\eps$.

Let $\bump$ denote a smooth nonnegative bump function supported
on the unit ball, with $\bump\ident1$ in a small neighborhood of~$0$.
Given any vector $v\in\R^3$ we write $v^\perp := v - \ip v{T_0} T_0$
for its part perpendicular to the tangent vector $T_0:=T(0)$ at the origin.
Then we define
$$\xi^\eps(x) := \bump\bigl(\nicefrac x\eps\bigr)\, x^\perp.$$
Since $\xi^\eps$ is supported on the $\eps$--ball its $L^\infty$ norm is $O(\eps)$.
Thus in the limit $\eps\to0$ the order~$0$ (strut and $\delta\length$) terms
in the balance equation approach~$0$ (even though the
strut force measure might have an atom at the origin).
Therefore the kink term approaches~$0$ as well.

We easily calculate the derivatives
\begin{align*}
D_x\xi^\eps(v)         &=    D_{x/\eps}\,\bump(v)\, x^\perp / \,\eps 
                      + \bump\bigl(\nicefrac x\eps\bigr)\,  v^\perp, \\
D^2_x\xi^\eps(v,v)     &=    2D_{x/\eps}\,\bump(v)\, v^\perp / \,\eps
                      + D^2_{x/\eps}\,\bump(v,v) \, x^\perp / \,\eps^2.
\end{align*}
Note that $D\xi^\eps$ is $O(1)$ while $D^2\xi^\eps$
is $O\bigl(\nicefrac1\eps\bigr)$.
At the origin (independent of~$\eps$) we have $D_0\xi^\eps(v)=v^\perp$,
while the second derivatives vanish.

Note that $\xi^\eps$ is supported on the $\eps$--ball; since
$\reach(L)\ge\Ts(L)=1$ we know (from \cite[Lemma~3.1]{DDS})
that for small~$\eps$ this ball contains a single arc~$\alpha^\eps$
of~$L$ whose length is at most $2\arcsin\eps$.
Now suppose $x\in \alpha^\eps$ is at arclength
$s=O(\eps)$ from~$0$.  Using the curvature bound
and the fact that $\sigma\le\half$, we get
$\snorm {T(x)-T_0}\le \nicefrac{\snorm s}{\sigma}\le 2\snorm s$
and thus $\snorm {x-sT_0} \le s^2$.
In particular, $\snorm{T^\perp}=O(\eps)$ and $\snorm{x^\perp}=O(\eps^2)$
along the whole arc $\alpha^\eps$.

The integrand in the kink term is
\begin{equation*}
\delta_{\xi^\eps} R(x,n)
=  2\sigma\ip {T}{D_x \xi^\eps (T)}
  - \sigma\ip n{D_x\xi^\eps(n)} 
  - \sigma^2 \ip n{D^2_x\xi^\eps (T,T)}.
\end{equation*}
First we show that this integrand is uniformly bounded as $\eps\to0$.
Clearly the first two terms are $O(1)$.
Writing
$$\ip n{D^2_x\xi^\eps (T,T)} = 
  2D_{x/\eps}\,\bump(T)\, \ip{n}{T^\perp} / \,\eps
 + D^2_{x/\eps}\,\bump(T,T) \, \ip{n}{x^\perp} / \,\eps^2$$
shows -- using our estimates on $T^\perp$ and $x^\perp$ --
that the third term is also $O(1)$.
We also note that at $x=0$ the integrand reduces to
$$\delta_{\xi^\eps} R(0,n)=0-\sigma\ip{n}{n}-0=-\sigma,$$
independent of $\eps$.
		      
Now as $\eps \to 0$ the arcs $\alpha^\eps$ shrink to the single point~$\{0\}$,
so since the kink integrand is uniformly bounded, the kink
integral $\int_{\Pi^{-1}(\alpha^\eps)}\delta_{\xi^\eps} R(x,n)\,d\nu$
approaches the integral over $\Pi^{-1}\{0\}$, which as
noted is $-\sigma\nu\bigl(\Pi^{-1}\{0\}\bigr)$, independent of $\eps$.
Thus this measure is zero, as desired.
\end{proof}  

\subsection{Regularly balanced links} \label{sec:reg-bal}
To reformulate the balance criterion in a nicer way it
will be important to consider curves with regulated
second derivative. While regulated functions are usually defined
(as in \cite[Chapter~2.1]{Bourbaki}) on an interval in~$\R$, it is
equivalent to define them on Riemannian $1$--manifolds; in our context we
speak of submanifolds $M$ of a $C^1$ curve~$L$.  (Any $1$--manifold is
a countable union of components, each a circle or a open, half-open
or compact interval.)  Note that a submanifold $M\subset L$ with
empty boundary is exactly an open subset $U\subset L\setm \d L$.

Let $M\subset L$ be a submanifold of a $C^1$ curve.
A \demph{regulated function} on~$M$ is
a function $f\co D\to \R^n$ defined on a dense subset $D\subset M$
whose one-sided limits exist at every $x\in M$.
An interior point $x\in M\setm \d M$ is called
a \demph{jump point} of~$f$ if $f(x-)\ne f(x+)$.
For $\eps>0$ we let $J_\eps$ denote the set on
which the jump is large:
$$J_\eps(f) :=
   \bigl\{x\in M\setm \d M : \snorm{\hs f(x-)-f(x+)}\ge\eps\bigr\}.$$
If $M$ is compact then $J_\eps$ is finite; for any $M$ it follows that $J_\eps$
is countable and closed in~$M$ (though not necessarily in~$L$).
The union $J=J(f):=\bigcup J_\eps(f)\subset M$ is the countable set
of all jump points (which may of course be dense).
Let $\bar f\co M\to\R^n$ denote any function such that
$\bar f(x)\in \bigl\{f(x-),\,f(x+)\bigr\}$ for each $x$.
(Note that $\bar f=f$ at all but countably many points of~$D$,
a statement which is vacuous if $D$ is countable.)
Then $\bar f$ is continuous on $M\setm J$ but has a jump discontinuity
at each $x\in J$.  The following lemma is then immediate:

\begin{lemma}\label{lem:reg-mollify}
Let $f$ be a regulated function on~$M$.
Consider the smoothings $f_\eps:=\bar f * \phi_\eps$ obtained
by convolution with a sequence of mollifiers
(cf.~\cite[Chapter~1]{Duistermaat:2010bs}).
Here $f_\eps$ is defined away from an $\eps$--neighborhood of~$\d M$.
For any $x\in M\setm (\d M\cup J)$, the
continuity of~$\hs\bar f$ at~$x$ implies that $f_\eps(x)\to\bar f(x)$.
In particular we have this pointwise convergence
at all but countably many points of~$M$.
\qed\end{lemma}

We will say that an absolutely continuous function $g\co M\to \R^n$
has \demph{regulated derivative} if its arclength derivative $g'$
(which is defined almost everywhere) is regulated.
Note that in this case the mean value theorem implies that
$g'(x\pm)$ are the one-sided derivatives of~$g$, so
these exist everywhere, and $g$ is differentiable exactly at
those~$x$ where $g'(x+) = g'(x-)$.

\begin{lemma}\label{reg-inverse}
Let $f\co (a,b)\to(c,d)$ be a $C^{1,1}$ diffeomorphism
with $\half\le f'\le 1$.
Its inverse $g$ is also $C^{1,1}$ with $1\le g'\le 2$.
Furthermore $f$ has regulated second derivative if and only if $g$ does.
\end{lemma}
\begin{proof}
The chain rule gives $g'\bigl(f(x)\bigr)=\nicefrac{1}{f'(x)}$;
therefore if $f'$ is $L$--Lipschitz then $g'$ is $8L$--Lipschitz.
The second derivative $g''$ exists almost everywhere and from
the formula $g''\bigl(f(x)\bigr)=\sfrac{-f''(x)}{f'(x)^3}$
we see that it has a one-sided limit at $f(x)$ if and only if
$f''$ has a one-sided limit at~$x$.
\end{proof}

\begin{definition}\label{def:regular balance}
Suppose a link $L$ is $\sigma$--balanced (Definition~\ref{def:balanced}) by strut measure~$\mu$
and kink measure~$\nu$.  We say $L$ is \demph{regularly balanced} if
there is an open subset $U\subset L$ such that
$\nua(L\setm U)=0$ and the unit tangent~$T$ has regulated
derivative~$\kappa$ on~$U$.
\end{definition}

We conjecture that every $\sigma$--balanced link is regularly balanced,
but this seems difficult to prove.  But there is a condition
on~$L$ which will ensure this.
\begin{definition}\label{def:reg kinks}
We say a $C^{1,1}$ curve $L$ has \demph{regulated kinks} if $\Pi\Kink$ is
contained in a submanifold $M\subset L$ on which $T$ has regulated derivative.
(As above, this means $M$ is a countable union of circles and intervals.)
\end{definition}
With this in hand, we prove
\begin{lemma}\label{lem:reg = balance}
Suppose $L$ has regulated kinks.
Then $L$ is regularly balanced~(Definition~\ref{def:regular balance})
if and only if $L$ is $\sigma$--balanced~(Definition~\ref{def:balanced}).
(By Theorem~\ref{thm:gbc},
this holds if and only if $L$ is strongly $\sigma$--critical.)
\end{lemma}
\begin{proof}
It only remains to show that if $L$ is $\sigma$--balanced
then it is regularly balanced.
Let $M$ be the submanifold on which $T$ has regulated derivative
and set $U:=M\setm \d M$. We know $\nua$ is supported on $\Pi\Kink\subset M$.
Since $\d M$ is countable and $\nua$ is nonatomic, we have $\nua(L\setm U)=0$.
\end{proof}

In the rest of this section we analyze regularly balanced links
to get several equivalent conditions that are easier to apply.
First we show that we can reformulate the balance equation to
involve distributions along~$L$ instead of on~$\R^3$; then
we integrate by parts twice, ending with a balance equation
that can be stated as an equality of measures with no explicit
variation vector field.  This is the condition we use later
to show our examples are (regularly) balanced.

Suppose $L$ is regularly balanced.
We let~$J$ denote the jump set of~$\kappa$ on~$U$; since $J$ is countable
and $\nua$ is nonatomic, $\nua(J)=0$.
Over each point of~$U\setm J$ there is at most one kink;
a kink exists only when $\snorm\kappa=\nicefrac1\sigma$.
(Over each point in~$J$ there are at most two kinks, but we
may ignore these with regards to the kink measure.)

Now we claim that we may replace $U$ (in the definition of regularly balanced)
by an open subset on which $\snorm\kappa$ is bounded away from zero.
Writing $c:=\nicefrac1{2\sigma}$ for notational convenience,
remove from $U$ the set $J_c$ where $\kappa$ jumps by at least~$c$.
We may do this because $J_c$ is closed in~$U$ and, being countable,
has measure zero with respect to the nonatomic~$\nua$.
Now let $A$ be the closure -- in this new~$U$ -- of $\{x\in U : \kappa(x)<c\}$.
At any point in~$A$, some one-sided limit of $\kappa$ is at most $c$,
while on $\Pi\Kink$ some one-sided limit of $\kappa$ is $2c=\nicefrac1\sigma$.
Since all jumps on~$U$ are by less than~$c$,
we see $A$ is disjoint from $\Pi\Kink$, so $\nua(A)=0$.
Thus we may remove~$A$ from~$U$, proving the claim.

From now on we assume we have adjusted~$U$ in this way.
It follows that the unit principal normal vector
$N:= \kappa/\snorm\kappa$ is well-defined as a regulated function on~$U$
(with jumps only on~$J$).  We can rewrite the kink term
in the balance equation in terms of this normal vector,
using Corollary~\ref{cor:easy derivative of R}:

\begin{lemma}\label{lem:nbal-deriv}
On a regularly balanced link~$L$, the kink measure~$\nu$ is uniquely determined
by its projection~$\nua$, and the kink term in the balance equation becomes
\begin{equation*}
\int_{\Kink} \delta_\xi R(x,n)\,d\nu(x,n)
= \int_U \bigl( 2\ip{\xi'}{T} - \sigma\ip{\xi''}{N} \bigr)\,d\nua.
\rlap{\hspace{21mm}\qedsymbol}
\end{equation*}
\end{lemma}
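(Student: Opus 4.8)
The plan is to exploit the single-kink structure over $U\setm J$ that was set up in the preceding paragraphs: once there is at most one kink over each such point, the fiber coordinate is pinned down and the kink measure collapses to a measure on~$L$. First I would check that $U\setm J\subset E_L$. On~$U$ the tangent~$T$ (Lipschitz, since $L$ is $C^{1,1}$) has regulated derivative~$\kappa$, so at any $x\notin J$ the one-sided limits of~$\kappa$ agree; as noted after Lemma~\ref{reg-inverse}, this forces the two-sided derivative $\gamma''(x)=\kappa(x)$ to exist, so $x\in E_L$. At such a kink point $\snorm\kappa=\nicefrac1\sigma$, hence $\kappa=N/\sigma$, and the unique kink is the osculating circle $(x,T(x),\kappa(x))$, identified with $(x,N(x))$ in $N_1(L)$.

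Next I would localize~$\nu$. Since $\sigma\nu$ projects to~$\nua$, for every Borel set $A\subset L$ we have $\nu\bigl(\Pi^{-1}(A)\bigr)=\nicefrac1\sigma\,\nua(A)$. Because $\nua(L\setm U)=0$ by the definition of regular balance, and $\nua(J)=0$ since $J$ is countable and $\nua$ is nonatomic by Theorem~\ref{thm:nonatomic}, the measure~$\nu$ is concentrated on $\Pi^{-1}(U\setm J)$. On this set $\Kink$ meets each fiber in the single point $(x,N(x))$, so $\Pi$ is injective there; hence $\nu$ must be the pushforward of $\nicefrac1\sigma\,\nua$ under the Borel graph map $x\mapsto(x,N(x))$. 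This already proves the first assertion, that $\nu$ is uniquely determined by~$\nua$.

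It then remains to evaluate the integrand and integrate out the fiber. Applying Corollary~\ref{cor:easy derivative of R} at the kink $(x,N(x))$, where $R=\sigma$ and $\kappa=N/\sigma$, gives
$$\delta_\xi R\bigl(x,N(x)\bigr)=2\sigma\ip{\xi'}{T}-\sigma^3\ip{\xi''}{N/\sigma}=2\sigma\ip{\xi'}{T}-\sigma^2\ip{\xi''}{N},$$
a quantity depending only on the base point~$x$. Since this integrand factors through~$\Pi$, pushing $\nu$ forward via $\Pi_*\nu=\nicefrac1\sigma\,\nua$ integrates out the fiber and yields
$$\int_{\Kink}\delta_\xi R\,d\nu=\frac1\sigma\int_U\bigl(2\sigma\ip{\xi'}{T}-\sigma^2\ip{\xi''}{N}\bigr)\,d\nua=\int_U\bigl(2\ip{\xi'}{T}-\sigma\ip{\xi''}{N}\bigr)\,d\nua,$$
exactly the claimed formula.

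The argument is essentially bookkeeping, so there is no serious computational obstacle; the one point requiring care is the reduction of~$\nu$ to a graph. This is precisely where the earlier replacement of~$U$ (via Lemma~\ref{lem:reg-open}) by an open set on which $\snorm\kappa\ge\nicefrac1{2\sigma}$ pays off: it guarantees that $N=\kappa/\snorm\kappa$ is a well-defined regulated, hence Borel, normal field on~$U$, so the graph map is measurable and Corollary~\ref{cor:easy derivative of R} legitimately applies with $\xi''$ the genuine second arclength derivative along~$L$. Once the graph structure of $\Kink\cap\Pi^{-1}(U\setm J)$ is in hand, both conclusions follow formally.
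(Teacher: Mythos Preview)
Your proof is correct and follows exactly the line the paper intends: the lemma is stated with a bare \qed\ because the preceding paragraph has already done all the work (at most one kink over each point of $U\setm J$, $\nua(J)=0$, $N$ well-defined and regulated), and you have simply written out the bookkeeping that makes this explicit. Your verification that $U\setm J\subset E_L$, the reduction of~$\nu$ to the pushforward of $\nicefrac1\sigma\,\nua$ under the graph map, and the substitution $R=\sigma$, $\kappa=N/\sigma$ into Corollary~\ref{cor:easy derivative of R} are exactly the omitted steps.
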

Here we note that in the last term, both $N$ and $\xi''$ are
regulated functions (with jumps only on~$J$).
Since their product is also regulated and $\nua$ is nonatomic,
the integral is well-defined.

By this lemma, the balance equation for a regularly balanced~$L$ can be
expressed entirely in terms of derivatives of the vector field~$\xi$ along
the curve~$L$.  Of course, $\xi$ here is still a $C^2$ vector field
in space, and the balance equation is an equation of distributions
on such vector fields.  Our next result shows, however, that we can translate
it into an equation of distributions on $C^2$ vector fields along~$L$.
(We recall that the $C^2$ structure on~$L$ comes not directly from
the embedding in~$\R^3$ but instead from the local identification
with~$\R$ given by an arclength parametrization.)
This sets us up to use the standard calculus of distrubutions: by examining the
highest-order term, we can integrate by parts and bootstrap to higher
smoothness.

\begin{theorem}\label{thm:distr-on-L}
Let $L$ be a link with $\Ts(L)=1$.  Then $L$ is regularly
balanced (Definition~\ref{def:regular balance}) by strut force measure~$\SFM$ and kink measure~$\nu$
if and only if
$$\int_L \ip{\eta'}{T}\ds - \int_L \ip{\eta}{d\SFM}
= \int_U \bigl( 2\ip{\eta'}{T} - \sigma\ip{\eta''}{N} \bigr)\,d\nua$$
for all compatible $C^2$ vector fields $\eta\in C^2(L,\R^3)$ along~$L$.
\end{theorem}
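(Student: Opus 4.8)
The plan is to read both conditions as the vanishing of a single functional of the arclength $2$-jet of the test field along~$L$, the two conditions differing only in the admissible class of test fields. Writing
$$\Lambda(\zeta):=\int_L \ip{\zeta'}{T}\ds-\int_L\ip{\zeta}{d\SFM}-\int_U\bigl(2\ip{\zeta'}{T}-\sigma\ip{\zeta''}{N}\bigr)\,d\nua,$$
I observe that, by Definition~\ref{def:balanced} together with Lemma~\ref{lem:nbal-deriv} (which in turn uses Corollary~\ref{cor:easy derivative of R}), the statement that $L$ is regularly balanced by~$\SFM$ and~$\nu$ unwinds to $\Lambda(\Xi|_L)=0$ for every compatible $C^2$ field~$\Xi$ on~$\R^3$, where $(\Xi|_L)''=D^2_x\Xi(T,T)+D_x\Xi(\kappa)$; the asserted identity is instead $\Lambda(\eta)=0$ for every compatible $\eta\in C^2(L,\R^3)$. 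Since $\Lambda$ depends on its argument only through the value, the first arclength derivative, and the scalar $\ip{\cdot''}{N}$ — the last paired against the finite nonatomic measure~$\nua$ — it is continuous in the topology~$\tau$ given by uniform convergence of the field and its first derivative together with $L^1(\nua)$-convergence of $\ip{\cdot''}{N}$. Thus the whole theorem reduces to showing that the $2$-jets of the two classes of test fields have the same $\tau$-closure.

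The reverse implication is the easy direction. Given a compatible ambient field~$\Xi$, its restriction $\Xi|_L$ is only $C^{1,1}$ along~$L$, with second arclength derivative $D^2_x\Xi(T,T)+D_x\Xi(\kappa)$ bounded and, since~$\kappa$ is regulated on~$U$ (part of the given data), regulated there, with jumps confined to a countable set~$J$; by Theorem~\ref{thm:nonatomic}, $\nua(J)=0$. Mollifying $\Xi|_L$ in arclength produces compatible fields $\eta_k\in C^2(L,\R^3)$ with $\eta_k\to\Xi|_L$ and $\eta_k'\to(\Xi|_L)'$ uniformly, while Lemma~\ref{lem:reg-mollify} gives $\eta_k''\to(\Xi|_L)''$ at every continuity point, hence $\nua$-almost everywhere and boundedly; dominated convergence then yields $\tau$-convergence. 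Passing to the limit in the hypothesis $\Lambda(\eta_k)=0$ gives $\Lambda(\Xi|_L)=0$, so~$L$ is regularly balanced. The only care needed is near~$\bdy L$, where the mollification must be arranged to preserve compatibility, a routine adjustment.

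The forward implication is the main obstacle, because the restriction of a $C^2$ ambient field can never reproduce an arbitrary $C^2$ field along~$L$ exactly: its second arclength derivative $D^2_x\Xi(T,T)+D_x\Xi(\kappa)$ inherits the discontinuities of~$\kappa$ and so cannot equal the continuous~$\eta''$. One must therefore approximate, and the construction I would use is to approximate the curve rather than extend along it. Mollifying the Lipschitz tangent~$T$ (whose derivative is the regulated~$\kappa$) produces smooth curves $\gamma_k\to\gamma$ in~$C^1$ with curvatures $\kappa_k$ that, by Lemma~\ref{lem:reg-mollify}, converge to~$\kappa$ $\nua$-almost everywhere and boundedly. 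Along each smooth~$\gamma_k$ the $2$-jet $(\eta,\eta',\eta'')$ extends to a compatible ambient $C^2$ field~$\xi_k$ by the usual tubular-neighborhood construction, with uniform bounds since the~$\gamma_k$ have uniformly bounded geometry. Restricting~$\xi_k$ back to the original curve~$\gamma$ and using $\gamma_k\to\gamma$, $\kappa_k\to\kappa$ together with equicontinuity of the second derivatives~$D^2\xi_k$ then gives $\xi_k|_\gamma\to\eta$ and $(\xi_k|_\gamma)'\to\eta'$ uniformly and $(\xi_k|_\gamma)''\to\eta''$ $\nua$-almost everywhere and boundedly — here again Theorem~\ref{thm:nonatomic} is what makes the jump set~$J$ of~$\kappa$ negligible. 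Dominated convergence gives $\tau$-convergence, and $\Lambda(\xi_k|_\gamma)=0$ (regular balance) passes to $\Lambda(\eta)=0$. The technical heart is thus the uniform, equicontinuous extension along the approximating smooth curves, which is exactly where the merely $C^{1,1}$ regularity of~$L$ must be circumvented.
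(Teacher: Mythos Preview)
Your reverse implication is essentially the paper's argument: mollify the restriction $\Xi|_L$ in arclength, invoke Lemma~\ref{lem:reg-mollify} for pointwise convergence of second derivatives off the countable jump set~$J$, and apply dominated convergence (with a small adjustment near~$\bdy L$ to preserve compatibility).

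Your forward implication, however, takes a different route from the paper and has a genuine gap. The paper does \emph{not} mollify the curve and extend via tubular neighborhoods of smoothed curves~$\gamma_k$. Instead it works locally near each $p\in L$ and uses the \emph{linear} projection $f(x)=\ip{T_0}{x}$ onto the tangent line at~$p$. Since $f$ is $C^\infty$ on~$\R^3$ regardless of the regularity of~$L$, all the non-smoothness is isolated in the one-dimensional reparametrization $g=(f|_V)^{-1}$, which by Lemma~\ref{reg-inverse} is $C^{1,1}$ with regulated second derivative on~$f(U\cap V)$. One then sets $\bar\eta:=\eta\circ g$, mollifies $\bar\eta$ in one real variable, and defines $\xi_i:=(\bar\eta*\phi_i)\circ f$. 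This is automatically smooth on~$\R^3$ because it is a smooth function composed with a linear map, and the required convergence along~$L$ follows directly from Lemma~\ref{lem:reg-mollify} applied to~$\bar\eta$.

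The gap in your construction is the claimed equicontinuity of~$D^2\xi_k$. A tubular-neighborhood extension along~$\gamma_k$ has second spatial derivatives depending on the curvature~$\kappa_k$; while the~$\kappa_k$ are uniformly bounded (since~$\gamma$ is $C^{1,1}$), they are \emph{not} equicontinuous: mollifying a merely bounded (indeed jump-discontinuous) $\kappa$ at scale~$\eps_k$ yields $\kappa_k'=\kappa*\phi_{\eps_k}'$ of order~$\nicefrac1{\eps_k}$ near each jump, so the moduli of continuity of~$D^2\xi_k$ degrade as $k\to\infty$. Worse, since $|\gamma(s)-\gamma_k(s)|$ is of order~$\eps_k$ while the third derivatives of~$\xi_k$ are of order~$\nicefrac1{\eps_k}$, the comparison of $D^2\xi_k$ at~$\gamma(s)$ versus at~$\gamma_k(s)$ yields only an $O(1)$ discrepancy, not $o(1)$; hence even the pointwise convergence $(\xi_k|_\gamma)''\to\eta''$ that you need for dominated convergence is not established. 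The paper's linear-projection device is exactly what sidesteps this: because $D^2f\equiv0$, no curvature of any auxiliary curve ever enters the spatial second derivative of~$\xi_i$.
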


Note that this is the same balance equation we already have for
$C^2$ fields on space -- the only difference is that it is now
supposed to hold for $C^2$ fields along~$L$.  For such fields~$\eta$,
\demph{compatible} means again that at each endpoint $p\in\d L$
we have $\eta(p)$ tangent to~$H^0_p$ and $\eta'(p)\in H^1_p$.

\begin{proof}
First suppose this balance equation holds
for all compatible $\eta\in C^2(L,\R^3)$.
Given a compatible $C^2$ vector field $\xi$ on space,
to check the balance equation for~$\xi$
it suffices to find a sequence of compatible smooth fields $\eta_i$ along~$L$
with uniformly bounded $C^2$ norms such that
$\snorm{\eta_i-\xi}_{C^1(L)} \to 0$
and $\eta_i''\to\xi''$ pointwise on~$U\setm J$.
For then each term in the balance equation for~$\eta_i$
approaches the corresponding term for~$\xi$ (in Lemma~\ref{lem:nbal-deriv}).
In particular, to handle the second-order term
$\int_{U\setm J}\ip{N}{\eta_i''}\,d\nua$
we use the dominated convergence theorem.
But the construction of the $\eta_i$ is easy: we simply start with
the restriction of $\xi$ to~$L$ and smooth it by convolving
with a sequence of mollifiers.  (Small modifications near the endpoints
suffice to maintain the compatibility conditions.)
Since $\xi''$ is regulated on~$U$
with jumps only on~$J$, the desired pointwise convergence follows
from Lemma~\ref{lem:reg-mollify}.

Conversely, if $L$ is regularly balanced, then given any compatible $C^2$
field~$\eta$ along~$L$ it
suffices to find a sequence of smooth $\xi_i$ on~$\R^3$
that have uniformly bounded $C^2$ norms,
that converge to $\eta$ in $C^1(L)$ and whose second derivatives
converge pointwise on~$U\setm J$.
Indeed it suffices to construct the $\xi_i$ locally in a neighborhood
of any given point $p\in L$; these pieces can be patched together
with a partition of unity.  By translation we assume $p=0$ and
let $T_0$ be the tangent there.  The idea is to extend $\eta$ to $\bar\eta$
on a neighborhood of $0\in\R^3$ by making $\bar\eta$ constant
on each plane perpendicular to $T_0$, and then smooth this in space.

More precisely, consider the function $f\co x\mapsto\ip{T_0}{x}$.
Restricted to~$L$, it is $C^{1,1}$ and has regulated second derivative
on~$U$.  On some neighborhood $V\subset L$ of~$p$ we have $\half < f'\le 1$,
so in particular $f|_V$ is a $C^1$ diffeomorphism onto its image
$(a,b)\subset\R$.  Lemma~\ref{reg-inverse} applies to show
the inverse function $g\co (a,b)\to V$ is a $C^{1,1}$
parametrization with speed in $[1,2)$, and has regulated second
derivative on the subset~$f(U\cap V)$.
Thus if we set $\bar\eta := \eta\after g$ then $\bar\eta$
is also $C^{1,1}$ with regulated second derivative on~$f(U\cap V)$.
To get the $\xi_i$, we simply smooth $\bar\eta$
by convolving it with a sequence of mollifiers:
$$\xi_i := (\bar\eta*\phi_i)\after f.$$
The desired properties again
follow immediately using Lemma~\ref{lem:reg-mollify}.
\end{proof}

On a regularly balanced link~$L$, we have discussed
the principal normal $N$ as a regulated function on~$U$.
For convenience we extend it arbitrarily outside of~$U$.
(Of course for points $x\in E_L$ with $\kappa\ne0$ we are free to
pick $N=\kappa/\snorm\kappa$ but this will be irrelevant.)
In the balance equation of Theorem~\ref{thm:distr-on-L},
since $\nua$ vanishes outside~$U$,
we can thus equally well write the integral over~$U$ as an integral
over all of~$L$.

For our further analysis, it will be important to make use of
the space~$\BV(M,\R^n)$ of functions of bounded (essential) variation,
again on a submanifold $M\subset L$ of a $C^1$ curve.
For $k\ge 1$ we write $W^{k,\BV}(M,\R^n)$ for the Sobolev space of functions
whose $k^\text{th}$ (distributional) derivatives (with respect to arclength)
lie in $\BV(M,\R^n)$.  We write $\BV_\loc(M,\R^n)$ for the space of functions
with locally bounded variation in $M$, and similarly for
$W^{k,\BV}_\loc(M,\R^n)$. We recall a few facts about $\BV$ functions.
(Compare the discussion in \cite[Section~1]{Sul-FTC} and the references there.)

\begin{itemize}
\item Any $f\in\BV_\loc(M,\R^n)$ (after modification on a
set of measure zero) is regulated, that is, has only jump discontinuities.
(On the other hand, of course not even every continuous function is in $\BV_\loc$.)
\item We have $f\in\BV_\loc(M,\R^n)$ if and only if its distributional
derivative is a vector-valued Radon measure (with atoms at the jumps of~$f$).
\item Any function $g\in W^{1,\BV}_\loc(M,\R^n)$ is continuous
and locally Lipschitz.  (A continuous curve is in $W^{1,\BV}$
if and only if it has finite total curvature.)
\end{itemize}

\begin{lemma}\label{lem:bv}
Suppose $L$ is regularly balanced.
Then the projected kink measure~$\nua$ is absolutely
continuous with respect to $ds$ and indeed there exists
$\KFM\in W^{1,\BV}(L,\R^3)$ such that $N\nua=\KFM\ds$
and $\Phi(p)\perp H^1_p$ at each endpoint $p \in \d L$.
The balance equation for~$L$ can then be written as
\begin{equation*}
\int_L \ip{\eta}{d\SFM}
= \int_L \ip{\eta'}{T-2\snorm\KFM T - \sigma \KFM'}\ds.
\end{equation*}
\end{lemma}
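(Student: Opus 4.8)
The plan is to begin from the distributional balance equation of Theorem~\ref{thm:distr-on-L}, extract scalar equations, and bootstrap the regularity of the kink measure in two stages before a single integration by parts. Write $\Theta:=N\nua$ for the vector-valued measure on~$L$; since $\snorm N=1$ its total variation is $\snorm\Theta=\nua$, so it suffices to produce a density $\KFM\in W^{1,\BV}(L,\R^3)$ with $\Theta=\KFM\ds$, whence $\nua=\snorm\KFM\ds$. To reduce to scalar distributions on the interior of~$L$, I would test the balance equation with fields $\eta=\psi V$ for scalar $\psi\in C^2$ compactly supported in the interior and constant $V\in\R^3$ (such $\eta$ is automatically compatible). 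Writing $a_V:=\ip{V}{T}$, $\Theta_V:=\ip{V}{\Theta}$, $\SFM_V:=\ip{V}{\SFM}$, and $\lambda_V:=a_V(\ds-2\,d\nua)$, the identity of Theorem~\ref{thm:distr-on-L} collapses to the scalar distributional equation
\begin{equation*}
\sigma\,\Theta_V''=\lambda_V'+\SFM_V,
\end{equation*}
whose right-hand side has order at most one.

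The first bootstrap establishes absolute continuity. Since $\SFM_V$ is a measure, $D(\sigma\Theta_V'-\lambda_V)=\sigma\Theta_V''-\lambda_V'=\SFM_V$, so $\sigma\Theta_V'-\lambda_V$ is represented by a $\BV$ function; as $\lambda_V$ is itself a measure, it follows that $\Theta_V'$ is a measure. Now I would invoke the elementary fact that a signed measure whose distributional derivative is again a measure must be absolutely continuous with $\BV$ density: comparing distribution functions, if $Dg=\Theta_V$ and $Dh=\Theta_V'$ with $g,h\in\BV$, then $D(Dg-h)=\Theta_V'-\Theta_V'=0$ forces $Dg=h+c$, so $\Theta_V=(h+c)\,\ds$ is absolutely continuous. (In particular this excludes atoms and singular continuous parts, refining Theorem~\ref{thm:nonatomic}.) Applying this with $V=e_1,e_2,e_3$ shows each component of~$\Theta$ is absolutely continuous, so $\Theta=\KFM\ds$ and $\nua=\snorm\KFM\ds$.

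The second bootstrap upgrades $\KFM$ to $W^{1,\BV}$. With $\nua$ now absolutely continuous, $\lambda_V=a_V(1-2\snorm\KFM)\ds$ is absolutely continuous with $\BV$ density $\ell_V$, a product of the Lipschitz function $a_V$ with the $\BV$ function $\snorm\KFM$. Writing $\KFM_V:=\ip{V}{\KFM}$ so that $\Theta_V=\KFM_V\ds$, the scalar equation becomes $\sigma\KFM_V''=\ell_V'+\SFM_V$; hence $\sigma\KFM_V'-\ell_V$ again has the measure $\SFM_V$ as its derivative and so lies in $\BV$, giving $\KFM_V'\in\BV$. Taking $V=e_1,e_2,e_3$ yields $\KFM\in W^{1,\BV}(L,\R^3)$.

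Finally I would substitute $N\nua=\KFM\ds$ into Theorem~\ref{thm:distr-on-L}, using $\ip{\eta''}{N}\,d\nua=\ip{\eta''}{\KFM}\ds$, and integrate the second-order term by parts, which is legitimate since $\KFM\in W^{1,\BV}$ is Lipschitz. This produces the asserted bulk integrand $T-2\snorm\KFM T-\sigma\KFM'$ together with the endpoint sum $\sigma\sum_{p\in\bdy L}\veps\ip{\eta'(p)}{\KFM(p)}$. Because the identity holds for all compatible~$\eta$, and the first derivatives $\eta'(p)$ range over all of $H^1_p$ independently at each endpoint, this boundary sum must vanish identically, forcing $\KFM(p)\perp H^1_p$ and leaving exactly the stated balance equation. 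The main obstacle is the measure-regularity step and its correct two-stage iteration: one must track carefully which objects are measures and which are honest $\BV$ functions, and verify that the bootstrap genuinely removes both the atomic and the singular continuous parts of~$\nua$ before the second-order term can be integrated by parts; the endpoint orthogonality then falls out of the boundary terms.
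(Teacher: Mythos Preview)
Your argument is correct and rests on the same distributional bootstrap as the paper's proof: the balance identity of Theorem~\ref{thm:distr-on-L} forces the second-order kink term to have order at most one, hence $N\nua=\KFM\,ds$ with $\KFM\in\BV$, and then a second pass gives $\KFM\in W^{1,\BV}$.

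The organizational difference is worth noting. The paper establishes only $\KFM\in\BV$, integrates by parts immediately (so that ``$\KFM'\,ds$'' must be read as the measure~$d\KFM$), and then extracts both the boundary condition $\KFM(p)\perp H^1_p$ and the upgrade $\KFM'\in\BV$ from the resulting order comparison. You instead reduce to scalar equations via $\eta=\psi V$ and iterate the bootstrap twice \emph{before} any integration by parts, so that when you finally integrate the $\ip{\eta''}{\KFM}$ term the integrand is already Lipschitz and the computation is unambiguous. This costs you the componentwise bookkeeping but buys a cleaner final step; the paper's route is shorter but leans more heavily on the reader to interpret $\KFM'\,ds$ correctly at the intermediate stage.
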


\begin{proof}
The balance equation from Theorem~\ref{thm:distr-on-L}
equates $\int_L \ip{\eta''}{N\,d\nua}$ with terms of
order at most one in~$\eta$, so this term is also order one.
Thus we can write  $N\nua=\KFM\ds$ with $\KFM\in\BV(L,\R^3)$.
Since $\nua$ is nonnegative, it follows that $\KFM=\snorm\KFM N$; of course
$\snorm\KFM \in \BV(L)$ is nonnegative and vanishes (a.e.) outside~$U$.
Now we may integrate by parts to obtain
\begin{equation*}
-\int_L\ip{\eta''}{N}\,d\nua
= -\int_L\ip{\eta''}{\KFM}\ds
= \int_L\ip{\eta'}{\KFM'\ds} - \sum_{p\in\d L} \ip{\veps\eta'}{\KFM}
\end{equation*}
where $\veps\eta'$ is the derivative of~$\eta$
in the outward direction $\veps T$.
Note that the value $\KFM(p)$ of a $\BV$ function at an endpoint is
well-defined as the one-sided limit.

Thus we may write the balance equation from Theorem~\ref{thm:distr-on-L} as
$$\int_L \ip{\eta'}{T}\ds -\int_L \ip{\eta}{d\SFM}
= \int_L \ip{\eta'}{2\snorm\KFM T + \sigma\KFM'}\ds
- \sigma\sum_{p\in\d L} \ip{\veps \eta'}{\KFM}.$$
Since the left-hand side has order~$0$, so does the right-hand side.
Our first conclusion is that the atomic terms $\ip{\eta'}{\KFM}$ vanish at
each endpoint.  Since a compatible vector field~$\eta$ can have
an arbitrary value $\eta'(p)\in H^1_p$ at $p\in\d L$, this simply
means that $\KFM(p)\perp H^1_p$.  The balance equation then reduces
to the form given in the lemma.

Our second conclusion is that the integrand
$2\snorm\KFM T + \sigma\KFM'$ (which gets paired with~$\eta'$)
is a $\BV$ function.  Since $T$ and $\snorm\KFM$ are both $\BV$,
so is their product and we conclude that $\KFM'\in \BV$,
that is, that $\KFM\in W^{1,\BV}(L,\R^3)$, as desired.
In particular $\KFM$ is continuous.
\end{proof}

A few comments on the boundary conditions are in order.
Let $p\in\d L$ be an endpoint.  By continuity it is clear
that $\KFM(p)$ is a normal vector.  Thus if $\dim H^1_p=1$
(that is, if the tangent vector at~$p$ is fixed) then the
condition $\KFM\perp H^1_p$ is automatic.  If on the other
hand $\dim H^1_p=3$ (that is, if the tangent vector is free)
then of course $\KFM\perp H^1_p$ means $\KFM(p)=0$.

\begin{corollary}\label{cor:phi0onJ}
If $L$ is regularly balanced
then the vector field $\KFM$ of Lemma~\ref{lem:bv}
vanishes on the jump set $J\subset U$ of~$\kappa$.
\end{corollary}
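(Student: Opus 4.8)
The plan is to show that $\KFM$ can vanish nowhere on $J$ by proving the contrapositive at the level of sets: wherever $\KFM\ne0$, the curvature vector $\kappa$ is in fact continuous; since $J$ is by definition the set where $\kappa$ jumps, it must be disjoint from $\{\KFM\ne0\}$. I would begin by collecting the structural facts from Lemma~\ref{lem:bv}: the field $\KFM$ is continuous, it satisfies $\KFM=\snorm\KFM N$ on~$U$ with $N=\kappa/\snorm\kappa$ the principal normal, and the projected kink measure is $\nua=\snorm\KFM\ds$. Set $O:=\{x\in L:\snorm{\KFM(x)}>0\}$, which is open because $\KFM$ is continuous.

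Next I would bring in the support of the kink measure. Recall that over $U\setm J$ a kink exists only where $\snorm\kappa=\nicefrac1\sigma$, so $\nua$ gives no mass to $\{\snorm\kappa\ne\nicefrac1\sigma\}\cap U$ (the countable set $J$ being $\nua$-null, since $\nua$ is absolutely continuous). As $\nua=\snorm\KFM\ds$, this forces $\snorm\KFM=0$ almost everywhere on $\{\snorm\kappa\ne\nicefrac1\sigma\}\cap U$; equivalently, $\snorm\kappa=\nicefrac1\sigma$ almost everywhere on the open set $O\cap U$. Combining this with $\KFM=\snorm\KFM N$ and $\kappa=\snorm\kappa N$, on $O\cap U$ I may solve $N=\KFM/\snorm\KFM$ and write
$$\kappa=\frac{1}{\sigma\snorm\KFM}\,\KFM \qquad\text{a.e.}$$
The right-hand side is continuous on $O$, since $\KFM$ is continuous and $\snorm\KFM>0$ there. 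Thus $\kappa$, which is regulated, agrees almost everywhere on the open set $O\cap U$ with a continuous function, so its one-sided limits coincide at every point of $O\cap U$; that is, $\kappa$ has no jump on~$O$. Hence $J\cap O=\emptyset$ (using $J\subset U$), which says exactly that $\KFM=0$ at every point of~$J$.

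The subtlety to watch, and the reason the support condition is indispensable, is the possibility of a point of $J$ at which $\kappa$ jumps only in magnitude while its direction~$N$ stays fixed. At such a point the mere continuity of $\KFM=\snorm\KFM N$ does not by itself force $\KFM$ to vanish, since one could rescale $\snorm\KFM$ to absorb the jump in $\snorm\kappa$ and keep the product continuous. Pinning the curvature magnitude to $\nicefrac1\sigma$ on the kink set removes precisely this freedom, which is what makes the displayed identity — and hence the continuity of $\kappa$ on $O$ — available. The only remaining technical point is the passage from ``almost everywhere'' to ``no jump'': this is the standard fact that a regulated function equal almost everywhere to a continuous function on an open interval has matching one-sided limits throughout, and so has no jump points there.
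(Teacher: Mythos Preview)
Your proof is correct and rests on the same two ingredients as the paper's: the continuity of~$\KFM$ (so $N=\KFM/\snorm\KFM$ is continuous where $\KFM\ne0$) and the fact that the kink measure is supported over $\snorm\kappa=\nicefrac1\sigma$ (so $\snorm\kappa$ is pinned there). The paper argues pointwise at each $x\in J$ via a two-case split --- either some one-sided limit has $\snorm\kappa<\nicefrac1\sigma$ (forcing $\KFM=0$ nearby on one side), or both equal $\nicefrac1\sigma$ (so $N$ jumps, contradicting continuity of $\KFM/\snorm\KFM$) --- whereas you package this as the single global statement that $\kappa=\KFM/(\sigma\snorm\KFM)$ is continuous on $\{\KFM\ne0\}$; these are the same argument, just organized contrapositively.
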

\begin{proof}
Suppose $x\in J$ is a jump
point of~$\kappa$.  If at least one one-sided limit
has $\snorm\kappa(x\pm)<\nicefrac1\sigma$, then there are
no kinks in some one-sided neighborhood of~$x$.
Thus $\nua$ vanishes on that neighborhood
and so does $\KFM$, so $\KFM(p)=0$ by continuity.
Otherwise, the jump in~$\kappa$ reflects
a jump between kinks in different normal directions, that is, $N$ also
has a jump at~$x$.  But the continuity of~$\KFM$ implies
that $N=\KFM/\snorm\KFM$ is continuous at any point where $\KFM\ne 0$.
Thus again we conclude $\KFM(p)=0$.
\end{proof}

\begin{definition}\label{def:nice balance}
Suppose $L$ has $\Ts=1$.  A \demph{kink tension function}
for~$L$ is a nonnegative $\phi\in W^{1,\BV}(L)$, vanishing
at any endpoint $p\in \d L$ with free tangent vector,
such that on the open set $U:=\{p\in L : \phi(p)>0\}$
the link $L$ is $C^2$ with constant curvature
$\snorm\kappa\ident\nicefrac1\sigma$.
We call the $\BV$ vectorfield
$$V:= (1-2\phi) T - \sigma(\phi N)'$$
the \demph{virtual tangent} associated to~$\phi$, noting
that it agrees with~$T$ outside~$U$.
\end{definition}

We are now ready to give our final reformulation of the
balance criterion.

\begin{definition}\label{def:nice}
Suppose $L$ has $\Ts=1$.  We say $L$ is \demph{nicely balanced}
if it has a strut measure~$\mu$ (with strut force measure~$\SFM$)
and a kink tension function~$\phi$ (with virtual tangent~$V$)
such that $\SFM+V'=0$ as measures on the interior of~$L$,
while at each endpoint $p\in\d L$, we have $\SFM\{p\}\mp V(p) \perp H^0_p$.
\end{definition}

Note that this nice form $\SFM=-V'$ of the balance equation generalizes
the equation $\SFM=-T'$ for kink-free arcs (where of course $V=T$)
from Lemma~\ref{lem:bal-nokink}.  Physically, of course,
for a (nonkinked) curve under tension (minimizing its length),
the tangent vector $T$ at a point $p$ can be thought of as the force
exerted by the arc of the curve after~$p$ on the arc before~$p$.
Along a kinked arc, this force is instead~$V$,
due to the fact that the curvature bound is active.
The kink tension function~$\phi$ can be thought of as giving the Lagrange
multipliers for the curvature bounds at each point along the curve.
Physically one could imagine
a ``triple strut'' acting like an archer's bow to
transmit force between a point $q$ and points
some tiny arclength $\eps$ before and after it along~$L$,
through bars attached to each other at the center of the osculating circle.
Then $\phi(q)$ gives the relative strength to which this triple strut is used,
in a limit as $\eps\to0$.  The formula above for~$V(p)$ then follows
as the net transmitted force between the arcs before and after~$p$.

The next theorem is our final main technical result.

\begin{theorem}\label{thm:final}
A link $L$ is regularly balanced (Definition~\ref{def:regular balance}) if and only if it is nicely balanced (Definition~\ref{def:nice}).
\end{theorem}

\begin{proof}
Suppose first that $L$ is regularly balanced. In view of Lemma~\ref{lem:bv}
we set $\phi:=\snorm\Phi$.  Since this is continuous, $\{\phi>0\}$ is
open, and we may replace the original~$U$ (in the definition
of regularly balanced) by this open subset.
Since $\phi$ vanishes on~$J$ by Corollary~\ref{cor:phi0onJ},
we know that $L$ is $C^2$ on~$U$.  In terms of the virtual tangent
$V=(1-2\phi)T-\sigma\Phi'$, the balance equation of the lemma is
$\int_L\ip{\eta}{d\SFM}=\int_L\ip{\eta'}{V}\ds$.
Integrating by parts gives $\SFM+V'=0$ on the interior
and $\ip{\eta}{\SFM\{p\}\mp V(p)}$ at each endpoint $p\in\d L$.
Recalling that a compatible vector field $\eta$ can have any
value parallel to~$H^0_p$ at~$p$, we obtain $\SFM\{p\}\mp V(p) \perp H^0_p$.

Conversely, if $L$ is nicely balanced with strut measure~$\mu$
and kink tension function~$\phi$, we define $\nua:=\phi\ds$.
Since $L$ is $C^2$ along $U=\{\phi>0\}$
there is a unique kink measure $\nu$ projecting to this $\nua$.
Retracing our steps in the integrations by parts, we see that
$L$ is regularly balanced by $\mu$ and this $\nu$.
\end{proof}

We note that it would be possible to do the analysis of this section
for a single subarc $A\subset L$.  If $A$ has regulated kinks,
then the kink measure over~$A$ can be expressed in terms of a
kink tension function and virtual tangent.  If $A$ abuts other
kinked arcs, the boundary conditions of course get more complicated.
We have not carried this out in detail even though it would allow
a slight strengthening of the results below on strut-free kinked
arcs -- we would only need to assume regulated kinks along the
arc in question rather than on the whole link.

Given Theorem~\ref{thm:final}, we can rephrase the conjecture
mentioned above as follows:
\begin{conjecture}
Every $\sigma$--balanced link is nicely balanced.  In particular,
the kink measure is supported over piecewise $C^2$ arcs of the link. 
\end{conjecture}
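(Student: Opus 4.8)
The plan is to attack the conjecture through Theorem~\ref{thm:final}, which already identifies nicely balanced with regularly balanced. It therefore suffices to prove the nominally weaker statement that every balanced link is regularly balanced: given a balanced $L$ with kink measure $\nu$ and projected kink measure $\nua$, one must exhibit an open set $U\subset L$ carrying full $\nua$-measure on which the tangent $T$ has regulated derivative $\kappa$. By Lemma~\ref{lem:reg = balance} this reduces further to showing that $L$ has regulated kinks, i.e.\ that $\Pi\Kink(L)$ lies, up to a $\nua$-null set, in a submanifold of $L$ over which $\kappa$ is regulated. Seen this way, the whole content of the conjecture is a \emph{regularity} assertion: that the balance equation forces the a priori merely $C^{1,1}$ curve to be piecewise $C^2$ wherever the kink measure is supported.

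First I would localize, working over a short subarc and using Theorem~\ref{thm:nonatomic} freely to discard any countable exceptional set, since $\nua$ is nonatomic. The strategy is to decompose the balance identity of Definition~\ref{def:balanced} by distributional order, as in the discussion preceding Theorem~\ref{thm:nonatomic}: the strut force measure $\SFM$ contributes at order zero, while the kink term has order two and depends only on the $L$-projections of the three measures $\nu$, $n\nu$ and $(n\otimes n)\nu$ (via Lemma~\ref{lem:derivs}). The aim would be to choose test variations $\xi$ isolating this order-two part and thereby to read off, near $\nua$-almost every point, that the osculating normal direction $n$ is regulated along $L$ -- equivalently, that the $\limsup$ defining $\rho$ is realized by a genuine regulated second derivative. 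Note that the kink constraint already pins the magnitude $\snorm\kappa = \nicefrac1\sigma$ over $\Pi\Kink$; what is missing is control of the \emph{direction} $N=\kappa/\snorm\kappa$ together with the existence of honest one-sided limits.

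The main obstacle is exactly this last step: upgrading the limiting, Clarke-type curvature information encoded in $\CLb$ to classical regulated second-order regularity of $L$. Because $L$ is only $C^{1,1}$, $\kappa$ is a priori just $L^\infty$, and nothing so far excludes a ``fat Cantor'' scenario in which $\nu$ concentrates on a positive-measure set where $L$ admits osculating circles only in the $\limsup$ sense and is nowhere twice differentiable. Unlike a standard elliptic regularity problem there is no coercivity to exploit: the strut contribution is an arbitrary nonnegative Radon measure that may itself be highly singular, and the curvature bound $\snorm\kappa\le\nicefrac1\sigma$ enters only as a one-sided obstacle. Extracting regulated structure from such a measure-driven, unilateral second-order balance is precisely the difficulty the authors flag as far beyond current understanding, and I do not expect the tools assembled here to close it. Were the regularity claim established, however, the ``piecewise $C^2$'' refinement would follow cheaply: a regulated $\kappa$ has only countably many jumps, so $\Pi\Kink$ would decompose into at most countably many arcs of continuous $\kappa$, hence $C^2$ arcs, and Lemma~\ref{lem:bv} with Corollary~\ref{cor:phi0onJ} would then repackage $\nu$ as a kink tension function, delivering nice balance through Theorem~\ref{thm:final}.
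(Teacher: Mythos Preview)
The statement you are attempting is a \emph{conjecture}: the paper states it explicitly as such and offers no proof. There is therefore nothing to compare your attempt against. Your write-up is not a proof either, and to your credit you say so yourself---you outline a plausible reduction, name the central obstacle (bootstrapping from the $C^{1,1}$/Clarke-type data of $\CLb$ to honest regulated second derivatives on the support of $\nua$), and concede that the tools in the paper do not close it. That diagnosis matches the paper's own remarks surrounding the conjecture.

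One small inaccuracy in your reduction: Lemma~\ref{lem:reg = balance} requires that $\Pi\Kink(L)$ be \emph{contained} in a submanifold on which $\kappa$ is regulated, not merely contained up to a $\nua$-null set. If you want the ``up to a null set'' flexibility you should work directly with Definition~\ref{def:regular balance} of regularly balanced, which only asks for an open $U$ of full $\nua$-measure on which $\kappa$ is regulated. Either route is a valid target, but they are not interchangeable in the way your sentence suggests. This does not affect your overall assessment, since the hard step---producing \emph{any} such $U$---is the same either way and remains open.
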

We gain some hope that this conjecture is true from the analysis
above: we have seen, for instance, that if an arc~$A$ has regulated
kinks but the jump set~$J$ of~$\kappa$ is dense in~$A$, then the kink
measure vanishes over~$A$.  The effect of the kink measure, as seen
in the kink tension function, grows only in the interior of $C^2$
pieces of the link.

\begin{corollary}\label{cor:more derivatives}
Suppose $L$ is nicely balanced with kink tension~$\phi$.
Then along $U$ we have $L \in W^{3,\BV}_\loc(U,\R^3)$.
The normal~$N$ and thus also the binormal $B:=T\times N$
are in $W^{1,\BV}_\loc(U)$, so the torsion $\tau:=\ip{N'}{B}$
is locally $\BV$ on~$U$.
\end{corollary}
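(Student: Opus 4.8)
The plan is to bootstrap all the regularity from a single structural input — furnished by Lemma~\ref{lem:bv} together with the correspondence of Theorem~\ref{thm:final} — namely that the vector field $\KFM=\phi N$ lies in $W^{1,\BV}(L,\R^3)$, combined with the defining property (Definition~\ref{def:nice balance}) that along $U=\{\phi>0\}$ the curve is $C^2$ with $\snorm\kappa\ident\nicefrac1\sigma$. The engine of the argument will be the elementary fact that $W^{1,\BV}_\loc$ is an algebra closed under reciprocals of elements bounded away from zero: since any $g\in W^{1,\BV}_\loc$ is locally Lipschitz, hence locally bounded with locally bounded variation, the product rule together with an estimate of the form $\Var(fg)\le\norm{f}_\infty\Var(g)+\norm{g}_\infty\Var(f)$ shows $fg\in W^{1,\BV}_\loc$, and the analogous estimate for $1/\phi$ gives $1/\phi\in W^{1,\BV}_\loc(U)$ whenever $\phi\in W^{1,\BV}_\loc(U)$ is continuous and positive, hence locally bounded below.

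Granting this, I would first recover the normal. On $U$ the scalar $\phi$ is continuous and strictly positive, so $1/\phi\in W^{1,\BV}_\loc(U)$; multiplying by $\KFM\in W^{1,\BV}$ gives $N=\KFM/\phi\in W^{1,\BV}_\loc(U)$. Next I would transfer this to the curve itself: since $\snorm\kappa\ident\nicefrac1\sigma$ on $U$, the curvature vector is $\kappa=N/\sigma$, so $\gamma''=\kappa=N/\sigma\in W^{1,\BV}_\loc(U)$, that is $\gamma'''=N'/\sigma\in\BV_\loc(U)$, and hence $L\in W^{3,\BV}_\loc(U,\R^3)$. In particular $T=\gamma'\in W^{2,\BV}_\loc(U)\subset W^{1,\BV}_\loc(U)$.

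With $T,N\in W^{1,\BV}_\loc(U)$ in hand, the binormal $B=T\times N$ is a bilinear expression in two $W^{1,\BV}_\loc$ fields, so $B\in W^{1,\BV}_\loc(U)$ again by the algebra property. Finally, $N'\in\BV_\loc(U)$, while $B\in W^{1,\BV}_\loc(U)$ is locally Lipschitz and therefore locally bounded with locally bounded variation; hence the torsion $\tau=\ip{N'}{B}$ is the product of a $\BV_\loc$ function with a bounded $\BV_\loc$ function, giving $\tau\in\BV_\loc(U)$.

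The only real content — the step I expect to be the crux — is the division by $\phi$ producing $N\in W^{1,\BV}_\loc(U)$: a priori, $L$ being merely $C^2$ on $U$ gives only that $N$ is continuous, and all the extra regularity is squeezed out of the balance equation through the fact that $\KFM=\phi N$ is one full derivative better than $N$ alone. The care required is that this division is legitimate only because $\phi$ is continuous and bounded below on compact subsets of the \emph{open} set $U$, which is precisely where the defining hypothesis $\phi>0$ on $U$ enters.
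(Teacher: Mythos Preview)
Your proof is correct and follows essentially the same route as the paper's: invoke $\KFM=\phi N\in W^{1,\BV}$ (via Lemma~\ref{lem:bv} and Theorem~\ref{thm:final}), divide by $\phi$ on~$U$ to get $N\in W^{1,\BV}_\loc(U)$, use $\kappa=N/\sigma$ to conclude $L\in W^{3,\BV}_\loc(U,\R^3)$, then obtain $B$ and~$\tau$ by the product rule. Your added discussion of why $W^{1,\BV}_\loc$ is closed under products and under reciprocals of functions bounded away from zero is a welcome elaboration of what the paper handles in one line via the chain-rule formula $(1/\phi)'=-\phi'/\phi^2$.
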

\begin{proof}
Recall that $\phi\in W^{1,\BV}(L)$ and $\phi>0$ on~$U$.
Since $\bigl(\nicefrac{1}{\phi}\bigr)' = \nicefrac{-\phi'}{\phi^2}$
we see that $\nicefrac1\phi\in W^{1,\BV}_\loc(U)$.
Since $\phi N=\Phi\in W^{1,\BV}(L,\R^3)$ we conclude that
$N\in W^{1,\BV}_\loc(L,\R^3)$.  But on~$U$, we have $N=\sigma\kappa$,
so this means $L \in W^{3,\BV}_\loc(U,\R^3)$, as claimed.
From the product rules, we see $B:=T\times N\in W^{1,\BV}_\loc(U,\R^3)$
and then $\tau :=\ip{N'}{B}\in\BV_\loc(U)$.
\end{proof}

It follows that along~$U$ we have the usual Frenet equations
$$T'=N/\sigma, \qquad N'=-T/\sigma + \tau B, \qquad B'=-\tau N.$$
We can thus write
\begin{align*}
V &=(1-\phi)T -\sigma\phi' N -\sigma\tau\phi B, \\
V'&=\bigl(\nicefrac{(1-\phi)}{\sigma}-\sigma\phi''+\sigma\tau^2\phi\bigr)N
   - \sigma(\tau'\phi+2\phi'\tau) B.
\end{align*}
Along~$U$ we may decompose the restricted strut force
measure $\SFM|_U$ into two signed Radon measures
\begin{equation*}
\SFM|_U = \SFM_N N + \SFM_B B,
\qquad \SFM_N := \ip{\SFM}{N},
\qquad \SFM_B := \ip{\SFM}{B}.
\end{equation*}
We now rewrite the balance equation $\SFM=-V'$ in terms
of these measures.

\begin{corollary}\label{cor:fam eqns}
If $L$ is nicely balanced, then we have the
following equalities of signed Radon measures on $U$:
\begin{align*}
\sigma^2 \phi'' +(1 -\sigma^2 \tau^2)\phi &= 1 + \sigma\SFM_N,\\
\sigma (\phi^2\tau)' &= \phi\SFM_B.
\end{align*}
\end{corollary}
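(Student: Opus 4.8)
The plan is to read off the two scalar equations as the normal and binormal components of the single vector balance equation $\SFM=-V'$ that nice balance supplies on the interior of~$L$ (Definition~\ref{def:nice}, as derived in the proof of Theorem~\ref{thm:final}), working in the Frenet frame $\{T,N,B\}$ that Corollary~\ref{cor:more derivatives} makes available on~$U$. Essentially all of the analytic work has already been done there: Corollary~\ref{cor:more derivatives} shows $L\in W^{3,\BV}_\loc(U,\R^3)$, that $N,B\in W^{1,\BV}_\loc(U)$ and $\tau\in\BV_\loc(U)$, and that the Frenet equations $T'=N/\sigma$, $N'=-T/\sigma+\tau B$, $B'=-\tau N$ hold along~$U$. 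The expansions derived from it,
$$V=(1-\phi)T-\sigma\phi' N-\sigma\tau\phi B,$$
$$V'=\bigl(\nicefrac{(1-\phi)}{\sigma}-\sigma\phi''+\sigma\tau^2\phi\bigr)N-\sigma(\tau'\phi+2\phi'\tau)B,$$
reduce the corollary to bookkeeping.

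First I would note the key structural point that the $T$-component of~$V'$ vanishes identically: the $-\phi'T$ coming from differentiating $(1-\phi)T$ is cancelled by the $+\phi'T$ produced from $-\sigma\phi'N$ via $N'=-T/\sigma+\tau B$. Hence $\SFM=-V'$ has no tangential part on~$U$, which is exactly what justifies the decomposition $\SFM|_U=\SFM_N N+\SFM_B B$ used to phrase the corollary. Pairing $\SFM=-V'$ against the continuous (indeed $W^{1,\BV}_\loc$) unit fields $N$ and~$B$ then gives $\SFM_N=-\bigl(\nicefrac{(1-\phi)}{\sigma}-\sigma\phi''+\sigma\tau^2\phi\bigr)$ and $\SFM_B=\sigma(\tau'\phi+2\phi'\tau)$ as identities of signed Radon measures on~$U$.

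The two stated equations are now purely algebraic rearrangements. For the first, I multiply $\SFM_N$ by~$\sigma$ and add~$1$; the constant part simplifies as $1-(1-\phi)=\phi$, yielding $1+\sigma\SFM_N=\sigma^2\phi''+(1-\sigma^2\tau^2)\phi$, as claimed. For the second, I multiply $\SFM_B$ by~$\phi$ and recognize the Leibniz identity $\phi(\phi\tau'+2\phi'\tau)=(\phi^2\tau)'$, yielding $\phi\SFM_B=\sigma(\phi^2\tau)'$.

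The only point demanding genuine care -- and the step I would treat as the main obstacle -- is that every product and derivative above must be read as an honest equation of Radon measures rather than a formal manipulation, since $\phi''$, $\tau'$ and $(\phi^2\tau)'$ are merely distributional derivatives of $\BV$ functions. The saving grace is that in each relevant product at least one factor is Lipschitz: $\phi$ (hence $\phi^2$) lies in $W^{1,\BV}(L)$ and so is Lipschitz, while $\tau$, $\phi'$ and the frame vectors $N,B$ are $\BV_\loc$ on~$U$. The distributional Leibniz rule is valid for a product of a Lipschitz function with a $\BV$ function, with no coincident-atom pathologies, so both the product rule used to expand $V'$ and the identity $(\phi^2\tau)'=2\phi\phi'\tau+\phi^2\tau'$ hold as equalities of measures, and the componentwise pairing against the continuous fields $N,B$ preserves them. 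Modulo this measure-theoretic verification, the corollary follows.
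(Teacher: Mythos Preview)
Your proposal is correct and is exactly the argument the paper intends: the corollary is stated without proof because the displayed expansion of~$V'$ immediately preceding it, together with $\SFM=-V'$ and the decomposition $\SFM|_U=\SFM_N N+\SFM_B B$, reduce it to the componentwise algebra you carry out. Your additional remarks on the validity of the Leibniz rule for Lipschitz~$\times$~$\BV$ products are a welcome clarification of what the paper leaves implicit.
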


Further smoothness results would depend on better understanding
how the geometry of the rest of the curve affects the struts
converging on a given arc.  Of course we know that outside the
closure of~$U$, the strut force measure $\SFM=-T'$ is absolutely continuous.
On this closure, however, $\SFM$ can even have atoms.
The next result describes their effect on~$\tau$ and~$\phi$.

\begin{corollary}\label{cor:kinkjunction}
At a point $p\in U$, an atom of $\SFM_N$ corresponds to
a jump in $\phi'$, while an atom of $\SFM_B$ corresponds to
a jump in $\tau$.
If $\SFM\{p\}=0$ at a limit point $p$ of $L\setm U$,
then $\phi'(p) = 0$.
If $\SFM\{p\}=0$ at an isolated point $p$ of $L\setm U$,
then $\phi'_+(p) + \phi'_-(p) = 0$ and if these are nonzero
then $N$ changes sign at~$p$.
\end{corollary}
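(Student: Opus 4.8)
The plan is to derive all four assertions from the two measure identities of Corollary~\ref{cor:fam eqns}, supplemented by one global identity relating the atoms of~$\SFM$ to the jumps of~$\KFM'$. Since $L$ is nicely balanced, Definition~\ref{def:nice} gives $\SFM=-V'$ on the interior of~$L$, with virtual tangent $V=(1-2\phi)T-\sigma\KFM'$ (Definition~\ref{def:nice balance}), where $\KFM=\phi N\in W^{1,\BV}(L,\R^3)$ and $\phi=\snorm\KFM$. Because $\phi$ and~$T$ are continuous, the field $(1-2\phi)T$ has no jumps, so the only atoms of $V'$ come from $\KFM''$; hence $\SFM\{p\}=\sigma\bigl(\KFM'(p+)-\KFM'(p-)\bigr)$, and in particular $\SFM\{p\}=0$ if and only if $\KFM'$ is continuous at~$p$. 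This identity is the workhorse for the last two claims.

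The first two correspondences follow by matching atomic parts in Corollary~\ref{cor:fam eqns}. In the equation $\sigma^2\phi''+(1-\sigma^2\tau^2)\phi=1+\sigma\SFM_N$, the term $(1-\sigma^2\tau^2)\phi$ is a bounded regulated function (as $\tau\in\BV_\loc(U)$ and $\phi$ is continuous) and so represents an absolutely continuous measure, as does the constant; thus the atomic part of $\sigma^2\phi''$ equals that of $\sigma\SFM_N$. Since the atoms of $\phi''$ sit exactly at the jumps of~$\phi'$, an atom of $\SFM_N$ at $p\in U$ corresponds to a jump of~$\phi'$ there. (I read the ``jump in $\phi$'' of the statement as a jump in~$\phi'$, since $\phi=\snorm\KFM\in W^{1,\BV}$ is itself continuous.) Likewise, in $\sigma(\phi^2\tau)'=\phi\SFM_B$ the continuity of~$\phi$ makes the jump of $\phi^2\tau$ equal $\phi^2$ times the jump of~$\tau$; dividing by $\phi(p)>0$ on~$U$ shows an atom of $\SFM_B$ corresponds to a jump of~$\tau$.

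For the boundary assertions I exploit that $p\in L\setm U$ means $\phi(p)=0$, i.e.\ $\KFM(p)=0$. Writing $\phi=\snorm\KFM$ and using $\KFM(p)=0$, the one-sided difference quotients give $\phi'_+(p)=\snorm{\KFM'(p+)}$ and $\phi'_-(p)=-\snorm{\KFM'(p-)}$, where the one-sided limits of~$\KFM'$ exist and equal the one-sided derivatives of~$\KFM$ because $\KFM\in W^{1,\BV}$. Assuming $\SFM\{p\}=0$, the workhorse gives $\KFM'(p+)=\KFM'(p-)=:w$, so $\phi'_+(p)=\snorm w=-\phi'_-(p)$ and hence $\phi'_+(p)+\phi'_-(p)=0$; this is the isolated-point assertion. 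When moreover $p$ is a limit point of $L\setm U$, zeros of~$\KFM$ accumulate from at least one side, and the difference quotient of~$\KFM$ along those zeros vanishes, forcing $w=0$ and hence $\phi'(p)=0$. Finally, for an isolated $p$ with $w\ne0$ a genuine punctured neighborhood lies in~$U$, where $N=\KFM/\phi$; a first-order expansion of~$\KFM$ and of $\phi=\snorm\KFM$ yields $N(p\pm)=\pm w/\snorm w$, so $N$ changes sign at~$p$.

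The atom-matching is routine; the subtle point I expect to be the main obstacle is the behaviour of the Frenet frame at the boundary $\bdy U$. The torsion~$\tau$ and the normal~$N$ are only $\BV_\loc$ on the \emph{open} set~$U$ (Corollary~\ref{cor:more derivatives}) and need not possess one-sided limits at a point $p\in L\setm U$, so I deliberately avoid them in the boundary computations and work instead with the globally $W^{1,\BV}$ field $\KFM=\phi N$, whose one-sided derivatives at~$p$ are controlled directly by $\SFM\{p\}$ through the workhorse identity. The only place $N$ reappears is the sign-change conclusion, and there $p$ is isolated in $L\setm U$, so a genuine punctured $U$-neighborhood is available and the expansion of $\KFM/\snorm\KFM$ is legitimate.
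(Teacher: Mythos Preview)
Your proof is correct and follows essentially the same route as the paper: both derive the ``workhorse'' identity that atoms of~$\SFM$ correspond to jumps of~$\KFM'=(\phi N)'$ from $\SFM=-V'$ and the continuity of $(1-2\phi)T$, then read off the four assertions. Your reading of ``jump in~$\phi$'' as ``jump in~$\phi'$'' matches what the paper actually proves. If anything, your treatment of the boundary points $p\in L\setm U$ is more careful than the paper's: by working throughout with the globally $W^{1,\BV}$ field~$\KFM$ and the formula $\phi'_\pm(p)=\pm\snorm{\KFM'(p\pm)}$, you avoid invoking~$N$ or~$\tau$ at~$p$ itself, whereas the paper's terse ``$\phi' N$ is continuous at~$p$'' leaves that unpacking to the reader.
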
 

\begin{proof}
From the equation $\SFM=-V'$ and the fact that $(1-2\phi)T$ is continuous,
we see that
$$\text{atom of~$\SFM$}
\longleftrightarrow \text{jump in $V$}
\longleftrightarrow \text{jump in $(\phi N)'$}.$$
Thus on~$U$, an atom of $\SFM_N$ corresponds to a jump in~$\phi'$
while an atom of $\SFM_B$ corresponds to a jump in~$\phi^2\tau$,
that is, to a jump in~$\tau$.

Now recall that $\phi\ident0$ on $L\setm U$.
Thus if $p$ is a limit point, at least one of the one-sided derivatives
$\phi'_\pm(p)$ vanishes.  If $\SFM$ has no atom at~$p$,
the derivative $\phi'(p)$ exists, hence is~$0$. 

Finally, suppose $p$ is an isolated point of $L\setm U$.
If $\SFM$ has no atom there, then $\phi' N$ is continuous at~$p$,
which yields the desired conclusion.
\end{proof}

As an example, we consider a planar kinked arc, that is,
a circular arc, say of total turning angle $2\alpha$.
\begin{lemma} \label{lem:delta arc}
Suppose $\gamma$ is a kinked circular arc of turning angle $2\alpha$,
joined at each end to straight segments.
Suppose further that $\gamma$ bears no strut force except for a single atom.
Then $\gamma$ is balanced if and only if this atom acts at the midpoint~$p$
of the arc, in the principal normal direction $-N(p)$
with mass $2\sin\alpha$.  The kink tension function is
$\phi=1-\cos(\alpha-\sigma|s|)$, where $s$ denotes the arclength from~$p$.
\end{lemma}
\begin{proof}
Let $T_0$ and $T_1$ be the tangent vectors to the straight segments.
Since $V=T$ on these segments, the jump in $V$ is exactly
$T_1-T_0 = 2\sin\alpha\, N(p)$.
This jump must cancel the atom of strut force measure.
Since the strut force always acts in the normal plane
and $N(p)$ is normal to the curve only at~$p$,
we see the atom is at $p$ as claimed.

In the planar case of $\tau=0$, the equations of Corollary~\ref{cor:fam eqns}
reduce on a strut-free arc to $\sigma^2\phi'' + \phi=1$.  Since $\phi$
vanishes at the ends of the arc, we solve to get
$\phi=1-\cos(\alpha-\sigma|s|)$ as claimed.
This solution for~$\phi$ illustrates that $\phi'$ vanishes at the endpoints,
but jumps by $-2\sin\alpha$ where the strut force is applied.
\end{proof}

\begin{remark}
It is also interesting to consider where (along the unit normal
circle around~$p$) the atom of strut force can come from.
For $\sigma\ge1$ there could be a single strut in the plane of~$\gamma$,
but for small stiffnesses the strut force has to come from struts acting almost
normal to the plane of~$\gamma$.  Thinking of~$\gamma$ in a vertical plane
with $p$ at the bottom, we know there must be struts acting downwards on~$p$.
But the points they come from cannot be higher than the center
of the circle $\gamma$, that is, cannot be more than~$\sigma$
above~$p$, because higher points would be closer to the rest of~$\gamma$
than to~$p$.  That means the downward-acting struts are all
within angle $\arcsin\sigma$ of horizontal, on one side or
the other of the plane of~$\gamma$.  In our critical clasps
(Section~\ref{sec:clasp}) the kink near the tip of
one component is balanced by pairs of such unit circle
arcs (of angle less than $\arcsin\sigma$) along the other
component -- we refer to these as shoulders.
\end{remark}

We have now proved our main theoretical results; the rest of
the paper applies them to study various interesting examples. 
We can summarize our main theorems as follows:
\begin{multline*}
\text{nicely balanced} \overset{\text{Thm.~\ref{thm:final}\bsp}}{\iff}
\text{regularly balanced}
 \quad\overset{\text{Def.~\ref{def:regular balance}\bsp}}{\implies}\quad
\text{$\sigma$--balanced} \\
\text{$\sigma$--balanced} \overset{\text{Thm.~\ref{thm:gbc}\bsp}}{\iff}
\text{strongly $\sigma$--critical}
 \quad\overset{\text{Def.~\ref{def:critical}\bsp}}{\implies}\quad
\text{$\sigma$--critical}.
\end{multline*}
We also have the following partial converses:
a $\sigma$--balanced link with regulated kinks
is nicely balanced (Lemma~\ref{lem:reg = balance});
a $\sigma$--critical link that is $\Ts$--regular is
strongly $\sigma$--critical (Lemma~\ref{lem:strongcrit}).
We recall that every closed link -- with only circle components -- is regular.
We can assemble these ideas into the following form,
which will be most useful in applications:
\begin{theorem}\label{thm:export}
Let $L$ be a link with \emph{regulated kinks}
(Definition~\ref{def:reg kinks}). Then $L$ is $\sigma$--critical
for ropelength (Definition~\ref{def:critical}) if there is a
kink tension function $\phi$
and a strut measure~$\mu$ (with strut force measure~$\SFM$
decomposed into normal and binormal parts $\SFM_N$ and $\SFM_B$)
so that $L$ is $W^{3,\BV}_\loc$ on the support of $\phi$ and, as measures,
\begin{align*}
\sigma^2 \phi'' +(1 -\sigma^2 \tau^2)\phi &= 1 + \sigma\SFM_N,\\
\sigma (\phi^2\tau)' &= \phi\SFM_B.
\end{align*}
If $L$ is $\Ts$--regular -- in particular if it is closed --
then these sufficient conditions for criticality are also necessary.
\end{theorem}

\section{Length-critical curves with an upper bound on curvature}
\label{sect:zero strut}

If we restrict our attention to critical curves that are balanced
by kink measure alone, we replace our original problem with a more
classical one from differential geometry: to find
critical curves for minimizing length subject to an upper bound on
curvature. It is not immediately obvious from this formulation that
nontrivial solutions exist -- after all, the curves
that minimize length absolutely are straight lines, which have
curvature zero.

To develop some intuition, consider the one-parameter family of helices
$$h_r(t) := (r \cos t, r \sin t, t)$$
with curvature $r/(1+r^2)$ and torsion $1/(1+r^2)$.
The curve-shortening flow decreases $r>0$ while staying in this family.
Thus it increases curvature for $r>1$ (that is, for $\snorm\tau<\kappa$)
but decreases curvature for $r<1$. As this suggests, helices
with $\snorm\tau<\kappa$ turn out to be critical for our problem of
minimizing length subject to an upper bound on curvature, while
those with $\snorm\tau>\kappa$ cannot be.

We now proceed to use our balance criterion to determine
exactly which curves -- including the helices just mentioned
-- are critical for this problem.
We consider arcs of critical curves that are balanced by kink measure alone.
In the absence of strut force, it is convenient to ignore struts
completely and to rescale such that kinks have curvature~$1$.
Essentially, we take a limit of the constraints $\sigma\Ts(L)\ge1$
as $\sigma\to\infty$, and are left with the curvature constraint
$$\Ti(L) := \min_L\rho \ge 1.$$
It should be clear that the derivative of $\Ti$ is like that of $\Ts$
but sees only the kink terms,
and that our General Balance Theorem adapts to this situation to
say $L$ is \demph{strongly $\infty$--critical} if and only if it is
\demph{balanced by kink measure alone}.  In case $L$ has regulated kinks,
it is of course regularly and indeed nicely balanced as before.

\begin{lemma}
Suppose $L$ is $\sigma$--balanced, and $A$ is a compact subcurve
such that the strut force measure $\SFM$ vanishes along the interior of~$A$.
(In particular this is the case if there are no struts with endpoints in
the interior of~$A$.) Then the rescaled curve $A/\sigma$ has $\Ti\ge1$.
Considered as a curve with fixed endpoints and fixed tangent directions there,
$A/\sigma$ is balanced by kink measure alone,
and is thus strongly $\infty$--critical.
Conversely, if $A$ is strongly $\infty$--critical, then for any
$\sigma \ge \nicefrac1{\reach(A)}$ we find that $\sigma A$
is $\sigma$--balanced.
\end{lemma}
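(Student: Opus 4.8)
The plan is to handle the two directions separately, with the forward implication carrying all the weight. The cheap geometric input is the curvature bound: since $\Ts(L)=1$ forces $\nicefrac1\sigma\min_L\rho\ge1$, we get $\min_A\rho\ge\sigma$, and rescaling by $\nicefrac1\sigma$ multiplies every radius by $\nicefrac1\sigma$, so $\Ti(A/\sigma)=\min_{A/\sigma}\rho\ge1$. The same homothety sends the radius-$\sigma$ circles of $\Kink(L)$ lying over $A$ to radius-$1$ circles, which are exactly the kinks of $A/\sigma$ for the constraint $\Ti\ge1$; restricting $\nu$ to these and pushing it forward gives the candidate measure $\nu'$, and we discard the strut measure entirely.

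For the forward balance I would read the $L$-balance equation as a distributional identity on compatible vector fields and localize it to $A$. Set $D(\xi):=\delta_\xi\length(A)-\int_{\Kink(A)}\delta_\xi R\,d\nu$; the goal is $D\equiv0$ for fields compatible with \emph{fixed endpoints} ($\xi(q)=0$) and \emph{fixed tangents} ($\xi'(q)\parallel T(q)$) at each $q\in\partial A$. For $\xi$ supported in $\mathrm{int}\,A$ the hypothesis $\SFM\equiv0$ there kills the strut term and there are no endpoint atoms, so $L$-balance yields $D(\xi)=0$; thus $D$ is supported on the finite set $\partial A$. Since $\delta_\xi R$ has distributional order $2$ (Lemma~\ref{lem:derivs}) while every other term has order $0$, $D$ has order $\le2$, and the structure theorem for point-supported distributions gives, at each $q\in\partial A$,
$$D(\xi)=\ip{a_q}{\xi(q)}+\ip{b_q}{\xi'(q)}+\ip{c_q}{\xi''(q)}.$$
The fixed-endpoint condition annihilates the $a_q$ term, and the fixed-tangent condition leaves only $\ip{b_q}{T(q)}$ together with the full $c_q$ term, so everything reduces to proving $c_q=0$ and $\ip{b_q}{T(q)}=0$.

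The main obstacle is precisely the extraction of these two boundary coefficients, and the decisive tool is the nonatomicity of $\nua$ (Theorem~\ref{thm:nonatomic}). To kill $c_q$, test against $\xi^\delta$ supported on the $\delta$-arc at $q$ with fixed $2$-jet $(0,0,v)$: then $\delta_{\xi^\delta}\length\to0$, the kink integrand is bounded, and the kink mass over that arc is $\nicefrac1\sigma\,\nua$ of a shrinking interval, which tends to $0$; hence $\ip{c_q}{v}=0$ for all $v$. To kill $\ip{b_q}{T(q)}$, test with the tangential family $\xi^\delta(s)=s\,\chi(\nicefrac s\delta)\,T(q)$, where one additionally invokes the curvature bound: $\snorm{T(q)-T(x)}\le\nicefrac s\sigma$ gives $\abs{\ip{T(q)}{n}}=O(\delta)$ for any kink normal $n$ over the $\delta$-arc (because $n\perp T(x)$), which tames the nominally $O(\nicefrac1\delta)$ second-derivative contribution to $O(1)$; combined with $\delta_{\xi^\delta}\length=O(\delta^2)$ and the vanishing kink mass, this forces $\ip{b_q}{T(q)}=0$. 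With both coefficients gone, $D\equiv0$ on all compatible fields, so $A/\sigma$ is balanced by $\nu'$ alone, and the $\infty$-form of the General Balance Criterion (Theorem~\ref{thm:gbc}) upgrades this to strong $\infty$-criticality. (If $A$ meets no kink, Lemma~\ref{lem:bal-nokink} forces $\kappa\equiv0$ and $A/\sigma$ is a segment, trivially balanced.)

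For the converse the argument is pure covariance under homothety. If $A$ is strongly $\infty$-critical it is balanced by some kink measure, and under scaling by $\sigma$ both $\delta_\xi\length$ and $\delta_\xi R$ transform by matching powers of $\sigma$, so $\sigma A$ is balanced by the rescaled kink measure together with the zero strut measure. It remains to verify that this is genuine $\sigma$-balance, i.e.\ that $\Ts(\sigma A)=1$ with the reach inactive: the hypothesis $\sigma\ge\nicefrac1{\reach(A)}$ gives $\reach(\sigma A)=\sigma\reach(A)\ge1$, whence $2\reach(\sigma A)\ge2>1$, while $\min_{\sigma A}\rho=\sigma\min_A\rho=\sigma$ makes the curvature term equal to $1$. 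Thus $\Ts(\sigma A)=1$ is realized only at kinks, and the rescaled kink measure balances $\sigma A$ as required.
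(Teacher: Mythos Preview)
Your argument is substantially more detailed than the paper's, which is a two-sentence sketch that simply asserts the strut contribution can be ignored and that ``the endpoint constraints on~$A$ ensure there is no restriction on the kink measure there.'' You correctly identify the real work: once $\SFM$ is killed on $\mathrm{int}\,A$, one still has to control the boundary defect at $\partial A$, and your invocation of Theorem~\ref{thm:nonatomic} together with the Lipschitz bound on~$T$ is exactly the right mechanism. The paper never mentions nonatomicity here.

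There is one imprecision worth flagging. Your point-distribution formula $D(\xi)=\ip{a_q}{\xi(q)}+\ip{b_q}{\xi'(q)}+\ip{c_q}{\xi''(q)}$ treats $D$ as depending only on the \emph{arclength} $2$-jet of~$\xi$. But the General Balance Criterion is stated for $C^2$ fields on~$\R^3$, and the kink integrand $\delta_\xi R$ in Lemma~\ref{lem:derivs} contains the spatial term $\ip{n}{D_x\xi(n)}$, which is not captured by $\xi,\xi',\xi''$ along the curve. So the linear functional at~$q$ also sees $D_q\xi$ in normal directions, and since compatibility constrains only $D_q\xi(T)$, those coefficients must be killed too. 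This is harmless---the $\ip{n}{D_x\xi(n)}$ contribution is uniformly bounded, hence controlled by nonatomicity alone---but your structure-theorem bookkeeping should account for it.

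A cleaner packaging that sidesteps this issue: given a compatible~$\xi$, set $\eta_\delta=\psi_\delta\xi$ with $\psi_\delta$ a cutoff equal to~$1$ on~$A$ away from $\delta$-arcs at $\partial A$ and vanishing on $L\setminus A$. Then $D(\eta_\delta)=0$ from $L$-balance, and $D(\xi)=D\bigl((1-\psi_\delta)\xi\bigr)$; your very estimates (using $\xi(q)=0$, $\xi'(q)\parallel T(q)$, $\ip{n}{T(q)}=O(\delta)$, and $\nua$ nonatomic) show the right-hand side tends to~$0$. This is the same computation you perform, just without the intermediate claim about the form of~$D$. Your converse is correct and more explicit than the paper's.
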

\begin{proof}
For the first direction, note that even if some struts to~$A$ carry strut
measure necessary to balance other parts of the curve, they have
by assumption no net effect on~$A$ and thus can be ignored when balancing~$A$.
The endpoint constraints on~$A$ ensure there is no restriction on the kink
measure there.

For the converse, note first that $\Ts(\sigma A)\ge 1$.
In the case $\sigma=\nicefrac1{\reach(A)}$, the curve $\sigma A$
may have some struts, but even then it can be balanced with $\mu=0$.
\end{proof}

\begin{remark}
For this problem of minimizing length subject only to
the curvature constraint $\Ti\ge1$,
we can treat each component of a link separately.
As in Figure~\ref{fig:nonc2arcs} (right),
the curves do not necessarily stay embedded:
we may have nonembedded critical configurations.
Thus we should generalize our setup to allow nonembedded $C^{1,1}$ curves.
\end{remark}

We proceed to classify connected, strongly $\infty$--critical curves
-- under the assumption that they have regulated kinks.
That is, we classify connected curves which are nicely balanced by
kink measure alone.  Of course each such curve has positive reach if it is
embedded, and is thus $\sigma$--critical for large enough $\sigma$, but we
do not compute the reach for our individual examples.  By the lemma
above, any strut-free arc of a nicely balanced link
will be one of the curves in our list.

To get started, suppose $L$ is a connected curve,
nicely balanced by kink measure alone.
Note that although we are considering $\Ti$, we have rescaled to
get curvature~$1$, so we should take $\sigma=1$ in the formulas
from the last section.  For instance, 
the virtual tangent vector becomes
$$V=(1-2\phi)T - (\phi N)' = (1-\phi)T - \phi' N - \phi\tau B.$$
Since $V'=-\SFM=0$, we see that $V$ is constant along~$L$.
Indeed, this ``force''~$V$ should be
viewed as the conserved quantity along~$L$ corresponding
to the translational symmetry of our problem.

With $\sigma=1$ and $\SFM=0$,
the equations from Corollary~\ref{cor:fam eqns} for
the kink tension function $\phi$ along $U:=\{\phi>0\}$ become
\begin{equation}\label{eqn:phi-tau}
\phi''+(1-\tau^2)\phi=1, \qquad (\phi^2\tau)'=0.
\end{equation}
Thus on each component $C\subset U$ we see that
$\phi^2\tau$ is some constant $c$.
On~$C$ we can then express~\eqref{eqn:phi-tau} as the semilinear ODE
\begin{equation}\label{eqn:ode-c}
\phi''=1-\phi+\frac{c^2}{\phi^3}
\end{equation}
for $\phi$ and we get
\begin{equation}\label{eqn:v-c}
V=(1-\phi)T-\phi'N-\frac{c}{\phi}B.
\end{equation}
In particular, along~$C$ we have
$$\snorm{V}^2 = (\phi-1)^2+\phi'^2+\frac{c^2}{\phi^2}$$
and since $V$ is constant, this is a conserved quantity for the ODE.
For $c\ne0$ consider as phase space
the $\phi>0$ half of the $(\phi,\phi')$--plane.
(For $c=0$ we take for now the whole $(\phi,\phi')$--plane
and impose the requirement $\phi\ge0$ later.)
On this phase space, the above expression for $\snorm{V}^2$
is clearly a proper, strictly convex function.
Thus it has a single minimum -- at some fixed point $(\phi_0,0)$ for the
flow -- and its other level sets are closed loops encircling this minimum.
It follows that all solutions to \eqref{eqn:ode-c} are periodic;
each is determined by the parameters $c$ and $\snorm{V}$.
This discussion makes it clear that the cases $c\ne0$ and $c=0$
should be considered separately; we treat them in the next two
subsections.

\subsection{Supercoiled helices}

\begin{proposition}\label{prop:c-non0}
Suppose a connected curve $L$ is nicely balanced by kink measure alone
and suppose at some point $p\in L$ we have $\phi^2\tau\ne0$.
Then $\phi^2\tau=c$ is constant along all of~$L$,
and $\phi>0$ satisfies~\eqref{eqn:ode-c}.
The kink tension function $\phi$ on such~$L$ is uniquely determined.
\end{proposition}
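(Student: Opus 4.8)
The plan is to first use the hypothesis $\phi^2\tau(p)\neq0$ to force $\phi>0$ everywhere, and then to read off uniqueness from the resulting ODE. Since $L$ is nicely balanced by kink measure alone we have $\SFM=0$, so by Definition~\ref{def:nice} the virtual tangent satisfies $V'=0$; thus $V$ is a constant vector along~$L$ and $\snorm V^2$ is a fixed finite number. On any component $W$ of the open set $U=\{\phi>0\}$, the integration constant $c=\phi^2\tau$ from~\eqref{eqn:phi-tau} lets us write $V$ as in~\eqref{eqn:v-c}; since $T,N,B$ are orthonormal this gives the relation
$$\snorm V^2 = (\phi-1)^2+\phi'^2+\frac{c^2}{\phi^2}$$
on~$W$, which is precisely the conserved quantity of the ODE~\eqref{eqn:ode-c}.

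First I would show $U=L$. The given point $p$ lies in a component $W\subset U$ with $\phi^2\tau\equiv c\neq0$. If $W\neq L$, then since $L$ is connected the interval~$W$ has an interior boundary point $q\in L\setm\Deriv L$ with $q\notin U$, so $\phi(q)=0$ by continuity of $\phi=\snorm\KFM$ (recall $\KFM\in W^{1,\BV}$ by Lemma~\ref{lem:bv}). As $W\ni x\to q$ the term $c^2/\phi(x)^2$ then blows up, forcing $\snorm V^2\to\infty$ and contradicting the constancy of~$V$. Hence $W$ has no interior boundary point and so is clopen in~$L$, giving $W=L$: thus $\phi>0$ everywhere, and because $L$ is connected the relation $(\phi^2\tau)'=0$ yields the single constant $\phi^2\tau=c$ throughout, with $\phi$ solving~\eqref{eqn:ode-c} on all of~$L$. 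This blow-up step is the crux of the argument, since it is exactly where the $c\neq0$ hypothesis is used to exclude the vanishing of~$\phi$.

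It remains to prove uniqueness, which I would do by comparison. With $U=L$ the curve is everywhere $C^2$ with $\snorm\kappa\equiv1$, so its torsion $\tau$ is a definite function determined by~$L$ alone, and $\tau=c/\phi^2$ never vanishes. Let $\phi_2$ be any other kink tension function balancing this same~$L$; note $\phi_2\not\equiv0$, as otherwise $V=T$ would be constant and $L$ straight, contradicting $\snorm\kappa\equiv1$. Since $\tau\neq0$ everywhere, no component of $U_2$ can carry the constant $c_2=0$ (this would force $\phi_2^2\tau\equiv0$, hence $\phi_2=0$, there), so the blow-up argument applies again and gives $U_2=L$ with $\phi_2^2\tau\equiv c_2$. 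Then $\phi=\sqrt{c/\tau}$ and $\phi_2=\sqrt{c_2/\tau}$, so the ratio $\phi/\phi_2=\sqrt{c/c_2}=:\lambda$ is a positive constant with $c=\lambda^2c_2$. Substituting $\phi=\lambda\phi_2$ and $c=\lambda^2c_2$ into~\eqref{eqn:ode-c} for~$\phi$ and subtracting $\lambda$ times~\eqref{eqn:ode-c} for~$\phi_2$, the linear and singular terms cancel and only the constant terms survive, yielding $1=\lambda$. Therefore $\phi_2=\phi$, and the kink tension function is uniquely determined.
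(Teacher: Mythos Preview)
Your proof is correct and follows essentially the same line as the paper's. The paper phrases the key step as ``the level set of $\snorm V^2$ is a closed loop in the halfplane $\phi>0$'', which is the phase-plane way of saying exactly what your blow-up argument says: if $\phi$ approached~$0$ along~$W$, the term $c^2/\phi^2$ in the conserved quantity would diverge while $\snorm V^2$ stays fixed. For uniqueness the paper simply observes that $\phi^2\tau=c$ determines $\phi$ up to a constant factor and that the ODE then pins down the factor; your version spells out this last step (substituting $\phi=\lambda\phi_2$ into \eqref{eqn:ode-c} to force $\lambda=1$) and also makes explicit why any second kink tension $\phi_2$ must again be everywhere positive, a point the paper leaves implicit.
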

\begin{proof}
As above, let $C$ be the component of $\{\phi>0\}$ containing $p$,
and set $c:=\phi(p)^2\tau(p)\ne0$.
On $C$ we know $\phi$ satisfies \eqref{eqn:ode-c} for some $c\ne0$.
The level set of $\snorm V^2$ is a closed loop in the halfplane $\phi>0$,
meaning the solution extends with nonvanishing $\phi$
to the whole curve~$L$.
For the final statement, note first that $\phi$ is uniquely determined
up to a constant factor by the fact that $\phi^2\tau$ is constant;
the constant is then determined by~\eqref{eqn:phi-tau}.
\end{proof}

To understand these solutions better, let us first consider helices again.
A helix of constant curvature $\kappa\ident1$ and torsion
$\tau\ident m$ also has pitch~$m$ and lies on a cylinder
of radius $\sfrac1{(1+m^2)}$; in appropriate coordinates it
is parametrized as $\bigl(\cos t, \sin t, mt\bigr)/(1+m^2)$.
If it is balanced then by \eqref{eqn:phi-tau}, we see $\phi\ge0$ is
a constant $\phi\ident\phi_0=\sfrac1{(1-m^2)}$.  Clearly this works
exactly when $\snorm m<1$, that is, when $\snorm\tau<\kappa$.
(We saw before that helices
with $\snorm\tau>\kappa$ are not critical as they can
be shortened while decreasing curvature.)
We compute $c=m/\bigl(1-m^2\bigr)^2$ and
$$\snorm{V}^2 = cm(1+m^2) = \frac{m^2\bigl(1+m^2\bigr)}{\bigl(1-m^2\bigr)^2}.$$
Using \eqref{eqn:v-c},
we see that the virtual tangent vector $V$ points along the axis of the helix,
but in the \emph{opposite direction} from $T$,
as $\left<V,T\right>=1-\phi<0$.
(Physically, the endpoint constraints are holding a kinked helix
under compression, rather than tension as for a straight arc.)

To consider general solutions, we start again with
any value of $m\in(-1,1)$ and define $c$ by $c:=m/\bigl(1-m^2\bigr)^{2}$.
A direct computation shows that the minimum value of $|V|^2$ on the
$(\phi,\phi')$--plane is then $cm(1+m^2)$, occuring at $(1/(1-m^2),0)$,
and every solution to~\eqref{eqn:ode-c} then corresponds to a choice of
$\snorm{V} \ge \sqrt{cm(1+m^2)}$.  Equality gives the helix described
above with $\tau\ident m$ and $\phi\ident\sfrac1{(1-m^2)}$,
while greater values of $\snorm{V}$ lead to solutions where $\tau$
and $\phi$ oscillate above and below these values.
Each solution can also be described by
the maximum value of $\phi$ along its orbit in the $(\phi,\phi')$--plane,
which will be $\sfrac k{(1-m^2)}$ for some $k\ge1$.
This $k$ determines $\snorm{V}$ by
\begin{equation}\label{eqn:V from k,m}
\snorm{V}^2 = \frac{\bigl(k-1+m^2\bigr)^2 + m^2/k^2}{\bigl(1-m^2\bigr)^2}.
\end{equation}

Even if these general solutions cannot be expressed
in closed form, it is easy to integrate the ODE numerically
for different values of~$m$ and $\snorm{V}$.
Given their shapes (seen in Figure~\ref{fig:supercoils}),
we call these curves \demph{supercoiled helices}.
We can restate Proposition~\ref{prop:c-non0} as follows:
Suppose a connected curve $L$ has nonzero torsion somewhere
and is nicely balanced by kink measure alone.  Then $L$ is
a subarc of some supercoiled helix.

\begin{figure*}[ht]
\vspace{5mm}
\hfill
\begin{overpic}[height=69mm]{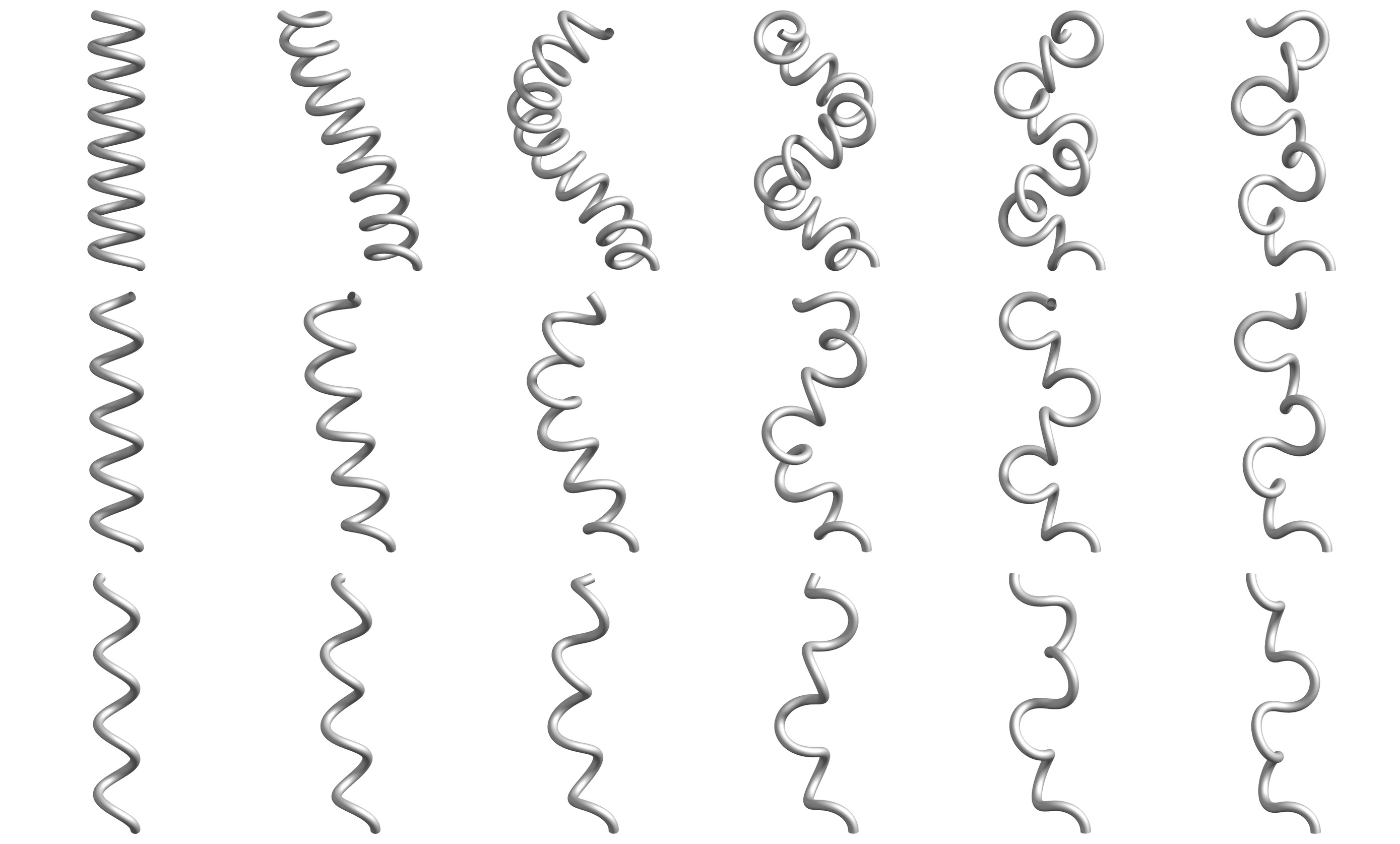}
\put(0,62){$k=$}
\put(8,62){$1$}
\put(19,62){$1.125$}
\put(38,62){$1.25$}
\put(57,62){$1.5$}
\put(73,62){$1.75$}
\put(91,62){$2$}
\put(-9,52){$c=0.5$}
\put(-10,48){$m \approx 0.372$}
\put(-9,32){$c=1$}
\put(-10,28){$m \approx 0.525$ }
\put(-9,11){$c=3$}
\put(-10,7){$m \approx 0.715$}
\end{overpic}
\caption{The picture shows $\sigma$--critical curves obtained by
solving~\eqref{eqn:ode-c} with various values for~$c=m/(1-m^2)^{2}$
and various initial conditions.  For any~$c$, there is one
solution with constant $\phi\ident\sfrac1{(1-m^2)}$: a helix with torsion~$m$.
The solutions shown have initial conditions $\phi'(0)=0$ and
$\phi(0)=\sfrac{k}{(1-m^2)}$, for various $k\ge1$.
The shape of the curves explains why we call them supercoiled helices;
they become progressively more twisted as $k$ increases.
The virtual tangent $V$ is vertical in all of these pictures, and
we can see that
each curve is invariant under a screw motion along $V$,
as guaranteed by Proposition~\ref{prop:meaning of V}.}
\label{fig:supercoils}
\end{figure*}

This same family of curves was discovered by Hector Sussmann,
who called them ``helicoidal arcs''.
Sussmann gives a fascinating control-theoretic derivation
of the family in his research abstract~\cite{susspaths}.
He considers the same problem of minimizing length subject
to the curvature bound $\Ti\ge1$ for
arcs with fixed endpoints and fixed tangents there.
He shows the absolute length minimizer (for any given boundary conditions)
is either a helicoidal arc or a concatenation of at most
three circular arcs and straight segments (as in our case $c=0$ below).
Our results are somewhat
weaker than Sussmann's in that he has fewer regularity assumptions,
but are stronger in that we classify all \emph{critical} curves,
rather than just minimizers.
(Sussmann also claims to have a proof that any supercoiled helix is a
local strict minimizer for length in the sense that each subarc
of length less than some $\delta>0$ is the unique length minimizer
for its endpoints, but the promised paper with details
does not seem to have appeared even as a preprint.)

As is clear from the pictures, each supercoiled helix is
invariant with respect to some screw motion (perhaps degenerating
to a translation) along the direction of~$V$, which we call vertical.
To prove this, we analyze the vertical and horizontal components separately.

\begin{lemma}\label{lem:nonclosed}
Suppose an arc from $p$ to $q$ is nicely balanced by kink measure
alone, with $\phi>0$ and virtual tangent~$V$.
Then
$$\ip{q-p}{V}=\phi'(q)-\phi'(p)-c^2\int_p^q\phi^{-3}\ds.$$
\end{lemma}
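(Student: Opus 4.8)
The plan is to express $\langle q-p,\,V\rangle$ as an arclength integral of $\langle T,\,V\rangle$, evaluate the integrand with the Frenet frame, and then use the ODE for $\phi$ to split off a total derivative. First I would record that ``nicely balanced by kink measure alone'' means the strut measure is $\mu=0$, so the strut force measure $\SFM$ vanishes on the interior of the arc; hence the nice-balance relation $\SFM+V'=0$ of Definition~\ref{def:nice} forces $V'=0$, and $V$ is a constant vector along the arc. Writing $\gamma$ for the arclength parametrization so that $q-p=\int_p^q T\,ds$, pairing with the constant $V$ gives
$$\langle q-p,\,V\rangle = \int_p^q \langle T,\,V\rangle\,ds.$$

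Next I would compute the integrand pointwise. Since $\phi>0$ on the arc, the arc lies in $U$ and carries the Frenet apparatus of Corollary~\ref{cor:more derivatives}, so I may use the expression \eqref{eqn:v-c}, namely $V=(1-\phi)T+\phi'N+\tfrac{c}{\phi}B$, together with the orthonormality of $(T,N,B)$, to obtain $\langle T,\,V\rangle = 1-\phi$. It then remains to integrate $1-\phi$, and for this I invoke the ODE \eqref{eqn:ode-c}, $\phi''=1-\phi+c^2/\phi^3$, to rewrite the integrand as $1-\phi=\phi''-c^2/\phi^3$. Thus
$$\int_p^q \langle T,\,V\rangle\,ds = \int_p^q \phi''\,ds - c^2\int_p^q \phi^{-3}\,ds = \phi'(q)-\phi'(p) - c^2\int_p^q \phi^{-3}\,ds,$$
which is exactly the claimed identity.

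The one point deserving care — rather than a genuine obstacle — is the fundamental-theorem-of-calculus step $\int_p^q\phi''\,ds=\phi'(q)-\phi'(p)$, since a priori $\phi$ is only $W^{1,\BV}$ and $\phi''$ is merely a measure. But balance by kink measure alone means $\SFM_N$ has no atoms on the interior, so the first equation of Corollary~\ref{cor:fam eqns} (with $\sigma=1$) reads $\phi''=1-(1-\tau^2)\phi$ as measures; the right-hand side is a continuous function of arclength (using $\tau=c/\phi^2$ and $\phi>0$ continuous), so $\phi''$ is absolutely continuous with continuous density and $\phi$ is in fact $C^2$ on the arc. The fundamental theorem then applies verbatim, with $\phi'(p)$ and $\phi'(q)$ understood as the (existing) one-sided limits at the endpoints.
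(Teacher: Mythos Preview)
Your proof is correct and follows essentially the same route as the paper: write $\langle q-p,V\rangle=\int\langle T,V\rangle\,ds$, use \eqref{eqn:v-c} to get $\langle T,V\rangle=1-\phi$, then invoke \eqref{eqn:ode-c} to rewrite $1-\phi=\phi''-c^2/\phi^3$ and integrate. The paper's proof is terser and relies on the surrounding discussion (where the constancy of~$V$ and the ODE are already established), while your added paragraph justifying the $C^2$ regularity of~$\phi$ on the arc is a welcome clarification that the paper leaves implicit.
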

\begin{proof}
From \eqref{eqn:v-c} and \eqref{eqn:ode-c} we have
\begin{equation*}
  \ip{q-p}{V} =\int_p^q\ip TV\ds
  = \int_p^q(1-\phi)\ds = \int_p^q\phi''\ds -c^2\int_p^q\phi^{-3}\ds.
       \rlap{\hspace{6.3mm}\mbox{\qedhere}}
\end{equation*}
\end{proof}

We conjecture that each supercoiled helix is embedded; while we
do not attempt to prove this, the last lemma suffices to show
that the curve does not close after any full number of periods:
\begin{corollary}\label{cor:nonclosed}
Each period of a supercoiled helix makes negative progress
in the direction of~$V$.  In particular, for $c\ne0$ no solution
to \eqref{eqn:ode-c} gives a closed curve.
\end{corollary}
\begin{proof}
For $c\ne0$ the lemma means that each period of the curve makes
the same negative progress $-c^2\int_L\phi^{-3}\ds$ in the $V$ direction.
Thus we cannot close up after any number of periods.
\end{proof}

Now we turn to analyzing the horizontal part of the supercoiled
helix~$L$.  For this, consider the curve $V\cross L$ -- a rotated
and scaled version of the horizontal projection.  Differentiating gives
$$(V\cross L)' = V\cross T = (-\phi'N-\phi\tau B) \cross T
  = -\phi\tau N + \phi' B = (\phi B)'.$$

But this means that $V\cross L - \phi B \ident: W$ is a constant.
Since $\phi B$ is bounded, we immediately see (for $V\ne0$)
that $L$ is contained in a cylinder around an axis parallel to~$V$.
Just as $V$ can be viewed as a conserved force, the (pseudo)vector~$W$
is the conserved torque corresponding to the rotational invariance
of our problem.  This torque~$W$ of course depends on a choice of
origin -- by translating~$L$ we can change its horizontal
component (perpendicular to~$V$).
In particular, we will translate
to make $W$ vertical -- a scalar multiple of~$V$.
This minimizes $|W|$ and centers the bounding cylinder for~$L$ around
the origin.

With this choice of origin, $V\cross W=0$.
Thus, writing $L^\perp$ for the horizontal component of $L$, we have
\begin{equation}
L^\perp := -\frac{V\cross(V\cross L)}{\snorm{V}^2}
         = -\frac{V\cross\phi B}{\snorm{V}^2}.
\label{eqn:lperp}
\end{equation}
Since $\left<V,\phi B\right>\ident -c$, we get
\begin{equation}
\snorm{V\cross\phi B}^2
  = \snorm{V}^2\snorm{\phi B}^2 - \left<V,\phi B\right>^2
  = \phi^2\snorm{V}^2 - c^2.
\label{eqn:vxphiB}
\end{equation}
Combining \eqref{eqn:lperp} and \eqref{eqn:vxphiB} gives
\begin{equation}
\snorm{L^\perp} = \sqrt{\frac{\phi^2}{\snorm{V}^2} - \frac{c^2}{\snorm{V}^4}}.
\label{eqn:lperp-len}
\end{equation}
Since $c$ and $V$ are constant, it is clear that
the radius $\snorm{L^\perp}$ from the cylinder axis
is a monotone function of~$\phi$.

\begin{proposition}\label{prop:meaning of V}
For $c \ne 0$ every solution to \eqref{eqn:ode-c} -- that is every
supercoiled helix -- is invariant under some screw motion (or perhaps
a translation) in the direction of the virtual tangent~$V$.
For the supercoiled helix with $c=m/(1-m^2)^{2}$ and $\phi$
maximized at $\sfrac{k}{(1-m^2)}$, the curve is (tightly)
contained in a cylinder of radius
\begin{equation*}
\frac{k(k-1+m^2)}{(k-1+m^2)^2+m^2/k^2} = \frac{k(k-1+m^2)c}{m\snorm{V}^2}.
\end{equation*}
\end{proposition}

\begin{proof}
Any solution to~\eqref{eqn:ode-c} is periodic with some period~$P$.
Thus the torsion (and of course curvature) of the supercoiled
helix~$L$ are $P$--periodic in arclength.
Thus $L$ is invariant under some rigid motion $\rho$ of space
in the sense that $L(s+P)=\rho L(s)$ for all~$s$.
But this motion must preserve the vertical direction of the
constant virtual tangent $V$.  That is, $\rho$ is a screw
motion along an axis parallel to~$V$, perhaps degenerating
to a translation or a rotation; the case of a rotation
is ruled out by Corollary~\ref{cor:nonclosed}.
Since we have translated to make $W\parallel V$, the screw axis
passes through the origin.  The cylinder radius is the maximum value
of $\snorm{L^\perp}$, calculated from \eqref{eqn:lperp-len}
at the maximum $\phi=k/(1-m^2)$.
\end{proof}

\subsection{Planar critical curves}

Now we turn to the case $c=0$.
Based on what we have already proved about the case $c\ne0$,
we see that if $c=0$ on one component $C$ of $U\subset L$,
then we must have $c=0$ on all of~$U$.
Thus $\tau\ident0$ on~$U$, so each component of~$U$ is an arc
of a unit circle (if not the whole circle).
Thus $L$ is made up of (potentially infinitely many) circular arcs
(the components of $U$) possibly joined by straight segments ($L\setm U$).
We will use Corollary~\ref{cor:kinkjunction} to analyze the possible
junctions.

First we examine the possible kink tension functions~$\phi$
on a circular arc, noting that for $c=0$
equations~\eqref{eqn:ode-c}, \eqref{eqn:v-c} become
$$\phi''=1-\phi, \qquad V=(1-\phi)T-\phi'N.$$
Now suppose that $L$ is a unit circle.
Given any vector $V$ in the plane of~$L$,
we define $\phi:=1-\ip{T}{V}$ on~$L$.  Clearly $\phi\ge0$ on $L$
if and only if $\snorm V\le1$.  That is, the various possible kink measures
balancing $L$ correspond to the virtual tangent vectors $V$
in the closed unit disk. 
For $V=0$ we have $\phi\ident1$ (and it is interesting to think
of $L$ as a degenerate helix with $m=0$ in the context of the discussion
after Proposition~\ref{prop:c-non0}).
For $\snorm V<1$ we have $\phi>0$ on~$L$.
For $\snorm V=1$ we have $\phi>0$ except at a single point
$p\in L$ where $\phi(p)=0=\phi'(p)$.

For $\snorm V>1$, we cannot use this $\phi$ to balance the whole
circle, but we do have $\phi>0$ on an arc of more than half the circle,
centered at the point where $T=-V$; at its endpoints $\phi=0$ but $\phi'\ne0$.
Congruent such arcs can be joined end-to-end in a $C^1$ fashion
such that $V$ remains constant at each junction point while $N$ flips sign.
(See Figure~\ref{fig:nonc2arcs}.)  We call an infinite such concatenation
a \demph{wave}.  A wave is embedded if and only if the turning angle of
each piece is less than $\nicefrac{5\pi}3$, that is, if and only if
$\snorm V>\nicefrac{2}{\sqrt3}$.  (The borderline case corresponds to
two rows of the hexagonal circle packing.)

\begin{figure*}
\begin{center}\begin{overpic}[width=4in]{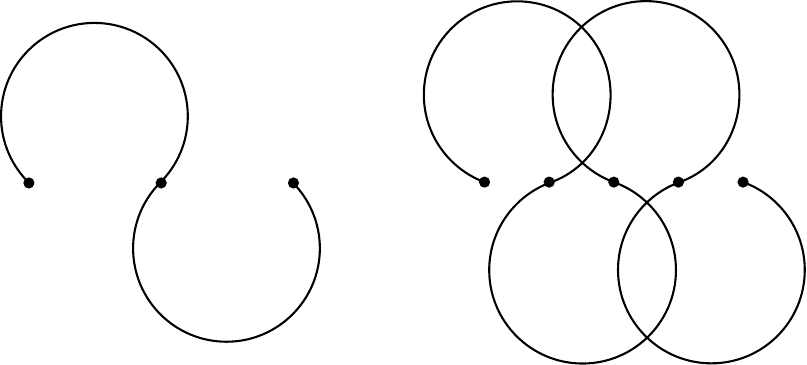}\end{overpic}\end{center}
\caption{A wave is the planar $C^1$ concatenation of
circular arcs of the same turning angle $\theta>\pi$. On the left, we see such
an example. Since the straight line joining these endpoints is
also critical, this shows that there are
many $\sigma$--critical curves joining the same pair of fixed
endpoints. If we allow nonembedded curves, there are
infinitely many such critical configurations, like the one
on the right.}
\label{fig:nonc2arcs}
\end{figure*}

\begin{theorem}\label{thm:no strut force}
Suppose $L$ is an embedded connected curve,
nicely balanced by kink measure alone (for fixed endpoints with fixed tangents).
If $L$ has any point of nonzero torsion, then as we have seen,
it is a subarc of some supercoiled helix (for instance a helix of
torsion less than~$1$).  Otherwise $L$ is either
a straight segment (possibly joined to circular arcs at each end),
a circle (or arc thereof),
or a subarc of some wave.
\end{theorem}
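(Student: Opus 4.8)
The plan is to organise the classification around two scalar invariants of a curve $L$ balanced by kink measure alone: the constant $c = \phi^2\tau$ from Corollary~\ref{cor:fam eqns}, and the length $\snorm{V}$ of the virtual tangent. Since $V' = -\SFM = 0$, the vector $V$ is genuinely constant along $L$, so $\snorm{V}$ is a single number. First I would dispose of the case of nonvanishing torsion. A point $p$ with $\tau(p) \ne 0$ lies in $U$, where $\phi > 0$, so $\phi^2\tau \ne 0$ at $p$; then Proposition~\ref{prop:c-non0} forces $\phi^2\tau \equiv c \ne 0$ and $\phi > 0$ on all of $L$, with $\phi$ the unique solution of~\eqref{eqn:ode-c}. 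Hence $L$ is a subarc of the corresponding supercoiled helix, the special constant-$\phi$ solutions being the genuine helices (of torsion less than $1$) of Lemma~\ref{lem:helix}. This settles the ``as we have seen'' clause and shows that the torsion is either identically zero or nowhere zero, so it remains only to treat $\tau \equiv 0$, equivalently $c = 0$.

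For $c = 0$ the preceding discussion already records that each component of $U = \{\phi > 0\}$ is an arc of a unit circle, while along $L \setm U$ the curve is kink-free with $\SFM = 0$ and hence straight by Lemma~\ref{lem:bal-nokink}. On any circular component the relations $V = (1-\phi)T + \phi' N$ and $\snorm{V}^2 = (1-\phi)^2 + \phi'^2$ hold; since $V$ is globally constant, $\snorm{V}$ is one fixed number, and at any zero of $\phi$ it satisfies $\phi'^2 = \snorm{V}^2 - 1$. I would then split into the three regimes $\snorm{V} < 1$, $\snorm{V} = 1$, and $\snorm{V} > 1$, reading off the junction behaviour at the zeros of $\phi$ from Corollary~\ref{cor:kinkjunction}.

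The regime $\snorm{V} < 1$ is immediate: a zero of $\phi$ would force $\snorm{V}^2 = 1 + \phi'^2 \ge 1$, a contradiction, so $\phi > 0$ throughout, $U = L$, and $L$ is an arc of a unit circle (or the whole circle). When $\snorm{V} > 1$, every zero of $\phi$ is transversal ($\phi' \ne 0$) and hence an isolated point of $L \setm U$; Corollary~\ref{cor:kinkjunction} then gives $\phi'_+ + \phi'_- = 0$ with $N$ changing sign, which is exactly the $C^1$ junction of two congruent circular arcs defining a wave, so $L$ is a subarc of a wave. No straight segment can occur in this regime, since the endpoints of such a segment would be limit points of $L \setm U$, forcing $\phi' = 0$ and thus $\snorm{V} = 1$. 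The remaining regime $\snorm{V} = 1$ is the delicate one: every zero of $\phi$ is now tangential ($\phi = \phi' = 0$), and the local solution with these data is $\phi = 1 - \cos(s - p)$, whose further tangential zeros occur only after a full turn of arclength $2\pi$.

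The main obstacle is exactly the global bookkeeping in this last regime, and in particular ruling out more than one straight segment. The key is the full-turn phenomenon just noted: a circular component bounded by tangential zeros at both ends must be a complete unit circle and therefore returns to its starting point, so it cannot be interposed between two distinct straight segments of an embedded connected curve. Consequently, when $\snorm{V} = 1$ the curve either closes up as a full circle (with $\phi$ touching $0$ tangentially at the single point where $T = V$) or contains at most one straight segment, capped at each end by a circular subarc meeting it tangentially in the direction $T = V$; this is precisely the stated ``straight segment (possibly joined to circular arcs at each end).'' Collecting the three regimes together with the nonzero-torsion case yields exactly the four listed shapes, completing the classification.
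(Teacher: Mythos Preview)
Your proposal is correct and follows essentially the same route as the paper's proof: dispose of the nonzero-torsion case via Proposition~\ref{prop:c-non0}, then for $c=0$ classify by the constant $\snorm{V}$ into the three regimes $\snorm{V}<1$, $\snorm{V}=1$, $\snorm{V}>1$, reading off the junction behaviour from Corollary~\ref{cor:kinkjunction} and the ODE $\phi''+\phi=1$. The key step---that a circular arc meeting a straight segment tangentially cannot reach another tangential zero of~$\phi$ without completing a full turn, hence embeddedness forbids a second straight segment---is exactly the argument the paper gives (phrased contrapositively there as ``embeddedness means each arc is less than a full circle, so we never have $\phi=0$ again''). Your organisation is arguably a bit cleaner, since you trichotomise on~$\snorm{V}$ from the outset rather than beginning with ``along any straight segment $V=T$''; and you make explicit the exclusion of straight segments in the $\snorm{V}>1$ regime, which the paper leaves implicit. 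But these are presentational differences, not substantive ones.
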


\begin{proof}
We have already treated the case of nonzero torsion, so we may assume $c=0$.
Thus the curve is made up of
straight segments and unit circular arcs.  At any junction between
two pieces we have $\phi=0$, and by
Corollary~\ref{cor:kinkjunction} we have $\phi'=0$ unless $N$ flips sign.

Our classification now proceeds according to $\snorm V$.
Along any straight segment we have $V=T$, so $\snorm V=1$; if the segment
is joined to a circular arc at either end, this $V$ uniquely
determines the kink tension function on that arc.  In particular
the embeddedness of~$L$ means each arc is less than a full circle,
so we never have $\phi=0$ again along either arc and there are no
further junctions.

If $\snorm V<1$ on a circular arc then $\phi>0$ so there are no junctions
and $L$ is a circle, or some subarc.  (Here $V$ is not uniquely determined.
Since $L$ is embedded we do not go more than once around the circle.)

Finally if $\snorm V>1$ on a circular arc, then if the arc extends to
where $\phi=0$ we have $\phi'\ne0$ so if there is a junction it is
exactly the kind seen in a wave.  Extending, there can be further junctions,
but the whole curve is a subarc of the wave specified by~$V$.
(If there is no junction, we are really in the previous case of a circular arc.
As long as there is at least one junction, $V$ is uniquely determined.)
\end{proof}

\begin{remark}
If we did allow nonembedded curves,
then there would be additional examples as follows:
at any point $p\in L$ where $\phi=0=\phi'$ (for instance any point along a
straight segment of~$L$), we can splice in a ``hoop'',
a full circle tangent to $L$ at~$p$.  Indeed we could traverse many different
hoops at~$p$ before continuing further along the initial curve~$L$.
Comparing where we used embeddedness in the proof above, we see these
(along with circles traversed more than once) are the only new examples.
\end{remark}

\begin{corollary}\label{cor:no-strut-force}
Suppose $L$ is an embedded connected curve,
nicely balanced by kink measure alone
(for fixed endpoints with \emph{free} tangents).
Then $L$ is either a straight segment, a circle,
or the subarc of a wave between some two junction points
-- that is, a planar $C^1$ concatenation of circular arcs
with equal turning angle $\theta>\pi$ (and
$\theta<\nicefrac{5\pi}3$ if there are more than two arcs).
\end{corollary}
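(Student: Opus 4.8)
The plan is to derive Corollary~\ref{cor:no-strut-force} directly from Theorem~\ref{thm:no strut force} by adding the single extra ingredient that comes from the free-tangent boundary condition. Recall from Lemma~\ref{lem:bv} and Definition~\ref{def:nice balance} that a kink tension function~$\phi$ must vanish at any endpoint $p\in\Deriv L$ with free tangent vector. So first I would invoke Theorem~\ref{thm:no strut force} to conclude that $L$, being embedded, connected, and nicely balanced by kink force alone, is one of the four listed types: a subarc of a supercoiled helix, a straight segment (possibly with circular arcs attached), a circle (or arc), or a subarc of a wave. The whole point of the corollary is that the free-tangent condition $\phi=0$ at both endpoints rules out the genuinely new possibilities and collapses the list.

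Next I would go through the cases and apply the boundary condition $\phi(p)=0$ at each of the two free endpoints. For the supercoiled-helix case (torsion nonzero somewhere), Proposition~\ref{prop:c-non0} says $c=\phi^2\tau$ is a nonzero constant along all of~$L$, which forces $\phi>0$ everywhere; this contradicts $\phi=0$ at the endpoints, so no supercoiled helix (in particular no genuine helix) can satisfy free-tangent boundary conditions. This eliminates the nonzero-torsion case entirely, so we may assume $c=0$ and $L$ is built from straight segments and unit circular arcs. For the straight-segment-with-attached-arcs case, I would observe that along a straight segment $\phi$ solves $\phi''=1-\phi$ and the attached circular arcs were there precisely to use up the freedom in~$\phi$; but with free tangents the endpoints must have $\phi=0$, and I would check that a clean straight segment with $\phi\equiv0$ (hence $V=T$, $\SFM=0$) already balances, while attaching a nontrivial arc would force $\phi>0$ on the interior without being compatible with the free-tangent vanishing at the terminal endpoint. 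The upshot is that the attached-arc embellishments disappear and only the bare straight segment survives.

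For the circle case there is nothing to remove: a full circle has no endpoints, so the free-tangent condition is vacuous and it remains on the list. The wave case is where the real content lies: a subarc of a wave is a planar $C^1$ concatenation of circular arcs of equal turning angle $\theta>\pi$, and I would argue that the free-tangent endpoints must be junction points. At a junction $\phi=0$, which is exactly the free-tangent boundary condition, whereas if an endpoint fell in the interior of one of the circular arcs we would have $\phi>0$ there, violating $\phi(p)=0$. Hence the admissible subarc runs precisely between two junction points, giving the description in the statement. I expect the main obstacle to be the bookkeeping in the straight-segment case: one has to verify carefully, using Corollary~\ref{cor:kinkjunction} (the conditions $\phi'(p)=0$ at limit points and the sign-flip of~$N$ at isolated points of $L\setm U$) together with the $W^{1,\BV}$ regularity and nonnegativity of~$\phi$, that the free-tangent vanishing genuinely forbids attaching a nontrivial circular arc rather than merely constraining it, since this is the step where the distinction between fixed and free tangents actually does the work.
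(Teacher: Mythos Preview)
Your approach is essentially the paper's: invoke Theorem~\ref{thm:no strut force}, then use the free-tangent condition $\phi(p)=0$ at both endpoints to prune the list. The paper's proof is a terse three lines doing exactly this.

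Two minor slips to clean up. First, the ODE $\phi''=1-\phi$ holds on the \emph{circular arcs} (the set $U=\{\phi>0\}$), not on straight segments, where $\phi\equiv0$ identically; your subsequent reasoning (``attaching a nontrivial arc would force $\phi>0$ \dots\ at the terminal endpoint'') is correct and does not actually depend on the misstated ODE, so just delete that clause. Second, you skipped the proper-arc-of-a-circle subcase: the theorem allows ``a circle (or arc thereof)'' with $\snorm{V}\le1$, and you need to note that on such an arc $\phi=1-\ip{T}{V}$ vanishes at \emph{at most one} point, so two free-tangent endpoints are impossible unless the curve is the full closed circle. Finally, your anticipated difficulty with Corollary~\ref{cor:kinkjunction} does not arise; the argument is simpler than you expect, since in each case $\phi$ is explicitly known and you just read off where it vanishes.
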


\begin{proof}
Since the tangent vectors at the endpoints are free,
we must have $\phi=0$ there.  That means we are looking for
those examples from the theorem that satisfy this boundary
condition.  (Recall that on almost all examples, $\phi$ was uniquely
determined.)  Supercoiled helices are clearly excluded.
In the other three examples, the endpoints are restricted
to the special cases listed.
\end{proof}

\begin{remark}
Analogous to the remark about curve-shortening flow on helices,
we can give the following
intuition for the condition that each piece in a wave has turning
angle greater than~$\pi$.  Consider the one-parameter family of
circular arcs through two fixed points in a plane.  The curvature
is maximized at the semicircle.  The arcs of less than a semicircle
can thus be shortened while decreasing curvature -- even
staying within our family -- while the arcs of more than a semicircle cannot.
\end{remark}

Durumeric~\cite{durLocII} used Sussmann's work to prove that
every closed $C^{1,1}$ curve which is a local minimum for ropelength
has at least one
strut. In our language, such curves are $\half$--minimizing.
We now prove a similar result which again is weaker in that it
requires regulated kinks but stronger in that it applies to
all critical curves, not just to minimizers.

\begin{corollary}
Every closed $\half$--critical curve with regulated kinks
has at least one strut.
\end{corollary}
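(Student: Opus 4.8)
The plan is to argue by contradiction: suppose $L$ is a closed $\half$-critical curve with regulated kinks that has \emph{no} struts, and derive a contradiction. The first step is to put criticality into the ``nicely balanced'' normal form. Since every closed link with $\Ts=1$ is $\Ts$-regular (via the Euler field, as noted after Definition~\ref{def:regular}), Lemma~\ref{lem:strongcrit} shows $L$ is strongly $\half$-critical, and Theorem~\ref{thm:gbc} makes it $\sigma$-balanced; because the kinks are regulated, Lemma~\ref{lem:reg = balance} promotes this to regularly balanced and Theorem~\ref{thm:final} to nicely balanced. The crucial observation is that ``no struts'' means $\Strut(L)=\emptyset$, so the only available strut measure is $\mu=0$ and hence $\SFM=0$: the curve $L$ is nicely balanced by kink measure alone. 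After rescaling by $1/\sigma=2$ so that kinks have curvature~$1$, $L$ falls under the classification of Section~\ref{sect:zero strut}.

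The second step is to pin down which closed connected curves admit such a kink-only balance. The virtual tangent $V$ is constant (since $V'=-\SFM=0$), and $\ip{T}{V}=1-\phi$ regardless of~$c$ by \eqref{eqn:v-c}. If the torsion is nonzero anywhere, then $c=\phi^2\tau\ne0$ along all of~$L$ by Proposition~\ref{prop:c-non0}, and Corollary~\ref{cor:nonclosed} shows such a supercoiled helix makes strictly negative net progress $-c^2\int\phi^{-3}\ds$ in the $V$-direction over each period, so it cannot close up. Thus $c\equiv0$ and $\tau\equiv0$ on $U=\{\phi>0\}$, and $L$ consists of unit circular arcs possibly joined by straight segments. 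On any straight segment $V=T$ is forced, so all straight pieces of a closed curve would have to share the single direction~$V$; and at a genuine wave junction the analogous computation (by Lemma~\ref{lem:nonclosed} with $c=0$, over one piece $\ip{q-p}{V}=\phi'(q)-\phi'(p)<0$ since $\phi'$ is positive entering and negative leaving) again forces monotone progress and prevents closure. The only surviving possibility is $U=L$ with $\tau\equiv0$ and $\snorm\kappa\equiv1$, that is, $L$ is a round circle.

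The final step is to observe that a round circle \emph{does} carry struts, contradicting the standing assumption. Unscaling, $L$ is a circle of radius $\sigma=\half$, so $\reach(L)=\half=\Ts(L)/2$, and any two antipodal points lie at distance~$1$ with each in the other's normal plane; there $\psi^*=0$, so $\pds{x}{y}=1=\Ts(L)$ and these are genuine struts by Corollary~\ref{cor:strut}. Hence $\Strut(L)\ne\emptyset$, contradicting the hypothesis and completing the proof.

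The hard part, and the point most easily gotten wrong, is precisely the distinction between a vanishing strut \emph{measure} and an empty strut \emph{set}: being nicely balanced by kink measure alone does \emph{not} forbid struts, and indeed the round circle is the prototypical curve balanced by kinks that nonetheless has struts. The real content is therefore the rigidity statement that the only closed curve admitting a kink-only balance is the circle (where the non-closure of supercoiled helices and waves does the work), together with the elementary verification that the circle's antipodal critical pairs are struts.
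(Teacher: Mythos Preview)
Your argument is correct and follows the same route as the paper: reduce to a curve nicely balanced by kink measure alone, classify such closed curves as the circle, and note that the unit-diameter circle has antipodal struts. The paper's two-line proof simply cites Theorem~\ref{thm:no strut force} for the classification step, whereas you re-derive it; the one loose end in your re-derivation is the straight-segment case---observing that all straight pieces share the direction~$V$ does not by itself rule out closure, and what actually forces at most one segment is the fact (from Theorem~\ref{thm:no strut force}) that on any circular arc attached to a segment one has $\phi>0$ throughout, so no further junction can occur.
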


\begin{proof}
If the curve has nonzero strut force measure, it must have struts. If not,
the curve is a circle of unit diameter by Theorem~\ref{thm:no strut force},
and it again has struts.
\end{proof}

It is also interesting to see how two arcs of the type we have
been considering can join at a point $p$ where there
is an atom of strut force measure.
At~$p$ the virtual tangent~$V$ jumps by exactly $\SFM\{p\}$,
while of course $\phi$ is continuous.
If $\phi(p)=0$ we are talking about a junction between circular
arcs (or perhaps one straight segment); here the atom of $\SFM$
allows us to change the plane of the circle (and to change $\phi'$).

If on the other hand $\phi(p)>0$, the Frenet frame is well-defined,
and we now consider atoms in $\SFM_N$ and in $\SFM_B$ separately
using Corollary~\ref{cor:kinkjunction}.
At an atom of $\SFM_B$ we have a jump in $c=\phi^2\tau$ but $\phi'$
(like $\phi$) is continuous.  That is, we might change from one
supercoiled helix to another, or might jump to or from the case $c=0$.
At an atom of $\SFM_N$, on the other hand, $c$ stays constant
but $\phi'$ jumps.  For $c\ne0$ this means a vertical jump in the
phase space -- generally to a different supercoiled helix with the same~$c$,
but if $\phi'$ merely changes sign then $|V|$ is unchanged and we have
merely jumped to a different point on the same supercoiled helix.
For $c=0$ we don't see any effect on the curve at~$p$ -- it
remains a circular arc -- but the jump in $\phi'$ affects where $\phi$
vanishes to either side along this arc (as we saw in Lemma~\ref{lem:delta arc}).

\section{Noncompact curves}
Sometimes it is interesting to consider noncompact
(but still metrically complete) curves~$L$.  Since a
complete curve $L$ with positive reach is properly embedded,
for any compact $K\subset \R^3$, the intersection $L\cap K$ is compact.
Typically (for instance, by Sard's theorem for almost every closed ball $K$)
this intersection is actually a compact subcurve of~$L$.

Of course the length of~$L$ is infinite, but if we restrict
our attention to variations $\xi$ supported on some compact
$K\subset\R^3$ then $\delta_\xi\length(L)$ and $\delta_\xi\Ts(L)$
are given by the same formulas as before, noting that only
those struts and kinks touching~$K\cap L$ -- a compact subfamily -- matter here.

Fix now a compact $K$ and a complete curve $L$ with $\Ts(L)=1$.
We say that $L$ is strongly $\sigma$--critical
for variations supported on~$K$ if there exists $\eps>0$
(depending on~$K$) such that the condition in
the earlier definition of strong criticality holds for
all $\xi$ supported on~$K$.  We say that $L$ is $\sigma$--balanced
for variations supported on~$K$ if there exist strut and kink
measures (depending on~$K$) such that the balance
equation holds for all $\xi$ supported on $K$.

It is straightfoward to extend the General Balance Criterion
(for each $K$) to say that $L$ is strongly critical for variations supported
on~$K$ if and only if it is $\sigma$--balanced for variations supported on~$K$.
Indeed, in the typical case when $K\cap L$ is a compact subcurve~$A$,
this statement is only slightly different from the General Balance
Criterion for~$A$ (considered with any new endpoints and their tangents fixed):
Essentially the parts of~$L$ at distance at most~$1$ from~$K$ act as obstacles
for~$A$.

Now suppose for a complete curve $L$ with $\Ts(L)=1$
we can find a single strut measure~$\mu$
and a single kink measure~$\nu$ (typically given by a kink tension function
$\phi\in W^{1,\BV}_\loc(L)$ vanishing outside $C^2$ arcs)
such that the balance equation holds
for all compactly supported~$\xi$.  It follows
for each~$K$ that $L$ is strongly critical for variations
supported on~$K$.  In particular, $L$ is critical -- any
compactly supported variation that decreases length must
also decrease thickness.

In previous sections, we have implicitly seen
several examples like this already:

\begin{itemize}
\item A straight line is balanced by $\mu=0$ and $\nu=0$.
\item A infinite double helix of pitch at least~$1$
is balanced by the single family of struts in one-to-one contact.
\item Any supercoiled helix is balanced by the $\phi>0$ used to define it;
in particular any infinite single helix with $\tau<\snorm\kappa$
is balanced by a constant $\phi$.
\item Any infinite wave (with each piece having turning angle
more than~$\pi$) is balanced by its~$\phi$, which vanishes at every junction.
\end{itemize}

With appropriate regularity and smoothness assumptions,
one can show these are the only complete critical curves
with the kink/strut patterns we considered before, that is,
kink-free with controlled strut pattern as in Section~\ref{sec:spec-strut},
or strut-free as in Section~\ref{sect:zero strut}.

In the clasps we discuss next, the ends of each arc -- attached
the boundary planes -- are straight segments.  Clearly we could
extend these to be infinite rays and talk about a complete
clasp.  It would be balanced by the same compactly supported
strut and kink measures used for the compact clasp.

\section{The tight clasp}\label{sec:clasp}
Our next example is a variation on the ``simple clasp''
which we considered previously in \cite[Sect.~9]{CFKSW1}.
This clasp is a system of two interlooped ropes as in
Figure~\ref{fig:clasp-problem}~(left), one anchored to
the floor and one to the ceiling.
We studied the problem of minimizing the total length
subject to the Gehring condition that the two strands are everywhere
separated by at least unit distance, that is, that the link-thickness
is at least~$1$.
\begin{figure*}
\begin{center}
\begin{tabular}{cc}
\begin{overpic}[width=1.75in]{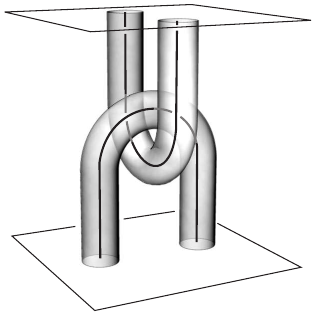}
\end{overpic}
&
\begin{overpic}[width=2.25in]{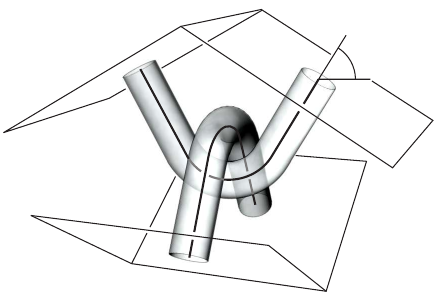}
\put(82,55){$\asin \tau$}
\end{overpic}
\end{tabular}
\end{center}
\caption{The clasp is the simplest configuration of two interlooped arcs.
On the left, we see the basic clasp where the endpoints are constrained
to lie in parallel planes. On the right, we have the angled clasp
where the four ends of the rope make an angle of $\asin \tau$
with the horizontal. We will study $\sigma$--critical clasp
configurations for varying values of $\tau$ and $\sigma$.}
\label{fig:clasp-problem}
\end{figure*} 

In fact, we considered the entire family of ``$\tau$--clasp'' problems,
$0\le \tau\le 1$, in which the four ends of the two ropes are no
longer vertical but make an angle of $\asin \tau$ with the
horizontal.  (Thus the case $\tau=1$ is the basic clasp described
above.) In each case we described in detail a critical
configuration (a ``Gehring clasp'') that we conjectured to be
minimizing.  Surprisingly, for $\tau=1$ the Gehring clasp
is a $C^1$ curve with unbounded curvature (that is, not $C^{1,1}$).
 
Here we consider the analogous problem in the more
physically realistic setting of the present paper where
the constraint is $\Ts\ge 1$.  Where the Gehring $\tau$--clasp
would have curvature greater than $\nicefrac1\sigma$,
our $\sigma$--critical $\tau$--clasp now has a kinked arc.
Note that the struts in these critical clasps always connect
one component to the other.  Thus (by an argument like
Proposition~\ref{prop:gehr-thick}) they are equally well
critical for a Gehring problem with stiffness in which, in addition
to the constraint on link-thickness, we insist that the
curvature of each strand never exceed $\nicefrac 1 \sigma$.
For this problem we may permit the stiffness to assume the
full range of values $0\le \sigma<\infty$. The criticality
theory for this problem is a straightforward combination
of our work here with that in~\cite{CFKSW1},
and we refrain from developing it explicitly.  In the remainder
of this section, we will allow arbitrary values of~$\sigma$; when
$\sigma<\half$ we implicitly then mean link-thickness with a curvature constraint instead of $\Ts$.

\begin{definition}
Consider a large tetrahedron with two edges forming an orthgonal
frame with the line connecting their midpoints,
where the dihedral angles along these edges are $2\asin\tau \in [0,\pi]$
as in Figure~\ref{fig:clasp-problem}~(right).
Suppose that the endpoints of two arcs are constrained to lie on
the faces of this tetrahedra, and the arcs are linked as shown
(giving a Hopf link if each component is closed with segments
in its own boundary faces).
The \demph{$(\tau,\sigma)$--clasp problem} is the problem
of minimizing the length of this configuration subject to the
constraint that $\Ts(L) \ge 1$.
\end{definition}

In this section we construct \emph{critical curves}
for the various $(\tau,\sigma)$--clasp problems.  These
curves have the same symmetry (with the two components
being congruent convex planar arcs in perpendicular planes)
as our Gehring clasps.
We believe these solutions are the length minimizers, but
we do not see how to prove this.  (Our arguments below might perhaps
extend to show the curves we describe are the unique critical curves
with the given symmetry, but it seems hard to show this symmetry
is not broken in a minimizer.)

The maximum curvature of the Gehring $\tau$--clasp is $1/\sqrt{1-\tau^2}$
at its tip.
Thus for $0\le\sigma\le\sqrt{1-\tau^2}$,
the critical $(\tau,\sigma)$--clasp is identical to the Gehring clasp,
a curve explicitly described in terms of elliptic integrals.
On the other hand, for larger $\sigma$, the curvature bound is
active, and it is not surprising that our critical clasps
include not only ``Gehring arcs'' (subarcs of the Gehring clasp),
but also ``kinks'' (circular arcs of curvature $\nicefrac1\sigma$) at the tips.

The curves that we obtain fall into
four regimes, depending on the values of the parameters $\tau$ and
$\sigma$, as shown in the phase diagram of Figure~\ref{fig:phasediagram}.
\begin{figure}
\begin{center}
\begin{overpic}[height=2.3in]{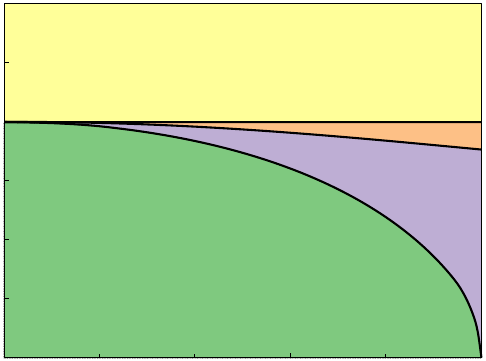}
\put(40,60){fully kinked}
\put(43,23.5){Gehring}
\put(77,36){generic}
\put(102,45.5){transitional}
\put(-10.5,11.5){$0.25$}
\put(-10.5,23.5){$0.5$}
\put(-10.5,36){$0.75$}
\put(-10.5,48){$1.0$}
\put(-10.5,60){$1.25$}
\put(-4.5,70){$\sigma$}
\put(17,-4){$0.2$}
\put(37,-4){$0.4$}
\put(57,-4){$0.6$}
\put(76,-4){$0.8$}
\put(96,-4){$\tau$}
\end{overpic}
\hspace{0.5in}
\end{center}
\caption{This phase diagram shows the domain of the various types
of solutions to the clasp problem as the values of $\tau$ (the
sine of the angle made by the endpoints of the clasp with the
horizontal) and $\sigma$ (the stiffness parameter) change. In the
uppermost ``fully kinked'' region, the clasp is a pair of circle
arcs of radius $\sigma$ joined with straight segments. There is
a single strut connecting these arcs. In the next ``transitional''
region, the clasp consists of arcs of circles of radius $\sigma$
at the tips joined by straight segments to arcs of circles of radius
$1$ at the shoulders of the clasp, finally joined by straight
segments to the endpoints. In the third ``generic'' region, the
curve is piecewise analytic, with eleven analytic pieces: a circle
arc of radius $\sigma$ at the tip, joined by straight segments
to arcs of the ``Gehring clasp'' from~\cite{CFKSW1}. These arcs
are joined by straight segments to circular arcs of unit radius, which
are joined by straight segments to the endpoints of the clasp. In
the last ``Gehring'' region, the solution is the same as that
from~\cite{CFKSW1}.}
\label{fig:phasediagram}
\end{figure}
In each case they consist of two congruent arcs lying in orthogonal
planes. Both components are symmetric with respect to the line of
intersection of the two planes, which we take to be the $z$--axis.
We describe the component lying in the $xz$--plane, which we take
to be the one with endpoints attached to the ceiling, as in~\cite{CFKSW1}.
In the discussion below, we will refer to a
circular arc of maximal curvature $\nicefrac 1\sigma$ as a \emph{kink}.
\begin{itemize}
\item $\sigma \ge 1$: the fully kinked regime. Here the curve
consists of a kink of total angle $2\asin\tau$, with straight segments
attached to the endpoints. There is exactly one strut between the
two components, joining their tips (the points lying on the $z$--axis).
\item $\frac{\sqrt{4+\tau^2}-2}{2-\sqrt{4-\tau^2}} \le \sigma < 1$:
the transitional regime. In this case the curve consists of a
kink of angle $2 \asin \nicefrac \tau 2$ joined by line segments to
two circular arcs of radius $1$ and angle $\asin\tau -\asin\nicefrac\tau2$,
each centered at the tip of the other component.
There is a one parameter family of struts connecting each point of
the latter arcs to the tip of the other component.
\item $\sqrt{1-\tau^2} <\sigma <\frac{\sqrt{4+\tau^2}-2}{2-\sqrt{4-\tau^2}}$:
the generic regime.
This is the most complicated possibility, of which the others may
be regarded as degenerations. The curve is piecewise analytic,
with eleven analytic pieces, described by four parameters
$a,b,\alpha,\beta$ (determined in section~\ref{generic} below): a
kink of angle $2\alpha$ at the tip; joined to two straight segments
of length $a$; each joined to a section of the Gehring $\tau$--clasp
described by the parameter interval $[\sin \alpha, \sin \beta]$;
each joined to another straight segment of length $b$;
joined to a circular arc of radius $1$, centered at the tip of the
other component, and of angle $\asin \tau- \beta$; each joined
finally to a straight segment connected to a constraining plane.
There are two types of one-parameter families of struts connecting
the two components: first, those connecting the arcs of radius 1
to the tip of the other component; second, each point of each Gehring
arc shares struts with its conjugate points (in the sense of~\cite{CFKSW1})
on the two Gehring arcs of the other component.
\item $0\le \sigma \le \sqrt{1-\tau^2}$: the Gehring regime. For
these parameter values the critical curves are identical to those
described in~\cite{CFKSW1}.
\end{itemize}

The clasp problem was analyzed earlier by
Starostin~\cite{Starostin:2003ut}. While Starostin did not have a
general criticality theory to work with, and so could not prove
that his configurations were fully ropelength-critical, he derived
a solution equivalent to our ``generic'' clasp by considering the
problem of length-critical curves with a fixed contact set. Very
recently, the clasp has been numerically analyzed with extremely
high resolution by Pieranski and Przybyl~\cite{PieranskiHRT}. Their
results (at least for the generic regime), agree very closely with
both Starostin's work and the conclusions here.

\subsection{General results on clasp-type curves}
We start with some useful lemmas about configurations of
circular arcs.
\begin{lemma}
\label{lem:circlearc} 
Suppose a $\sigma$--critical link~$L$ passes through the origin
and includes the circular arc
$C:=\{(\sin \theta, 0, \cos \theta): \theta_0\le \theta\le \theta_1\}$.
If $\sigma<1$ so that $C$ is not kinked
and if $C$ has no struts except those to the origin,
then these struts
generate an atom of strut force measure at the origin whose vertical
component has magnitude $\sin \theta_1 - \sin \theta_0$.
\end{lemma}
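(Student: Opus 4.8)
The plan is to recognize that $C$ is a kink-free arc, apply Lemma~\ref{lem:bal-nokink} to see that the strut force is distributed continuously along~$C$, and then transfer this distributed force to a single atom at the origin using the interchange-symmetry of the strut measure.

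First I would observe that $C$ is an arc of the unit circle centered at the origin, so its curvature is identically~$1$. Since $\sigma<1$, the maximal allowed curvature $\nicefrac1\sigma$ exceeds~$1$, so no point of~$C$ carries a kink and the kink measure vanishes along the open arc~$C$. As $L$ is $\sigma$-critical (hence, being regular, strongly $\sigma$-critical and so $\sigma$-balanced by Theorem~\ref{thm:gbc}), Lemma~\ref{lem:bal-nokink} applies to~$C$ and gives $\SFM=-\kappa\ds$ there. Next I would compute $\kappa$ explicitly: parametrizing by arclength $\theta$ we have $x(\theta)=(\sin\theta,0,\cos\theta)$, $T=(\cos\theta,0,-\sin\theta)$, and $\kappa=T'=-(\sin\theta,0,\cos\theta)=-x$, the inward unit normal pointing at the center (the origin). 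Hence along~$C$,
$$\SFM = -\kappa\ds = (\sin\theta,0,\cos\theta)\,d\theta,$$
a force density pointing radially outward from the origin.

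The main step is to transfer this to an atom at the origin. By hypothesis the only struts touching~$C$ connect its points to the origin, so the entire force $\SFM|_C$ arises from these struts. Recalling Definition~\ref{def:kink and strut}, the strut force measure is obtained by projecting the vector-valued measure $2(x-y)\,d\mu(x,y)$ via $(x,y)\mapsto x$, and $\mu$ is invariant under the interchange $(x,y)\mapsto(y,x)$. Thus each strut contributing $2(x-0)\,d\mu$ to the force at a point $x\in C$ contributes the opposite vector $2(0-x)\,d\mu$ to the force at the origin. Integrating over all struts to~$C$, the origin therefore carries an atom
$$\SFM\{0\} = -\int_C x\ds = -\int_{\theta_0}^{\theta_1}(\sin\theta,0,\cos\theta)\,d\theta,$$
whose vertical component is $-\int_{\theta_0}^{\theta_1}\cos\theta\,d\theta=-(\sin\theta_1-\sin\theta_0)$, of magnitude $\sin\theta_1-\sin\theta_0$, as claimed.

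The hard part will be the bookkeeping in this last step: one must use the interchange-symmetry of~$\mu$ together with the fact that every strut from~$C$ terminates at precisely the single point~$0$ to conclude that the force spread continuously along~$C$ reappears, negated, as a single point mass there. Everything else is an application of Lemma~\ref{lem:bal-nokink} and a routine integration.
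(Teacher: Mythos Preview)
Your proof is correct and follows essentially the same approach as the paper's. The paper's version is simply terser: it observes that on the kink-free arc $\SFM=-T'\,ds$, so $\SFM(C)=T(\theta_0)-T(\theta_1)$ is the difference of endpoint tangents (whose vertical component is $\sin\theta_1-\sin\theta_0$), and then notes that this force ``all gets transmitted to the origin'' --- exactly your interchange-symmetry argument, stated in one phrase rather than spelled out.
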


\begin{proof}
Since $C$ has no kinks, $\SFM(C)$ is the difference in the tangent vectors
at the two ends of~$C$.  This force all gets transmitted to the origin.
\end{proof}

\begin{lemma}\label{strut lemma} Let $C$ be circle
in the $xz$--plane, centered at a point~$c$ on the $z$--axis, and let $B$ be
a $C^1$ arc in the $yz$--plane. If $(p,q) \in B \times C$ is
critical for distance, and $p$ is an interior point of~$B$,
then either $p=c$ or $q$ lies on the $z$--axis.
\end{lemma}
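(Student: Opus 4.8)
The plan is to adopt coordinates adapted to the two perpendicular planes and to exploit the fact that each half of the critical‑pair condition constrains only a planar projection of the chord $p-q$. Write the center of $C$ as $(0,0,z_0)$ with radius $r$, so that $C=\{(r\cos\phi,0,z_0+r\sin\phi)\}$ lies in the plane $y=0$ while $B$ lies in the plane $x=0$. That $(p,q)$ is a critical point for the distance gives the first‑order relations $p\in\Nor qC$ and $q\in\Nor pB$; since $p$ is interior to $B$ and $C$ is boundaryless, both normal cones are full normal planes, so these say that the chord $p-q$ is orthogonal to the tangent direction $T_qC$ and to the tangent direction $T_pB$. The key point I would use is that $T_qC$ lies in the $xz$-plane (no $y$-component) and $T_pB$ lies in the $yz$-plane (no $x$-component), so each orthogonality relation ignores one coordinate of the chord.

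First I would treat the condition at $q$. Let $\bar p:=(0,0,p_z)$ be the projection of $p$ to the $xz$-plane, a point of the $z$-axis. Because $T_qC$ has no $y$-component, $\ip{p-q}{T_qC}=\ip{\bar p-q}{T_qC}$, so the condition becomes $\bar p-q\perp T_qC$; that is, $\bar p$ lies on the line normal to $C$ at $q$, which within the $xz$-plane is the line through $q$ and the center $(0,0,z_0)$. A $z$-axis point can lie on this normal only if the normal is itself the $z$-axis — which forces $q$ to be one of the two points of $C$ on the axis — or if $\bar p$ is the center, i.e.\ $p_z=z_0$. Thus from the first‑order relations alone, either $q$ lies on the $z$-axis, and we are done, or $p$ sits at the center's height, hence on the circle's axis $\ell=\{(0,t,z_0)\}$, where $p$ is equidistant from all of $C$.

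The degenerate height case $p_z=z_0$ is where I expect the real work, because there the condition at $q$ holds for every $q\in C$ and supplies nothing further; it is exactly here that one must use that a strut realizes a local \emph{minimum} of the distance (cf.\ Lemma~\ref{lem:pd>rch}), and the natural tool is a second‑order computation. Parametrizing $B$ by arclength $s$ and $C$ by $\phi$, one finds $\snorm{p-q}^2=r^2+p_y^2+(p_z-z_0)^2-2r(p_z-z_0)\sin\phi$, so at a height‑$z_0$ critical point the Hessian in $(s,\phi)$ has a vanishing $\phi\phi$-entry and off‑diagonal entry $-2r\,p_z'\cos\phi$, hence determinant $-(2r\,p_z'\cos\phi)^2\le0$. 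Since $q$ is off‑axis ($\cos\phi\ne0$), the pair is a genuine saddle — incompatible with being a distance minimum — unless $p_z'=0$, i.e.\ $T_pB$ is the horizontal direction along $\ell$. Feeding $p_z'=0$ back into the first‑order relation $\ip{p-q}{T_pB}=0$ gives $p_y\,p_y'=0$ with $p_y'\ne0$, whence $p_y=0$ and $p$ lies on the $z$-axis. The main obstacle is thus precisely this second‑order step: the first‑order critical‑pair relations by themselves are consistent with a height‑$z_0$ configuration having $p_y\ne0$ (a saddle of the distance), so closing the degenerate case genuinely requires the minimum property of the strut rather than more first‑order geometry.
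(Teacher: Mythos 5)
Your first-order step is exactly the paper's: projecting the normality condition at $q$ into the $xz$-plane forces either $q$ onto the $z$-axis or $p$ to the height $z_0$ of the center, so that $p$ lies on the axis of $C$ and is equidistant from all of $C$. Where you diverge is the degenerate case, and your diagnosis there is correct --- indeed more careful than the published proof. The paper disposes of this case by asserting that all segments $\overline{pr}$, $r\in C$, must be normal to $B$ at $p$, ``contradicting the criticality of the pair $(p,r)$''; but nothing in the hypothesis makes those pairs critical: equidistance yields only the $C$-side normality $p\in\Nor rC$, not $r\in\Nor pB$. That inference is valid only when $(p,q)$ is a \emph{global} minimum of the distance between $B$ and $C$ (then each $(p,r)$ is also a global minimum, hence critical). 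Your claim that first-order criticality alone cannot suffice is right: take $C$ the unit circle in the $xz$-plane centered at the origin and $B$ the vertical segment $\{(0,1,t)\}$; then $p=(0,1,0)$, $q=(1,0,0)$ satisfy both normality conditions (a saddle of the distance, exactly as your Hessian predicts), yet neither point lies on the $z$-axis. So the lemma as literally stated is false and needs a minimality hypothesis. This is harmless for the paper's applications, where the lemma can always be applied to pairs realizing the minimal distance between the two components (struts between components do exactly this, since that minimum is $2\reach(L)$); but your local-minimum version is the right statement, and your second-order argument proves something strictly stronger than what the paper's (repaired) equidistance argument gives, since you need only local rather than global minimality.

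One technical repair is needed in your degenerate-case step: $B$ is only assumed $C^1$, so $f_{ss}$, and hence the Hessian, need not exist, and ``negative Hessian determinant'' is not directly available. But your argument never really uses $f_{ss}$. Writing $h(s):=p_z(s)-z_0$, your formula reads $f(s,\phi)=r^2+p_y(s)^2+h(s)^2-2rh(s)\sin\phi$, which is affine in $\sin\phi$; since $\cos\phi_0\ne0$ you may, for each $s$ near $s_0$, choose $\sin\phi=\sin\phi_0+\delta$ when $h(s)>0$ and $\sin\phi=\sin\phi_0-\delta$ when $h(s)<0$, with $\delta>0$ small and fixed. Local minimality then gives $g(s)\ge 2r\delta\snorm{h(s)}$ near $s_0$, where $g(s):=f(s,\phi_0)-f(s_0,\phi_0)$ is $C^1$ with $g(s_0)=0$ and $g'(s_0)=0$ (your first-order relation). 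Dividing by $\snorm{s-s_0}$ and letting $s\to s_0$ yields $h'(s_0)=0$, i.e.\ $p_z'(s_0)=0$, after which your final step gives $p_y=0$ as before. With this $C^1$-safe substitute for the Hessian computation, your proof is complete and covers every use of the lemma in the paper.
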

\begin{proof} Since $(p,q)$ is critical for distance,
the segment $\overline{pq}$ is normal to $B$ and $C$.
Therefore, if $q$ does not lie on the $z$--axis then
the projection of $p$ to the $xz$--plane must be the center~$c$ of~$C$. It
follows that all points of $C$ are equidistant from $p$. However,
unless $p=c$ then not all of the segments
$\overline{pr}$ joining $p$ to $r \in C$ are normal to $B$ at~$p$,
contradicting the criticality of the pair $(p,r)$.
\end{proof}

To fix the symmetry of our clasps in coordinates, let one component lie in
the $xz$--plane while the other lies in $yz$--plane.
Our symmetry group \demph{$2*2$} (using the Conway-Thurston orbifold notation)
is then the dihedral point group of order eight in $O(3)$
generated by mirror reflections across the $xz$-- and $yz$--planes,
together with a four-fold rotary reflection around the $z$--axis.
To describe a symmetric clasp, it suffices to describe half of one component:
the arc from the ``tip'' on the $z$--axis (where the curve is horizontal)
to the endpoint (on a face of the enclosing tetrahedron); this convex
arc has total curvature $\asin\tau$.

In each of our descriptions of a clasp, we
will describe only the portion of the clasp in a fundamental domain
for this symmetry. This will be a convex curve in the halfplane of
the $xz$--plane with positive $x$; its endpoint on the $z$--axis
will be called the \demph{tip} of the clasp. 
It will sometimes be convenient
for us to parametrize this curve by the sine $u$ of the angle that
its tangent makes with the $x$--axis.

We will be interested in proving that the minimum distance between
two such arcs is at least $1$. To this end we adapt Lemma~9.3
of~\cite{CFKSW1}.

\begin{lemma}\label{lem:perp-planes}
Let~$\gamma_1$ and~$\gamma_2$ be two convex curves
lying in the $xz$-- and $yz$--planes respectively.  Suppose there is a critical
pair $(p_1,p_2)$ of length~$\rho$ connecting these components.
Write $x_i$ for the distance from $p_i$ to the $z$--axis,
and $u_i$ for the sine of the angle
between the tangent to $\gamma_i$ and the horizontal.
Then $0\le \nicefrac{x_i}{\rho}\le u_i\le 1$, and any two of the numbers
$x_1,x_2,u_1,u_2$ determine the other two
according to the formulas
\begin{equation*}
x_i^2 = \rho^2-\frac{x_j^2}{u_j^2} = \rho^2
\frac{u_i^2(1-u_j^2)}{1-u_i^2u_j^2}, \qquad
u_i^2 = \frac{\rho^2-x_j^2/u_j^2}{\rho^2-x_j^2} = \frac{x_i^2}{\rho^2-x_j^2},
\end{equation*}
where $j\ne i$.  The height difference between $p_1$ and~$p_2$ is
$\Delta z = \frac{x_i}{u_i}\sqrt{1-u_i^2}.$
\end{lemma}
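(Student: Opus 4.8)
The plan is to work in coordinates adapted to the $2*2$ symmetry, turn the criticality of the pair into two normality conditions, and then solve the resulting system for the four quantities. First I would place $p_1=(x_1,0,z_1)$ on $\gamma_1\subset$ the $xz$-plane and $p_2=(0,x_2,z_2)$ on $\gamma_2\subset$ the $yz$-plane, so that $x_i\ge0$ are literally the distances to the $z$-axis. Orienting each curve so its tangent meets the horizontal at an angle in $[0,\nicefrac\pi2]$, I write $T_1=(\cos\alpha_1,0,\sin\alpha_1)$ and $T_2=(0,\cos\alpha_2,-\sin\alpha_2)$ with $u_i=\sin\alpha_i$ and $\sqrt{1-u_i^2}=\cos\alpha_i$; the opposite $z$-sign in $T_2$ reflects that the two strands run toward opposite ends of the axis (ceiling versus floor), and is exactly what will make the two conditions below consistent.

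Criticality means the connecting segment is normal to each curve, i.e.\ $v:=p_1-p_2=(x_1,-x_2,z_1-z_2)$ is orthogonal to both $T_1$ and $T_2$. Solving $v\cdot T_1=0$ and $v\cdot T_2=0$ each for the height difference gives
\[
|z_1-z_2|=\frac{x_1}{u_1}\sqrt{1-u_1^2}=\frac{x_2}{u_2}\sqrt{1-u_2^2},
\]
which is the claimed formula for $\Delta z$; the equality of its two expressions is the first consistency relation among the four quantities. Substituting $\Delta z^2=x_i^2/u_i^2-x_i^2$ into the length identity $\rho^2=x_1^2+x_2^2+\Delta z^2$ collapses it, for each index $i$, to $\rho^2=x_j^2+x_i^2/u_i^2$. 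Relabeling, this is exactly $x_i^2=\rho^2-x_j^2/u_j^2$, the first displayed formula, while rearranging the same relation gives $u_i^2=x_i^2/(\rho^2-x_j^2)$.

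To obtain the closed forms, I would treat the two instances $\rho^2=x_2^2+x_1^2/u_1^2$ and $\rho^2=x_1^2+x_2^2/u_2^2$ as a linear system in the unknowns $x_1^2,x_2^2$; eliminating one unknown yields $x_i^2=\rho^2\,u_i^2(1-u_j^2)/(1-u_i^2u_j^2)$, and substituting back into the first formula produces the remaining expression for $u_i^2$ (its numerator $\rho^2-x_j^2/u_j^2$ equals $x_i^2$). This is routine algebra once the two defining relations are in hand.

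It remains to read off $0\le x_i\le u_i\le1$. The bound $x_i\ge0$ is built into the coordinates, and $u_i=\sin\alpha_i\le1$ is immediate (equivalently $u_i^2=x_i^2/(x_i^2+\Delta z^2)\le1$). The one inequality needing a genuine argument is $x_i\le u_i$: from the closed form the factor $(1-u_j^2)/(1-u_i^2u_j^2)$ is at most $1$, since $u_i^2u_j^2\le u_j^2$, whence $x_i^2\le\rho^2u_i^2$, i.e.\ $x_i\le\rho\,u_i$; in the regime of interest, where one is testing whether the two arcs can approach within unit distance so that $\rho\le1$, this gives $x_i\le u_i$. I expect the main obstacle to be precisely this sign-and-orientation bookkeeping — choosing the orientations of $\gamma_1,\gamma_2$ so that the two normality conditions agree on the sign of $z_1-z_2$ and fuse into a single $\Delta z$ — together with isolating the exact hypothesis under which the last inequality holds; everything downstream is linear-algebraic manipulation.
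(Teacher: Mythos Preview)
Your approach is essentially the paper's: set up coordinates, write the difference vector, impose perpendicularity to each tangent to get $\Delta z = \frac{x_i}{u_i}\sqrt{1-u_i^2}$, combine with $\rho^2 = x_1^2+x_2^2+\Delta z^2$, and manipulate. The paper's proof stops after writing down those two relations and invokes ``simple algebraic manipulations''; you have actually carried them out, so your argument is more detailed but not different in strategy.

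One point worth flagging: the paper's proof never addresses the inequality $x_i\le u_i$ at all, and you are right that it only follows from $x_i^2 = \rho^2 u_i^2(1-u_j^2)/(1-u_i^2u_j^2)\le \rho^2 u_i^2$ once one knows $\rho\le 1$. The lemma statement does not list this hypothesis, but in every application in the paper (and in the antecedent Lemma~9.3 of~\cite{CFKSW1}, where $\rho=1$) the critical pairs under consideration satisfy it. Your caveat about ``the regime of interest'' is the correct reading; this is a mild gap in the lemma's \emph{statement}, not in your proof.
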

\begin{proof}
The difference vector is $p_1-p_2=(x_1,x_2,\Delta z)$.
Since this strut has length~$\rho$ and is perpendicular to
each~$\gamma_i$, we get
$$ \Delta z^2 + x_1^2 + x_2^2 = \rho^2,
\qquad \Delta z = \frac{x_i}{u_i} \sqrt{1-u_i^2}. $$
Simple algebraic manipulations, eliminating~$\Delta z$,
yield the other given equations.
\end{proof}

\subsection{The fully kinked regime} \label{too stiff} 

We first consider a clasp constructed of very stiff rope, consisting
of circular arcs and line segments (see Figure~\ref{fig:clasps12}, left).
\begin{figure*}[ht]
\begin{center}
\begin{minipage}{.45\textwidth}
\begin{center}
\begin{overpic}[height=2.5in]{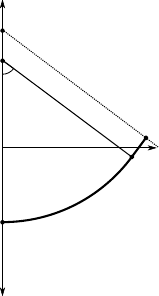}
\put(1,62){$\asin\tau$}
\put(15,85){bounding tetrahedron}
\end{overpic}
\\
$(0.8,1.1)$ Kinked Clasp 
\end{center}
\end{minipage}
\begin{minipage}{.45\textwidth}
\begin{center}
\begin{overpic}[height=2.5in]{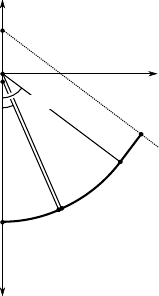}
\put(1,58){$\alpha$}
\put(7,64.5){$\asin\tau$}
\put(20,33.5){$s_2$}
\put(13,30){$s_1$}
\put(9,21){kink}
\put(32,32){shoulder}
\put(-5.5,71.7){$c_1$}
\put(-5.5,75.3){$c_2$}
\put(13,85){bounding tetrahedron}
\end{overpic} 
\\
$(0.8,0.95)$ Transitional Clasp
\end{center}
\end{minipage}
\end{center}
\caption{At the left, we see the fully kinked
clasp of Proposition~\ref{kinkedclasp} with $(\tau,\sigma)=(0.8,1.1)$.
At the right, we see the transitional clasp of
Proposition~\ref{transitionalclasp} with $(\tau,\sigma)=(0.8,0.95)$.
In each diagram, the upper (closely dotted) line is the intersection
of a face of the bounding tetrahedron with the $xz$--plane. 
The entire curved portion of the kinked clasp (left) is a
single circular arc of radius $\sigma$; the tips of the two components
are at unit distance.
The transitional clasp (right) consists of a lower ``kinked'' circular
arc of radius $\sigma$ joined by a short straight segment to
an upper ``shoulder'' circular arc of radius $1$. The kink
extends to an angle $\alpha=\asin\nicefrac\tau2$, while the shoulder
extends to angle $\asin \tau$.  The tip of the other component is
at the center $c_2$ of the shoulder.}
\label{fig:clasps12}
\end{figure*}

\begin{proposition}
\label{kinkedclasp}
Let $C_K$ be the curve in the right half-plane of the $xz$--plane consisting of
\begin{itemize}
\item a circular arc
of radius $\sigma$ of angle $\asin \tau$ centered
at $(0,0,\sigma - \nicefrac{1}{2})$
\item joined to a line
segment in the $xz$--plane.
\end{itemize}
If $\sigma \ge 1$, the corresponding $2*2$ symmetric curve $\tilde
C_K$, where the tips of the two components lie at unit distance,
is critical for the $(\tau,\sigma)$--clasp problem.
\end{proposition}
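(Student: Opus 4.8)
The plan is to show directly that $\tilde C_K$ is \emph{nicely balanced} in the sense of Definition~\ref{def:nice}; by Theorem~\ref{thm:final} together with the General Balance Criterion (Theorem~\ref{thm:gbc}) this gives the chain nicely balanced $\Rightarrow$ balanced $\Rightarrow$ strongly critical $\Rightarrow$ critical, which is what we want. The configuration has two congruent components interchanged by the $2*2$ symmetry, each a single kinked circular arc of curvature $\nicefrac1\sigma$ (turning through a total angle $2\theta$, where $\theta$ is the angle the adjoining straight segments make with the horizontal) capped by straight segments running out to the constraining faces. First I would record that $\tilde C_K$ is a $C^{1,1}$ curve of the correct (linked) isotopy type meeting the endpoint constraints, with segments straight and arcs of radius $\sigma$.

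The first substantive step is to verify $\Ts(\tilde C_K)=1$ via Corollary~\ref{cor:ts-clarke}. The curvature term is immediate: $\min_L\rho=\sigma$ (attained on the arcs, infinite on the segments), so $\nicefrac1\sigma\min_L\rho=1$. For the distance term I must show $\min\pds xy\ge1$, i.e.\ $\reach\ge\half$, with equality only at the pair of tips. Within a single component the curve is a convex planar ``U'', so its only self-critical pairs are near-diagonal and governed by the radius $\sigma\ge1$, giving $\pd xy\ge2\sigma\ge2$; the two segments diverge, so there are no long doubly-normal chords pinning distant points. For cross-component pairs I would invoke Lemma~\ref{strut lemma} (one component in the $xz$-plane, the other in the $yz$-plane, applied to the circular portions) to force at least one point of any critical pair onto the $z$-axis, i.e.\ onto a tip. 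It then suffices to examine the squared distance from one tip to the opposing arc; a direct computation gives $\tfrac{d}{d\theta}\snorm{\cdot}^2=2\sigma(\sigma-1)\sin\theta$, which is $\ge0$ exactly when $\sigma\ge1$. Thus for $\sigma\ge1$ the tip-to-tip strut, of length $1$, is the unique closest approach, so $\reach=\half$ and $\Ts=1$. This distance analysis -- in particular pinning down that $\sigma\ge1$ is precisely the condition making the single strut a genuine minimum -- is the main obstacle, and is exactly where the hypothesis is used.

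With $\Ts=1$ in hand, the pair of tips is a genuine strut and the arcs are kinks, so I would balance each component using Lemma~\ref{lem:delta arc}. Each component is exactly a kinked circular arc joined at both ends to straight segments and carrying strut force only from the single atom at its tip; since the tip is the midpoint of the arc by symmetry, the lemma says this is balanced precisely when the atom acts in the direction $-N$ with mass $2\sin\theta$. Placing a strut measure of mass $\sin\theta$ on the pair of tips, the induced strut force measure $\SFM$ puts at each tip a force of magnitude $2\sin\theta$ along the strut, i.e.\ in the direction $\mp(0,0,1)$; since the curvature vector at each tip points toward the center of its arc, $-N$ is exactly this direction, so the requirement of Lemma~\ref{lem:delta arc} is met at both tips simultaneously, consistently with the $2*2$ symmetry. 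Finally I would check the endpoint conditions of Definition~\ref{def:nice}: on the straight segments the virtual tangent is $V=T$ and there is no strut force, so I only need each segment to meet its constraining face perpendicularly -- which holds because the bounding tetrahedron is chosen with dihedral angle $2\asin\tau$ precisely so that its faces are normal to the outgoing segments. Hence $\tilde C_K$ is nicely balanced, and therefore a critical curve for the $(\tau,\sigma)$-clasp problem.
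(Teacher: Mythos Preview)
Your proof is correct and follows essentially the same three-step approach as the paper: verify the endpoint constraints, verify $\Ts=1$ (curvature bound plus cross-component strut analysis via Lemma~\ref{strut lemma}), then balance via Lemma~\ref{lem:delta arc} with an atomic strut measure on the tip-to-tip strut and check the endpoint normality condition. Your explicit computation $\tfrac{d}{d\theta}\snorm{\cdot}^2 = 2\sigma(\sigma-1)\sin\theta$ nicely pins down where $\sigma\ge1$ enters, which the paper leaves implicit; both your argument and the paper's gloss over cross-component critical pairs with \emph{both} points on straight segments (Lemma~\ref{strut lemma} needs one point on a circle), but these are easily seen via Lemma~\ref{lem:perp-planes} to lie at distance at least $\sigma\sqrt{1+\tau^2}\ge1$.
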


\begin{proof}
We must check that (i) $\tilde C_K$ obeys the endpoint constraints, (ii)
$\tilde C_K$ obeys the thickness constraint, and (iii) $\tilde C_K$ is
$\sigma$--critical. The first is clear from the construction. For
the second, we first note that the radius of curvature is always
at least $\sigma$ by construction, so that if the struts
have length at least $1$, the thickness constraint is satsified.
In fact, by Lemma~\ref{strut lemma} and symmetry, if $\sigma > 1$
the only strut is the one joining the tip points $(0,0,\nicefrac{1}{2})$
and $(0,0,-\nicefrac{1}{2})$. (If $\sigma = 1$, there is a family
of struts joining each point on each circular arc to the tip of the
other component of the clasp.)

To check that our configuration is $\sigma$--critical, since the
hypotheses are clearly satisfied we may apply the final version
of the balance criterion.
We let the strut measure be an atom of  mass
$2\tau$ on the unique strut. The arcs are then balanced against each other
by the kink tension function $\phi$
of Lemma~\ref{lem:delta arc}. On the straight segments,
$T' = 0$ and $\phi = 0$, so the balance equation is clearly
satisfied. At the endpoints, $\phi=0$ and there is no strut force
measure, so we require only that the curve be normal to the constraint
plane, which is true by construction.
\end{proof}

We note that Lemma~\ref{lem:delta arc} tells us that such a
configuration of circular arcs of turning angles $2\theta_0$ and
$2\theta_1$ and lines is $\sigma$--critical as above if and only if $\sin
\theta_0 = \sin \theta_1$. This means that in addition to the
configuration above, where $\theta_0 = \theta_1 \le \pi/2$, there
are balanced solutions with $\theta_0 \le \pi/2 \le \theta_1$
where a short circular arc balances a longer one, as well as balanced
solutions with $\theta_0 = \theta_1 > \pi/2$. These are interesting
$\sigma$--critical curves, but they do not satisfy the boundary conditions of the
$(\tau,\sigma)$--clasp problems.

\subsection{The transitional regime} \label{dunderhead} 
In the transitional regime, the clasp is a circle-line-circle-line
curve as in Figure~\ref{fig:clasps12}, right.

\begin{proposition}
\label{transitionalclasp} Suppose $\tau \le 2$.
Let $C_T$ be the $C^1$ curve in the right half-plane of the $xz$--plane
consisting of
\begin{itemize}
\item 
a (kinked) circular arc of angle $\asin \nicefrac{\tau}{2}$ and
radius $\sigma$,
\item joined by a line segment of length $\frac{\tau(1-\sigma)}{\sqrt{4 - \tau^2}}$ to 
\item a
 circular arc of radius $1$ and angle $\asin\tau - \asin \nicefrac{\tau}{2}$
 (which we will refer to as the \demph{shoulder}), with 
\item a ray attached to the other end of the shoulder.
\end{itemize}
If 
\begin{equation}\label{eq:transitional cond}
1 > \sigma \ge \frac{\sqrt{4 + \tau^2} - 2}{2 - \sqrt{4 - \tau^2}}
\end{equation}
 then
this curve exists, and the corresponding $2*2$ symmetric curve
$\tilde C_T$, the tip of whose second component lies at the center
of the shoulder of the first, is a critical curve for the
$(\tau,\sigma)$--clasp problem.
\end{proposition}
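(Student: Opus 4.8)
The plan is to follow the three-step structure of the proof of Proposition~\ref{kinkedclasp}: I would verify that the symmetric curve $\tilde C_T$ (i) meets the endpoint constraints, (ii) satisfies the thickness constraint $\Ts\ge1$, and (iii) is $\sigma$-critical, the last via the nice balance criterion of Theorem~\ref{thm:final}.

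First I would dispatch existence and the endpoint constraints, which are essentially geometric bookkeeping. The total turning of the fundamental-domain arc is $\asin\nicefrac\tau2 + \bigl(\asin\tau - \asin\nicefrac\tau2\bigr) = \asin\tau$, so the terminal ray makes the prescribed angle $\asin\tau$ with the horizontal and meets the tetrahedron face normally, as required. Existence amounts to checking that the connecting segment of length $\nicefrac{\tau(1-\sigma)}{\sqrt{4-\tau^2}}$ (nonnegative since $\sigma<1$) places the shoulder arc so that its center falls exactly on the tip of the opposite component; this is a routine trigonometric computation, and condition~\eqref{eq:transitional cond} is precisely what makes the construction consistent and the segment length well-defined.

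Next, the thickness constraint. Since the radius of curvature is $\sigma$ on the kink, $1$ on the shoulder, and infinite on the straight pieces, no osculating circle has radius below~$\sigma$; it remains to check that the minimal distance between the two components equals~$1$. Because each shoulder has radius~$1$ and is centered at the tip of the opposite component, every shoulder point lies at distance exactly~$1$ from that tip, and the radial strut is normal to both strands (to the shoulder radially, and to the opposite strand because its tip sits on the $z$-axis while the shoulder lies in a perpendicular plane). To rule out any \emph{shorter} critical pair I would invoke Lemma~\ref{strut lemma} (applied to the circle carrying each shoulder) together with the explicit formulas of Lemma~\ref{lem:perp-planes}, showing that within the range~\eqref{eq:transitional cond} the only unit-distance critical pairs are these shoulder-to-tip struts while all others exceed~$1$. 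I expect this distance estimate to be the main obstacle: the lower bound on~$\sigma$ is exactly the threshold at which a new critical pair drops to distance~$1$, forcing the transition to the generic regime, so the inequality must be proved sharply rather than merely qualitatively.

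Finally, criticality. I would exhibit a nice balance: the strut measure is carried by the one-parameter families of shoulder-to-tip struts, and the kink tension function~$\phi$ is supported on the kink arcs. On the straight segments both $T'$ and $\phi$ vanish, so $\SFM=-V'=0$ holds trivially; on the shoulder there are no kinks, so Lemma~\ref{lem:bal-nokink} gives $\SFM=-\kappa\ds$, which is exactly the force exerted by the radial unit struts. The crux is the balance at the tip. By Lemma~\ref{lem:circlearc}, the struts from one shoulder transmit a vertical force of magnitude $\sin(\asin\tau)-\sin(\asin\nicefrac\tau2)=\nicefrac\tau2$ to the opposite tip, so the two shoulders of a component contribute $\tau$ in total, their horizontal parts cancelling under the mirror reflection across the $yz$-plane. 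On the other hand, Lemma~\ref{lem:delta arc} shows the kink of turning angle $2\asin\nicefrac\tau2$ is balanced by a single atom of magnitude $2\sin(\asin\nicefrac\tau2)=\tau$ acting in the $-N$ direction, which at the tip is vertical. Since both magnitude and direction agree, each component's kink is balanced by the opposite component's shoulder struts. Checking the junction condition (that $\phi$ is continuous and vanishes where the kink meets the segment, via Corollary~\ref{cor:kinkjunction}) and the endpoint condition ($\phi=0$ with the ray normal to the constraint plane) then completes the verification that $\tilde C_T$ is nicely balanced, hence $\sigma$-critical.
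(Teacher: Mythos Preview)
Your proposal is correct and follows essentially the same three-part structure as the paper's proof: explicit construction, thickness verification via Lemmas~\ref{strut lemma} and~\ref{lem:perp-planes}, and nice balance via Lemmas~\ref{lem:circlearc} and~\ref{lem:delta arc}. One small correction: the lower bound in~\eqref{eq:transitional cond} plays no role in existence or consistency of the construction (only $\sigma<1$ is needed there, to make the segment length nonnegative); it enters solely in the thickness check, where the paper shows that the unique critical pair between the two straight segments has length $\rho=\bigl(2+\sigma(2-\sqrt{4-\tau^2})\bigr)/\sqrt{\tau^2+4}$, and $\rho\ge1$ is exactly the lower inequality of~\eqref{eq:transitional cond}.
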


\begin{remark}
Since $\frac{\sqrt{4+\tau^2}-2}{2-\sqrt{4-\tau^2}} < 1$
for $\tau \in (0,1]$, we see that for each such $\tau$ the condition
\eqref{eq:transitional cond} is not vacuous.
\end{remark}

\begin{proof}
We first show that $C_T$ exists. Referring
to Figure~\ref{fig:clasps12}, we choose coordinates so that the
center of the shoulder arc lies at the origin of the $xz$--plane.
Then endpoints of the shoulder arc are
\begin{equation}\label{eq:s2}
(\tau,0, -\sqrt{1-\tau^2}),
\qquad s_2:= \left(\frac \tau 2,0,-\sqrt{1-\nicefrac{\tau^2}{4}}\right).
\end{equation}
One endpoint of the segment is $s_2$, and the segment has slope
\begin{equation}\label{eq:slope}
m:=\frac{\tau}{\sqrt{4-\tau^2}} \iff  \tau = \frac{2m}{\sqrt{1+m^2}}.
\end{equation}
Thus the $x$ and $z$ coordinates of a point on the segment are related by
\begin{equation}\label{z in terms of x}
z = \frac{\tau}{\sqrt{4-\tau^2}} \left(x- \frac \tau 2\right) - \frac{\sqrt{4-\tau^2}}{2}.
\end{equation}
From the value for the length of the segment given in the Proposition
it is easily computed that its other endpoint is
\begin{equation}\label{eq:s1}
s_1:= \left(\frac{\sigma \tau} 2, 0,\frac{\sigma\tau^2-4}{2\sqrt{4-\tau^2}}\right).
\end{equation}
This endpoint coincides with one endpoint of the kinked arc of
radius $\sigma$. Putting $c_1$ for the center of this arc, the
radial vector $s_1-c_1$ is parallel to the radial vector $s_2$ of
the shoulder, that is, makes the angle $\arcsin \frac \tau 2$ with the
vertical. Thus the center of this arc is
$$
c_1:= \left(0,0,\frac{\sigma\tau^2-4}{2\sqrt{4-\tau^2}}
	+\sigma\frac{\sqrt{4-\tau^2}}{2}  \right)
= \left(0,0, \frac{2\sigma-2}{\sqrt{4-\tau^2}} \right)
$$
and the tip of $C$ is 
$
p_0:=  \left(0,0,  z_{0}\right) $, where
\begin{equation} \label{ztip}
z_{0}:=\frac{2\sigma-2}{\sqrt{4-\tau^2}} -\sigma.
\end{equation}

Next we show that if \eqref{eq:transitional cond} holds then $\tilde C_T$ has
$\Ts \ge 1$.
It is easy to see that its curvature satisfies
$\kappa \le \nicefrac 1\sigma$ (since $\sigma < 1$),
so we need only show that all
the critical pairs  have length at least $1$.
Let us call the two components of the curve $C$ and $C^*$,
and put $ p_0^*=(0,0,0)$ for the tip point of $C^*$.

If $(p,p^*) \in C\times C^*$ is a critical pair with $p$ on the
kink arc of $C$, then $p=p_0$ by Lemma~\ref{strut lemma},
since $C^*$ does not pass through the center of the kink.
The shoulders of $C^*$ lie on the boundary
of the ball of radius $1$ about $p_0$, and
by elementary geometry the rest of $C^*$ lies strictly outside it. Therefore any
such pair has length at least $1$.

If $(p,p^*)$ is a critical pair with $p$ on the shoulder of $C$,
then $p^* = p_0^*$ by Lemma~\ref{strut lemma} again, so $|p-p^*| = 1$.

By symmetry it remains to consider the case of critical pairs
$(p,p^*)$ where the points lie on the
respective straight segments of $C$ and $C^*$. We show that if
\eqref{eq:transitional cond} holds then  $\rho:=|p-p^*|\ge 1$.
In the notation of Lemma~\ref{lem:perp-planes}, put
$$
 p=: (x_1,0,z_1),\qquad
 p^*=: (0,x_2,z_2).
$$
By \eqref{eq:slope}, the sine of the angle made
by the respective segments with the $x$-- and $y$--axes is
$u:= \nicefrac \tau 2$.  Then by Lemma~\ref{lem:perp-planes}, 
\begin{equation}\label{x rho relation}
x_1^2 = x_2^2 = \frac{\rho^2u^2}{1+u^2}
=  \frac{\rho^2(\frac \tau 2)^2}{1+(\frac \tau 2)^2}
=  \frac{\rho^2\tau^2}{4+\tau^2}.
\end{equation}
In particular $p$ and $p^*$ correspond to one another under the
symmetry of the clasp, and the midpoint of the segment $pp^*$ lies on the horizontal
plane equidistant from the two tips $p_0,p_0^*$.  Therefore the
difference in heights between $p$ and $p_0^*$ is equal to the
difference in heights between $p_0$ and $p^*$, that is,
\begin{equation}\label{eq:zz*1}
z_1 + z_2 = z_0 + 0.
\end{equation}
On the other hand, by Lemma~\ref{lem:perp-planes} the
difference in the heights of $p,p^*$ is 
\begin{equation}\label{eq:zz*2}
\Delta z:=z_2-z_1= \frac {x_1}u\sqrt{1-u^2}
=\frac {x_1}\tau\sqrt{4-\tau^2} .
\end{equation} 
Substituting \eqref{ztip} and solving the system \eqref{eq:zz*1},
\eqref{eq:zz*2} we obtain
\begin{equation}\label{eq:x1}
x_1 =\frac  \tau {\tau^2 + 4} \left[2 + \sigma \left( 2- \sqrt{4-\tau^2}\right)\right]
\end{equation}
and from \eqref{x rho relation}
\begin{equation}\label{eq:rho ineq}
\rho= \frac{2 + \sigma \left( 2- \sqrt{4-\tau^2}\right)}{\sqrt{\tau^2+4}}.
\end{equation}

The thickness condition is violated if and only if both $\rho<1$ and
the point $p$ lies on the
segment of $C$ (rather than somewhere on the rest of the line it
determines). The latter condition is equivalent to the condition
that $x_1$ lie between the $x$ coordinates of $s_1$ and $s_2$, that is,
$$
\frac{\tau \sigma} 2 < x_1 <\frac \tau 2 
$$
in view of \eqref{eq:s2}, \eqref{eq:s1}, 
or by \eqref{eq:x1}, \eqref{x rho relation}
\begin{equation}\label{eq:squeeze}
\frac \sigma 2 < \frac \rho{\sqrt{\tau^2 +4}} < \frac 1 2.
\end{equation}

The second  inequality of \eqref{eq:squeeze} is a clear consequence
of $\rho <1$, which may in turn be expressed as
\begin{equation}\label{short strut}
\sigma < \frac{\sqrt{4+\tau^2}-2}{2-\sqrt{4-\tau^2}}.
\end{equation}
Substituting \eqref{eq:rho ineq}, the first  inequality of
\eqref{eq:squeeze} is equivalent to
\begin{equation}\label{left endpoint}
\sigma < \frac 4{\tau^2 + 2\sqrt{4-\tau^2}}.
\end{equation}
We claim that the right hand side of \eqref{left endpoint} dominates
that of \eqref{short strut} in the
relevant range $0\le \tau \le 2$. Putting $t:= \nicefrac{\tau^2} 4$
this is equivalent to the inequality
\begin{equation}\label{eq:equiv domin}
{t + \sqrt{1-t}} \le \frac{1-\sqrt{1-t}}{\sqrt{1+t}-1}
= \frac {(\sqrt{1+t}-\sqrt{1-t})+(1-\sqrt{1-t^2})} t, \qquad 0\le   t\le 1.
\end{equation}
To prove~\eqref{eq:equiv domin}, we note 
\begin{equation}\label{eq:workhorse}
\frac t 2 \le 1- \sqrt{1-t}, \qquad 0\le t \le 1,
\end{equation}
so the left hand side of \eqref{eq:equiv domin} is dominated by
$1 + \frac t 2$. On the other hand \eqref{eq:workhorse} also yields
immediately
\begin{equation*}
\frac {t^2}2  \le 1-\sqrt{1-t^2}, \qquad t  \le \sqrt{1+t}-\sqrt{1-t}
\end{equation*}
for $0\le t \le 1$, so $1 + \frac t 2$
is dominated by the right hand side of \eqref{eq:equiv domin} in turn.

Thus \eqref{short strut}
is the effective condition. But this is precisely the negation of
\eqref{eq:transitional cond} (assuming we are not in the fully
kinked case). So we have
now shown that if $(\tau,\sigma)$ obey our conditions then 
 $\Ts (\tilde C_T)\ge 1$.

Finally we show that the curve is (strongly) $\sigma$--critical with the given
endpoint constraints by showing it is regularly balanced.

There is a one-parameter family of struts joining each point on the
shoulder arcs to the opposite tip. By Lemma~\ref{lem:circlearc},
the strut measure $\ds$ on these struts balances the shoulders.
Further, this measure generates a strut force measure of magnitude
$\tau$ at the tip. By Lemma~\ref{lem:delta arc}, this is balanced
by a $\phi$ function on the kink if and only if the angle of the kink is
$\asin(\tau/2)$. But this is true by construction.
As before, $\tilde C_T$ is normal to the constraint planes at the endpoints
of the arc, so the endpoint conditions of Theorem~\ref{thm:final}
are satisfied as well.

This completes the proof of Proposition~\ref{transitionalclasp}.
\end{proof}

\subsection{The Gehring regime} \label{gehring}

We have now described the clasp structures in very stiff rope with
$\sigma > \frac{\sqrt{4 + \tau^2} - 2}{2 - \sqrt{4 - \tau^2}}$.
These are characterized by kinked circular arcs in balance with
shoulder arcs. We now jump to the opposite end of the spectrum and
describe clasps in very flexible rope with $\sigma < \sqrt{1 - \tau^2}$.
The generic clasp described in Section~\ref{generic}
will combine features from both of these situations.

In~\cite{CFKSW1}, we described critical $\tau$-clasps for
the Gehring problem.  We check below that the maximum curvature
of those Gehring $\tau$-clasps is $\sqrt{1-\tau^2}$ (at their tips).
This is all that is needed to strengthen
Theorem~9.5 of~\cite{CFKSW1} to yield the following result.
\begin{theorem}
\label{gehringclasp} Suppose  $\sigma \le \sqrt{1 - \tau^2}$.
Consider the curve $C_1$ in the $xz$--plane given parametrically for
$u\in[-\tau,\tau]$ by
\begin{align}
x = x_\tau(u) &:= \frac{u \sqrt{1- (\tau-|u|)^2}}{\sqrt{1-u^2(\tau-|u|)^2}},
   \label{gehringx} \\
z = z_\tau(u) &:= \int\frac{d z}{d x}\dx 
  = \int \frac{u}{\sqrt{1-u^2}} \,\frac{d u}{\kappa_\tau(u)}, \notag 
\end{align}
where 
\begin{equation} \label{gehringkappa}
\kappa_\tau(u) := 
   \frac {\sqrt{\big(1-u^2(\tau-|u|)^2\big)^3 \big(1-(\tau-|u|)^2\big)}} 
         {1-(\tau-|u|)^2+(\tau-|u|)|u|(1-u^2)}
\end{equation}
and the constant of integration for~$z$ is chosen so that
$$z(0)+z(\tau)=-\sqrt{1-\tau^2}.$$
There is a curve $C_2$ in the $yz$--plane, congruent to $C_1$ and
lying at distance exactly 1 from $C_1$, such that $\tilde C_{Ge}:=C_1
\cup C_2$ is
$2*2$ symmetric, with $\Ts(\tilde C_{Ge})= 1$, and is
critical for the $(\tau,\sigma)$--clasp problem.
\end{theorem}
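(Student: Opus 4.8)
The plan is to treat $\tilde C_{Ge}$ as the Gehring clasp already constructed in \cite[Thm.~9.5]{CFKSW1} and to supply the one genuinely new ingredient needed to promote Gehring-criticality to $\sigma$-criticality, namely the curvature bound $\kappa\le\nicefrac1\sigma$. From \cite{CFKSW1} I take for granted that the congruent partner $C_2$ exists in the $yz$-plane, that $\tilde C_{Ge}=C_1\cup C_2$ is $2*2$ symmetric, that the two components lie at distance exactly~$1$, and that $\tilde C_{Ge}$ is critical for the Gehring problem---that is, balanced by a strut measure carried on the Gehring struts joining the components. It then remains only to verify that $\Ts(\tilde C_{Ge})=1$ and that $\tilde C_{Ge}$ is $\sigma$-critical.

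The thickness splits into a curvature part and a reach part, and the curvature part is the crux. I must show that the curvature $\kappa_\tau(u)$ of~\eqref{gehringkappa} attains its maximum on $[-\tau,\tau]$ at the tip $u=0$, where a direct substitution gives $\kappa_\tau(0)=(1-\tau^2)^{-1/2}$ (for comparison $\kappa_\tau(\pm\tau)=1$). Since $\kappa_\tau$ depends on $u$ only through $|u|$, it suffices to bound $\kappa_\tau(u)\le\kappa_\tau(0)$ for $u\in[0,\tau]$; writing $w:=\tau-u$ and clearing denominators turns this into the polynomial inequality
$$(1-\tau^2)\,\bigl(1-u^2w^2\bigr)^3\bigl(1-w^2\bigr)\le\bigl(1-w^2+wu(1-u^2)\bigr)^2,$$
which I expect to establish either by factoring or by checking that the logarithmic derivative of $\kappa_\tau$ is nonpositive on $(0,\tau)$. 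Granting this, $\sup_L\kappa=(1-\tau^2)^{-1/2}\le\nicefrac1\sigma$ holds precisely when $\sigma\le\sqrt{1-\tau^2}$, so $\min_L\rho\ge\sigma$ and hence $\nicefrac1\sigma\min_L\rho\ge1$.

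For the reach, each component is a convex planar arc whose tangent turns through the angle $2\asin\tau<\pi$, so it is embedded with no interior critical pairs, and its minimal radius of curvature is $\sqrt{1-\tau^2}\ge\half$ (using $\sigma\ge\half$). Thus no intra-component pair comes within distance~$1$, while the inter-component distance is exactly~$1$ by \cite{CFKSW1}; the Gehring struts are critical pairs with $\psi^*=0$, so their penalized distance equals~$1$. By Corollary~\ref{cor:rch=infr} and Proposition~\ref{prop:basics} the binding constraint is this inter-component contact, giving $\reach(\tilde C_{Ge})=\half$, i.e.\ $2\reach=1$. Combining the two parts,
$$\Ts(\tilde C_{Ge})=\min\Bigl\{\,2\reach(\tilde C_{Ge}),\ \nicefrac1\sigma\min_L\rho\,\Bigr\}=1,$$
since the first term is~$1$ and the second is at least~$1$.

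Finally, $\sigma$-criticality is then immediate: $\tilde C_{Ge}$ is Gehring-critical and has $\Ts\ge1$, so by the proposition above relating the Gehring problem to the $\sigma$-thickness problem its Gehring strut measure alone balances it, whence it is strongly $\sigma$-critical by the General Balance Criterion (Theorem~\ref{thm:gbc}); equivalently one can present it as nicely balanced via Theorem~\ref{thm:final}, with kink tension $\phi\equiv0$ when $\sigma<\sqrt{1-\tau^2}$ and with a single kink at the tip when $\sigma=\sqrt{1-\tau^2}$. The endpoint constraints hold by construction, since the straight ends meet the faces of the bounding tetrahedron orthogonally. The only step I expect to require real work is the curvature monotonicity $\kappa_\tau(u)\le\kappa_\tau(0)$; everything else is either inherited from \cite{CFKSW1} or follows formally from the results assembled above.
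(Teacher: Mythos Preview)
Your overall strategy matches the paper's exactly: both recognize that everything except the curvature bound is inherited from \cite[Thm.~9.5]{CFKSW1}, and that $\sigma$-criticality then follows from the proposition transferring Gehring balance to $\sigma$-balance. Your discussion of reach and endpoint conditions is more explicit than the paper's (which simply says ``the only thing to check is that $\kappa_\tau(u)\le 1/\sigma$''), but consistent with it.

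The gap is in your proposed verification of the curvature bound. Your claim that the logarithmic derivative of $\kappa_\tau$ is nonpositive on $(0,\tau)$ is false: $\kappa_\tau$ is \emph{not} monotone. For instance at $\tau=0.8$ one computes $\kappa_\tau(0)=1/0.6\approx 1.667$, $\kappa_\tau(0.4)\approx 0.905$, and $\kappa_\tau(0.8)=1$, so the curvature dips below its endpoint value and rises again. Thus the monotonicity route cannot work, and you are left with the polynomial inequality
\[
(1-\tau^2)\,(1-u^2w^2)^3(1-w^2)\ \le\ \bigl(1-w^2+wu(1-u^2)\bigr)^2,\qquad w=\tau-u,
\]
which is true but whose direct verification is not obviously pleasant.

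The paper avoids this by the substitution $\sin\alpha=u$, $\sin\beta=\tau-u$, $\sin\gamma=\sin\alpha\sin\beta$, which turns~\eqref{gehringkappa} into
\[
\kappa_\tau(u)=\frac{\cos\beta\,\cos^3\gamma}{\cos^2\beta+\sin\gamma\,\cos^2\alpha}
\ \le\ \frac{\cos^3\gamma}{\cos\beta}\ \le\ \frac{1}{\cos\beta}
\ \le\ \frac{1}{\sqrt{1-\tau^2}}\ \le\ \frac{1}{\sigma},
\]
the last two steps using $\sin\beta\le\tau$ and the hypothesis $\sigma\le\sqrt{1-\tau^2}$. This three-line chain replaces your unexecuted polynomial argument and incidentally confirms your claim that the maximum of $\kappa_\tau$ on $[0,\tau]$ is attained at $u=0$.
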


\begin{remark}
As described in~\cite{CFKSW1}, the parameter $u$ equals the sine
of the angle between the tangent to $C_1$ and the $x$--axis. The
function $\kappa_\tau$ is the curvature.
Each point $(x(u),0,z(u))\in C_1$ is connected by two struts
of length $1$ to symmetrically located points $(0,\pm x(u^*),
-z(u^*))\in C_2$, where $u + u^* = \tau$.
These struts bear a strut measure which balances the curvature
measure on each arc of the curve.
\end{remark}

Following~\cite{CFKSW1}, the parameters $u,u^*$ as above are said
to be \demph{conjugate}. Likewise, a subarc $A \subset C_1$
corresponding to $c\le u \le d$ is said to be conjugate to the
subarcs of $C_2$ corresponding to $\tau-d\le u^* \le \tau -c$. In
other words the conjugate arcs to $A$ are precisely the subarcs of
$C_2$ that are joined to $A$ by struts.

\begin{proof}
The only thing to check is that the curvature function
$\kappa_\tau(u)\le 1/\sigma$ when $u \in [0,\tau]$.
To prove it, it will be convenient
to define $\alpha,\beta,\gamma \in [0,\frac \pi 2]$ by
$$\sin \alpha = u, \qquad
\sin \beta = u^* = \tau - \sin \alpha, \qquad
\sin \gamma = \sin \alpha \sin \beta.$$ 
Then by \eqref{gehringkappa}
\begin{equation}\label{eq:curvature} 
\kappa_\tau(u) = \kappa_\tau(\sin \alpha)
= \frac{\cos \beta \cos^3 \gamma}{\cos^2 \beta + \sin \gamma \cos^2 \alpha} 
\le \frac{\cos^3 \gamma}{ \cos \beta} \le \frac {\cos \gamma}{\cos \beta}.
\end{equation}
Furthermore 
\begin{equation*}
\frac{1}{\sigma} \ge \frac{1}{\sqrt{1 - \tau^2}}
\ge \frac{1}{\sqrt{1 - \sin^2\beta}} = \frac{1}{\cos\beta}.
\end{equation*}
since $\tau \ge \sin\beta$. Therefore
\begin{equation*}
\frac{1}{\sigma} \ge \frac{1}{\cos\beta}
\ge \frac{\cos\gamma}{\cos\beta} \ge \kappa_\tau(u),
\end{equation*}
as desired.
\end{proof}

\subsection{The generic regime }\label{generic}

We now describe the most complicated clasps. As the stiffness of
the curve decreases from the transitional regime, the transitional
clasp develops a self-contact in the middle of the straight segment.
This contact causes the straight segment to split into two straight
segments, with an arc of the Gehring clasp of Theorem~\ref{gehringclasp}
between them. The kink and shoulder arcs remain, though they become
smaller (they will eventually vanish) as the stiffness continues
to decrease. These clasps are pictured in Figure~\ref{fig:clasps3}.

\begin{figure*}[ht]
\begin{center}
\begin{overpic}[height=2.5in]{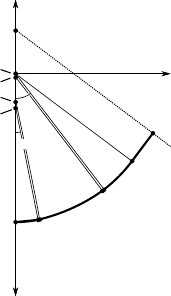}
\put(-5.5,76){$c_4$}
\put(-5.5,71.5){$c_3$}
\put(-5.5,66.5){$c_2$}
\put(-5.5,60.3){$c_1$}
\put(7,27.3){$s_1$}
\put(13.5,29){$s_2$}
\put(28,35.5){$s_3$}
\put(33,39.5){$s_4$}
\put(7.5,61.5){$\beta$}
\put(6,50.5){$\alpha$}
\put(6.5,20){kink}
\put(22,24.5){Gehring}
\put(40,37){shoulder}
\put(15,85){bounding tetrahedron}
\end{overpic}
\\
$(0.8,0.8)$ Generic Clasp
\end{center}
\caption{This diagram shows the construction of the generic clasp of
Proposition~\ref{genericclasp} with $(\tau,\sigma) = (0.8,0.8)$.
The top (closely dotted) line is the intersection of a face of the bounding
tetrahedron with the $xz$--plane. The generic clasp consists of a
kinked circular arc of radius~$\sigma$, a straight
segment, an arc of the Gehring clasp, another straight segment, and
a ``shoulder'' circular arc of radius $1$. The length of the straight
segments is exaggerated on this picture; their true length is close
to the width of the lines used to draw the radii. The tip of the other component is located at the center~$c_4$ of the shoulder;
the remaining $c_i$ are used in the proof below.}
\label{fig:clasps3}
\end{figure*}

\begin{theorem}
\label{genericclasp}
Suppose $\frac{\sqrt{4+\tau^2}-2}{2-\sqrt{4-\tau^2}} > \sigma > \sqrt{1-\tau^2}$.
\begin{enumerate}
\item\label{clasp conclusion 1}
There exists a unique solution $(\alpha,\beta,\gamma,a,b)$
to the system of equations
\vspace{-0.1in}
\begin{center}
\begin{subequations} \label{claspsystem}
\begin{minipage}{2.5in}
\begin{align}
&\sin \alpha + \sin \beta = \tau, \label{ab 1} \\
&\sin \gamma = \sin \alpha \sin \beta, \label{gamma_eqn} \\
&\frac{b}{\sin \beta} = a \sin \alpha + \sigma( 1- \cos\alpha) \label{ab 4}
\end{align}
\end{minipage}
\hspace{0.5in}
\begin{minipage}{2.5in}
\begin{align}
b\cos\beta &= \sin\beta - \frac{\cos\alpha\sin\beta}{\cos\gamma}, \label{ab 2}\\
a\cos\alpha &= \frac{\sin\alpha\cos \beta}{\cos\gamma} - \sigma\sin\alpha, \label{ab 3}
\end{align}
\end{minipage}
\end{subequations}
\end{center}
with $\alpha, \beta, \gamma \in [0,\pi/2]$,
$\sin \alpha \le \nicefrac  \tau 2$, and $a, b > 0$. 
\item\label{clasp conclusion 2}
Given this solution, there is a $C^1$ curve $C_\Gamma$ in the right
half-plane of the $xz$--plane as shown in Figure~\ref{fig:clasps3},
consisting of the following pieces joined in succession:
\begin{itemize}
\item a kinked circular arc of angle $\alpha$, meeting the $z$--axis orthogonally
\item a straight segment of length
$a$
\item the arc $\sin\alpha \le u \le \sin\beta$ arc of the Gehring clasp of Theorem~\ref{gehringclasp}
\item a straight
segment of length $b$
\item a ``shoulder'' circular arc of radius $1$
from angle $\beta$ to angle $\asin \tau$.
\end{itemize}
Furthermore, if we denote by $\tilde C_\Gamma$ the corresponding
$(2*2)$--symmetric curve, the tip of whose second component lies at
the center of the shoulder arc of the first, then the Gehring arcs
of the two components of $\tilde C_\Gamma$ are conjugate.
\item\label{clasp conclusion 3} $\Ts(\tilde C_\Gamma)= 1$.
\item\label{clasp conclusion 4}
$\tilde C_\Gamma$ is critical for the $(\tau,\sigma)$--clasp problem.
\end{enumerate}
\end{theorem}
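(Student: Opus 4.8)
The plan is to establish the four conclusions of Theorem~\ref{genericclasp} in order: first the existence and uniqueness of the parameters, then the $C^1$ construction together with conjugacy, then the thickness identity $\Ts(\tilde C_\Gamma)=1$, and finally criticality via the nice-balance criterion of Theorem~\ref{thm:final}. For conclusion~\ref{clasp conclusion 1}, I would collapse the system \eqref{claspsystem} to a single scalar equation. Equations \eqref{ab 1} and \eqref{gamma_eqn} express $\beta$ and $\gamma$ in terms of $\alpha$ via $\sin\beta=\tau-\sin\alpha$ and $\sin\gamma=\sin\alpha\sin\beta$; then \eqref{ab 2} and \eqref{ab 3} solve explicitly for $b$ and $a$ as functions of $\alpha$. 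Substituting these into \eqref{ab 4} leaves one transcendental equation $F(\alpha)=0$ on $\sin\alpha\in[0,\nicefrac\tau2]$. I would show $F$ is strictly monotone (forcing uniqueness) and changes sign across the admissible range, so a root with $a,b>0$ exists exactly for $\sqrt{1-\tau^2}<\sigma<\frac{\sqrt{4+\tau^2}-2}{2-\sqrt{4-\tau^2}}$. The two boundary degenerations, $b\to0$ recovering Proposition~\ref{transitionalclasp} and the Gehring arc shrinking to a point recovering Theorem~\ref{gehringclasp}, both serve as a consistency check and pin down the signs of $F$ at the endpoints.

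For conclusion~\ref{clasp conclusion 2}, the content is that the five pieces close up with matching tangents: equations \eqref{ab 2}, \eqref{ab 3} and the height relation \eqref{ab 4} are precisely the statements that the successive junction points agree and that the tip of each component lands at the shoulder-center of the other, so the $C^1$ curve exists by construction. The conjugacy of the two Gehring arcs follows from \eqref{ab 1}: since $u+u^*=\tau$ is the conjugacy relation of Theorem~\ref{gehringclasp} and the retained arc runs over $u\in[\sin\alpha,\sin\beta]$ with $\sin\alpha+\sin\beta=\tau$, its conjugate parameter interval is $[\tau-\sin\beta,\tau-\sin\alpha]=[\sin\alpha,\sin\beta]$, exactly the retained interval carried to the other component by the $2*2$ symmetry.

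Conclusion~\ref{clasp conclusion 3} is where the heaviest computation lies, and I expect the reach bound to be the main obstacle. I would split off the curvature bound $\snorm\kappa\le\nicefrac1\sigma$, which is immediate on the kink (radius $\sigma$), the straight segments, and the shoulder (radius $1$, so curvature $1<\nicefrac1\sigma$); on the Gehring arc it reduces to $\kappa_\tau(u)\le\nicefrac1\sigma$ on $[\sin\alpha,\sin\beta]$, which I would obtain from the monotone shape of $\kappa_\tau$ already estimated in \eqref{eq:curvature} together with the placement of the kink--Gehring junction. The reach bound --- that every critical pair has length at least $1$, with equality realized on the conjugate Gehring struts --- is the delicate part: the curve has many analytic pieces lying in two perpendicular planes, so one must exclude short struts between every pairing of kink, straight segment, Gehring arc and shoulder. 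I would organize this exactly as in the transitional case, using Lemma~\ref{strut lemma} to force one endpoint of any critical pair onto the $z$-axis, Lemma~\ref{lem:perp-planes} to compute the strut length for segment--segment pairs, and ball-exclusion arguments for arc--tip pairs. The Gehring arcs contribute struts of length exactly~$1$ by their construction in \cite{CFKSW1}, which forces $\reach=\nicefrac12$ and hence $\Ts=1$.

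Finally, for conclusion~\ref{clasp conclusion 4} I would verify nice balance in the sense of Theorem~\ref{thm:final}. The strut measure is assembled from two families: the conjugate-point struts on the Gehring arcs, carrying the measure from \cite{CFKSW1} that balances their curvature, and the one-parameter family of shoulder-to-tip struts, which by Lemma~\ref{lem:circlearc} balances each shoulder while depositing a resultant atom of strut force at the opposite tip. On the kink arc the curvature is balanced by a kink tension function $\phi$ as in Lemma~\ref{lem:delta arc}, whose boundary atom must cancel exactly the force delivered to the tip; the straight segments carry only transmitted force (with $T'=0$ and $\phi=0$), and the endpoints meet the constraint planes orthogonally. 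The subtle point here --- and the place where the parameter equations of conclusion~\ref{clasp conclusion 1} re-enter --- is checking that the force transmitted through the straight segments matches across each junction, so that the atom required at the tip by Lemma~\ref{lem:delta arc} agrees with the resultant produced by the shoulder and Gehring struts; I expect this bookkeeping, rather than any single estimate, to be the most delicate step of the balance argument.
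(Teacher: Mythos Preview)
Your outline matches the paper's proof closely in structure and in the tools invoked (Lemmas~\ref{strut lemma}, \ref{lem:perp-planes}, \ref{lem:circlearc}, \ref{lem:delta arc}, and the Gehring-clasp machinery from~\cite{CFKSW1}). Two points of comparison are worth noting.

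For conclusion~\ref{clasp conclusion 1}, the paper uses a slight twist on your reduction: rather than fixing $\sigma$ and reducing to a scalar equation $F(\alpha)=0$, it treats $\alpha$ as the free parameter and solves the full system \eqref{claspsystem} for $(\beta,\gamma,a,b,\sigma)$, obtaining the closed form
\[
\sigma(\alpha)=\frac{(1+\cos\alpha)\cos\beta}{\cos\gamma+\cos\alpha}.
\]
Then the monotonicity check is on $\sigma(\alpha)$ itself, and the endpoint values $\sigma(0)=\sqrt{1-\tau^2}$ and $\sigma\bigl(\arcsin\tfrac\tau2\bigr)=\frac{\sqrt{4+\tau^2}-2}{2-\sqrt{4-\tau^2}}$ fall out directly. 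This is equivalent to your $F(\alpha)$ approach but avoids having to differentiate an implicitly defined quantity. Note also that you have the two boundary degenerations reversed: as $\sigma\downarrow\sqrt{1-\tau^2}$ one has $\alpha\to0$, so the kink, both straight segments, and the shoulder all vanish, leaving the pure Gehring clasp; as $\sigma$ approaches the upper threshold one has $\alpha\to\arcsin\tfrac\tau2=\beta$, so it is the Gehring arc that shrinks to a point (and $a,b$ do \emph{not} vanish but merge into the single segment of the transitional clasp).

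For conclusion~\ref{clasp conclusion 2}, your observation that \eqref{ab 1} makes the parameter interval $[\sin\alpha,\sin\beta]$ self-conjugate is correct but not sufficient by itself: one must also verify that the two Gehring arcs sit at the correct relative height so that the length-$1$ struts of the original Gehring clasp still connect them. The paper does this by showing that the height relation \eqref{ab 4} is exactly the condition that the tip of one component coincides with the projection of the conjugate endpoint $s_2^*$ of the other, which pins down the vertical offset. You gesture at this (``the tip of each component lands at the shoulder-center of the other''), but the conjugacy of the arcs as spatial curves, not just as parameter intervals, is what (ab~4) encodes.
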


\begin{proof}
\eqref{clasp conclusion 1}:
Let us change our point of view by taking $\tau$ as given, and
viewing \eqref{claspsystem} as a 1-parameter family of systems in
the unknowns $\sigma , \beta, \gamma, a,b$ as the parameter $\alpha$
varies from $0$ to $\arcsin \frac \tau 2$. It is clear that \eqref{ab
1}, \eqref{gamma_eqn}, \eqref{ab 2} determine $\beta,\gamma, b$
uniquely, with $b >0$ since
\begin{equation}\label{eq:cos gamma}
\cos \gamma = \sqrt{ 1 - \sin^2 \gamma } = \sqrt{1 - \sin^2 \alpha \sin^2\beta}
> \sqrt{1 - \sin^2 \alpha} = \cos \alpha.
\end{equation}
Solving \eqref{ab 4}, \eqref{ab 3} for $a,\sigma$
and substituting the value for $b$ arising from \eqref{ab 2}, we obtain
\begin{equation}\label{sigma(alpha)}
\sigma= 
\frac{\sin^2\alpha \cos^2\beta + \cos^2\alpha - \cos \alpha \cos \gamma}
     {(1 - \cos\alpha)\cos\beta \cos\gamma} \\
= \frac{\cos\gamma - \cos \alpha}{(1-\cos \alpha)\cos \beta} 
= \frac{(1 + \cos\alpha)\cos\beta}{\cos\gamma + \cos\alpha}
\end{equation}
and 
\begin{equation} \label{newaformula}
a = \tan \alpha \cos \beta
    \left(\frac1{\cos\gamma}- \frac{1+\cos\alpha}{\cos\gamma +\cos\alpha}\right)
  = \tan \alpha \cos \beta
    \frac{\cos\alpha(1-\cos\gamma)}{\cos\gamma(\cos\gamma +\cos\alpha)} > 0 .
\end{equation}

Thus we may show that~\eqref{claspsystem} is uniquely solvable in
the original sense, with $\sigma$ given and $\alpha$ unknown, by
establishing that \eqref{sigma(alpha)} expresses $\sigma$ as a
continuous strictly increasing function of $\alpha$, with
$\sigma \bigl(\asin(\nicefrac \tau 2)\bigr)
=\frac{\sqrt{4+\tau^2}-2}{2-\sqrt{4-\tau^2}}$
and $\sigma(0) = \sqrt{1 - \tau^2}$. The latter relations may be
verified directly, and continuity of $\sigma$ is trivial.
To prove that $\sigma$ is strictly increasing, since $\sin \alpha
$ and $\sin \gamma= \sin\alpha (\tau-\sin\alpha)$ are both increasing
in the range $0\le \sin\alpha \le \frac \tau 2$, it is clear that
both $\cos \alpha$ and $\cos\gamma$ are decreasing functions of
$\alpha$. Thus it remains only to show that the numerator $(1+\cos
\alpha)\cos \beta$ of \eqref{sigma(alpha)} is increasing as a
function of
$ u:=\sin\alpha \in [0,\nicefrac{\tau}{2}]$. 
Since
\begin{equation*}
\frac{d}{du} \cos \alpha = - \tan \alpha, \qquad
\frac{d}{du} \sin \beta = -1, \qquad
\frac{d}{du} \cos \beta = \tan \beta,
\end{equation*}
we compute
\begin{equation*}
\frac{d}{du} (1 + \cos\alpha) \cos \beta = -\tan\alpha \cos \beta + (1 + \cos\alpha) \tan\beta
 > \tan\beta - \tan\alpha.
\end{equation*}
But $\sin\alpha + \sin\beta = \tau$ and $\sin\alpha < \nicefrac{\tau}{2}$, so 
\begin{equation*}
\sin \beta > \sin \alpha \implies \beta > \alpha \implies \tan \beta > \tan \alpha.
\end{equation*}

\eqref{clasp conclusion 2} Letting $x(u)=x_\tau(u)$ denote the
parametrization of the Gehring arc  given in~\eqref{gehringx}, the
$x$--coordinates of the two endpoints of this arc are
\begin{equation*}
x(\sin\alpha) = \frac{\sin \alpha\cos \beta}{\cos\gamma},
\qquad x(\sin\beta) = \frac{\cos\alpha\sin \beta}{\cos\gamma}
\end{equation*}
by~\eqref{ab 1} and~\eqref{gehringx}.  On the other hand the
$x$--coordinates of the inner endpoints of the kink and the shoulder
arcs are given by $\sigma \sin \alpha, \sin \beta $ respectively.
Since by part \eqref{clasp conclusion 1}
\begin{align*}
a\cos\alpha &=x(\sin\alpha) - \sigma \sin\alpha
             =\frac{\sin \alpha \cos\beta}{\cos \gamma}-\sigma \sin\alpha >0, \\
b\cos\beta  &= \sin\beta-x(\sin\beta)
             =\sin\beta -\frac{\cos\alpha \sin\beta}{\cos\gamma} >0,
\end{align*}
we may interpolate straight segments of lengths $a,b$ between the
kink and the Gehring arc, and between the Gehring arc and the
shoulder, respectively, to obtain a $C^1$ curve $C_\Gamma$ as
described.

Next we show that the Gehring arcs of the two components of $\tilde C_\Gamma$
are conjugate to each other provided the components are
situated with the tip of one at the center of the shoulder of the
other. Referring to Figure~\ref{fig:clasps3}, this is to say that
the point $c_3$ is the projection to the $xz$--plane of the point
$s_2^*$ of the other component that corresponds to $s_2$.
If the center of the shoulder arc (which is the tip of the other
component) is the origin then the $z$--coordinate of $c_3$ is clearly
$b/\sin\beta$.
On the other hand, since the two components are congruent the
$z$--coordinate of $s_2^*$ equals  the difference in the $z$--coordinates
of $s_2$ and the tip of $C_\Gamma$. Equating
these two, 
\begin{equation*}
\frac{b}{\sin\beta} = a \sin \alpha + \sigma(1 - \cos\alpha)
\end{equation*}
which is~\eqref{ab 4}.

\eqref{clasp conclusion 3}: We show first that the curvature of $C_\Gamma$ is no more than
$\nicefrac{1}{\sigma}$. The kink, shoulder, and straight segments
clearly obey this bound, so we need only check the Gehring clasp
arc.
We parametrize this arc by $u \in [\sin\alpha,\sin\beta]$ as in
Theorem~\ref{gehringclasp}. Viewing $\sigma= \sigma(\alpha)$ as in
\eqref{sigma(alpha)} above, we must check that
\begin{equation}\label{eq:curvature ineq}
\kappa_\tau(u) \le \nicefrac{1}{\sigma(\alpha)}
\end{equation}
on this interval. We carry this out for the two subintervals
$[\sin\alpha,\nicefrac \tau 2]$ and $[\nicefrac \tau 2, \sin\beta]$ separately.

Since $\sigma(\alpha)$ is strictly increasing in $\alpha$ for
$\sin\alpha \in [0,\tau/2]$, for $u$ in this range we have
$\nicefrac{1}{\sigma(u)} \le \nicefrac{1}{\sigma(\alpha)}$ and
it suffices to show $\kappa_\tau(u) \le \nicefrac{1}{\sigma(u)}$.
Define $\alpha'$ by $\sin \alpha' = u$, and $\beta',\gamma'$ analogously
to~\eqref{ab 1} and~\eqref{gamma_eqn}. Then
\begin{equation*}
\kappa_\tau(u) = \kappa_\tau(\sin \alpha')
= \frac{\cos \beta' \cos^3 \gamma'}{\cos^2 \beta' + \sin \gamma' \cos^2 \alpha'}
 \le \frac{\cos \beta' \cos^3 \gamma'}{ \cos^2 \beta'} \le \frac {\cos \gamma'}{\cos \beta'}.
\end{equation*}
On the other hand, by~\eqref{sigma(alpha)}
\begin{equation*}
\frac{1}{\sigma(u)} = \frac{\cos\gamma' + \cos\alpha'}{(1 + \cos \alpha')\cos\beta'}
\end{equation*}
and \eqref{eq:curvature ineq} follows easily.

To cover the range $u \in [\tau/2, \sin\beta]$ it suffices to prove that
$\kappa_\tau(u^*) \le \nicefrac{1}{\sigma(u)}$
for $u \in [\sin \alpha, \nicefrac\tau 2]$,
where $u+u^* = \tau$ (that is, $u,u^*$ are conjugate).
Since replacing $u$ by $u^*$ exchanges
the variables $\alpha'$ and $\beta'$ and leaves $\gamma'$ unchanged,
\begin{equation*}
\kappa_\tau(u^*) = \frac{\cos \alpha' \cos^3 \gamma'}
	{\cos^2\alpha' + \sin\gamma' \cos^2\beta'}
\le \frac{\cos^3 \gamma'}{\cos \alpha'} \le \frac{\cos \gamma'}{\cos \alpha'}.
\end{equation*}
On the other hand,
\begin{equation*}
\frac{1}{\sigma(u)} = \frac{\cos \gamma' + \cos \alpha'}{(1 + \cos\alpha')\cos\beta'}
\ge \frac{\cos \gamma' + \cos \gamma' \cos \alpha'}{(1 + \cos \alpha') \cos \beta'}
= \frac{\cos \gamma'}{\cos \beta'}.
\end{equation*}
Now \eqref{eq:curvature ineq} follows from the fact that
$ \sin \alpha' \le \nicefrac{\tau}{2} \le \sin \beta'$. 

Next we claim that all critical pairs $(p,p^*)$ of the distance between the components
of $\tilde C_\Gamma$ satisfy $|p-p^*|\ge 1$. To simplify the
discussion we will put $C^*_\Gamma$ for the part of the second
component lying in the $y\ge 0$ part of the $yz$--plane, and consider
only those pairs with $p\in C_\Gamma, p^*\in C_\Gamma^*$.

The claim is clearly true if $p$ lies on the Gehring arc, since in
this case $p^*$ is the conjugate point of the Gehring arc of
$C_\Gamma^*$.

Note that if $(p,p^*)$ is a critical pair then the projection of
the segment $pp^*$ to the $xz$--plane is a line segment perpendicular
to $C_\Gamma$ at $p$ and with the other endpoint on the $z$--axis.
Now if we denote by $z^*(p)$ the $z$--intercept of the normal line
through
$C_\Gamma$ at $p$, then $z^*$ is an increasing function of the
$x$--coordinate of $p$. (This is obvious for the circular arcs and
line segments, and true for the Gehring arc by construction.)

By Lemma~\ref{strut lemma}, if $p$ lies on the shoulder arc or the
kink then $p^*$ is the tip of $C_\Gamma^*$. In the shoulder case
$|p-p^*| =1$ by construction. To handle the kink case
we note that every point of $C_\Gamma$ lies at distance $\ge 1$
from the tip of $C_\Gamma^*$:  otherwise $C_\Gamma$ crosses the
circle of radius $1$ about the origin in the $xz$--plane
at some point $p$.  Since the slope of $C_\Gamma$ must be less than the slope
of the circle at this point, it follows that $z^*(p) > z^*(s_4) = 0$. But $z^*(p)
\le 0$ by monotonicity.

By monotonicity of $z^*$ again, and symmetry, it remains only to
consider the case where $p \in s_1s_2$ and $p^* \in s_3^* s_4^*$.
However, since the lines generated by these segments are skew, there
is at most one such critical pair. This pair is $p=s_2$, $p^*= s_3^*$,
that is, the common endpoints of the segments and the Gehring arcs.

\eqref{clasp conclusion 4}:
We will show $\tilde C_\Gamma$ is regularly balanced.

There is a one-parameter family of struts joining each point on the
shoulder arcs to the opposite tip. By Lemma~\ref{lem:circlearc},
the strut measure $\ds$ on these struts balances the shoulders.
Further, this measure generates a strut force measure of magnitude
$\tau$ at the tip. By Lemma~\ref{lem:delta arc}, this is balanced
by a $\phi$ function on the kink if and only if the angle of the kink is
$\asin(\tau/2)$. But this is true by~\eqref{ab 1}.  The straight
segments bear no strut force and have $T' = 0$, so they obey the balance
equation as well. Further, the Gehring arcs obey the balance equation
by construction.

As before, $\tilde C_\Gamma$ is normal to the constraint planes at the endpoints
of the arc, so the endpoint conditions of Theorem~\ref{thm:final}
are satisfied as well.

This completes the proof of Theorem~\ref{genericclasp}. A picture of the clasp appears in Figure~\ref{fig:3dclasp}.
\end{proof}

\begin{figure}[ht]
\hphantom{.}
\hfill
\raisebox{-0.5\height}{\begin{overpic}[width=2in]{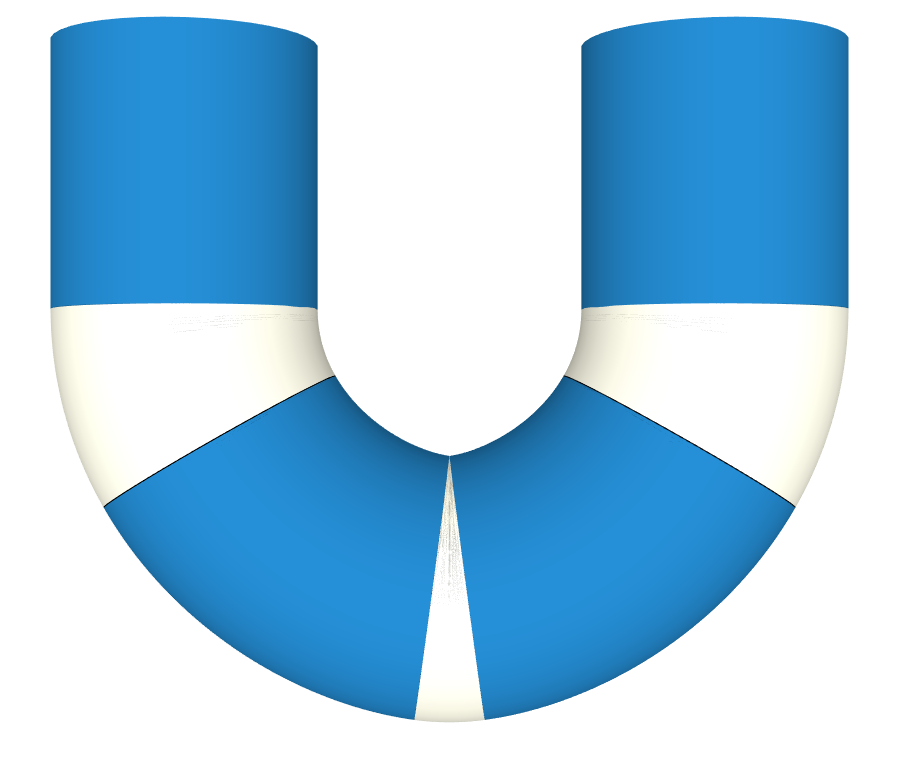}
\end{overpic}}
\hfill
\raisebox{-0.5\height}{\begin{overpic}[width=2.5in]{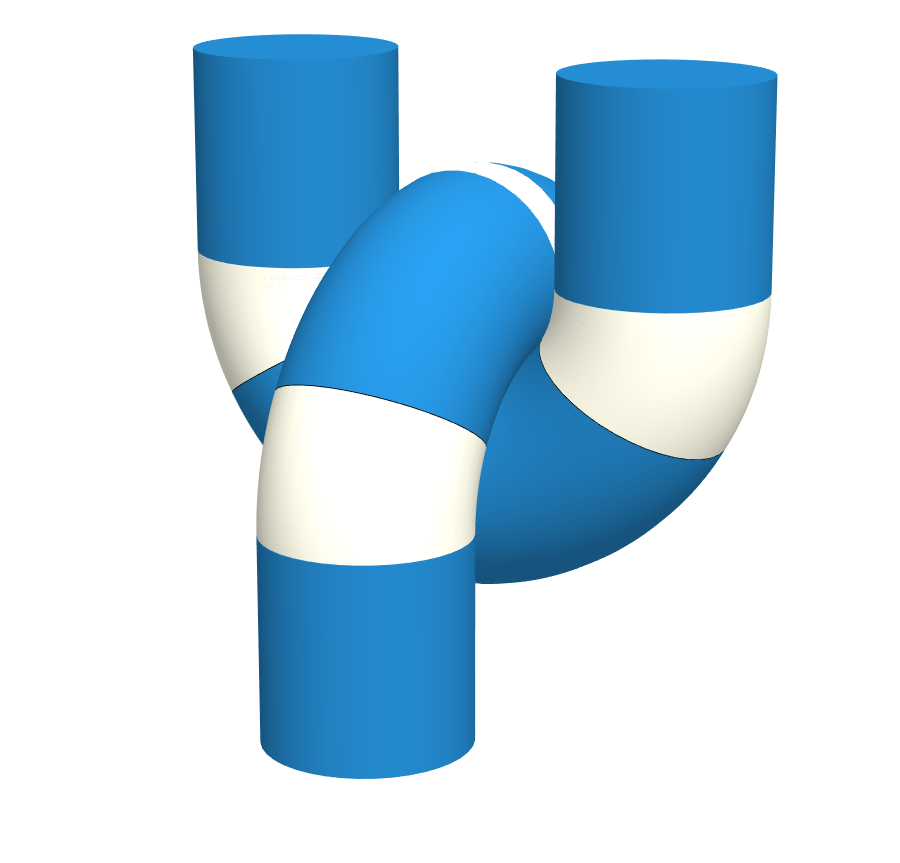}
\end{overpic}}
\hfill
\hphantom{.}
\caption{These figures show the $(1,\nicefrac{1}{2})$ clasp.
From left to right, the straight ``tail'', shoulder,
Gehring, and kinked arcs of the clasp are shown in alternating blue
and white colors. The two straight segments are included in black.
The longer segment of length $b \sim 0.003878$ between the Gehring
and shoulder sections is barely visible as a thin black border
about one pixel wide.  The much shorter segment of length
$a \sim 0.000224$ between the kink and Gehring regions is too
narrow to show up.
\label{fig:3dclasp}}
\end{figure}

\subsection{Geometry of the tight clasps}
To compare the length of various clasps with the same $\tau$ but
different $\sigma$, in a way independent of a particular bounding tetrahedron,
we define the \demph{excess length} $\ell(\tau,\sigma)$ of
our $(\tau,\sigma)$ clasp to be the difference between the length
of the clasp and four times the inradius of the bounding tetrahedron,
which would be the infimal length in the absence of any thickness constraint.
As $\sigma$ increases, we are strengthening the curvature constraint,
so the excess length must be monotonically increasing.

While the excess length of the kinked and transitional clasps can
be computed exactly, the length of the Gehring clasp (and the generic
clasp, which includes a Gehring arc) is only known as the solution
of a certain hyperelliptic integral~\cite{CFKSW1}. We constructed
all of our clasps numerically, checking the thickness and curvature
of each with~\texttt{octrope}~\cite{MR2197947}, and computing the
excess length by numerical integration. The results are shown in
Figure~\ref{fig:xslength}, which shows how the excess length
increases with $\sigma$ for $\tau=0.8$.  For a kinked clasp
we find $\ell(0.8,1)\approx 2.109180872$, while for the Gehring clasp we get
$\ell(0.8,\half)\approx2.103080861$; these differ by about $0.3\,\%$.

\begin{figure*}[ht]
\begin{center}
\hfill
\begin{overpic}[height=2.2in]{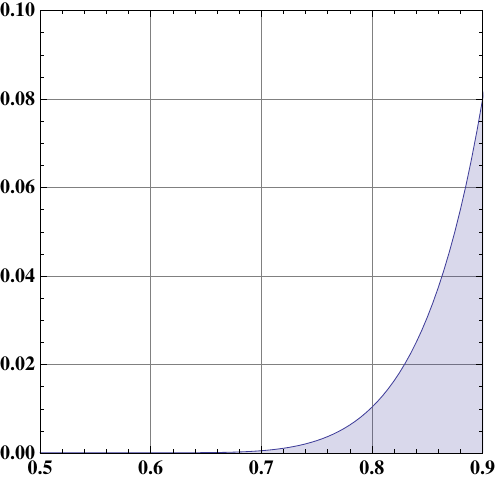}
\end{overpic}
\hfill
\begin{overpic}[height=2.2in]{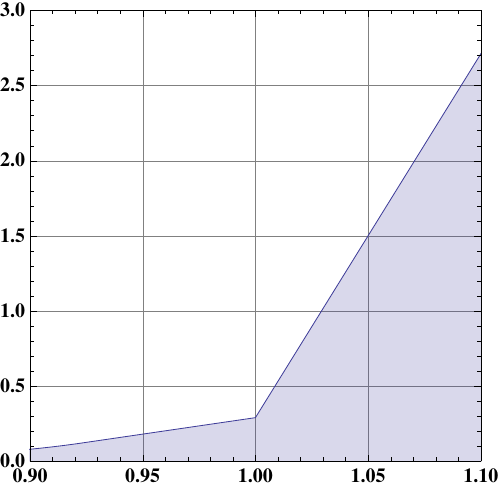}
\end{overpic}
\hfill
\end{center}
\caption{This pair of graphs shows how the excess length $\ell(0.8,\sigma)$
increases for $\sigma\in(0.5,1.1)$.  In the Gehring regime $0\le\sigma\le0.6$,
the $(\tau,\sigma)$ clasp is of course just the Gehring $\tau$-clasp,
independent of~$\sigma$, so $\ell(0.8,\sigma)$ stays constant at about
$2.10308$.
The graphs plot $100 \bigl(\ell(0.8,\sigma)/\ell(0.8,0)-1\bigr)$, that
is the percentage increase of $\ell(0.8,\sigma)$ over
the Gehring excess length.
For example, at $\sigma=1.05$, our (fully kinked) solution is a clasp
with $1.5\,\%$ more excess length
than the Gehring clasp. We have changed the scale of the
plot at $\sigma = 0.9$ in order to make the
behavior for smaller $\sigma$ easier to see.
From the graphs, it seems the excess length function may be $C^1$
across the Gehring/generic boundary at $\sigma = 0.6$ and the
generic/transitional boundary at $\sigma \approx 0.927$, but clearly has
a corner at the transitional/kinked boundary at $\sigma = 1$.}
\label{fig:xslength}
\end{figure*}

For $\tau = 1$, the excess length of the kinked $\sigma = 1$ clasp
is $\ell(1,1)=2\pi-2\approx4.28318531$, while in the generic
regime we have for instance $\ell(1,\half)\approx 4.2630946$;
these differ by about $0.46\,\%$.  For the Gehring clasp we
have $\ell(1,0)\approx 4.262897$, which is about $0.5\,\%$ less.
We can see, from this example and from the graphs in
Figure~\ref{fig:xslength}, that very little length is saved over the
generic regime.

One of the most striking features of the Gehring clasp is a small
gap between the two tubes, forming a small chamber between
the two tubes as they are pulled together. We have already seen
that the same gap exists in the generic solutions, as we showed
above that the tip-to-tip distance was greater than $1$. In fact,
the tip-to-tip distance is monotonic in $\sigma$ for each value of
$\tau$, as we see in Figure~\ref{fig:gapgraph}. For smaller values
of $\tau$, the maximum tip-to-tip distance decreases as well, reaching
$1$ only for the trivial $\tau = 0$ clasp. The maximum tip-to-tip
distance, about $1.05653$, occurs at the Gehring $(1,0)$--clasp.
The generic $(1,\half)$ clasp still has tip-to-tip distance about $1.05468$.

\begin{figure}[ht]
\begin{small}
\begin{center}
\begin{overpic}[height=75mm]{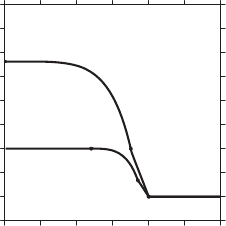}
\put(40,-12){stiffness $\sigma$}
\put(2,-5){$\sigma$}
\put(14,-5){$0.25$}
\put(32.5,-5){$0.5$}
\put(46,-5){$0.75$}
\put(64,-5){$1$}
\put(77.5,-5){$1.25$}
\put(15,75){$\tau = 1$}
\put(15,37){$\tau = 0.8$}
\put(-20,30){\rotatebox{90}{tip-to-tip distance}}
\put(-13,10.5){$1.0$}
\put(-13,21.5){$1.01$}
\put(-13,32){$1.02$}
\put(-13,42.5){$1.03$}
\put(-13,53.5){$1.04$}
\put(-13,64){$1.05$}
\put(-13,74.5){$1.06$}
\put(-13,85.5){$1.07$}
\end{overpic}
\hfill
\end{center}
\end{small}
\vspace{0.25in}
\caption{This graph shows the tip-to-tip distance for the $\tau = 1$
(upper curve) and $\tau = 0.8$ (lower curve). We can see that in
all the kinked clasps ($\sigma\ge 1$) the tips are in contact,
so the tip-to-tip distance is $1$. As the stiffness decreases, the
force exerted by the shoulder arcs pushes the tips apart, creating
a gap between the tubes. We mark the transition between the kinked,
transitional, generic, and Gehring regimes with small dots. For
$\tau=1$, recall that the Gehring regime degenerates to a point,
so the corresponding dot appears at $\sigma = 0$. Also, we note
that the kinked/transitional boundary occurs at $\sigma=1$ for all~$\tau$
so the curves merge at $\sigma=1$. We can see that the gap size is
constant over the Gehring regime (as the curves are not changing
with~$\sigma$) and then decreases monotonically as $\sigma$
increases until the transition to the kinked regime, which has no
gap for any~$\sigma$ or~$\tau$.}
\label{fig:gapgraph}
\end{figure}

\section{Future Directions}

A number of interesting questions regarding ropelength remain
unanswered by our investigation.  First, we note that although
every link type has a ropelength minimizer, there are
still very few explicit examples of closed links critical for ropelength:
only the Borromean rings and the known minimizers from~\cite{MR2003h:58014}.
These have no kinks (so they are critical also for the Gehring problem)
and all their components are planar.  It would be very interesting to apply our
balance criterion to describe further examples.

One way to generate further examples of critical links is to
minimize ropelength with some symmetry imposed.
The general principle of symmetric criticality suggests that the resulting
configurations are still critical when the symmetries are relaxed.
For ropelength, the superlinearity of the first variation of thickness
(Corollary~\ref{cor:superlinear})
is exactly the technical tool needed to show
that symmetric criticality works as expected for ropelength problem,
despite the lack of smoothness: the (symmetrized) average of thickening
fields is again a thickening field, and thus a link that is critical under
the imposition of symmetry remains critical without the symmetry constraint.
This means that we now know
many knots (including torus knots) with more than one critical configuration.
Results of this kind appear in \cite{CEFM}.
It then becomes interesting to ask about second-order behavior -- which in
particular could determine which are local minima.
Although there is a theory of second-order behavior for nonlinear constrained
optimization problems in finite dimensions
(see for instance \cite[Section~2]{FGW-int-meth})
it seems nontrivial to extend this to our infinite-dimensional setting.

It has long been conjectured that any knot -- even the unknot
-- will have multiple local minima for the ropelength problem.
Some such unknots have been computed numerically,
but proving their existence remains an interesting open question.
Promisingly, a solution to a closely related problem --
finding distinct configurations of a given link which cannot be isotoped
to one another without increasing the ropelength of one component --
has recently been given by Coward and Hass~\cite{CH}.

The question of the regularity of ropelength minimizers or critical
curves remains a central one in the field.  Our regularity results
depend on the assumption that kinks are regulated; it would be nice
to show this is always the case.  Our bootstrapping argument
(Corollary~\ref{cor:more derivatives}) gives $W^{3,\BV}_\loc$ regularity
on the kinks.  Regularity results for nonkinked regions (and further
regularity for kinks) would seem to depend on understanding the possible
geometry of how struts can impinge on an arc.

Finally, we note that the supercoiled helices
of Section~\ref{sect:zero strut} form an interesting
family for further investigation.  In particular, a comparison of
our approach with Sussmann's would be fruitful; there may be
borderline cases where solutions to his minimization problem
fail to be $\Ts$--regular and thus might not be strongly critical.
It would be nice to understand equation~\eqref{eqn:ode-c} well
enough to prove our conjecture that the curves are embedded.

\bibliographystyle{gtart}
\bibliography{thick,drl,cantarella}
\end{document}